\newtheorem{theorem}{Theorem}[section]
\newtheorem{lemma}{Lemma}[section]
\newtheorem{definition}[theorem]{Definition}
\newtheorem{remark}{Remark}[section]
\numberwithin{equation}{section}
\title{Expectation Propagation for Poisson Data}
\author{Chen Zhang\thanks{Department of Computer Science, University College London, London WC1E 6BT, UK (\texttt{s.arridge, b.jin,
chen.zhang.16@ucl.ac.uk})}  \and Simon Arridge\footnotemark[1] \and Bangti Jin\footnotemark[1] }
\date{}
\begin{document}
\maketitle
\begin{abstract}
The Poisson distribution arises naturally when dealing with data involving counts, and it has found many applications
in inverse problems and imaging. In this work, we develop an approximate Bayesian inference technique based on
expectation propagation for approximating the posterior distribution formed from the Poisson likelihood function and
a Laplace type prior distribution, e.g., the anisotropic total variation prior. The approach iteratively yields a
Gaussian approximation, and at each iteration, it updates the Gaussian approximation to one factor of the posterior
distribution by moment matching. We derive explicit update formulas in terms of one-dimensional integrals, and also discuss
stable and efficient quadrature rules for evaluating these integrals. The method is showcased on two-dimensional PET images.

\noindent{\bf Keywords}: Poisson distribution, Laplace prior, expectation propagation, approximate Bayesian inference

\end{abstract}

\section{Introduction}
The Poisson distribution is widely employed to describe inverse and imaging problems involving count data, e.g.,
emission computed tomography \cite{vardi1985statistical,shepp1982maximum}, including positron emission tomography
and single photon emission computed tomography. The corresponding likelihood function is a Poisson distribution
with its parameter given by an affine transform (followed by a suitable link function). Over the past few decades,
the mathematical theory and numerical algorithms for image reconstruction with Poisson data have witnessed
impressive progresses. We refer interested readers to \cite{HohageWerner:2016} for a comprehensive overview on
variational regularization techniques for Poisson data and \cite{BerteroBoccacci:2009} for mathematical modeling
and numerical methods for Poisson data. It is worth noting that the Poisson model is especially
important in the low-count regime, e.g., $[0,10]$ photons, whereas in the moderate (or high) count regime, heteroscedastic
normal approximations can be employed in the reconstruction, leading to a weighted Gaussian likelihood function (e.g.,
via the so-called Anscombe transform \cite{Anscombe:1948}). In this work, we focus on the Poisson model.

To cope with the inherent ill-posed nature of the imaging problem, regularization plays an important role in image
reconstruction. This can be achieved implicitly via early stopping during an iterative reconstruction procedure (e.g.,
EM algorithm or Richardson-Lucy iterations) or explicitly via suitable penalties, e.g., Sobolev penalty,
sparsity and total variation. The penalized maximum likelihood (or equivalently maximum a posteriori (MAP))
is currently the most popular way for image reconstruction with Poisson models \cite{de1995modified,sotthivirat2002image}.
However, these approaches can only provide point estimates, and the important issue of uncertainty quantification,
which provides crucial reliability assessment on point estimates, is not fully addressed. The full Bayesian approach
provides a principled yet very flexible framework for uncertainty quantification of inverse and imaging problems
\cite{KaipioSomersalo:2005,Stuart:2010}. The prior distribution acts as a regularizer, and the ill-posedness of
the imaging problem is naturally dealt with. Due to the imprecise prior knowledge of the solution and the presence
of the data noise, the posterior distribution contains an ensemble of inverse solutions consistent with the observed
data, which can be used to quantify the uncertainties associated with a point estimator, via, e.g., credible interval
or highest probability density regions.

For imaging problems with Poisson data, a full Bayesian treatment is challenging, due to the nonnegativity constraint
and high-dimensionality of the parameter / data space. There are several possible strategies from the computational perspective. One
idea is to use general-purposed sampling methods to explore the posterior state space, predominantly Markov chain Monte
Carlo (MCMC) methods \cite{Liu:2001,RobertCasella:2004}. Recent scalable variants, e.g., stochastic gradient
Langevin dynamics \cite{WellingTeh:2011}, are very promising, although these techniques have not been applied to the
Poisson model. Then the constraints on the signal can be incorporated directly by discarding
samples violating the constraint. However, in order to obtain accurate statistical estimates, sampling methods
generally require many samples and thus tend to suffer from high computational cost, due to the high problem
dimensionality. Further, the MCMC convergence is challenging to diagnose. These observations have motivated intensive
research works on developing approximate inference techniques (AITs).  In the machine learning literature, a large
number of AITs have been proposed, e.g., variational inference \cite{jordan1999introduction,BleiKucukelbir:2017,
ChallisBarber:2013,Jin:2012,ArridgeItoJinZhang:2018}, expectation propagation \cite{minka2001expectation,minka2001family}
and more recently Bayesian (deep) neural network \cite{gal2016dropout}; see the survey \cite{ZhangButepage:2019}
for a comprehensive overview of recent developments on variational inference. In all AITs, one aims at finding an
optimal approximate yet tractable distribution within a family of parametric/nonparametric probability distributions
(e.g., Gaussian), by minimizing the error in a certain probability metric, prominently Kullback-Leibler divergence.
Empirically they can often produce reasonable approximations but at a much reduced computational cost than MCMC. However,
there seem no systematic strategies for handling constraints in these approaches. For example, a straightforward
truncation of the distribution due to the constraint often leads to elaborated distributions, e.g., truncated normal
distribution, which tends to make the computation tedious or even completely intractable in variational Bayesian
inference.

In this work, we develop a computational strategy for exploring the posterior distribution for Poisson data (with two
popular nonnegativity constraints) with a Laplace type prior based on expectation propagation \cite{minka2001expectation,
minka2001family}, in order to deliver a Gaussian approximation. Laplace prior promotes the sparsity of the image in a
transformed domain, which is a valid assumption on most natural images. The main contributions of the work are as
follows. First, we derive explicit update formulas using one-dimensional integrals. It essentially exploits the
rank-one projection form of the factors to reduce the intractable high-dimensional integrals to tractable one-dimensional
ones. In this way, we arrive at two approximate inference algorithms, parameterized by either moment or natural parameters.
Second, we derive stable and efficient quadrature rules for evaluating the resulting one-dimensional integrals, i.e., a
recursive scheme for Poisson sites with large counts and an approximate expansion for Laplace sites, and discuss different
schemes for the recursion, dependent of the integration interval, in order to achieve good numerical stability. Last, we
illustrate the approach with comprehensive numerical experiments with the posterior distribution formed by Poisson
likelihood and an anisotropic total variation prior, clearly showcasing the feasibility of the approach.

Last, we put the work in the context of Bayesian analysis of Poisson data. The predominant body of literature in
statistics employs a log link function, commonly known as Poisson regression in statistics and machine learning (see,
e.g., \cite{CameronTrivedi:2013,ArridgeItoJinZhang:2018}). This differs substantially from the one frequently
arising in medical imaging, e.g., positron emission tomography, and in particular the crucial nonnegativity constraint
becomes vacuous. The only directly relevant work we are aware of is the recent work \cite{ko2016expectation}. The work
\cite{ko2016expectation} discussed a full Bayesian exploration with EP, by modifying the posterior distributions
using a rectified linear function on the transformed domain of the signal, which induces singular measures on the
region violating the constraint. However, the work  \cite{ko2016expectation} does not consider the background.

The rest of the paper is organized as follows. In Section \ref{sec:form} we describe the posterior distribution
for the Poisson likelihood function and a Laplace type prior. Then we give explicit expressions of the integrals
involved in EP update and describe two algorithms in Section \ref{sec:EP}. In Section \ref{sec:int} we present
stable and efficient numerical methods for evaluating one-dimensional integrals. Last, in Section \ref{sec:numer}
we present numerical results for three benchmark images. In the appendices, we describe two useful
parameterizations of a Gaussian distribution, Laplace approximation and additional comparative numerical results
for a one-dimensional problem with MCMC and Laplace approximation to shed further insights into the performance
of EP algorithms.

\section{Problem formulation}\label{sec:form}
In this part, we give the Bayesian formulation for Poisson data, i.e., the likelihood function $p(y|x)$ and prior
distribution $p(x)$, and discuss the nonnegativity constraint.

Let $x\in\mathbb{R}^n$ be the (unknown) signal/image of interest, $y\in\mathbb{R}^{m_1}_+$ be the observed Poisson data, and
$A=[a_{ij}]=[a^t_i]_{i=1}^{m_1}\in\mathbb{R}^{m_1\times n}_+$ be the forward map, where the superscript $t$ denotes matrix / vector
transpose. The entries of the matrix $A$ are assumed to be nonnegative. For example, in emission computed tomography, it
can be a discrete analogue of Radon transform, or probabilistically, the entry $a_{ij}$ of the matrix $A$ denotes the probability
that the $i$th sensor pair records the photon emitted at the $j$th site.

The conditional probability density $p(y_i|x)$ of observing $y_i\in\mathbb{N}$ given the signal $x$ is given by
\begin{equation*}
	p(y_i|x) = \frac{(a^t_ix+r_i)^{y_i}e^{-(a^t_ix+r_i)}}{y_i!},
\end{equation*}
where $r=[r_i]_i\in\mathbb{R}^{m_1}_+$ is the background. That is, the entry $y_i$ follows a Poisson distribution with a parameter
$a^t_ix+r_i$. The Poisson model of this form is popular in the statistical modeling of inverse and imaging problems involving counts, e.g.,
positron emission tomography \cite{vardi1985statistical}. If the entries of $y$ are independent and identically
distributed (i.i.d.), then the likelihood function $p(y|x)$ is given by
\begin{equation}
	p(y|x) = \prod_{i=1}^{m_1} p(y_i|x).
\end{equation}

Note that the likelihood function $p(y|x)$ is not well-defined for all $x\in\mathbb{R}^n$, and
suitable constraints on $x$ are needed in order to ensure the well-definedness of the factors
$p(y_i|x)$'s. In the literature, there are three popular constraints:
\begin{enumerate}
	\item $ \mathcal{C}_1=\{x|x>0\}  :=\cap_i\{x|x_i>0\}$;
	\item $ \mathcal{C}_2=\{x|Ax>0\} :=\cap_i\{x|[Ax]_i=a^t_ix>0\}$;
	\item $ \mathcal{C}_3=\{x|Ax+r>0\} :=\cap_i\{x|[Ax+r]_i=a^t_ix+r_i>0\}$.
\end{enumerate}
Since the entries of $A$ are nonnegative, there holds $\mathcal{C}_1\subset \mathcal{C}_2\subset \mathcal{C}_3$.
In practice, the first assumption is most consistent with the physics in that it reflects the physical
constraint that emission counts are non-negative. The last two assumptions were proposed to reduce positive
bias in the cold region \cite{lim2018pet}, i.e., the region that has zero count. In this work, we shall focus on the last two constraints.

The constraints $\mathcal{C}_2$ and $\mathcal{C}_3$ can be unified, which is useful for the discussions below.
\begin{definition}
	For each likelihood factor $p(y_i|x)$ with the constraint $\mathcal{C}_2$, let
	\begin{equation*}
		V_i^+ = \{x|[Ax]_i=a^t_ix>0\}
	\quad
	\mbox{and}\quad
		V_i^- = \mathbb{R}^n\backslash V_i^+.
	\end{equation*}
	For each likelihood factor $p(y_i|x)$ with the constraint $\mathcal{C}_3$, let
	\begin{equation}
		V_i^+ = \{x|[Ax+r]_i=a^t_ix+r_i>0\}
	\quad\mbox{and}\quad
		V_i^- = \mathbb{R}^n\backslash V_i^+.
	\end{equation}
	Then the constraints $\mathcal{C}_2$ and $\mathcal{C}_3$ are both given by $V^+ = \cap_iV^+_i$
		and $V^- = \mathbb{R}^n\backslash V^+$.
\end{definition}

With the indicator function $\mathbf{1}_{V^+}(x)$ of the set $V^+$, we modify the likelihood function $p(y|x)$ by
\begin{equation}
	\ell(x) = p(y|x)\mathbf{1}_{V^+}(x). 
\end{equation}
This extends the domain of $p(y|x)$ from $V^+$ to $\mathbb{R}^n$, and it facilitates
a full Bayesian treatment. Since the indicator function $\mathbf{1}_{V^+}(x)$ admits a separable form, i.e.,
$\mathbf{1}_{V^+}(x) = \prod_{i=1}^{m_1} \mathbf{1}_{V^+_i}(x)$,  $\ell(x)$ factorizes into
\begin{equation}
	\ell(x) = \prod_{i=1}^{m_1} \ell_i(x)\quad \mbox{with } \ell_i(x) = p(y_i|x)\mathbf{1}_{V^+_i}(x).
\end{equation}

To fully specify the Bayesian model, we have to stipulate the prior $p(x)$. We
focus on a Laplace type prior.
Let $L\in\mathbb{R}^{m_2\times n}$ and $L_i^t\in\mathbb{R}^{n\times 1}$ be the $i$th row of $L$.
Then a Laplace type prior $p(x)$ is given by
\begin{equation}
    p(x)=\prod_{i=1}^{m_2} p_i(x) \quad\mbox{with } 	p_i(x) = \tfrac{\alpha}{2}e^{-\alpha|L_i^tx|}.
\end{equation}
The parameter $\alpha>0$ determines the strength of the prior, playing the role of a regularization parameter
in variational regularization \cite{ItoJin:2015}. The choice of the hyperparameter $\alpha$ in the prior
$p(x)$ is notoriously challenging \cite{ItoJin:2015}. One may apply hierarchical Bayesian modeling in order
to estimate it from the data simultaneously with $q(x)$ \cite{WangZabaras:2005,JinZou:2009,ArridgeItoJinZhang:2018}.
The prior $p(x)$ is commonly known as a sparsity prior (in the transformed domain), which favors a candidate
with many small elements and few large elements in the vector $Lx$. The canonical total variation prior is
recovered when the matrix $L$ computes the discrete gradient. It is well known that the total variation
penalty can preserve well edges in the image/signals, and hence it has been very popular for various imaging
tasks \cite{RudinOsherFatemi:1992,ChanShen:2005}.

By Bayes' formula, we obtain the Bayesian solution to the Poisson inverse problem, i.e., the posterior
probability density function:
\begin{equation}\label{eqn:posterior}
	p(x|y) = Z^{-1}\prod^{m_1}_{i=1}\ell_i(x)\prod^{m_2}_{i=1}p_i(x),
\end{equation}
where $Z$ is the normalizing constant, defined by
$Z = \int_{\mathbb{R}^n}\prod^{m_1}_{i=1}\ell_i(x)\prod^{m_2}_{i=1}p_i(x) {\rm d}x.$
The computation of $Z$ is generally intractable for high-dimensional problems, and
$p(x|y)$ has to be approximated.

\section{Approximate inference by expectation propagation}\label{sec:EP}
In this section, we describe the basic concepts and algorithms of expectation propagation (EP),
for exploring the posterior distribution \eqref{eqn:posterior}. EP due to
Minka \cite{minka2001expectation,minka2001family} is a popular variational type approximate
inference method in the machine learning literature. It is especially suitable for approximating
a distribution formed by a product of functions, with each factor being of projection form. Since its first appearance in 2001, EP has
found many successful applications in practice, and it is reported to be very accurate, e.g., for Gaussian
processes \cite{rasmussen2004gaussian}, and electrical impedance tomography with sparsity prior
\cite{gehre2014expectation}. However, the theoretical understanding
of EP remains quite limited \cite{dehaene2018expectation, dehaene2015bounding}.

EP looks for an approximate Gaussian distribution $q(x)$ to a target distribution by means of an iterative algorithm.
It relies on the following factorization of the posterior distribution $p(x|y)$ (with $m=m_1+m_2$ being the total
number of factors):
\begin{equation}\label{eqn:post-factor}
	p(x|y) = Z^{-1}\prod_{i=1}^{m}t_i(x),\quad \mbox{with } t_i(x) =
	\begin{cases}
		\ell_i(x),&\quad i=1,\ldots,m_1,\\
		p_{i-m_1}(x), &\quad i = m_1+1,\ldots,m.
	\end{cases}
\end{equation}
Note that each factor $t_i(x)$ is a function defined on the whole space $\mathbb{R}^n$.
Likewise, we denote the Gaussian approximation $q(x)$ to the posterior distribution $p(x|y)$ by
\begin{equation*}
	q(x) = \tilde{Z}^{-1}\prod_{i=1}^{m}\tilde{t}_i(x),
\end{equation*}
with each factor $\tilde{t}_i(x)$ being a Gaussian distribution $\mathcal{N}(x|\mu_i,C_i)$,
and $\tilde{Z}$ is the corresponding normalizing constant. Below we use two different
parameterizations of a Gaussian distribution, i.e., moment parameters (mean and covariance)
$(\mu,C)$ and  natural parameters $(h,\Lambda)$; see Appendix \ref{app:Gaussian}. Both
parameterizations have their pros and cons: the moment one does not require solving
linear systems, and the natural one allows singular covariances for the Gaussians $\tilde t_i(x)$. The
rest of this section is devoted to the derivation of the algorithms and their complexity.

\subsection{Reduction to one-dimensional integrals}

There are two  main steps of one EP iteration: (a) form a tilted distribution $\hat{q}_i(x)$, and (b) update the
Gaussian approximation $q(x)$ by matching its moments with that of $\hat{q}_i(x)$. The moment matching step can be
interpreted as minimizing Kullback-Leilber divergence $\mathrm{KL}(\hat{q}_i||q)$ \cite{minka2001expectation,
minka2001family,gehre2014expectation}. Recall that the Kullback-Leibler divergence from one probability distribution
$p(x)$ to another $q(x)$ is defined by \cite{KullbackLeibler:1951}
\begin{equation}
	\mathrm{D}_{\rm KL}(p||q) = \int_{\mathbb{R}^n} p(x)\log\frac{p(x)}{q(x)}\mathrm{d}x.
\end{equation}
By Jensen's inequality, the divergence ${\rm D}_{\rm KL}(p||q)$ is always nonnegative, and it
vanishes if and only if $p(x)=q(x)$ almost everywhere.

The task at step (a) is to construct the $i$th tilted distribution $\hat q_i(x)$.
Let $q_{\backslash i}(x)$ be the $i$th cavity distribution, i.e., the product of all but the $i$th factor,
and defined by
\begin{equation}
	q_{\backslash i}(x) = Z^{-1}_i\prod_{j\neq i}\tilde{t}_i(x)
\end{equation}
with $Z_i = \int_{\mathbb{R}^n} \prod_{j\neq i}\tilde{t}_i(x)\text{d}x$.
It is Gaussian, i.e., $q_{\backslash i}(x)=\mathcal{N}(x|\mu_{\backslash i},C_{\backslash i})$,
whose moment and natural parameters are denoted by $(\mu_{\backslash i},C_{\backslash i})$ and
$(h_{\backslash i},\Lambda_{\backslash i})$, respectively.
Then the $i$th tilted distribution $\hat{q}_{i}(x)$ of the approximation $q(x)$ is given by
\begin{equation}
	\hat{q}_{i}(x) = \hat{Z}^{-1}_i t_i(x)\prod_{j\neq i}\tilde{t}_i(x),
\end{equation}
where $\hat{Z}_i=\int_{\mathbb{R}^n} t_i(x)\prod_{j\neq i}\tilde{t}_i(x)\text{d}x$ is the corresponding normalizing constant.
With the exclusion-inclusion step, one replaces the $i$th factor $\tilde t_i(x)$ in the
approximation $q$ with the exact one $t_i(x)$.

The task at step (b) is to compute moments of the $i$th tilde distribution $\hat q_i(x)$, which are then
used to update the approximation $q(x)$. This requires integration over $\mathbb{R}^n$, which is generally
numerically intractable, if $\hat q_i(x)$ were arbitrary. Fortunately, each factor $t_i(x)$ in
\eqref{eqn:post-factor} is of projection form and depends only on the scalar $u^tx$, with the vector
$u\in\mathbb{R}^n$ being either $a_i$ or $L_i$. This is the key fact rendering relevant high-dimensional
integrals numerically tractable. Below we write the factor $t_i(x)$ as $t_i(u^t_ix)$ and accordingly,
the $i$th cavity function $\hat q_i(x)$ as
\begin{equation}
	\hat{q}_{i}(x) = \hat{Z}^{-1}_i t_i(u^t_ix)\mathcal{N}(x|\mu_{\backslash i},C_{\backslash i}),
\end{equation}
upon replacing $\prod_{j\neq i}\tilde{t}_i(x)$ with its normalized version
$\mathcal{N}(x|\mu_{\backslash i},C_{\backslash i})$, and accordingly the normalizing
constant $\hat{Z}_i$.

Since a Gaussian is determined by its mean and covariance, it suffices to evaluate
the 0th to 2nd moments of $\hat q_i(x)$. The projection form of the factor $t_i$ allows
reducing the moment evaluation of $\hat q_i(x)$ to 1D integrals. Theorem \ref{lem:int} gives the
update scheme for  $q(x)$ from $\hat q_{i}(x)$.

\begin{theorem}\label{lem:int}
The normalizing constant $\hat Z_i:=\int_{\mathbb{R}^n} t_i(u^t_ix)\mathcal{N} (x|\mu_{\backslash i},C_{\backslash i})\mathrm{d}x$ is given by
\begin{align*}
\hat{Z}_i&= \int_{\mathbb{R}} t_i(s)\mathcal{N}(s|u^t_i\mu_{\backslash i},u^t_iC_{\backslash i}u_i)\mathrm{d}s =: Z_s
\end{align*}
Then with the auxiliary variables $\bar{s}\in\mathbb{R}$ and $C_s$ defined by
\begin{equation}\label{eqn:sCs}
  \bar{s} = Z_s^{-1}\int_{\mathbb{R}}t_i(s)\mathcal{N}(s|u^t_i\mu_{\backslash i},u^t_iC_{\backslash i}u_i)s\mathrm{d}s\quad \mbox{and}\quad
   C_s =Z_s^{-1}\int_{\mathbb{R}}t_i(s)\mathcal{N}(s|u^t_i\mu_{\backslash i},u^t_iC_{\backslash i}u_i)s^2\mathrm{d}s-\bar{s}^2,
\end{equation}
the mean $\mu=\mathbb{E}_{\hat{q}_i}[x]$ and covariance $C=\mathbb{V}_{\hat{q}_i}[x]$ are given respectively by
\begin{align*}
  \mu & = \mu_{\backslash i}+C_{\backslash i}u_i(u^t_iC_{\backslash i}u_i)^{-1}(\bar{s}-u^t_i\mu_{\backslash i}),\\
  C &= C_{\backslash i} + (u_i^tC_{\backslash i}u_i)^{-2}(C_s-u_i^tC_{\backslash i}u_i)C_{\backslash i}u_iu_i^tC_{\backslash i}.
\end{align*}
Similarly, the precision mean $h_{\hat q_i}$ and precision $\Lambda_{\hat q_i}$ are given respectively by
\begin{align*}
    h_{\hat{q}_i}     & = h_{\backslash i}+\lambda_{1,i}u_i\quad \mbox{with } \lambda_{1,i} = \frac{\bar{s}}{C_s}-\frac{u_i^t\mu_{\backslash i}}{u^t_iC_{\backslash i}u_i},\\
   \Lambda_{\hat{q}_i}&= \Lambda_{\backslash i}+\lambda_{2,i} u_iu^t_i\quad \mbox{with } \lambda_{2,i} = \frac{1}{C_s}-\frac{1}{u^t_iC_{\backslash i}u_i}.
\end{align*}
\end{theorem}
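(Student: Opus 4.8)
The plan is to exploit the rank-one / projection structure of $t_i$ in a change of variables that separates the direction $u_i$ from its orthogonal complement. First I would reduce $\hat Z_i$: since $t_i(u_i^t x)$ depends on $x$ only through the scalar $s = u_i^t x$, I would compute the pushforward of the Gaussian $\mathcal{N}(x\mid\mu_{\backslash i},C_{\backslash i})$ under the linear map $x\mapsto u_i^t x$. A linear image of a Gaussian is Gaussian with mean $u_i^t\mu_{\backslash i}$ and variance $u_i^t C_{\backslash i} u_i$, so Fubini gives $\hat Z_i = \int_{\mathbb{R}} t_i(s)\,\mathcal{N}(s\mid u_i^t\mu_{\backslash i}, u_i^t C_{\backslash i} u_i)\,\mathrm{d}s = Z_s$ immediately.

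For the moments, the cleanest route is to write $x = \mu_{\backslash i} + C_{\backslash i} u_i (u_i^t C_{\backslash i} u_i)^{-1}(s - u_i^t\mu_{\backslash i}) + w$, i.e. decompose $x$ into its regression onto $s=u_i^t x$ plus the residual $w$; under $\mathcal{N}(x\mid\mu_{\backslash i},C_{\backslash i})$ the residual $w$ is Gaussian, independent of $s$, with mean $0$ and covariance $C_{\backslash i} - C_{\backslash i} u_i (u_i^t C_{\backslash i} u_i)^{-1} u_i^t C_{\backslash i}$. Since $\hat q_i(x)$ reweights only through $t_i(s)$, the conditional law of $w$ given $s$ is unchanged, so $\mathbb{E}_{\hat q_i}[w] = 0$ and $\mathbb{E}_{\hat q_i}[x] = \mu_{\backslash i} + C_{\backslash i} u_i (u_i^t C_{\backslash i} u_i)^{-1}(\mathbb{E}_{\hat q_i}[s] - u_i^t\mu_{\backslash i})$, which is the claimed $\mu$ once we substitute $\mathbb{E}_{\hat q_i}[s] = \bar s$ from \eqref{eqn:sCs}. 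For the covariance I would use the law of total variance: $\mathbb{V}_{\hat q_i}[x] = \mathbb{E}[\mathbb{V}(x\mid s)] + \mathbb{V}(\mathbb{E}[x\mid s])$. The first term is the (constant) residual covariance above; the second is the rank-one term $C_{\backslash i} u_i (u_i^t C_{\backslash i} u_i)^{-1} C_s (u_i^t C_{\backslash i} u_i)^{-1} u_i^t C_{\backslash i}$ using $\mathbb{V}_{\hat q_i}[s] = C_s$. Adding and regrouping gives $C = C_{\backslash i} + (u_i^t C_{\backslash i} u_i)^{-2}(C_s - u_i^t C_{\backslash i} u_i)\,C_{\backslash i} u_i u_i^t C_{\backslash i}$.

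For the natural-parameter form I would simply convert: $\Lambda_{\hat q_i} = C^{-1}$ and $h_{\hat q_i} = C^{-1}\mu$, applying the Sherman–Morrison formula to the rank-one update of $C$. Writing $v = C_{\backslash i} u_i$ and $\sigma^2 = u_i^t C_{\backslash i} u_i$, one has $C = C_{\backslash i} + (\sigma^{-4})(C_s - \sigma^2) v v^t$, and Sherman–Morrison yields $C^{-1} = \Lambda_{\backslash i} + \lambda_{2,i} u_i u_i^t$ after the scalar coefficient simplifies to $\lambda_{2,i} = C_s^{-1} - \sigma^{-2}$ (the $C_{\backslash i}^{-1} v = u_i$ identity makes the rank-one direction collapse back to $u_i$). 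A parallel computation for $h_{\hat q_i} = C^{-1}\mu$, again using $C_{\backslash i}^{-1}v = u_i$ and $v^t u_i = \sigma^2$, collapses to $h_{\backslash i} + \lambda_{1,i} u_i$ with $\lambda_{1,i} = \bar s / C_s - u_i^t\mu_{\backslash i}/\sigma^2$.

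The only genuinely delicate point is the Sherman–Morrison bookkeeping for the natural parameters, where one must be careful that the rank-one correction, which a priori lives along $v = C_{\backslash i} u_i$, really does reduce to the direction $u_i$ after multiplying through by $C_{\backslash i}^{-1}$ on both sides; this is what makes the final formulas clean, and it also quietly uses invertibility of $C_{\backslash i}$ (in the singular-covariance regime one argues by the natural-parameter update directly rather than by inversion). Everything else is routine Gaussian conditioning, so I would present the $\mathbb{R}$-reduction and the regression decomposition in detail and then relegate the Sherman–Morrison algebra to a short computation.
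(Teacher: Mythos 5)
Your proposal is correct, and the half of it that overlaps with the paper's written proof --- the passage from the moment-parameter formulas for $(\mu, C)$ to the natural-parameter formulas $(h_{\hat q_i},\Lambda_{\hat q_i})$ via the Sherman--Morrison identity --- is essentially identical to what the paper does, including the key simplification $\lambda/(1+\lambda\, u_i^tC_{\backslash i}u_i) = (u_i^tC_{\backslash i}u_i)^{-1} - C_s^{-1}$ and the cancellation that collapses the coefficient of $u_i$ in $h=\Lambda\mu$ to $\bar s/C_s - u_i^t\mu_{\backslash i}/(u_i^tC_{\backslash i}u_i)$. Where you genuinely diverge is the first half: the paper does not prove the formulas for $\hat Z_i$, $\mu$ and $C$ at all, but simply cites \cite[Section 3]{gehre2014expectation}, whereas you supply a self-contained argument via the pushforward of the Gaussian under $x\mapsto u_i^tx$, the regression decomposition $x = \mathbb{E}[x\,|\,s] + w$ with $w$ independent of $s$ and unaffected by the reweighting $t_i(s)$, and the law of total variance. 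That argument is sound (I checked the resulting covariance regroups exactly to $C_{\backslash i} + (u_i^tC_{\backslash i}u_i)^{-2}(C_s - u_i^tC_{\backslash i}u_i)C_{\backslash i}u_iu_i^tC_{\backslash i}$), and it buys a proof that is independent of the cited reference and makes transparent why only the one-dimensional moments of $s$ enter; the cost is that it implicitly assumes $C_{\backslash i}$ is nonsingular (needed both for the regression coefficients and for the Sherman--Morrison step), a caveat you correctly flag and which the paper also quietly assumes.
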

\begin{proof}
The expressions for $\hat{Z}_i$, $\mu$ and $C$ were given in
\cite[Section 3]{gehre2014expectation}. Thus it suffices to derive the formulas for $(h,\Lambda)$. Recall the Sherman-Morrison formula
\cite[p. 65]{golub2012matrix}: for any invertible $B\in\mathbb{R}^{n\times n}$, $u,v\in\mathbb{R}^n$, there holds
\begin{equation}\label{eqn:sherman}
  (B+uv^t)^{-1} = B^{-1} - \frac{B^{-1}uv^tB^{-1}}{1+v^tB^{-1}u}.
\end{equation}
Let $\lambda = (u_i^tC_{\backslash i}u_i)^{-2}(C_s-u_i^tC_{\backslash i}u_i)$. Then the precision matrix $\Lambda$ is given by
\begin{align*}
	\Lambda &= (C_{\backslash i}+C_{\backslash i}u_i\lambda u_i^tC_{\backslash i})^{-1}\\
			&= C_{\backslash i}^{-1}-u_i(\lambda^{-1}+u_i^tC_{\backslash i}u_i)^{-1}u_i^t\\
			&= \Lambda_{\backslash i}+\Big(\frac{1}{C_s}-\frac{1}{u^t_iC_{\backslash i}u_i}\Big)u_iu^t_i.
\end{align*}
Similarly, the precision mean $h:=\Lambda\mu$ is given by
\begin{align*}
	h 	  &= \Big[\Lambda_{\backslash i}+\Big(\frac{1}{C_s}-\frac{1}{u^t_iC_{\backslash i}u_i}\Big)u_iu^t_i\Big][\mu_{\backslash i}+C_{\backslash i}u_i(u^t_iC_{\backslash i}u_i)^{-1}(\bar{s}-u^t_i\mu_{\backslash i})]\\
	  &= \Lambda_{\backslash i}\mu_{\backslash i}+u_i\Big(\frac{\bar{s}}{C_s}-\frac{u_i^t\mu_{\backslash i}}{u^t_iC_{\backslash i}u_i}\Big)= h_{\backslash i}+u_i\Big(\frac{\bar{s}}{C_s}-\frac{u_i^t\mu_{\backslash i}}{u^t_iC_{\backslash i}u_i}\Big).
\end{align*}
This completes the proof of the theorem.
\end{proof}

In both approaches, the 1D integrals $(Z_s,\bar{s},C_s)$ are needed, which depend on $u^t_i\mu_{
\backslash i}$ and $u^t_iC_{\backslash i}u_i$. A direct approach is first to downdate (the Cholesky factor
of) $\Lambda$ and then to solve a linear system. In practice, this can be expensive and the cost can be
mitigated. Indeed, they can be computed without the downdating step; see Lemma \ref{lm:1d_moments} below.
Below we use the super- or subscript \textit{n} and \textit{o} to denote a
variable updated at current iteration from that of the last iteration.
\begin{lemma}\label{lm:1d_moments}
Let $c = u^t_i\Lambda^{-1}_{o}u_i=u^t_iC_{o}u_i$, $(h,\Lambda)$ be the natural parameter of $q(x)$ and
$(\lambda_{1,i}, \lambda_{2,i})$ be defined in Theorem \ref{lem:int}. Then the mean $ u^t_i\mu_{\backslash i}$ and variance $u^t_iC_{\backslash i}u_i$ of the
Gaussian distribution $\mathcal{N}(s|u^t_i\mu_{\backslash i},u^t_iC_{\backslash i}u_i)$ are respectively given by
	\begin{equation}\label{eq:1_mean_var}
		u^t_i\mu_{\backslash i} = \frac{u^t_i\Lambda^{-1}_oh-c\lambda^{o}_{1,i}}{1-c\lambda^{o}_{2,i}}
    \quad\mbox{and}\quad
		u^t_iC_{\backslash i}u_i = \frac{c}{1-c\lambda^{o}_{2,i}}.
	\end{equation}
\end{lemma}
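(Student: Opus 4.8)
The plan is to express the cavity parameters in terms of the current approximation $q$ and the site-$i$ contribution $\tilde t_i$, then transport everything to the one-dimensional subspace spanned by $u_i$. Recall that in natural parameters the cavity is obtained by subtraction: since $q$ absorbs the current site via $h = h_{\backslash i} + \lambda^o_{1,i} u_i$ and $\Lambda = \Lambda_{\backslash i} + \lambda^o_{2,i} u_i u_i^t$ (this is exactly the update of Theorem 2.1 read backwards, with the superscript $o$ marking the values that were used to build the present $q$), we have
\begin{align*}
  \Lambda_{\backslash i} &= \Lambda - \lambda^o_{2,i} u_i u_i^t, & h_{\backslash i} &= h - \lambda^o_{1,i} u_i.
\end{align*}
So $C_{\backslash i} = (\Lambda - \lambda^o_{2,i} u_i u_i^t)^{-1}$, and the whole task is to compute the two scalars $u_i^t C_{\backslash i} u_i$ and $u_i^t \mu_{\backslash i} = u_i^t C_{\backslash i} h_{\backslash i}$ without ever forming $C_{\backslash i}$ or $\Lambda_{\backslash i}^{-1}$ in full.

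First I would apply the Sherman–Morrison formula \eqref{eqn:sherman} with $B = \Lambda = C_o^{-1}$, $u = u_i$, $v = -\lambda^o_{2,i} u_i$, giving
\begin{equation*}
  C_{\backslash i} = C_o + \frac{\lambda^o_{2,i}\, C_o u_i u_i^t C_o}{1 - \lambda^o_{2,i}\, u_i^t C_o u_i}.
\end{equation*}
Contracting on both sides with $u_i^t(\cdot)u_i$ and writing $c = u_i^t C_o u_i$ collapses this to
\begin{equation*}
  u_i^t C_{\backslash i} u_i = c + \frac{\lambda^o_{2,i} c^2}{1 - \lambda^o_{2,i} c} = \frac{c}{1 - c\lambda^o_{2,i}},
\end{equation*}
which is the second claimed identity. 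For the mean, I would compute $\mu_{\backslash i} = C_{\backslash i}(h - \lambda^o_{1,i} u_i)$ by left-multiplying the Sherman–Morrison expression for $C_{\backslash i}$ by $u_i^t$ and right-multiplying by $h - \lambda^o_{1,i}u_i$; using $u_i^t C_o h = u_i^t \Lambda_o^{-1} h$ and $u_i^t C_o u_i = c$ throughout, every term reduces to a scalar multiple of $1/(1 - c\lambda^o_{2,i})$, and after collecting terms one lands on $u_i^t\mu_{\backslash i} = (u_i^t\Lambda_o^{-1}h - c\lambda^o_{1,i})/(1 - c\lambda^o_{2,i})$, the first claimed identity.

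The computation is elementary once the right substitutions are made; the only thing to be careful about is the bookkeeping of which iterate ($n$ versus $o$) each quantity refers to — the key observation being that the $\lambda$'s appearing here are the \emph{old} site parameters $(\lambda^o_{1,i},\lambda^o_{2,i})$ that $q$ currently contains, so that removing the $i$th site is a rank-one downdate with known coefficients rather than an unknown one. A secondary point worth stating is the well-definedness condition $1 - c\lambda^o_{2,i} > 0$ (equivalently $\Lambda_{\backslash i} \succ 0$ on the relevant subspace), under which the divisions above and the quantity $u_i^t C_{\backslash i} u_i$ are positive; this is exactly what is needed for the 1D Gaussian $\mathcal N(s\mid u_i^t\mu_{\backslash i}, u_i^t C_{\backslash i}u_i)$ in Theorem 2.1 to make sense, and in practice it is enforced by the standard EP safeguards on the site updates.
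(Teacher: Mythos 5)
Your proposal is correct and follows essentially the same route as the paper: both identify the cavity as the rank-one downdate $\Lambda_{\backslash i}=\Lambda_o-\lambda^o_{2,i}u_iu_i^t$, $h_{\backslash i}=h_o-\lambda^o_{1,i}u_i$, apply the Sherman--Morrison formula \eqref{eqn:sherman}, and contract with $u_i$ to reduce everything to the scalar $c=u_i^tC_ou_i$. Your added remark on the positivity condition $1-c\lambda^o_{2,i}>0$ is a sensible supplement not present in the paper's proof, but the core argument is identical.
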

\begin{proof}
We suppress the sub/superscript $o$. By the definition of $u^t_iC_{\backslash i}u_i  $ and the Sherman-Morrison formula \eqref{eqn:sherman}, we have
\begin{align*}
	u^t_iC_{\backslash i}u_i &= u^t_i(\Lambda-\lambda_{2,i}u_iu^t_i)^{-1}u_i\\
		&= u^t_i[\Lambda^{-1}-\Lambda^{-1}u_i(-\lambda_{2,i}^{-1}+c)^{-1}u^t_i\Lambda^{-1}]u_i\\
		&= c - c(-\lambda_{2,i}^{-1}+c)^{-1}c
		= \frac{c}{1-c\lambda_{2,i}}, 
\end{align*}
and similarly, we have
\begin{align*}
		u^t_i\mu_{\backslash i} &= u^t_i(\Lambda-\lambda_{2,i}u_iu^t_i)^{-1}(h-\lambda_{1,i}u_i)\\
		&= u^t_i[\Lambda^{-1}-\Lambda^{-1}u_i(-\lambda_{2,i}^{-1}+c)^{-1}u^t_i\Lambda^{-1}](h-\lambda_{1,i}u_i)
        = \frac{u^t_i\Lambda^{-1}h-c\lambda_{1,i}}{1-c\lambda_{2,i}}.
\end{align*}
This completes the proof of the lemma.
\end{proof}

Since the quantities for the 1D integrals can be calculated from variables updated in the last iteration,
it is unnecessary to form cavity distributions. Indeed, the cavity precision is formed by $\Lambda_{\backslash
i} = \Lambda_o - \lambda_{2,i}^o u_iu_i^t,$ and the updated precision is given by $\Lambda_n = \Lambda_{\backslash
i} + \lambda_{2,i}^n u_iu_i^t;$ and similarly for $h$. Thus, we can update $\Lambda$ directly with $(\lambda_{2,i}^o,
\lambda_{2,i}^n)$ and $h$ with $(\lambda_{1,i}^o, \lambda_{1,i}^n)$; this is summarized in the next remark.
\begin{remark}\label{rm:lbd_difference}
The differences $\lambda_{k,i}^{n}-\lambda_{k,i}^{o}$, $k=1,2$, can be used to update
the natural parameter $(h,\Lambda)$:
	\begin{equation}
		\lambda_{1,i}^{n}-\lambda_{1,i}^{o} = \frac{\bar{s}}{C_s} - \frac{u^t_i\mu_{o}}{u^t_iC_{o}u_i}
	\quad\mbox{and}\quad
		\lambda_{2,i}^{n}-\lambda_{2,i}^{o} = \frac{1}{C_s} - \frac{1}{u^t_iC_{o}u_i}.
	\end{equation}
Moreover, the sign of $\lambda_{2,i}^{n}-\lambda_{2,i}^{o}$ determines whether to update
or downdate the Cholesky factor of $\Lambda$. 
\end{remark}

\subsection{Update schemes and algorithms}
Now we state the direct update scheme, i.e. without explicitly constructing the intermediate cavity distribution
$q_{\backslash i}(x)$, for both natural and moment parameterizations.
\begin{theorem}\label{thm:update}
Let $(h,\Lambda)$ and $(\mu,C)$ be the natural and moment parameters of the Gaussian approximation $q(x)$, respectively.
The following update schemes hold.
\begin{itemize}
 \item[$\rm(i)$] The precision mean $h$ and precision $\Lambda$ can be updated by
\begin{equation}\label{eq:update_natural}
\begin{split}
	h_{n} &= h_{o} + \Big(\frac{\bar{s}}{C_s} - \frac{u^t_i\Lambda^{-1}_{o}h_{o}}{u^t_i\Lambda^{-1}_{o}u_i}\Big)u_i\quad \mbox{and}
	\quad \Lambda_{n} = \Lambda_{o} + \Big(\frac{1}{C_s} - \frac{1}{u^t_i\Lambda^{-1}_{o}u_i}\Big)u_iu_i^t.
\end{split}
\end{equation}
\item[$\rm(ii)$] The mean $\mu$ and covariance $C$ can be updated by
\begin{equation}\label{eq:update_moment}
\begin{split}
	\mu_{n} &= \mu_{o} + \frac{\bar{s}-u^t_i\mu_{o}}{u^t_iC_{o}u_i} C_{o}u_i\quad\mbox{and}\quad
	C_{n} = C_{o} +\Big( \frac{C_s}{(u^t_iC_{o}u_i)^2}-\frac{1}{u^t_iC_{o}u_i}\Big)(C_{o}u_i)(u^t_iC_{o}).
\end{split}
\end{equation}
\end{itemize}
\end{theorem}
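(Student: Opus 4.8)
The plan is to read off both schemes from Theorem \ref{lem:int} by eliminating the cavity parameters in terms of the current approximation $q_o$, using the rank-one structure that EP preserves: because $t_i$ depends only on $u_i^tx$, the site $\tilde t_i$ carries natural parameters of the form $(\lambda_{1,i}^o u_i,\lambda_{2,i}^o u_iu_i^t)$ for scalars $\lambda_{1,i}^o,\lambda_{2,i}^o$ (this form is inherited from the projection form of $t_i$ and, as part (i) itself shows, is reproduced by the update). Hence removing the old site from $q_o$ gives a cavity with $h_{\backslash i}=h_o-\lambda_{1,i}^o u_i$ and $\Lambda_{\backslash i}=\Lambda_o-\lambda_{2,i}^o u_iu_i^t$, and the EP step replaces $q_o$ by the moment-matched Gaussian $\hat q_i$, so $(h_n,\Lambda_n)$ and $(\mu_n,C_n)$ are exactly the natural and moment parameters of $\hat q_i$ furnished by Theorem \ref{lem:int}.

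For part (i) I would combine the cavity relation with the natural-parameter formulas of Theorem \ref{lem:int}. From $\Lambda_{\hat q_i}=\Lambda_{\backslash i}+\lambda_{2,i}^n u_iu_i^t$ and $\Lambda_o=\Lambda_{\backslash i}+\lambda_{2,i}^o u_iu_i^t$ one subtracts to get $\Lambda_n=\Lambda_o+(\lambda_{2,i}^n-\lambda_{2,i}^o)u_iu_i^t$, and likewise $h_n=h_o+(\lambda_{1,i}^n-\lambda_{1,i}^o)u_i$. The two differences are precisely the quantities of Remark \ref{rm:lbd_difference}; using Lemma \ref{lm:1d_moments} to express $u_i^t\mu_{\backslash i}$ and $u_i^tC_{\backslash i}u_i$ through $q_o$ — equivalently $u_i^t\mu_o=u_i^t\Lambda_o^{-1}h_o$ and $u_i^tC_ou_i=u_i^t\Lambda_o^{-1}u_i$ — collapses the right-hand sides to $\bar s/C_s-u_i^t\Lambda_o^{-1}h_o/(u_i^t\Lambda_o^{-1}u_i)$ and $1/C_s-1/(u_i^t\Lambda_o^{-1}u_i)$, which is \eqref{eq:update_natural}.

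For part (ii), rather than repeating the elimination in moment coordinates (which would need Sherman-Morrison to write $C_{\backslash i}$ via $C_o$ before substituting into Theorem \ref{lem:int}), I would simply invert the natural update just established. Writing $c=u_i^tC_ou_i$ and $\delta_2=1/C_s-1/c$, the Sherman-Morrison formula \eqref{eqn:sherman} applied to $\Lambda_n=\Lambda_o+\delta_2 u_iu_i^t$ gives $C_n=\Lambda_n^{-1}=C_o-\tfrac{\delta_2}{1+\delta_2 c}(C_ou_i)(u_i^tC_o)$, and the cancellation $1+\delta_2 c=c/C_s$ turns the scalar into $C_s/c^2-1/c$, i.e.\ the stated $C_n$. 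For the mean I would use $\mu_n=C_nh_n$ with $h_n=h_o+\delta_1 u_i$, $\delta_1=\bar s/C_s-u_i^t\mu_o/c$: since $C_nh_o=\mu_o+(C_s/c^2-1/c)(u_i^t\mu_o)C_ou_i$ and $C_nu_i=(C_s/c)C_ou_i$, combining the two $C_ou_i$-terms makes the coefficient telescope to $(\bar s-u_i^t\mu_o)/c$, which is \eqref{eq:update_moment}.

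No step is deep; the only delicate point is the bookkeeping of the rank-one corrections. The main obstacle is keeping track of the fact that $C_nu_i$, $C_ou_i$ and $C_{\backslash i}u_i$ are all parallel, and that every simplification rests on the single identity $1+\delta_2\,u_i^tC_ou_i=u_i^tC_ou_i/C_s$, so that the many scalar coefficients that appear along the way collapse to the compact forms in \eqref{eq:update_natural} and \eqref{eq:update_moment}.
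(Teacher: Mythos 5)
Your proposal is correct and follows essentially the same route as the paper's own proof: part (i) by differencing the site parameters via Remark \ref{rm:lbd_difference} and Lemma \ref{lm:1d_moments}, and part (ii) by applying the Sherman--Morrison formula \eqref{eqn:sherman} to $\Lambda_n=\Lambda_o+(\lambda_{2,i}^n-\lambda_{2,i}^o)u_iu_i^t$ and then forming $\mu_n=C_nh_n$, with the same key cancellation $1+(\lambda_{2,i}^n-\lambda_{2,i}^o)\,u_i^tC_ou_i=u_i^tC_ou_i/C_s$. The only cosmetic difference is that you split $C_nh_n$ into $C_nh_o+\delta_1 C_nu_i$ before telescoping, whereas the paper expands the full product and collects the $C_ou_i$ coefficient into $\eta_1$; the algebra is identical.
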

\begin{proof}
The first assertion is direct from Theorem \ref{lem:int} and Remark \ref{rm:lbd_difference}, and it can be rewritten as
\begin{equation}
		\Lambda_{n} = \Lambda_{o} + (\lambda_{2,i}^{n}-\lambda_{2,i}^{o})u_iu_i^t\quad \mbox{and}\quad
	h_{n} = h_{o} + (\lambda_{1,i}^{n}-\lambda_{1,i}^{o})u_i.
\end{equation}
By Sherman-Morrison formula \eqref{eqn:sherman}, the covariance $C_{n}=\Lambda_{n}^{-1}$ is given by
\begin{align*}
	C_{n} & = (\Lambda_{o}+(\lambda^{n}_{2,i}-\lambda^{o}_{2,i})u_iu^t_i)^{-1}\\
	&= \Lambda_{o}^{-1} - \Lambda_{o}^{-1}u_i\Big(\frac{1}{\lambda^{n}_{2,i}-\lambda^{o}_{2,i}}+u^t_iC_{o}u_i\Big)^{-1}u^t_i\Lambda_{o}^{-1}\\
	&=: C_{o} + \eta_2(C_{o}u_i)(u^t_iC_{o}),
\end{align*}
where the scalar $\eta_2 := -(\frac{1}{\lambda^{n}_{2,i}-\lambda^{o}_{2,i}}+u^t_iC_{o}u_i)^{-1}$
can be simplified to
\begin{align*}
		\eta_2 = -\frac{\lambda^{n}_{2,i}-\lambda^{o}_{2,i}}{1+(\lambda^{n}_{2,i}-\lambda^{o}_{2,i})u^t_iC_{o}u_i}= -\frac{1}{u^t_iC_{o}u_i} + \frac{C_s}{(u^t_iC_{o}u_i)^2},
\end{align*}
where the second identity follows from Remark \ref{rm:lbd_difference}.
Similarly, the mean $\mu_{n} := C_{n}h_{n} $ is given by
\begin{align*}
\mu_{n}&= [C_{o} +\eta_2(C_{o}u_i)(u^t_iC_{o})][h_{o}+(\lambda_{1,i}^{n}-\lambda_{1,i}^{o})u_i]\\
		&= \mu_{o} + (\lambda_{1,i}^{n}-\lambda_{1,i}^{o})C_{o}u_i +\eta_2 u^t_i\mu_{o}C_{o}u_i+\eta_2(\lambda_{1,i}^{n}-\lambda_{1,i}^{o})u^t_iC_{o}u_iC_{o}u_i=: \mu_{o} + \eta_1 C_{o}u_i,
\end{align*}
where, in view of Remark \ref{rm:lbd_difference}, $\eta_1:=(\lambda_{1,i}^{n}-\lambda_{1,i}^{o})
+\eta_2 u^t_i\mu_{o}+\eta_2 (\lambda_{1,i}^{n}-\lambda_{1,i}^{o})u^t_iC_{o}u_i$ can be simplified to
\begin{align*}
	\eta_1
	&= \frac{(\lambda^{n}_{1,i}-\lambda^{o}_{1,i})-(\lambda^{n}_{2,i}-\lambda^{o}_{2,i})u^t_i\mu_{o}}{1+(\lambda^{n}_{2,i}-\lambda^{o}_{2,i})u^t_iC_{o}u_i}
	= \frac{\bar{s}-u^t_i\mu_{o}}{u^t_iC_{o}u_i}.
\end{align*}
This completes the proof of the theorem.
\end{proof}

All matrix operations in Theorem \ref{thm:update} are of rank one type, which can be implemented stably and efficiently
with the Cholesky factors and their update / downdate; see Section \ref{ssec:complexity} for details. Thus, in practice,
we employ Cholesky factors of the precision $\Lambda$ and covariance $C$, denoted by $\Lambda_{chol}$ and
$C_{chol}$, respectively, instead of $\Lambda$ and $C$. Further, we also use the auxiliary variables $(\lambda_{1,i},
\lambda_{2,i})$ defined in Theorem \ref{lm:1d_moments}, and stack $\{(\lambda_{1,i},\lambda_{2,i})\}_{i=1}^{m_1+m_2}$ into two vectors
\begin{equation*}
\lambda_1=[\lambda_{1,i}]_i,\quad \lambda_2=[\lambda_{2,i}]_i\in\mathbb{R}^{m_1+m_2},
\end{equation*}
which are initialized to zeros. Thus, we obtain two inference procedures for Poisson data with a
Laplace type prior in Algorithms \ref{alg:ep_natural} and \ref{alg:ep_moment}.

The rigorous convergence analysis of EP is outstanding. Nonetheless, empirically, it often converges very fast, which
is also observed in our numerical experiments in Section \ref{sec:numer}. In practice, one can terminate the iteration
by monitoring the relative change of the parameters or fixing the maximum number $K$ of iterations. The important task
of computing 1D integrals will be discussed in Section \ref{sec:int} below.

\begin{algorithm}[hbt!]
\centering
\caption{Expectation propagation for Poisson data (natural parametrization)\label{alg:ep_natural}}
\begin{algorithmic}[1]
	\STATE  Input: $(A,y)$, hyper-parameter $\alpha$, and maximum number $K$ of iterations
    \STATE  Initialize $h$, $\Lambda_{chol}$, $\lambda_1$ and $\lambda_2$;
	\FOR{$k=1,2,\ldots,K$}
		\STATE Randomly choose an index $i$ to update;
		\STATE Compute the mean and variance for 1D Gaussian integral by Lemma \ref{lm:1d_moments};
		\STATE Evaluate $\bar{s}$ and $C_s$ in \eqref{eqn:sCs};
		\STATE Calculate and update $\lambda_{1,i}$ and $\lambda_{2,i}$;
		\STATE Update $h$ and $\Lambda_{chol}$ by Theorem \ref{thm:update};
    \STATE Check the stopping criterion.
	\ENDFOR
    \STATE Output: $(h,\Lambda_{chol})$
\end{algorithmic}
\end{algorithm}

\begin{algorithm}[hbt!]
\centering
\caption{Expectation propagation for Poisson data (moment parametrization)\label{alg:ep_moment}}
\begin{algorithmic}[1]
	\STATE  Input: $(A,y)$, hyper-parameter $\alpha$, and maximum number $K$ of iterations
    \STATE  Initialize $\mu$, $C_{chol}$, $\lambda_1$ and $\lambda_2$;
	\FOR{$k=1,2,\ldots,K$}
		\STATE Randomly choose an index $i$ to update;
		\STATE Compute the mean and variance for 1D Gaussian integral by Lemma \ref{lm:1d_moments};
		\STATE Evaluate $\bar{s}$ and $C_s$ in \eqref{eqn:sCs};
		\STATE Calculate and update $\lambda_{1,i}$ and $\lambda_{2,i}$;
		\STATE Update $\mu$ and $C_{chol}$ by Theorem \ref{thm:update};
    \STATE Check the stopping criterion.
	\ENDFOR
    \STATE Output: $(\mu,C_{chol})$
\end{algorithmic}
\end{algorithm}

\subsection{Efficient implementation and complexity estimate}\label{ssec:complexity}

The rank-one matrix update $A\pm \beta uu^t$, for $A\in\mathbb{R}^{n\times n}$, $u\in\mathbb{R}^n$ and $\beta>0$,
can be stably and efficiently updated / downdated with the Cholesky factor of $A$ with $\sqrt{\beta}u$.
The update step of $A$ can be viewed as an iteration from $A_k$ to $A_{k+1}$. Let the upper triangular matrices
$R_k$ and $R_{k+1}$ be the Cholesky factors of $A_k$ and $A_{k+1}$ respectively, i.e., $A_k = R_k^tR_k$ and
$A_{k+1}=R_{k+1}^tR_{k+1}$. There are two possible cases:
\begin{enumerate}
\item[(i)] If $A_{k+1}=A_k+\beta uu^t$, $R_{k+1}$ is the Cholesky rank one update of $R_k$ with $\sqrt{\beta}u$.
\item[(ii)]If $A_{k+1}=A_k-\beta uu^t$, $R_{k+1}$ is the Cholesky rank one downdate of $R_k$ with $\sqrt{\beta}u$.
\end{enumerate}
The update/downdate is available in several packages. For example, in \texttt{MATLAB},
the function \texttt{cholupdate} implements the update/downdate of Cholesky factors,
based on \texttt{LAPACK} subroutines \texttt{ZCHUD} and \texttt{ZCHDD}.

Next, we discuss the computational complexity per inner iteration. The first step picks one
index $i$, which is of constant complexity. For the second step, i.e., computing the mean and variance for
1D integrals, the dominant part is linear solve involving upper triangular matrices and matrix-vector product
for natural and moment parameters. For either parameterization, it incurs $\mathcal{O}(n^2)$ operations.
The third step computes $\bar{s}$ and $C_s$ from the one dimensional integrals. For Poisson site, the
complexity is $\mathcal{O}(y_i)$, and for Laplace site, it is $\mathcal{O}(1)$. Last, the fourth step is
dominated by Cholesky factor modifications, and its complexity is $\mathcal{O}(n^2)$.
Overall, the computational complexity per inner iteration is $\mathcal{O}(n^2+y_i)$.
In a large data setting, $y_i\ll n$, and thus the complexity is about $\mathcal{O}(n^2)$.

In passing, we note that in practice, the covariance / precision matrix may admit additional structures, e.g.,
sparsity, which translate into structures on Cholesky factors. For
the general sparsity assumption, it seems unclear how to effectively exploit it
for Cholesky update/downdate for enhanced efficiency, except the diagonal case,
which can be incorporated into the algorithm straightforwardly.

\section{Stable evaluation of 1d integrals}\label{sec:int}

Now we develop a stable implementation for the three 1D integrals: $Z_s$, $\bar s$ and $C_s$
in Theorem \ref{lem:int}. These integrals form the basic components of Algorithms \ref{alg:ep_natural} and
\ref{alg:ep_moment}, and their stable, accurate and efficient evaluation is crucial to the performance of the
algorithms. By suppressing the subscript $i$, we can write the integrals in a unified way:
\begin{equation*}
	J_j = \int_{\mathbb{R}} t(s)\mathcal{N}(s|m,\sigma^2)s^j\text{d}s,\quad j=0,1,2,
\end{equation*}
where the factor $t(s)$ is either Poisson likelihood or Laplace prior.
Then we can express $\bar{s}$ and $C_s$ in terms of $J_j$ by
\begin{equation}
	\bar{s} = \frac{J_1}{J_0} \quad\text{and}\quad C_s = \frac{J_2}{J_0} - \bar{s}^2.
\end{equation}
Note that the normalizing constants in $J_j$ cancel out in $\bar{s}$ and $C_s$, and thus they can be ignored
when evaluating the integrals. In essence, the computation boils down to stable evaluation of moments of a (truncated) Gaussian
distribution. This task was studied in several works \cite{cunningham2011gaussian, seeger2008bayesian}:
\cite{cunningham2011gaussian} focuses on Gaussian moments, and \cite{seeger2008bayesian} discusses also evaluating
the integrals involving Laplace distributions. Below we derive the formulas for the (constrained) Poisson likelihood and Laplace
prior separately.

\subsection{Poisson likelihood}
Throughout, we suppress the subscript $i$, write $V_+$ etc in place of $V^+_{i}$ etc and introduce
the scaler variable $s=a^tx$. Then the constraint on $x$ transfers to that on $s$: $a^tx>0$ corresponds to
$s>0$ and $a^tx+r>0$ to $s>-r$, respectively. We shall slightly abuse the notation and use
$\mathbf{1}_{V_+}(s)$ as the indicator for the constraint on $s$. Then the Poisson likelihood $t(x)$
can be equivalently written in either $x$ or $s$ as
\begin{equation}
	t(x) = \frac{(a^tx+r)^{y}e^{-(a^tx+r)}}{y!}\mathbf{1}_{V_+}(x) \quad \mbox{and}\quad
	t(s) = \frac{(s+r)^{y}e^{-(s+r)}}{y!}\mathbf{1}_{V_+}(s).
\end{equation}
Note that the factorial $y!$ cancels out when computing $\bar{s}$ and $C_s$, so it is omitted in the derivation
below. For a fixed $\mathcal{N}(s|m,\sigma^2)$, the integrals $J_{y,j}$ depend on the
observed count data $y$ and moment order $j$:
\begin{equation}
	J_{y,j}	= \int_{b}^\infty (s+r)^{y}s^je^{-(s+r)}\mathcal{N}(s|m,\sigma^2) \text{d}s.
\end{equation}
where the lower integral bound $b=0$ or $b=-r$, which is evident from the context. Note that the terms
$e^{-(s+r)}$ and $\mathcal{N}(s|m,\sigma^2)$ in $J_{y,j}$ together give an unormalized Gaussian density.
This allows us to reduce the integrals $J_{y,j}$ into (truncated) Gaussian moment evaluations of the type:
\begin{equation}
	I_y = \int_{b}^\infty (s+r)^y\mathcal{N}(s|m-\sigma^2,\sigma^2)\text{d}s,
\end{equation}
and accordingly $\bar s$ and $C_s$. This is given in the next result.

\begin{theorem}
The scalars $\bar{s}$ and $C_s$ can be computed by
\begin{equation}
\bar{s} = \frac{I_{y+1}}{I_y} - r\quad \mbox{and}\quad C_s = \frac{I_{y+2}}{I_y} - \Big(\frac{I_{y+1}}{I_y}\Big)^2.
\end{equation}
\end{theorem}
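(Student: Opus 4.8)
The plan is to reduce the three moment integrals $J_{y,j}$, $j=0,1,2$, to the auxiliary integrals $I_y$, $I_{y+1}$, $I_{y+2}$ by absorbing the exponential factor $e^{-(s+r)}$ into the Gaussian. First I would complete the square: since
\begin{equation*}
  e^{-(s+r)}\mathcal{N}(s|m,\sigma^2) = e^{-r}e^{-s}\frac{1}{\sqrt{2\pi}\sigma}e^{-\frac{(s-m)^2}{2\sigma^2}}
  = \kappa\,\mathcal{N}(s|m-\sigma^2,\sigma^2)
\end{equation*}
for a constant $\kappa$ depending only on $m,\sigma,r$ (namely $\kappa = e^{-r}e^{-m+\sigma^2/2}$), the shift of the Gaussian mean from $m$ to $m-\sigma^2$ exactly accounts for the linear term in the exponent. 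Substituting this identity into $J_{y,j}=\int_b^\infty (s+r)^y s^j e^{-(s+r)}\mathcal{N}(s|m,\sigma^2)\,\mathrm ds$ gives $J_{y,j}=\kappa\int_b^\infty (s+r)^y s^j \mathcal{N}(s|m-\sigma^2,\sigma^2)\,\mathrm ds$, and in particular $J_{y,0}=\kappa I_y$.

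Next I would handle the factors $s^j$ for $j=1,2$ by writing $s = (s+r) - r$, so that $s^j$ becomes a polynomial in $(s+r)$: $s = (s+r) - r$ and $s^2 = (s+r)^2 - 2r(s+r) + r^2$. Expanding, $(s+r)^y s = (s+r)^{y+1} - r(s+r)^y$ and $(s+r)^y s^2 = (s+r)^{y+2} - 2r(s+r)^{y+1} + r^2(s+r)^y$. Integrating against the shifted Gaussian $\mathcal{N}(s|m-\sigma^2,\sigma^2)$ on $(b,\infty)$ then yields
\begin{equation*}
  J_{y,1} = \kappa\,(I_{y+1} - rI_y), \qquad J_{y,2} = \kappa\,(I_{y+2} - 2rI_{y+1} + r^2 I_y).
\end{equation*}
Since $\bar s = J_{y,1}/J_{y,0}$ and $C_s = J_{y,2}/J_{y,0} - \bar s^2$, the common factor $\kappa$ cancels, giving $\bar s = I_{y+1}/I_y - r$ and $C_s = I_{y+2}/I_y - 2rI_{y+1}/I_y + r^2 - \bar s^2$. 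Substituting $\bar s + r = I_{y+1}/I_y$ into the last expression, $C_s = I_{y+2}/I_y - 2r(\bar s + r) + r^2 - \bar s^2 = I_{y+2}/I_y - (\bar s + r)^2 = I_{y+2}/I_y - (I_{y+1}/I_y)^2$, which is the claimed formula.

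There is no real obstacle here — the argument is a short completion-of-the-square calculation followed by a binomial rearrangement — but the one point to state carefully is that the lower limit $b$ (either $0$ or $-r$) is the same in $J_{y,j}$ and in $I_y$, $I_{y+1}$, $I_{y+2}$, so that the substitution $s\mapsto s$ does not alter the domain; this is exactly why the constraint was phrased uniformly through $\mathbf{1}_{V_+}(s)$. I would also note explicitly that $I_y>0$ (the integrand is nonnegative and not identically zero on $(b,\infty)$), so the ratios are well-defined, and that the normalizing constant $y!$ together with $\kappa$ are immaterial precisely because they appear identically in numerator and denominator of $\bar s$ and $C_s$.
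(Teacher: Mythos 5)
Your proposal is correct and follows essentially the same route as the paper's own proof: complete the square to absorb $e^{-(s+r)}$ into a shifted Gaussian $\mathcal{N}(s|m-\sigma^2,\sigma^2)$ with a common prefactor (your $\kappa$ is the paper's $\alpha=e^{\sigma^2/2-m-r}$), expand $s^j$ in powers of $(s+r)$ to express $J_{y,j}$ in terms of $I_y,I_{y+1},I_{y+2}$, and let the prefactor cancel in the ratios. Your added remarks on the unchanged lower limit $b$ and the positivity of $I_y$ are sensible bookkeeping but do not alter the argument.
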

\begin{proof}
First, we claim that with $ \alpha=e^{\frac{\sigma^2}{2}-m-r}$, there hold the following identities
\begin{align}\label{eqn:rec-J}
	J_{y,0} = \alpha I_y,\quad
	J_{y,1} = \alpha (I_{y+1} - rI_y),\quad\mbox{and}\quad
	J_{y,2} = \alpha (I_{y+2} - 2rI_{y+1} + r^2I_y).
\end{align}
Let $c_\sigma=(2\pi\sigma^2)^{-\frac12}$. Then by completing the square, we obtain
\begin{equation}
\begin{split}
	e^{-(s+r)}\mathcal{N}(s|m,\sigma^2)
	&= c_\sigma e^{-r-s-\frac{(s-m)^2}{2\sigma^2}}
	= c_\sigma e^{\frac{\sigma^2}{2}-m-r}e^{-\frac{(s-(m-\sigma^2))^2}{2\sigma^2}}.
\end{split}
\end{equation}
The claim follows directly from the trivial identities
\begin{align*}
  	(s+r)^ys &= (s+r)^{y+1}-r(s+r)^y,\\
	(s+r)^ys^2 &= (s+r)^{y+2}-2r(s+r)^{y+1}+r^2(s+r)^y.
\end{align*}
The desired identities follow from the definitions and the recursions in \eqref{eqn:rec-J} by
\begin{align*}
	\bar{s} &= \frac{J_{y,1}}{J_{y,0}}
	        = \frac{\alpha(I_{y+1}-rI_y)}{\alpha I_y}
	        = \frac{I_{y+1}}{I_y} - r,\\
	C_s &= \frac{J_{y,2}}{J_{y,0}} - \bar{s}^2
	    = \frac{\alpha(I_{y+2}-2rI_{y+1}+r^2I_y)}{\alpha I_y} -\Big (\frac{I_{y+1}}{I_y} - r\Big)^2
	    = \frac{I_{y+2}}{I_y} - \Big(\frac{I_{y+1}}{I_y}\Big)^2.
\end{align*}
This completes the proof.
\end{proof}

However, directly evaluating $I_y$ can still be numerically unstable for large $y$. To
avoid the potential instability, we develop a stable recursive scheme on $I_y$.
\begin{lemma}\label{lm:recursive}
For $y\geq 2$, the following recursion holds
\begin{equation}
	I_y = (m-\sigma^2+r)I_{y-1} + \sigma^2(y-1)I_{y-2} + \frac{\sigma^2(b+r)^{y-1}}{\sqrt{2\pi\sigma^2}}e^{-\frac{(b-m+\sigma^2)^2}{2\sigma^2}}.
\end{equation}	
\end{lemma}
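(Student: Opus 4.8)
The plan is to establish the recursion by integration by parts on the defining integral
\[
I_y = \int_b^\infty (s+r)^y\,\mathcal{N}(s\,|\,m-\sigma^2,\sigma^2)\,\mathrm{d}s.
\]
The key observation is that the Gaussian density $\phi(s):=\mathcal{N}(s\,|\,\mu_0,\sigma^2)$ with $\mu_0=m-\sigma^2$ satisfies the first-order ODE $\phi'(s) = -\sigma^{-2}(s-\mu_0)\phi(s)$, equivalently $(s-\mu_0)\phi(s) = -\sigma^2\phi'(s)$. So I would write $(s+r) = (s-\mu_0) + (\mu_0+r)$ inside one power, giving
\[
I_y = (\mu_0+r)I_{y-1} + \int_b^\infty (s+r)^{y-1}(s-\mu_0)\phi(s)\,\mathrm{d}s
    = (\mu_0+r)I_{y-1} - \sigma^2\int_b^\infty (s+r)^{y-1}\phi'(s)\,\mathrm{d}s.
\]
Then integrate the last integral by parts: the boundary term is $-\sigma^2\big[(s+r)^{y-1}\phi(s)\big]_b^\infty = \sigma^2 (b+r)^{y-1}\phi(b)$ (the contribution at $+\infty$ vanishes since $\phi$ decays faster than any polynomial grows), and the remaining integral is $\sigma^2(y-1)\int_b^\infty (s+r)^{y-2}\phi(s)\,\mathrm{d}s = \sigma^2(y-1)I_{y-2}$.

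Assembling the pieces and substituting $\mu_0 = m-\sigma^2$ gives exactly
\[
I_y = (m-\sigma^2+r)I_{y-1} + \sigma^2(y-1)I_{y-2} + \sigma^2(b+r)^{y-1}\phi(b),
\]
and writing out $\phi(b) = (2\pi\sigma^2)^{-1/2}\exp\!\big(-(b-\mu_0)^2/(2\sigma^2)\big) = (2\pi\sigma^2)^{-1/2}\exp\!\big(-(b-m+\sigma^2)^2/(2\sigma^2)\big)$ recovers the stated boundary term. The condition $y\ge 2$ is needed so that $I_{y-2}$ is well defined (i.e.\ $y-2\ge 0$) and so that the power $(s+r)^{y-1}$ differentiated in the by-parts step produces the factor $y-1$ with a genuine integrand $(s+r)^{y-2}$ rather than a degenerate case.

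I do not expect any serious obstacle here; the argument is a routine two-step integration by parts. The only points requiring a word of care are: (i) justifying that the boundary term at $+\infty$ vanishes, which follows from Gaussian decay dominating polynomial growth (so all the integrals $I_y$ converge absolutely and the manipulations are legitimate); and (ii) keeping track of signs when splitting $(s+r) = (s-\mu_0)+(\mu_0+r)$ and when invoking $(s-\mu_0)\phi(s) = -\sigma^2\phi'(s)$. One could alternatively derive the same recursion purely algebraically from the three-term structure already present in \eqref{eqn:rec-J}, but the integration-by-parts route is the cleanest and makes the origin of the boundary (truncation) term transparent.
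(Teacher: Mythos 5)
Your argument is correct and is essentially identical to the paper's own proof: both split $(s+r)=(s-(m-\sigma^2))+(m-\sigma^2+r)$, use the identity $(s-(m-\sigma^2))\phi(s)=-\sigma^2\phi'(s)$ for the Gaussian density, and integrate by parts to produce the $\sigma^2(y-1)I_{y-2}$ term and the boundary contribution at $s=b$. The sign bookkeeping and the vanishing of the boundary term at $+\infty$ check out, so nothing further is needed.
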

\begin{proof}
Let $c=m-\sigma^2$, $d=\sigma^2$ and $f(s)=\frac{1}{\sqrt{2\pi\sigma^2}}e^{-\frac{(s-c)^2}{2d}}$.
The definition of $I_y$ implies
\begin{equation}
\begin{split}
	I_y &= \int_b^\infty(s+r)^yf(s)\mathrm{d}s
		= \int_b^\infty(s+r)^{y-1}\Big(d\frac{s-c}{d}+c+r\Big)f(s)\mathrm{d}s\\
		&= -d\int_b^\infty(s+r)^{y-1}\Big(-\frac{s-c}{d}\Big)f(s)\mathrm{d}s
		+(c+r)\int_b^\infty(s+r)^{y-1}f(s)\mathrm{d}s.
\end{split}
\end{equation}
Next we employ the trivial identity $\frac{\mathrm{d}}{\mathrm{d}s}f(s)= -\frac{s-c}{d}f(s)$ and
apply integration by parts to the first term
\begin{equation}
\begin{split}
	&\hspace{1em}\int_b^\infty(s+r)^{y-1}\Big(-\frac{s-c}{d}\Big)f(s)\mathrm{d}s\\
	&=(s+r)^{y-1}f(s)|_b^\infty - \int_b^\infty (y-1)(s+r)^{y-2}f(s)\mathrm{d}s\\
	&= -(b+r)^{y-1}f(b) - (y-1)I_{y-2}.
\end{split}
\end{equation}
Collecting the terms shows the desired recursion on the integral $I_y$.
\end{proof}

For $b=-r$, we have a simplified recursive formula for $I_y$:
\begin{equation*}
  I_y = (m-\sigma^2+r)I_{y-1} + \sigma^2(y-1)I_{y-2}.
\end{equation*}

Lemma \ref{lm:recursive} uses a two-term linear recurrence relation for $I_y$'s. The coefficients of $I_{y-1}$
and $I_{y-2}$ are raised by power when expanding $I_y$ in terms of $I_0$ and $I_1$, and thus the computation of $I_y$
is susceptible to the evaluation errors of $I_0$ and $I_1$ for large $y$. This motivates a reciprocal recursive scheme
by introducing a ratio sequence $\{L_y\}_y$ defined by $L_y = \frac{yI_{y-1}}{I_y}$, for $r=0$ or $b=-r$, in order to
restore the numerical stability. Note that $L_y$ also admits a recursive scheme $L_y = \frac{y}{(m-\sigma^2+r)+\sigma^2
L_{y-1}}$, and further $I_y$ can be recovered from $\{L_y\}$ by $\ln I_y = \ln y! + \ln I_0 - \sum_{i=1}^y L_i.$
	
We can compute $\bar{s}$ and $C_s$ directly from $L_y$. The identities follow from straightforward computation.
\begin{theorem}
If $r=0$ or $b=-r$, the ratios for calculating $\bar{s}$ and $C_s$ are given by
\begin{equation}
	\frac{I_{y+1}}{I_y}  = (m-\sigma^2+r)+\sigma^2 L_y\quad\mbox{and}\quad
		\frac{I_{y+2}}{I_y} = e^{\ln(y+1)+\ln(y+2)-\ln L_{y+1}-\ln L_{y+2}}.
	\end{equation}
\end{theorem}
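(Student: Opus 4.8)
The plan is to establish the two stated identities by unwinding the definition of the ratio sequence $\{L_y\}$ and combining it with the recursions already proved. Recall $L_y = \tfrac{yI_{y-1}}{I_y}$, which by Lemma \ref{lm:recursive} (specialized to $r=0$ or $b=-r$, where the boundary term vanishes) satisfies $L_y = \tfrac{y}{(m-\sigma^2+r)+\sigma^2 L_{y-1}}$. From this latter form, solving for the denominator gives $(m-\sigma^2+r)+\sigma^2 L_{y-1} = \tfrac{y}{L_y} = \tfrac{I_y}{I_{y-1}}$; replacing $y$ by $y+1$ (equivalently shifting the index) yields the first identity $\tfrac{I_{y+1}}{I_y} = (m-\sigma^2+r)+\sigma^2 L_y$. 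So the first identity is essentially a one-line reindexing of the recursion defining $L_y$.

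For the second identity, the idea is to telescope: write $\tfrac{I_{y+2}}{I_y} = \tfrac{I_{y+2}}{I_{y+1}}\cdot\tfrac{I_{y+1}}{I_y}$ and then express each factor in terms of $L$. From $L_y = \tfrac{yI_{y-1}}{I_y}$ we get directly $\tfrac{I_{y}}{I_{y-1}} = \tfrac{y}{L_y}$, hence $\tfrac{I_{y+1}}{I_y} = \tfrac{y+1}{L_{y+1}}$ and $\tfrac{I_{y+2}}{I_{y+1}} = \tfrac{y+2}{L_{y+2}}$. Multiplying gives $\tfrac{I_{y+2}}{I_y} = \tfrac{(y+1)(y+2)}{L_{y+1}L_{y+2}}$, and taking logarithms (all quantities are positive since $I_y>0$ and $L_y>0$ for the relevant ranges) produces exactly $\ln\tfrac{I_{y+2}}{I_y} = \ln(y+1)+\ln(y+2)-\ln L_{y+1}-\ln L_{y+2}$, i.e. the claimed exponential form. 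Finally, one substitutes these two ratio expressions into the earlier formulas $\bar s = \tfrac{I_{y+1}}{I_y} - r$ and $C_s = \tfrac{I_{y+2}}{I_y} - \bigl(\tfrac{I_{y+1}}{I_y}\bigr)^2$ to obtain the stated recipe for $\bar s$ and $C_s$ in terms of $\{L_y\}$.

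There is no real obstacle here: the statement is a bookkeeping consequence of the definition of $L_y$ and the recursion of Lemma \ref{lm:recursive}, so the proof is a short computation. The only point requiring a word of care is the index shift in the first identity (making sure $L_y$, not $L_{y+1}$, appears on the right), and confirming positivity of $I_y$ and $L_y$ so that the logarithms are well-defined — both are immediate since the integrand $(s+r)^y\mathcal{N}(s\,|\,m-\sigma^2,\sigma^2)$ is nonnegative and not identically zero on $(b,\infty)$, and the recursion for $L_y$ preserves positivity. I would therefore present the argument as: (i) recall the recursion for $L_y$ and read off $\tfrac{I_{y+1}}{I_y}$; (ii) telescope $\tfrac{I_{y+2}}{I_y}$ through the ratio $\tfrac{I_{y+1}}{I_y}$ and take logs; (iii) substitute into the formulas for $\bar s$ and $C_s$.
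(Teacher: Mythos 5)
Your proposal is correct and is exactly the ``straightforward computation'' the paper alludes to (it gives no explicit proof): the first identity is the reindexed $L$-recursion, and the second follows from telescoping $\tfrac{I_{y+2}}{I_y}=\tfrac{(y+1)(y+2)}{L_{y+1}L_{y+2}}$ and taking logarithms, with positivity of $I_y$ and $L_y$ justified as you note. Nothing is missing.
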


Last, we discuss the computation of the first three integrals $I_0$, $I_1$ and $I_2$, which are needed for the
recursion. We employ three different forms according to the integration range with respect to the auxiliary variable
\begin{equation*}
  \eta=\frac{\sigma^2-m+b}{\sqrt{2\sigma^2}}.
\end{equation*}
The formulas are listed in Table \ref{tab:scheme}, where \texttt{erf} and \texttt{erfc} denote the error
function and complementary error function, respectively, and $\text{erfcx}(\eta)=e^{\eta^2}(1-\text{erf}(\eta)).$
Since the value of $1-\text{erf}(\eta)$ is vanishingly small for large $\eta$ value, we use Scheme 2 to avoid
underflow. Scheme 3 is useful when the $\eta$ value is large, since both $1-\text{erf}(\eta)$ and $\text{erfc}
(\eta)$ suffer from numerical underflow. Note that when $\eta$ is small, Scheme 3 is not as accurate as Scheme
2, so we use Scheme 2 in the intermediate range. In our experiments, we use Scheme 1 for $\eta\in(-\infty,5)$,
Scheme 2 for $\eta\in[5,26)$ and Scheme 3 for $\eta\in(26,\infty)$. To use Scheme 3, we construct $\tilde{I}_i
=\frac{I_i}{I_0}$, $i=0,1,2$, and $\tilde{L}_y=\frac{y\tilde{I}_{y-1}}{\tilde{I}_y}$, $y\in\mathbb{N}_+$. Then similar
identities for computing $\bar{s}$ and $C_s$ hold, i.e., $\bar{s} = \frac{\tilde{I}_{y+1}}{\tilde{I}_y} - r$
and $C_s = \frac{\tilde{I}_{y+2}}{\tilde{I}_y} - (\frac{\tilde{I}_{y+1}}{\tilde{I}_y})^2$, with
$\frac{\tilde{I}_{y+1}}{\tilde{I}_y}  = (m-\sigma^2+r)+\sigma^2 \tilde{L}_y$ and
$\frac{\tilde{I}_{y+2}}{\tilde{I}_y} = e^{\ln(y+1)+\ln(y+2)-\ln \tilde{L}_{y+1}-\ln \tilde{L}_{y+2}}$.

\begin{table}[hbt!]
  \centering
  \caption{Three schemes for evaluating $I_0$, $I_1$ and $I_2$, with $c_1=m-\sigma^2+b+2r$ and $c_2=m-\sigma^2+r$. \label{tab:scheme}}
  \begin{tabular}{|c|l|c|}
   \hline
   scheme & formulae & $\eta$\\
    \hline
    1   & $\displaystyle I_0 = \frac{1}{2}(1-\text{erf}(\eta)),\qquad  	I_1 = \sqrt{\frac{\sigma^2}{2\pi}}e^{-\eta^2} + \frac{c_2}{2}(1-\text{erf}(\eta))$ & $(-\infty,5)$ \\
	    &  $\displaystyle I_2 = \sqrt{\frac{\sigma^2}{2\pi}}c_1e^{-\eta^2} + \frac{c_2^2+\sigma^2}{2}(1-\text{erf}(\eta))$& \\
   \hline
   2   & $\displaystyle I_0 = \frac{1}{2}\text{erfc}(\eta),\qquad
	I_1 = \sqrt{\frac{\sigma^2}{2\pi}}e^{-\eta^2} + \frac{c_2}{2}\text{erfc}(\eta)$ & $[5,26]$\\
	   & $\displaystyle I_2 = \sqrt{\frac{\sigma^2}{2\pi}}c_1e^{-\eta^2} + \frac{c_2^2+\sigma^2}{2}\text{erfc}(\eta)$&\\
    \hline
    3  & 	$\displaystyle \tilde{I}_0 = 1,\qquad
	\tilde{I}_1 = \sqrt{\frac{2\sigma^2}{\pi}}\frac{1}{\text{erfcx}(\eta)} + c_2$ & $(26,\infty)$\\
	   & $\displaystyle \tilde{I}_2 = \sqrt{\frac{2\sigma^2}{\pi}}\frac{c_1}{\text{erfcx}(\eta)}+c_2^2+\sigma^2$ &\\
    \hline
  \end{tabular}
\end{table}

\subsection{Laplace potential}
Now we derive the formulas for evaluating the 1D integrals for the Laplace potential $t(x)= \frac{\alpha}{2}e^{-\alpha |\ell^tx|}$.
For any fixed $\ell\in\mathbb{R}^n$, we divide the whole space $\mathbb{R}^n$ into two disjoint half-spaces $V_+$ and $V_-$, i.e.,
$\mathbb{R}^n = V_+ \cup V_-$, with $V_+ = \{x|\ell^t x > 0\}$ and $V_- = \{x|\ell^t x \le 0\}$.
Then we split the Laplace potential $t(x)$ into
\begin{equation}
	t(x)= \frac{\alpha}{2}e^{-\alpha \ell^tx}\mathbf{1}_{V_+}(x) +
	       \frac{\alpha}{2}e^{\alpha \ell^tx}\mathbf{1}_{V_-}(x).
\end{equation}
The integrals involving $t(x)\mathcal{N}(s|\mu,\sigma^2)$ (slightly abusing $\mu$) can be divided into two parts:
\begin{equation}
\begin{split}
\int_{\mathbb{R}_+} \frac{\alpha}{2}s^ie^{-\alpha s}\mathcal{N}(s|\mu,\sigma^2)\text{d}s
	&= \frac{\alpha}{2}e^{\frac{\alpha^2\sigma^2}{2}}\underbrace{e^{-\alpha \mu}\int_{\mathbb{R}_+} s^i\mathcal{N}(s|\mu-\alpha\sigma^2,\sigma^2)\mathrm{d}s}_{:=I^+_i},\\
	\int_{\mathbb{R}_-} \frac{\alpha}{2}s^ie^{\alpha s}\mathcal{N}(s|\mu,\sigma^2)\text{d}s
	&= \frac{\alpha}{2}e^{\frac{\alpha^2\sigma^2}{2}}\underbrace{e^{\alpha \mu}\int_{\mathbb{R}_-} s^i\mathcal{N}(s|\mu+\alpha\sigma^2,\sigma^2)\mathrm{d}s}_{:= I^-_i}.
\end{split}
\end{equation}
By the change of variable $t=\frac{s-\mu\pm\alpha\sigma^2}{\sigma}$ for $I^{\pm}_i$ respectively, we have
\begin{align}
I^+_i &= \frac{e^{-\alpha \mu}}{\sqrt{2\pi}}\int_{-\frac{\mu}{\sigma}+\alpha\sigma}^{+\infty}(\sigma t+\mu-\alpha\sigma^2)^i e^{-\frac{t^2}{2}}\mathrm{d}t,\\
I^-_i &= \frac{(-1)^i e^{\alpha \mu}}{\sqrt{2\pi}}\int_{\frac{\mu}{\sigma}+\alpha\sigma}^{+\infty}(\sigma t-\mu-\alpha\sigma^2)^i e^{-\frac{t^2}{2}}\mathrm{d}t.
\end{align}
These integrals can be expressed using the cumulative distribution function $\Phi$ of the standard Gaussian
distribution. We shall view $I^{\pm}_i$ as functions of $\mu$ and let $I_i = I^+_i(\mu)+(-1)^iI^+_i(-\mu)$. Then we have
\begin{equation}
	\bar{s} = \frac{I_1}{I_0}
\quad \mbox{and}\quad
	C_s = \frac{I_2}{I_0} - \left(\frac{I_1}{I_0}\right)^2.
\end{equation}

To avoid the potential underflow of direct evaluation of $\Phi$,
we use the following well known (divergent) asymptotic expansion \cite[item 7.1.23]{abramowitz1965handbook}
\begin{align*}
  1 - \Phi(\eta) & = \int_\eta^\infty e^{-\frac{t^2}{2}}{\rm d} t
                  = e^{-\frac{\eta^2}{2}}\left(\eta^{-1} +\sum_{k=1}^\infty \frac{(-1)^k(2k-1)!}{2^k(k-1)!}\eta^{-(2k+1)}\right)\\
                 & = \mathcal{N}(\eta|0,1)\eta^{-1}\underbrace{\sum_{n=0}^\infty(-1)^n(2n-1)!!\eta^{-2n}}_{:=g(\eta)}.
\end{align*}
This formula follows by integration by parts, and allows accurate evaluation for large positive $\eta$.
It was shown in \cite{gehre2013rapid} that the error of evaluating $1 - \Phi(\eta)$ with a truncation
of the asymptotic expansion is less than $10^{-11}$ for $\eta>5$ with more than $8$ terms in the
summation of $g(\eta)$. For $\eta\le 5$, $1 - \Phi(\eta)$ can be accurately evaluated directly.
Then we introduce a ratio
\begin{align*}
	\beta &= \frac{I^+_0(-|\mu|)}{I^+_0(|\mu|)}  = e^{2\alpha|\mu|}\frac{(\alpha\sigma^2-|\mu|)g(\alpha\sigma+\frac{|\mu|}{\sigma})}{(\alpha\sigma^2+|\mu|)g(\alpha\sigma-\frac{|\mu|}{\sigma})}.
\end{align*}
With the ratio $\beta$, the two fractions $\frac{I_1}{I_0}$ and $\frac{I_2}{I_0}$ can be evaluated by
\begin{align*}
\frac{I_1}{I_0} &= \mu + \alpha\sigma^2\mathrm{sgn}(\mu)\Big(1-\frac{2}{1+\beta}\Big),\\
\frac{I_2}{I_0} &= -\frac{2\alpha\sigma^3}{\sqrt{2\pi}}e^{-(\frac{\mu^2}{2\sigma^2}+\frac{\alpha^2\sigma^2}{2})}I^{-1}_0+(\sigma^2+\alpha^2\sigma^4-\mu^2)+2\mu\frac{I_1}{I_0}.
\end{align*}
To avoid potential numerical instability of the first term in $\frac{I_2}{I_0}$, we use the identity
\begin{equation*}
	-\frac{2\alpha\sigma^3}{\sqrt{2\pi}}e^{-(\frac{\mu^2}{2\sigma^2}+\frac{\alpha^2\sigma^2}{2})}I^{-1}_0 = \frac{-2\alpha\sigma^2(-|\mu|+\alpha\sigma^2)}{g(-\frac{|\mu|}{\sigma}+\alpha\sigma)(1+\beta)}.
\end{equation*}
To avoid potential numerical instability of the term $\sigma^2+\alpha^2\sigma^4$, we use the exp-log trick
\begin{equation*}
	\sigma^2+\alpha^2\sigma^4 = \exp\Big(-2\log\frac{1}{\alpha\sigma^{2}}+\log\big(1+\frac{1}{\alpha^2\sigma^{2}}\big)\Big),
\end{equation*}
where $\log(1+\frac{1}{\alpha^2\sigma^{2}})$ is evaluated by the $\texttt{MATLAB}$ builtin function $\texttt{log1p}$.
Thus, $\bar{s}$ and $C_s$ can be evaluated by $\bar{s} = \frac{I_1}{I_0}$ and $C_s = -\frac{2\alpha\sigma^2(-|\mu|+\alpha\sigma^2)}{g(-\frac{|\mu|}{\sigma}+\alpha\sigma)(1+\beta)}+\exp[-2\log\frac{1}{\alpha\sigma^{2}}+\log(1+\frac{1}{\alpha^2\sigma^{2}})]-(\mu-\frac{I_1}{I_0})^2$.

\section{Numerical experiments and discussions}\label{sec:numer}

Now we numerically illustrate one EP algorithm on realistic images. In the implementation, we employ the natural parameter
parameterization, i.e., Algorithm \ref{alg:ep_natural}, which appears to be numerically more robust. We measure
the accuracy of a reconstruction $x^*$ relative to the ground truth $x^\dag$ by the standard $L^2$-error $||x^*-
x^\dagger||_2$, the structural similarity (SSIM) index (by \texttt{MATLAB} built-in \texttt{ssim}), and peak
signal-to-noise ratio (PSNR) (by \texttt{MATLAB} built-in \texttt{psnr} with peak value 1 for \texttt{Shepp-Logan} and \texttt{PET} phantom, and 5 for \texttt{IRT} phantom). For comparison, we
also present MAP, computed by a limited-memory BFGS algorithm \cite{LiuNocedal:1989} with constraint $\mathcal{C}_1$.
The hyperparameter $\alpha$ in the prior $p(x)$ is determined in a trial-and-error manner. Unless otherwise stated,
the EP algorithm is run for four sweeps through the sites.

\subsection{Simulated data with two benchmark images}

\begin{figure}[hbt!]
\centering
\begin{tabular}{ccccccccc}
\includegraphics[scale=0.2]{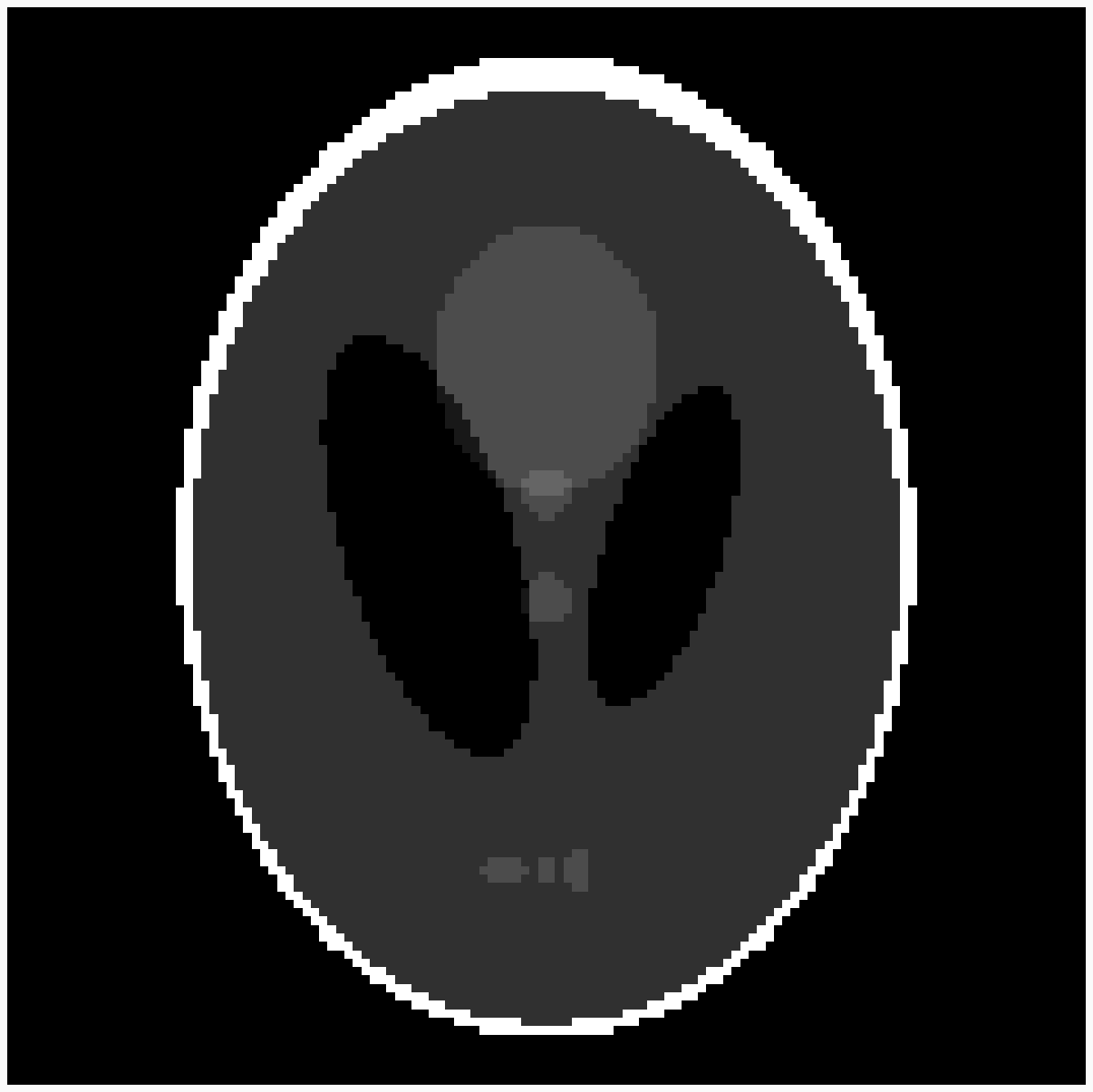} \includegraphics[scale=0.2]{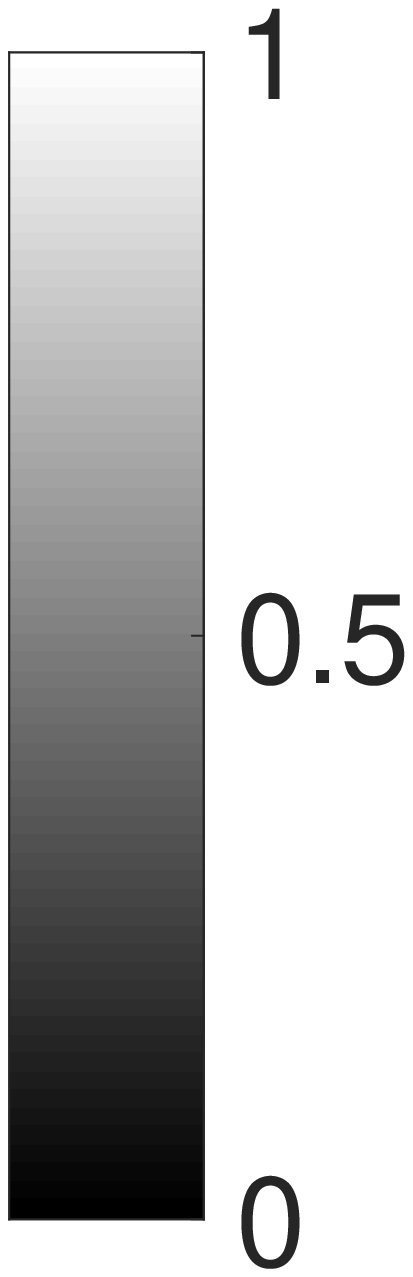} & \includegraphics[scale=0.2]{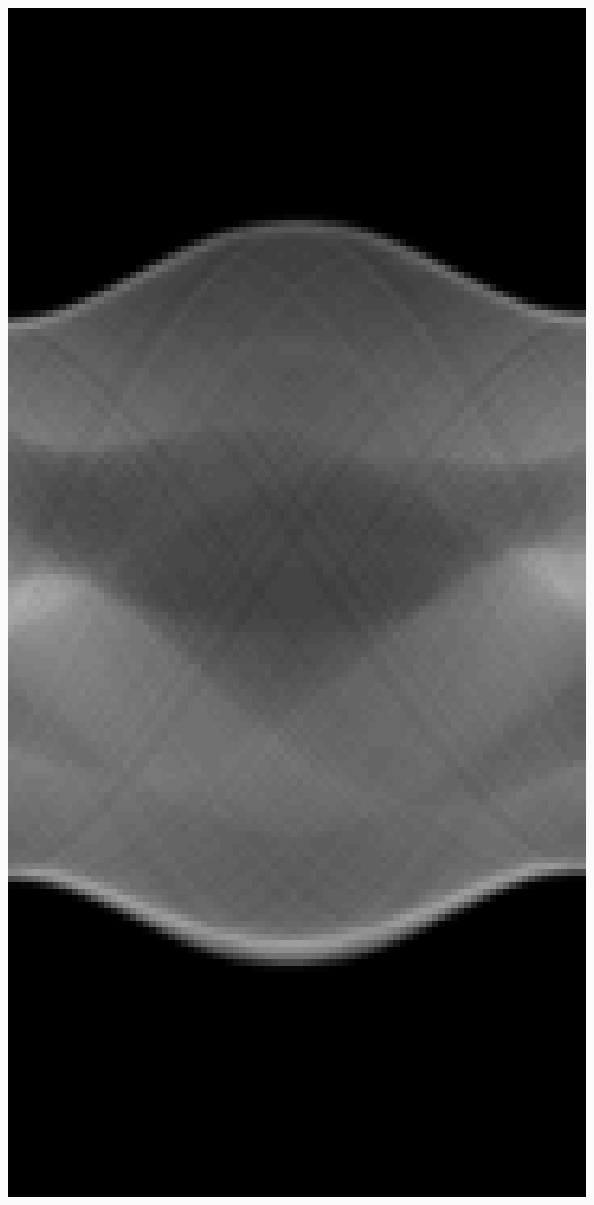} \includegraphics[scale=0.2]{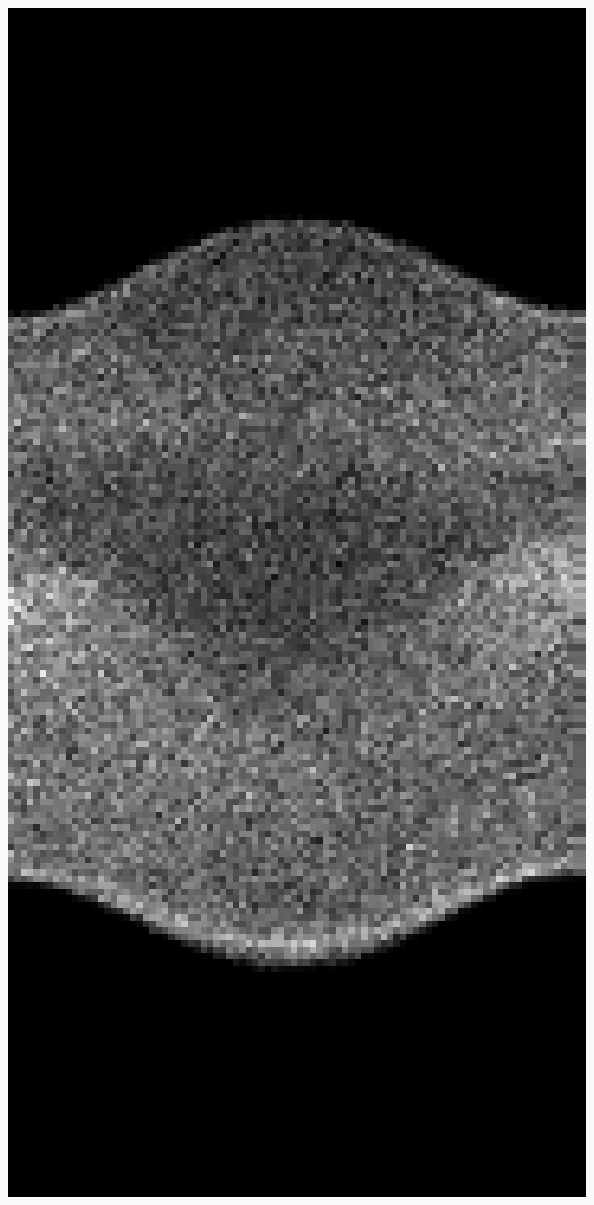}
& \includegraphics[scale=0.2]{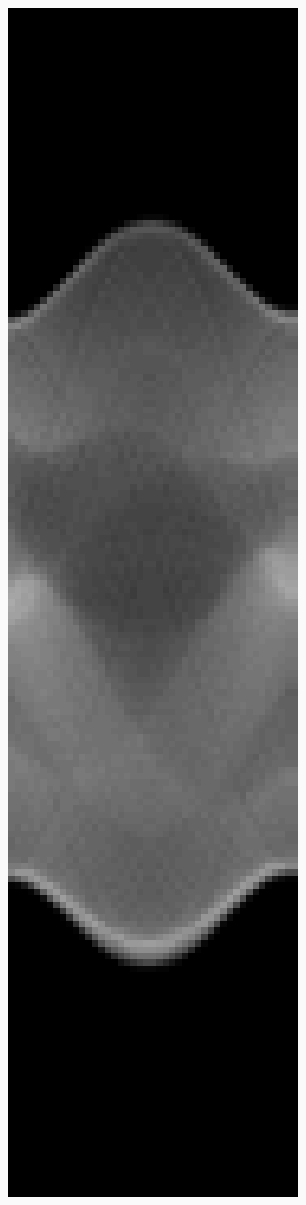}  \includegraphics[scale=0.2]{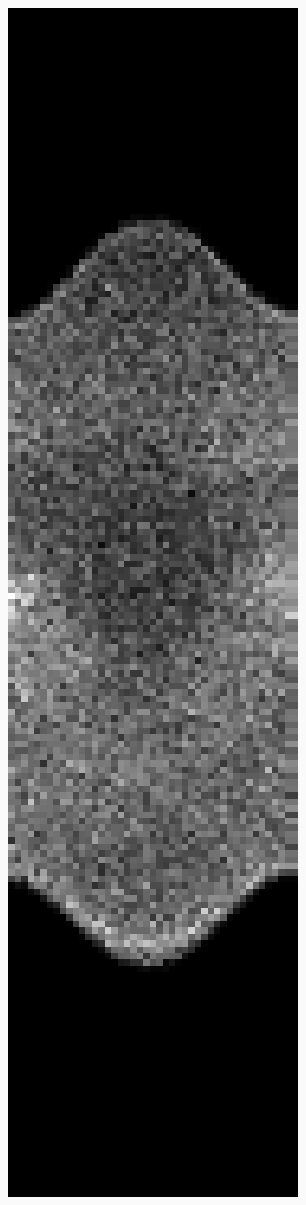} & \includegraphics[scale=0.2]{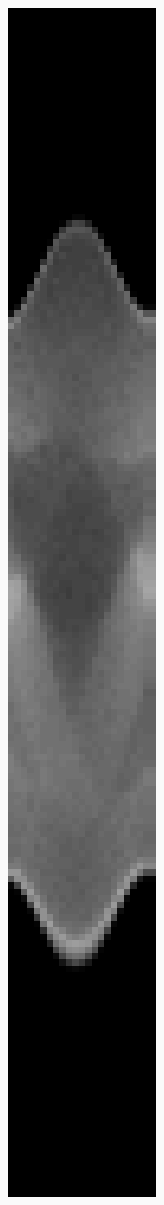}  \includegraphics[scale=0.2]{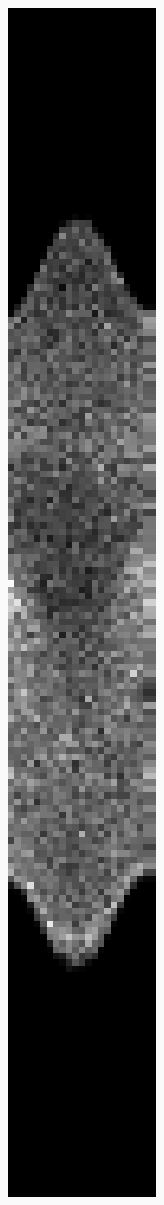}
& \includegraphics[scale=0.2]{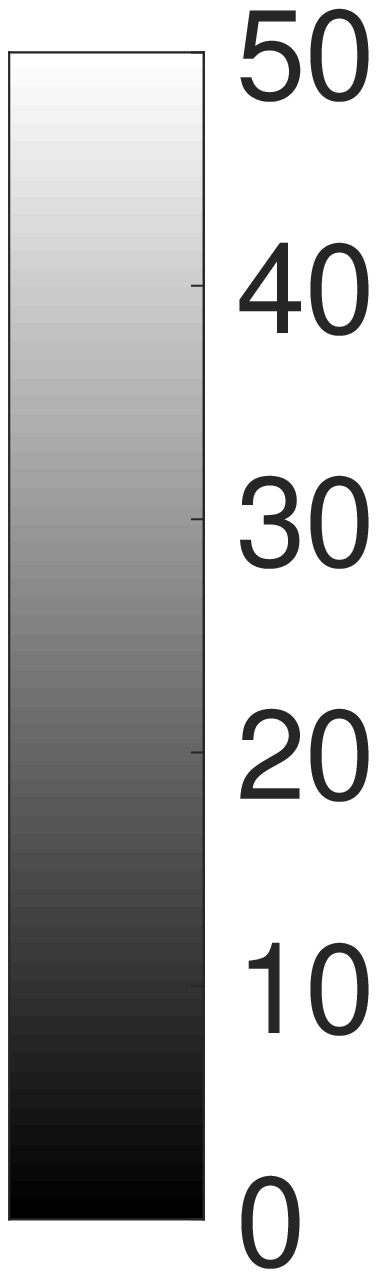}\\
\includegraphics[scale=0.2]{SL_true} \includegraphics[scale=0.2]{bar_1} & \includegraphics[scale=0.2]{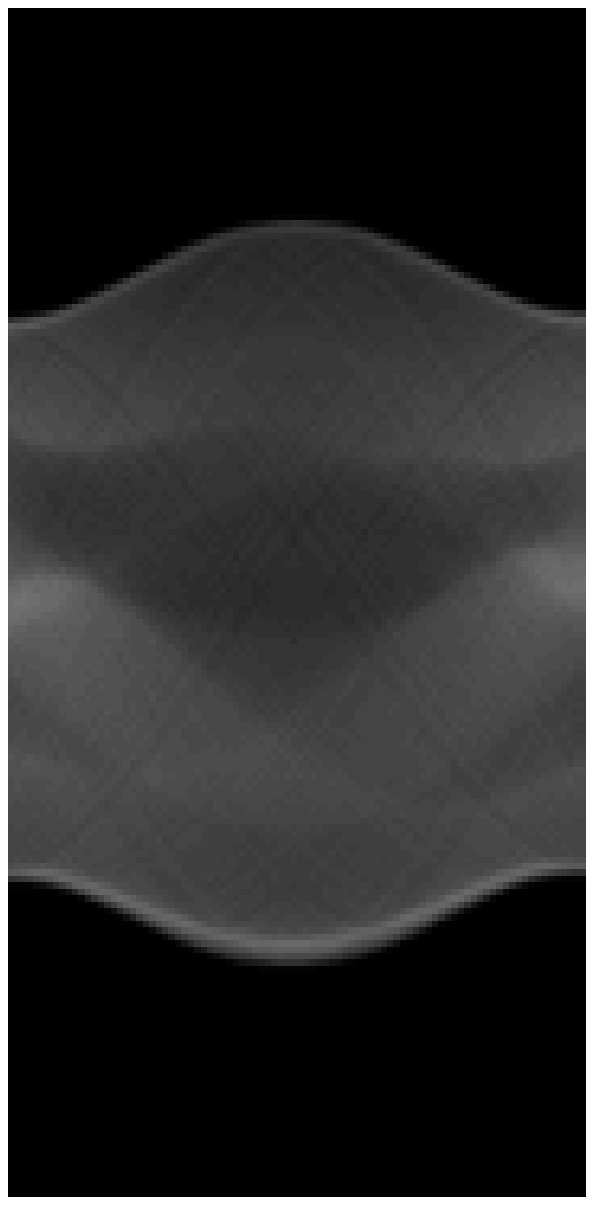} \includegraphics[scale=0.2]{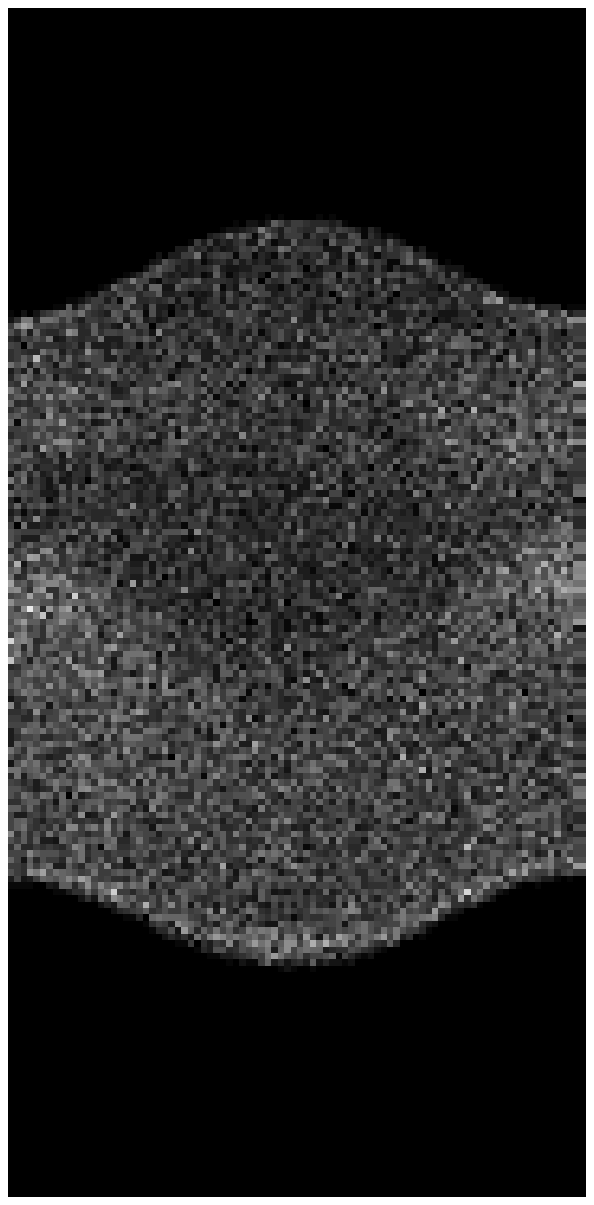}
& \includegraphics[scale=0.2]{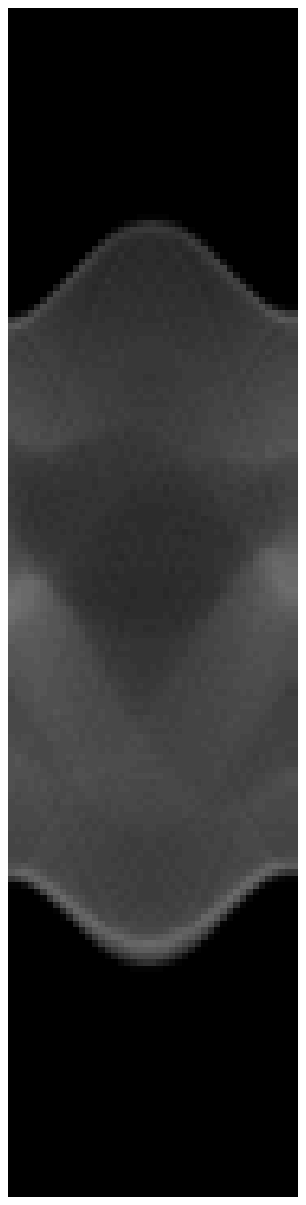}  \includegraphics[scale=0.2]{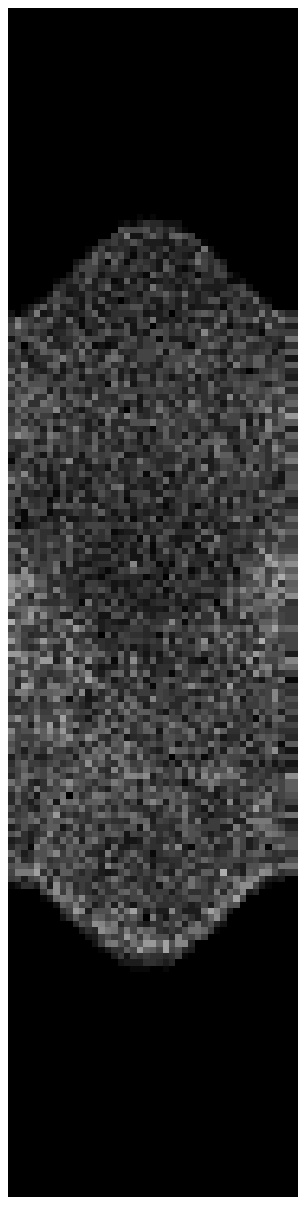} & \includegraphics[scale=0.2]{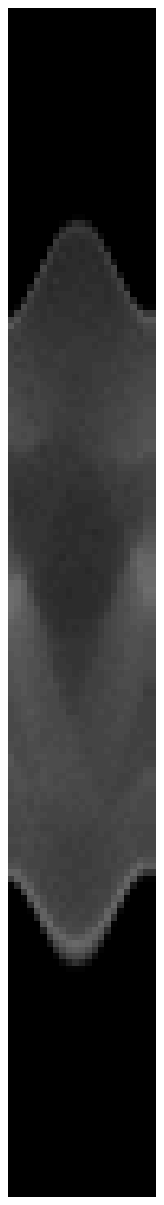}  \includegraphics[scale=0.2]{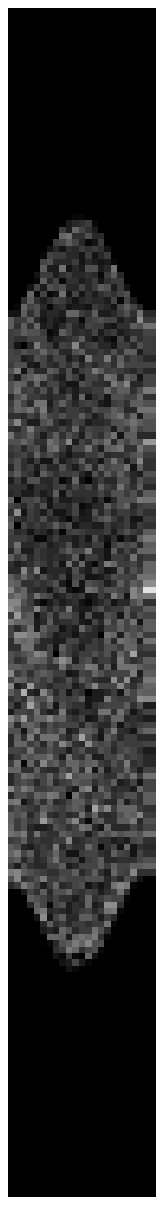}
& \includegraphics[scale=0.2]{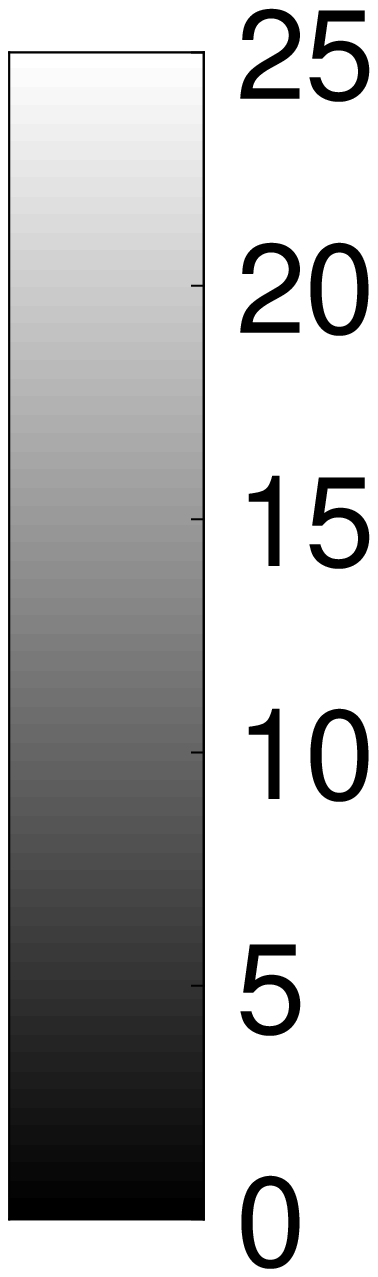}
\end{tabular}
\caption{The exact image, sinograms and data with three different $A$s for \texttt{Shepp-Logan} phantom. The top
and bottom rows refer to the moderate count and low count cases, respectively. \label{fig:SL_xby}}	
\end{figure}

\begin{figure}[hbt!]
\centering
\begin{tabular}{rccccccc}
 & \includegraphics[scale=0.2]{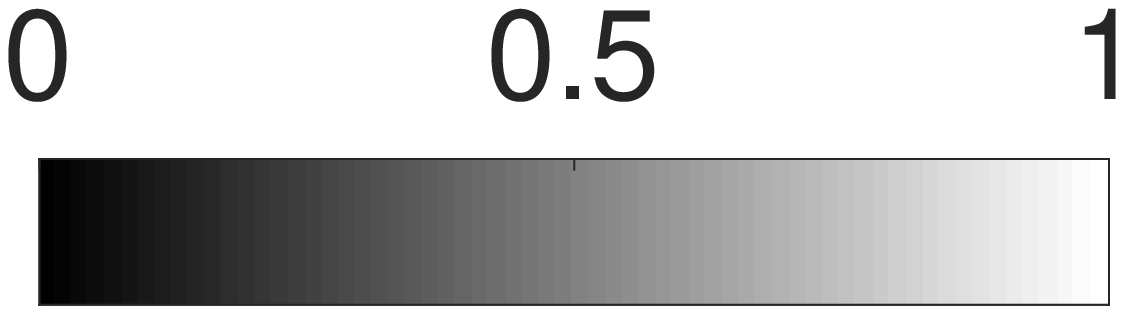} & \includegraphics[scale=0.2]{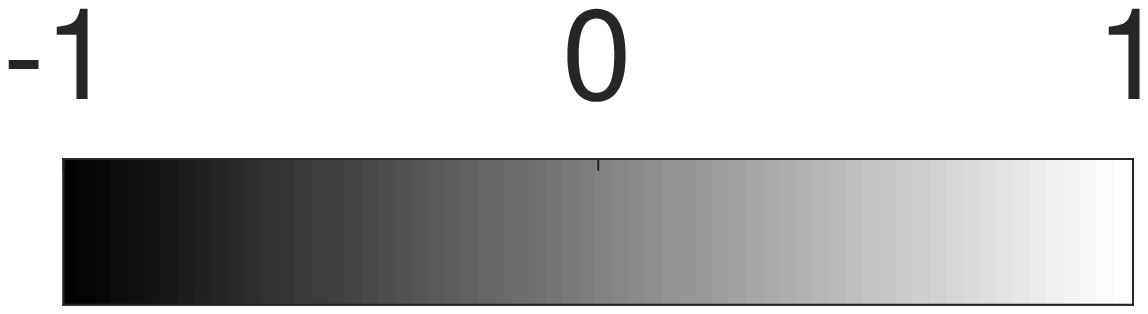} & \includegraphics[scale=0.2]{bar_1_h} & \includegraphics[scale=0.2]{bar_1_1_h} & \includegraphics[scale=0.2]{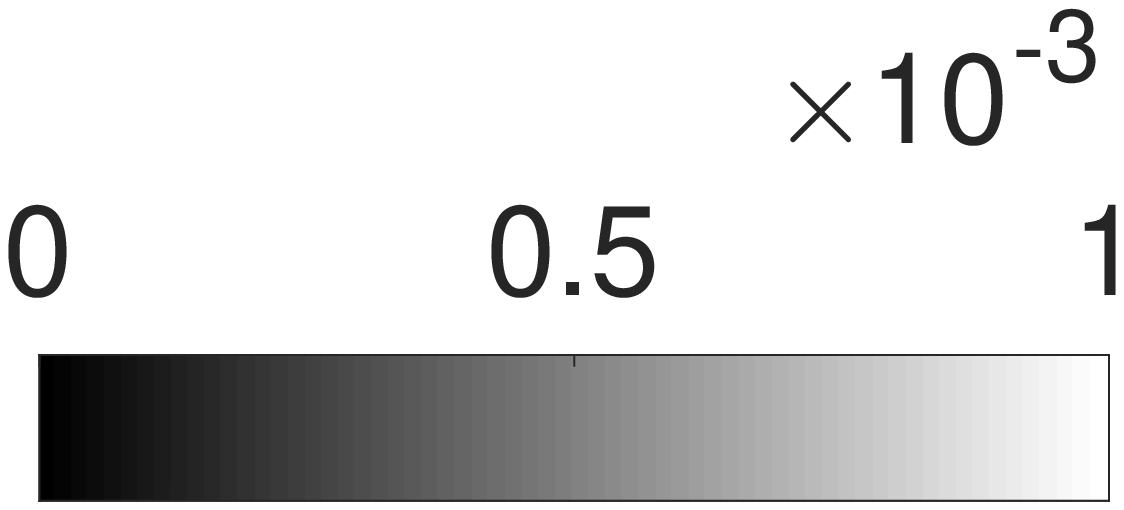}\\
\rotatebox{90}{\quad[0:2:179]}&\includegraphics[scale=0.2]{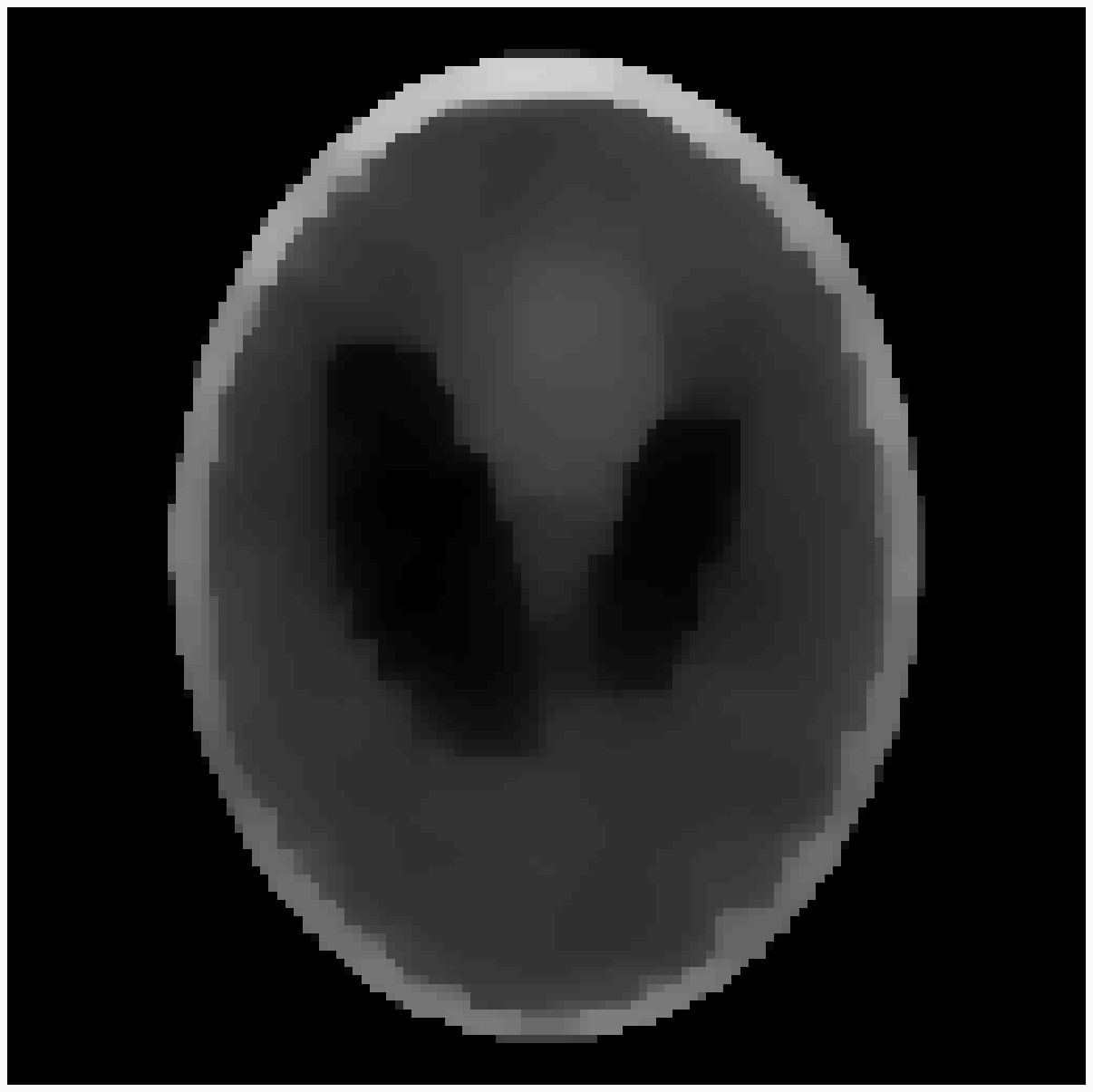} & \includegraphics[scale=0.2]{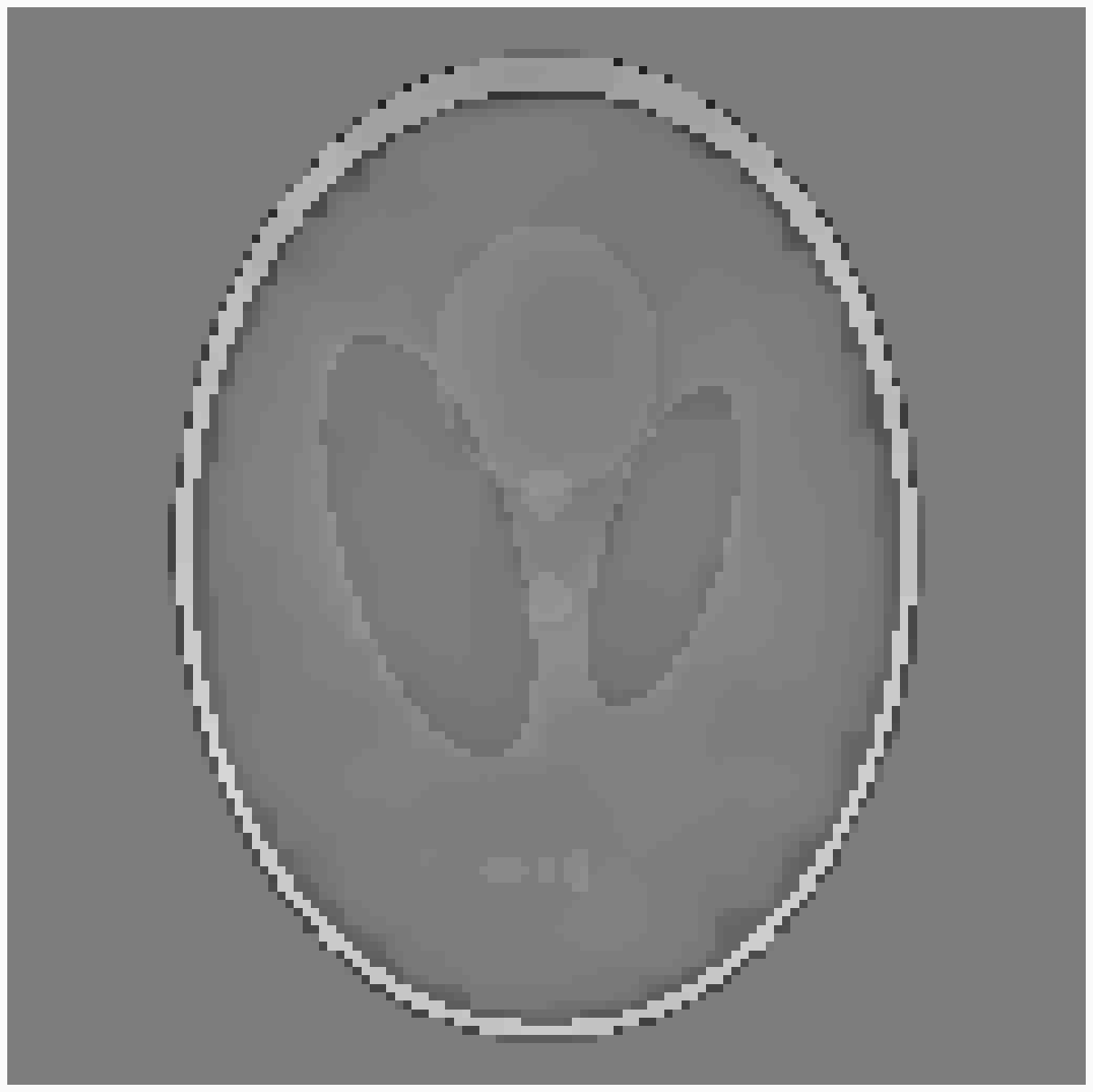} & \includegraphics[scale=0.2]{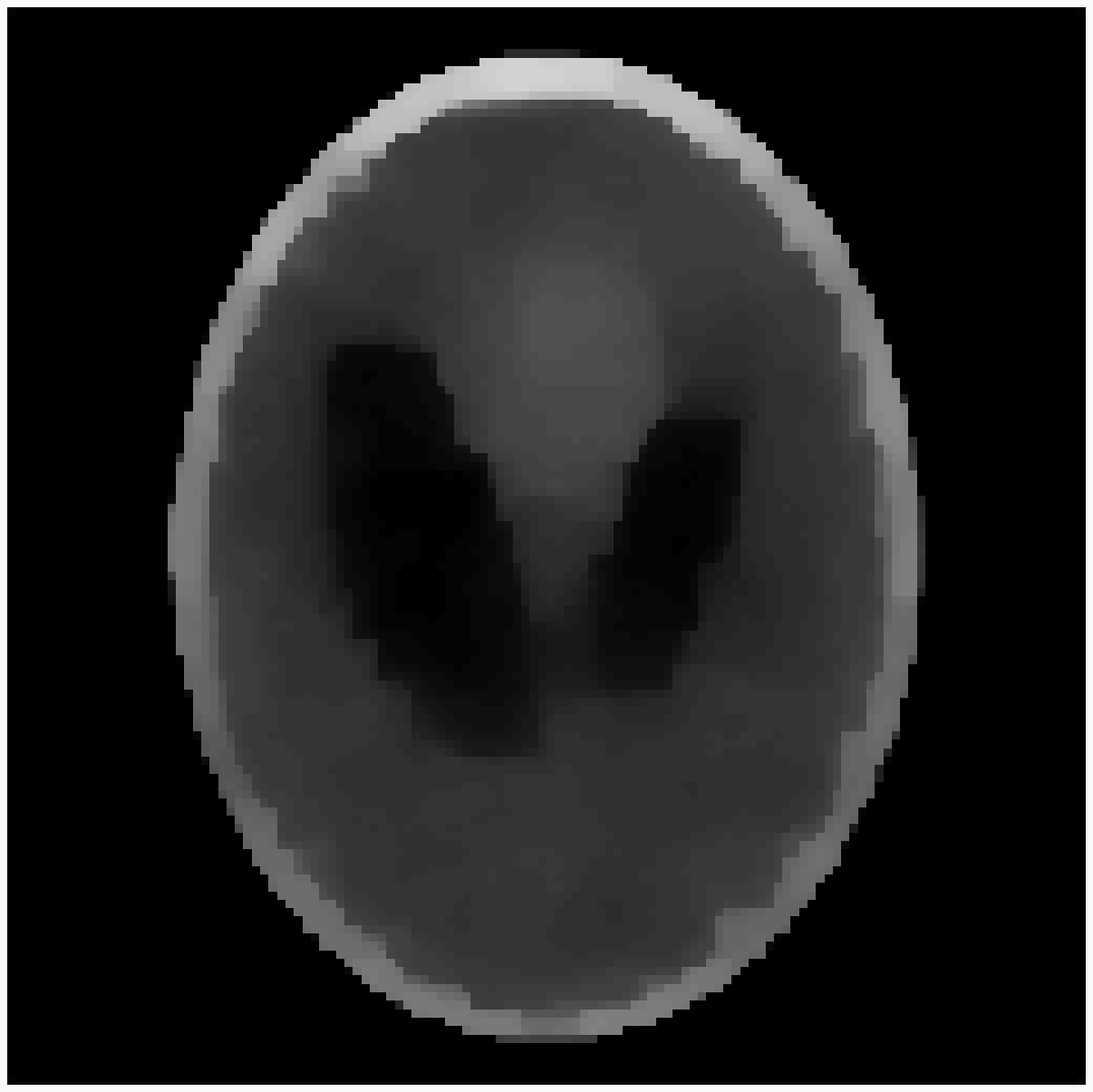} & \includegraphics[scale=0.2]{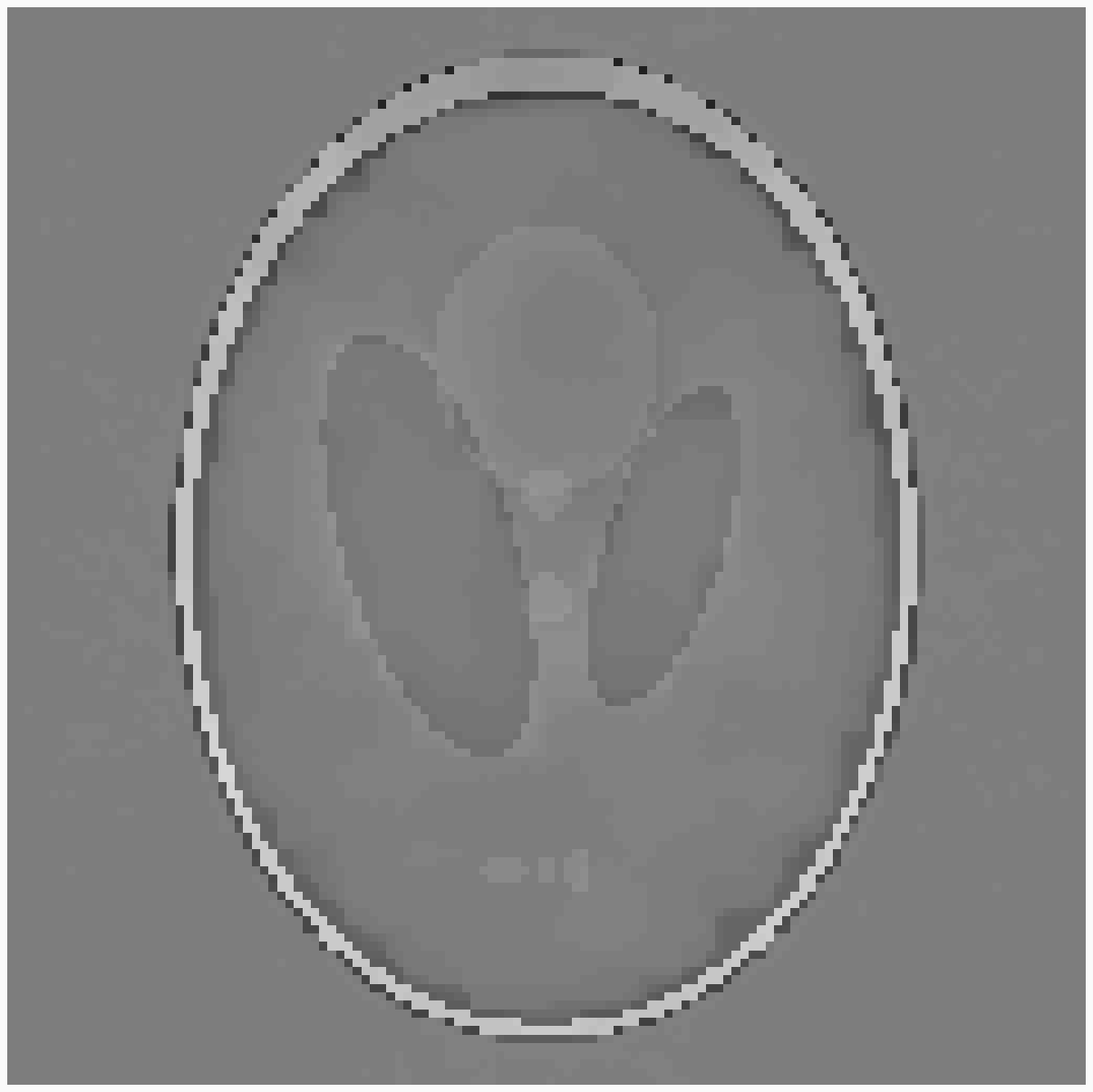} & \includegraphics[scale=0.2]{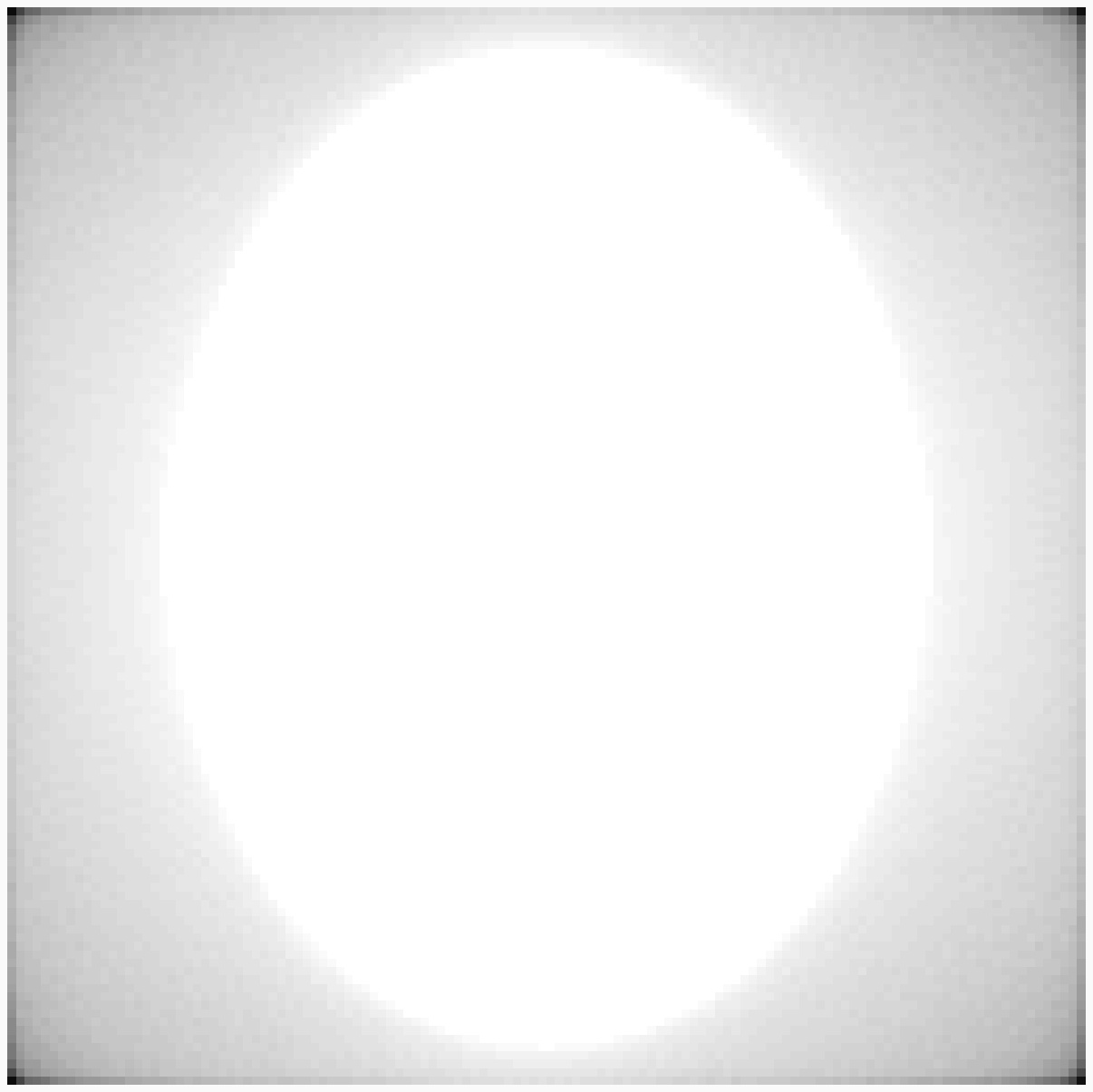}\\
\rotatebox{90}{\quad[0:4:179]}&\includegraphics[scale=0.2]{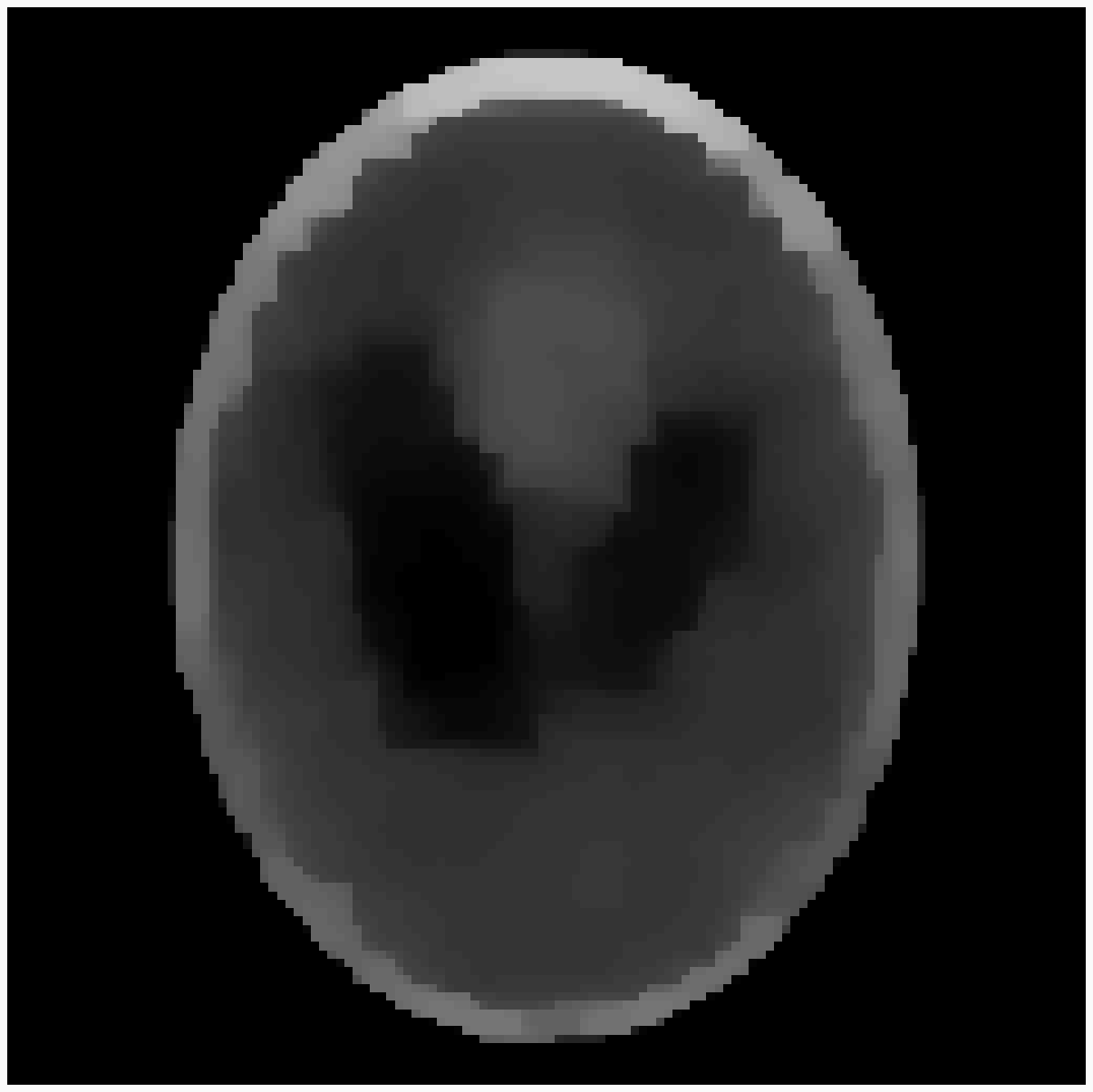} & \includegraphics[scale=0.2]{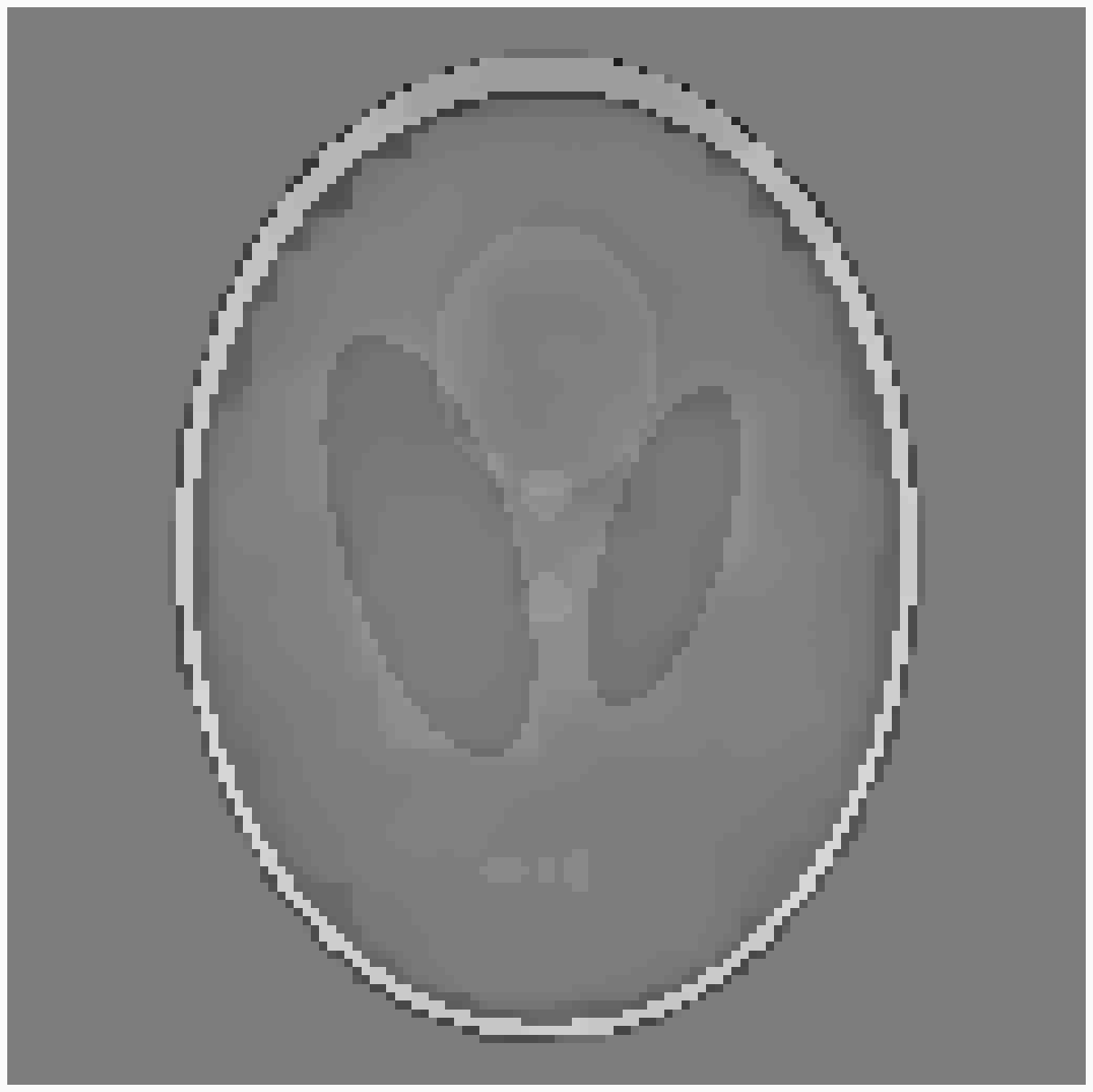} & \includegraphics[scale=0.2]{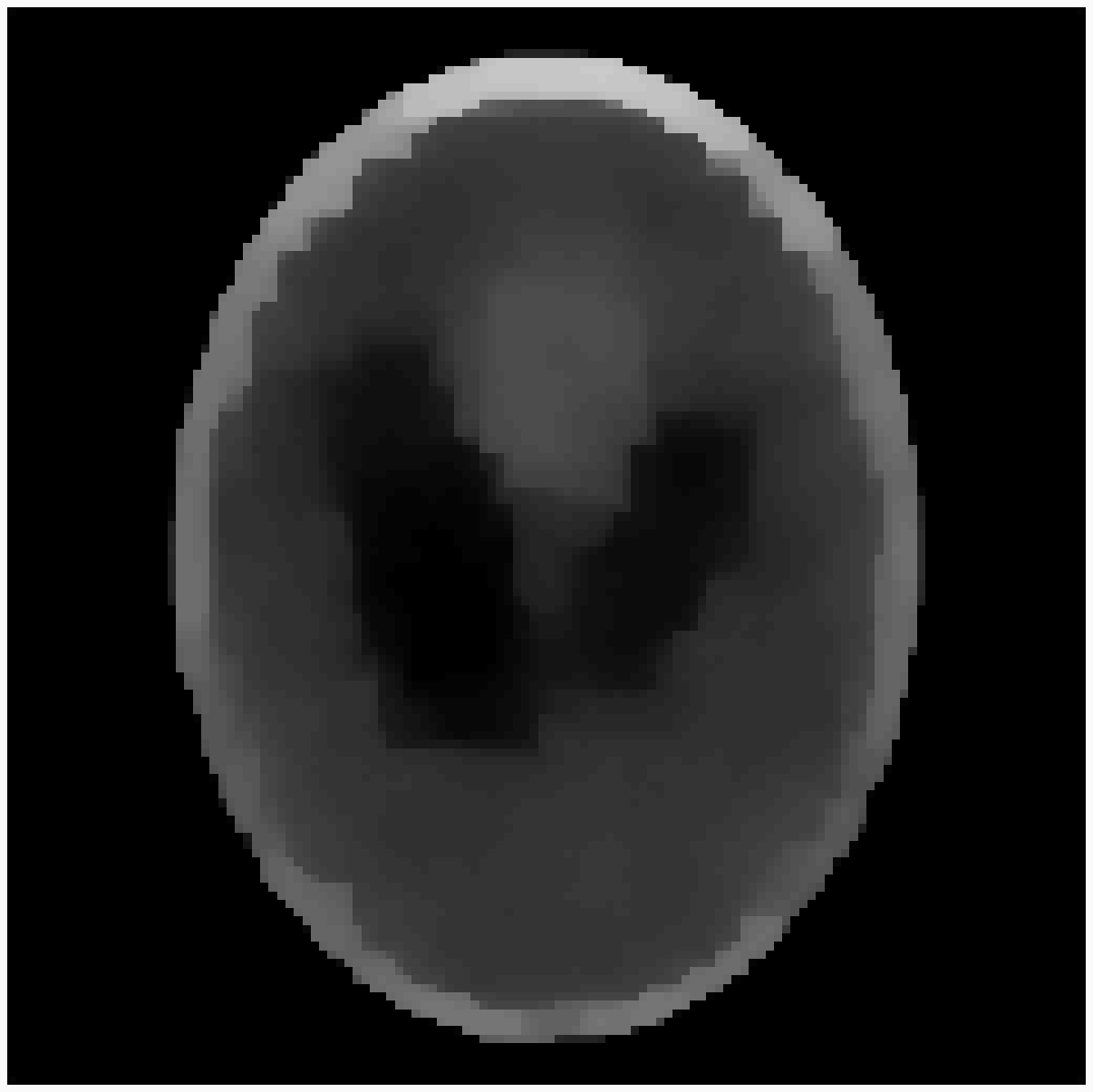} & \includegraphics[scale=0.2]{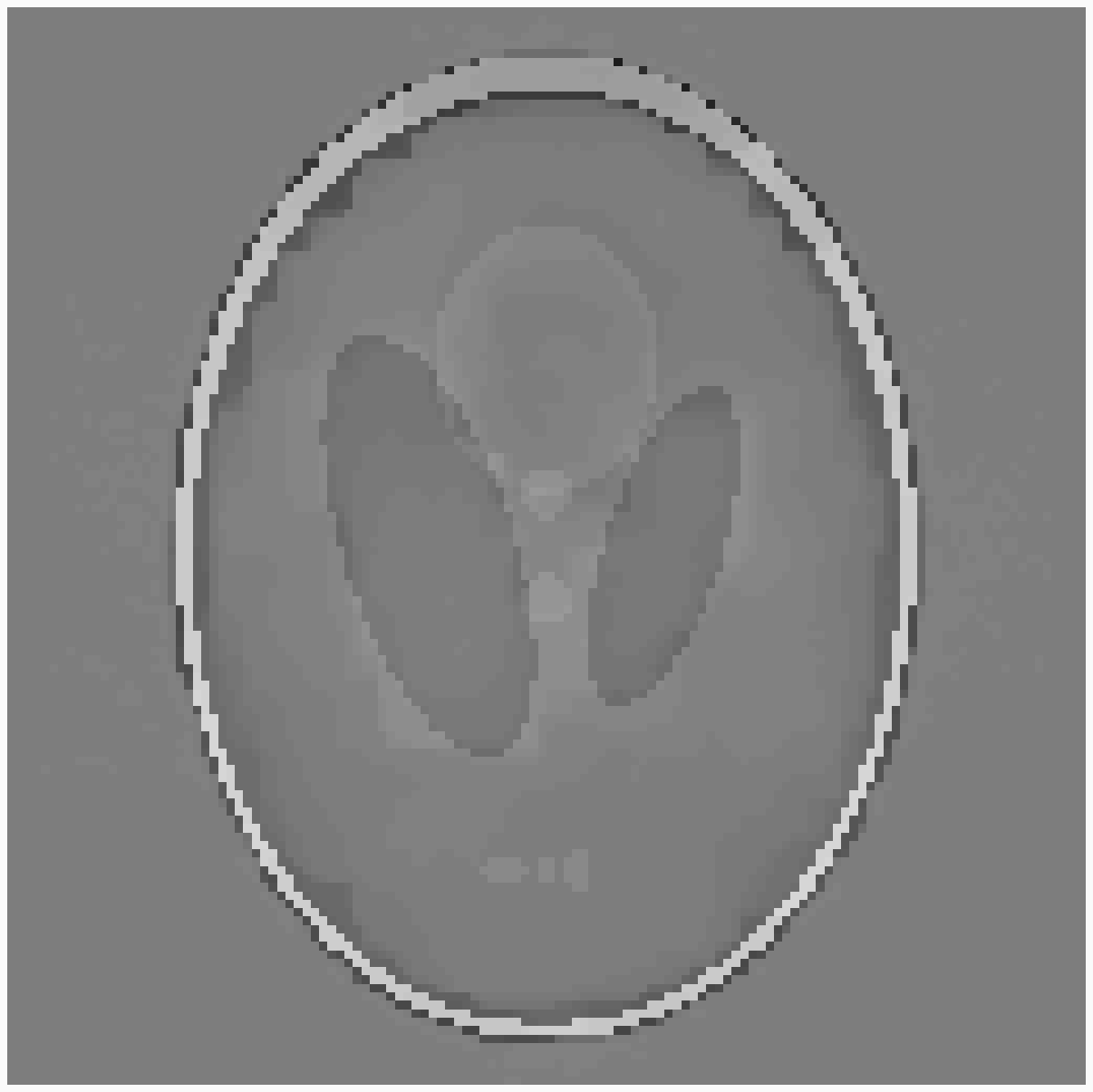} & \includegraphics[scale=0.2]{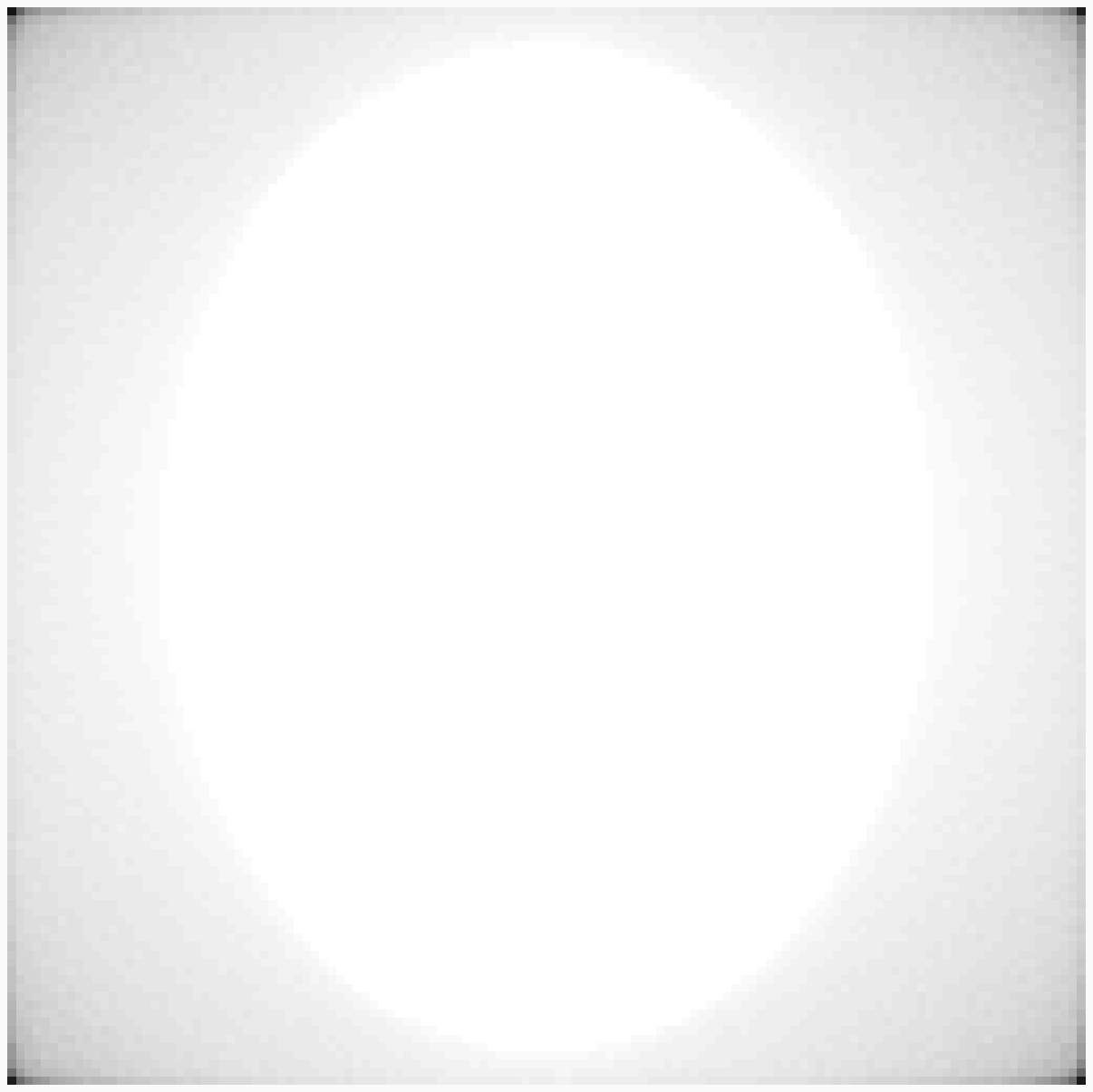}\\
\rotatebox{90}{\quad[0:8:179]}&\includegraphics[scale=0.2]{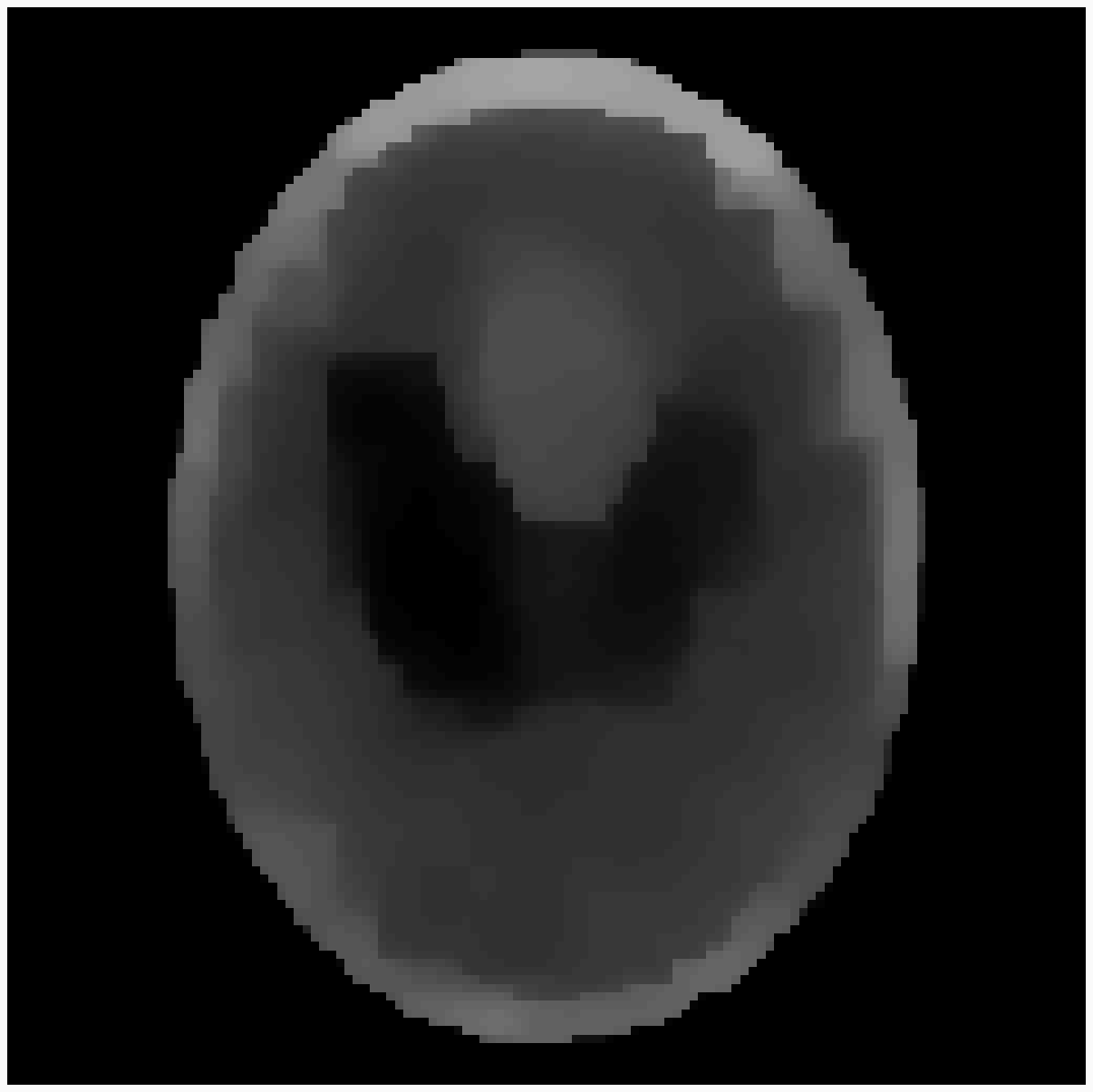} & \includegraphics[scale=0.2]{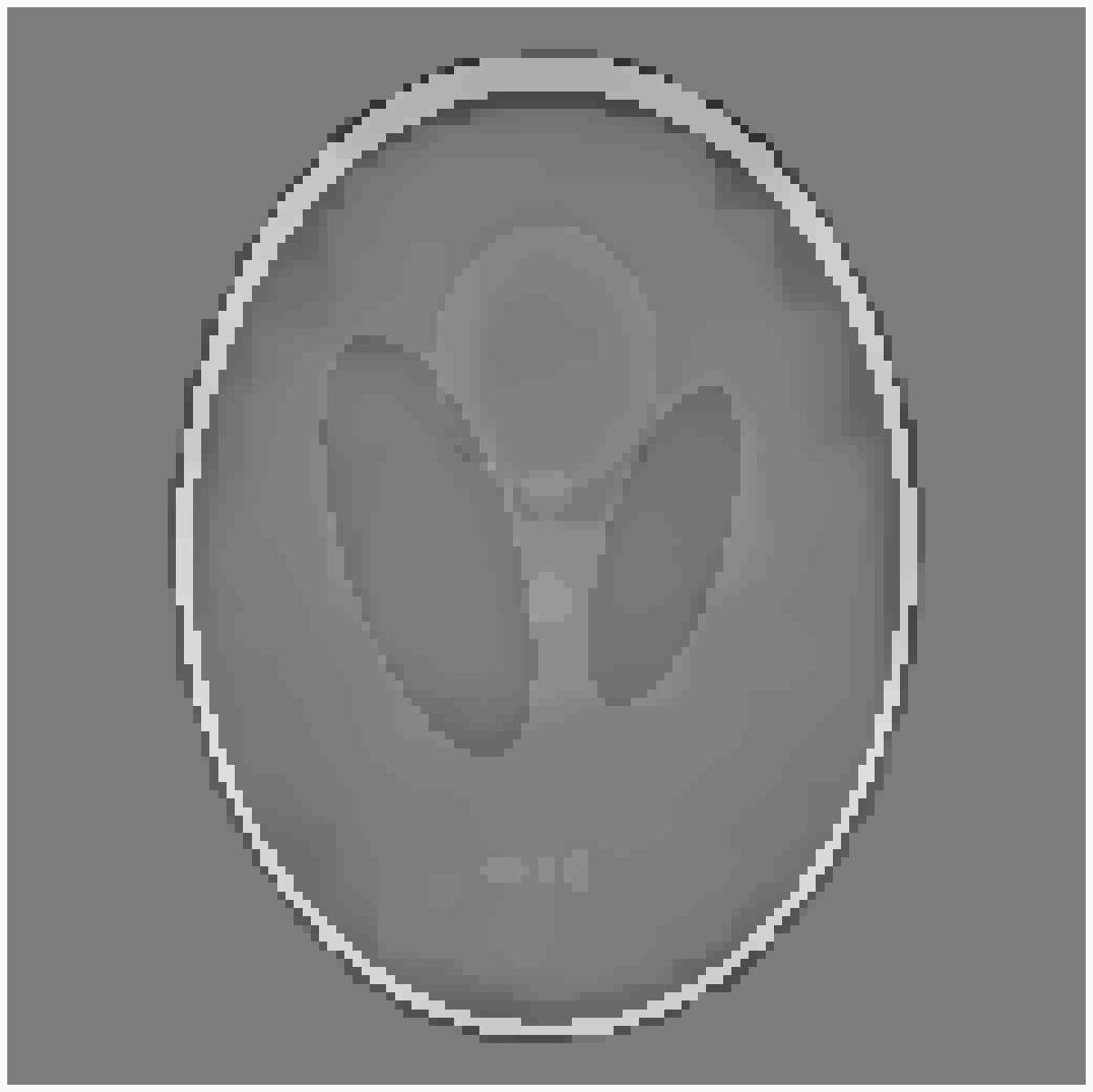} & \includegraphics[scale=0.2]{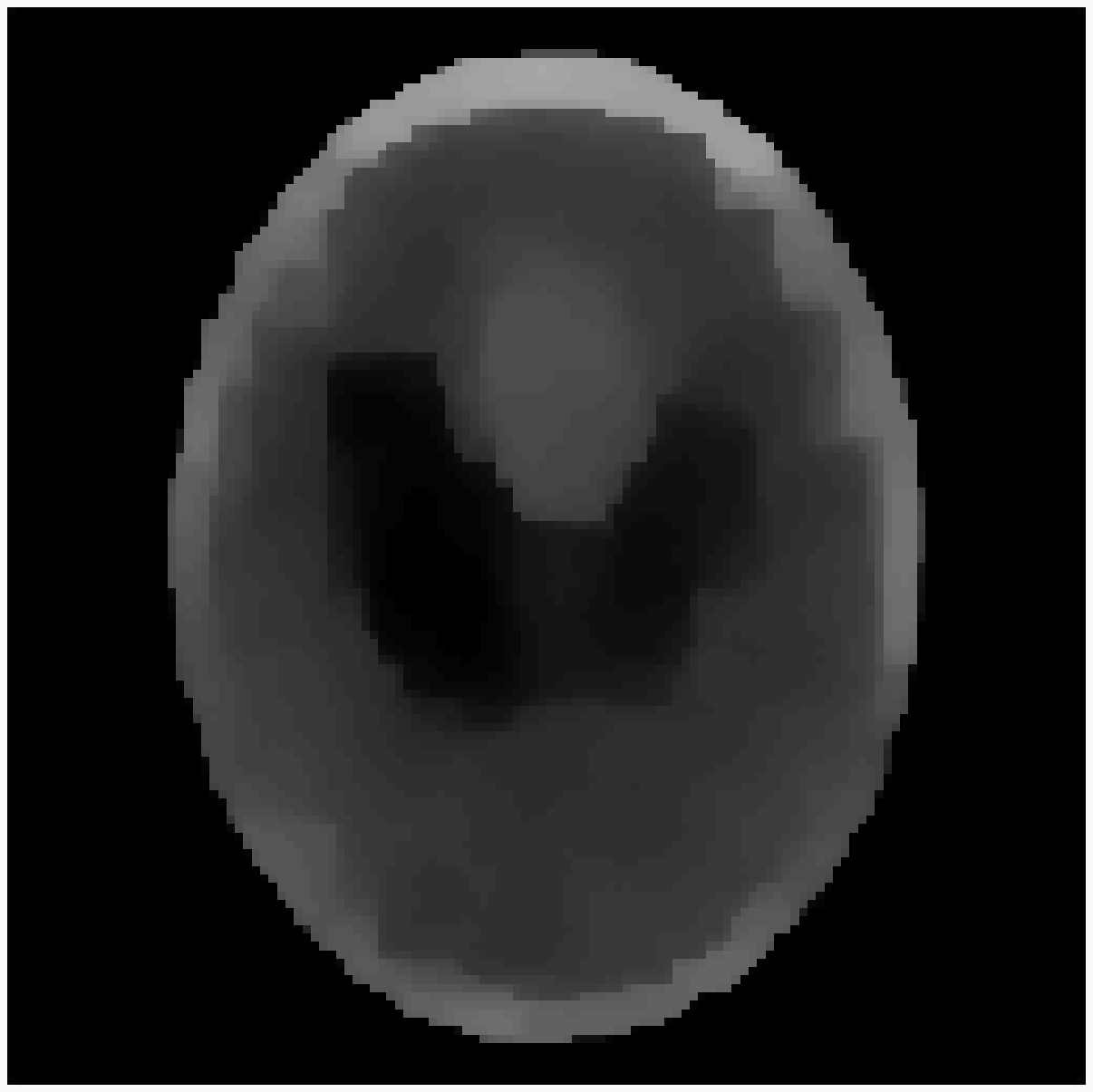} & \includegraphics[scale=0.2]{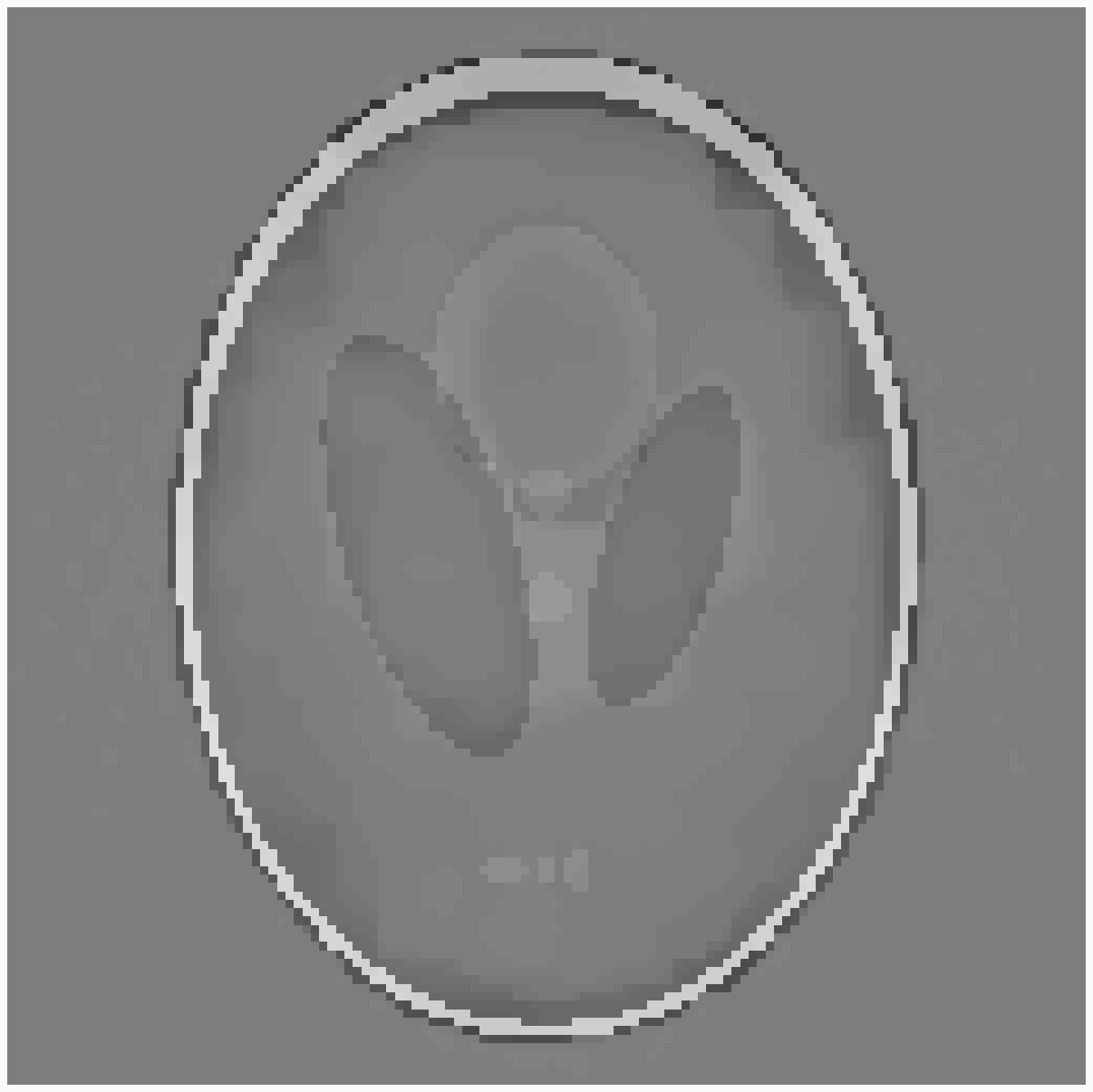} & \includegraphics[scale=0.2]{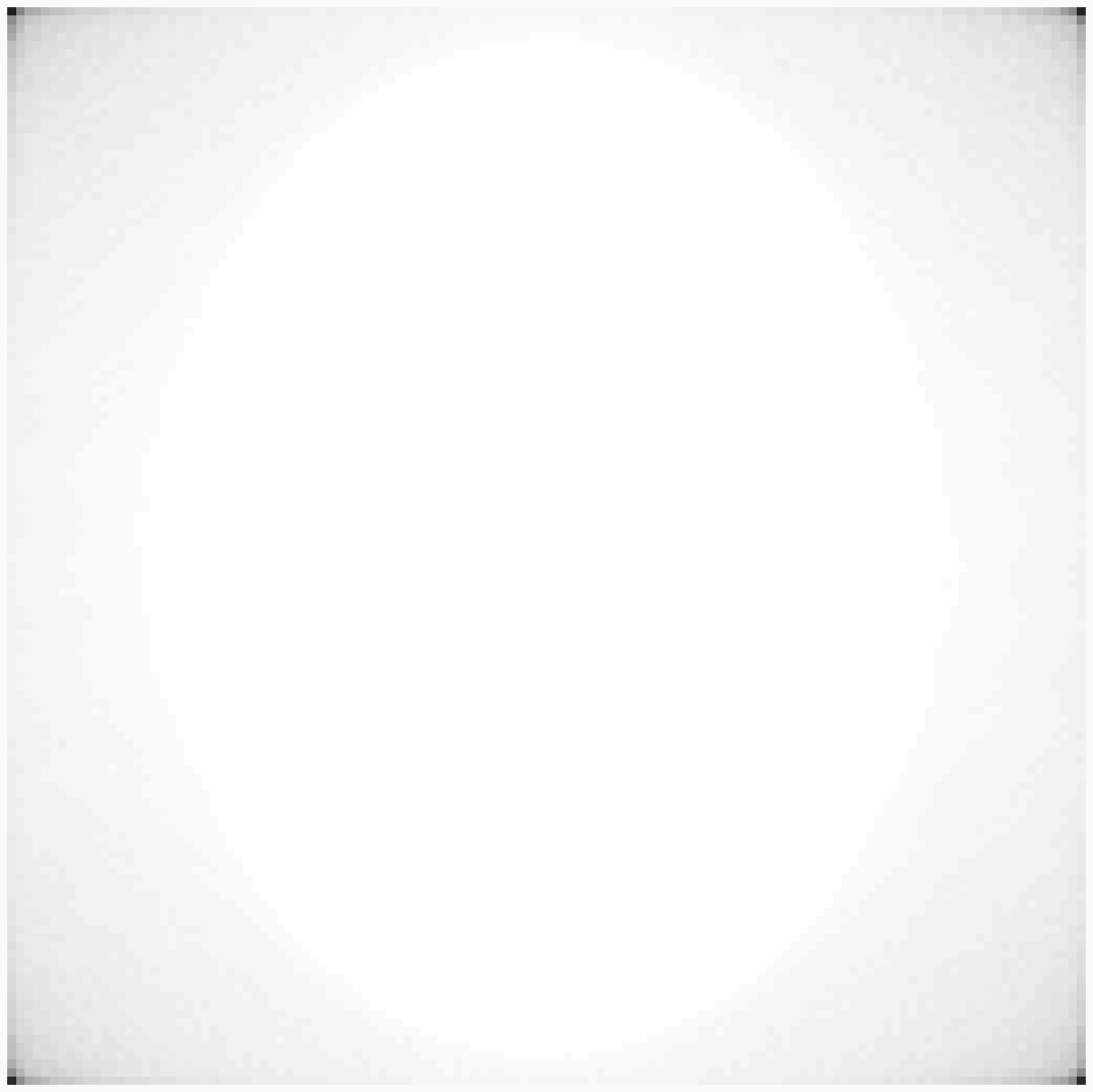}\\
&MAP & MAP error & EP mean & EP error & EP variance
\end{tabular}
\caption{MAP vs EP with anisotropic TV prior for the \texttt{Shepp-Logan} phantom, moderate count case.\label{fig:SL_248}}	
\end{figure}

\begin{figure}[hbt!]
\centering
\begin{tabular}{rccccccc}
 & \includegraphics[scale=0.2]{bar_1_h} & \includegraphics[scale=0.2]{bar_1_1_h} & \includegraphics[scale=0.2]{bar_1_h} & \includegraphics[scale=0.2]{bar_1_1_h} & \includegraphics[scale=0.2]{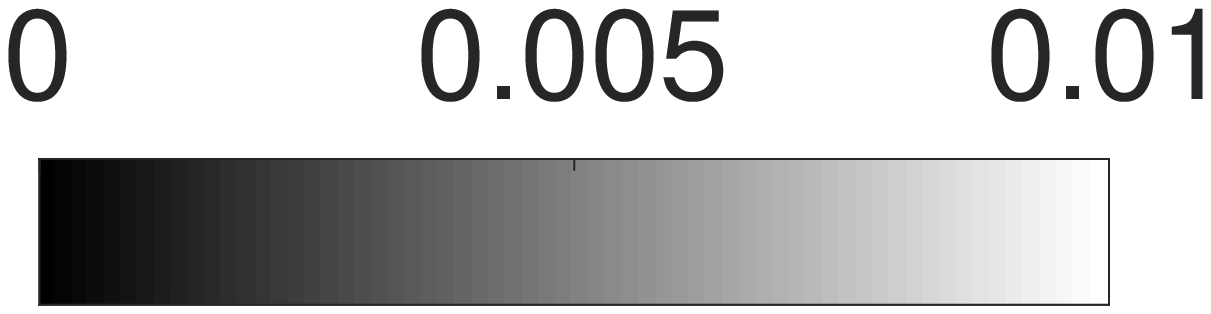}\\
\rotatebox{90}{\quad[0:2:179]}&\includegraphics[scale=0.2]{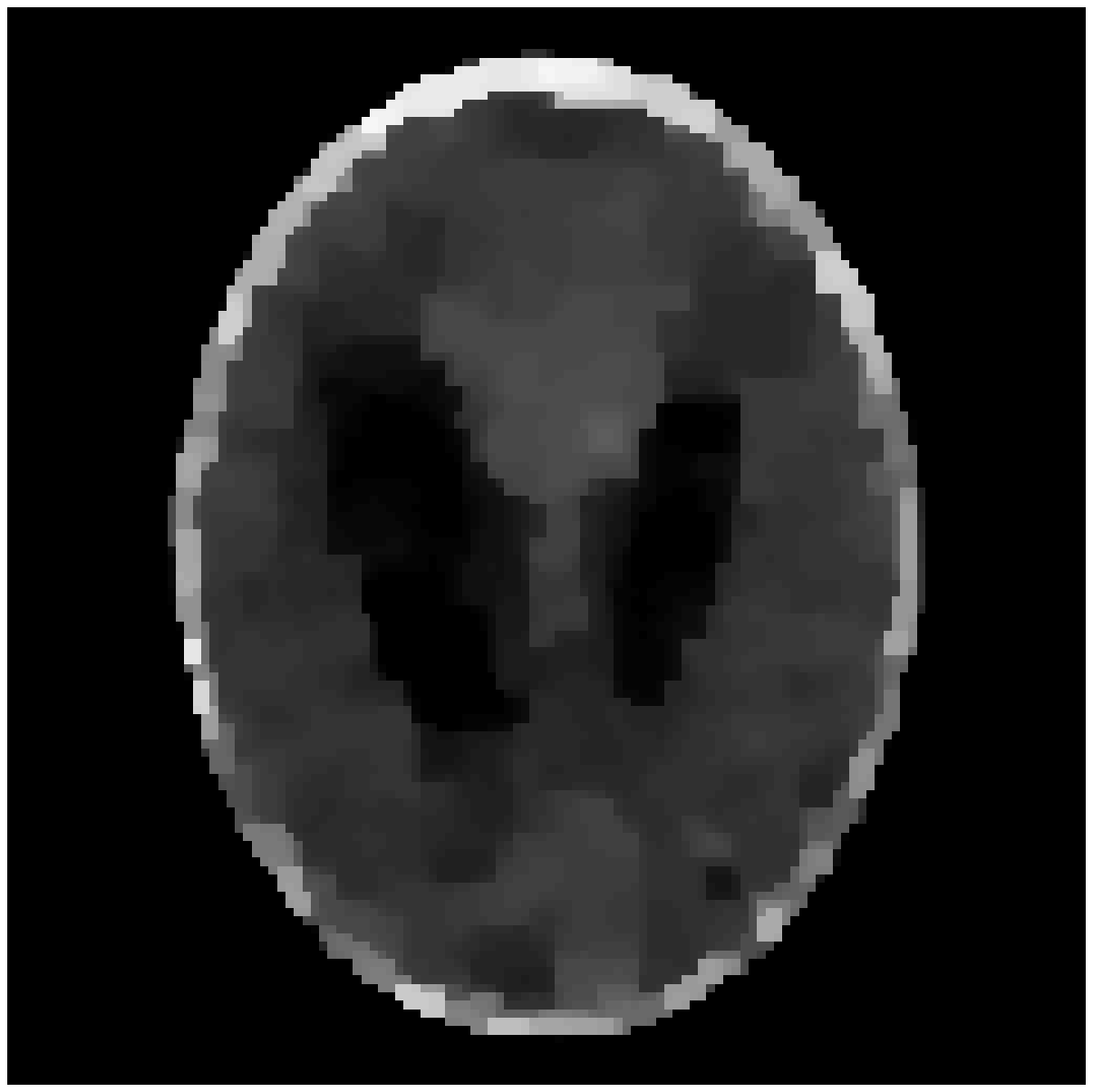} & \includegraphics[scale=0.2]{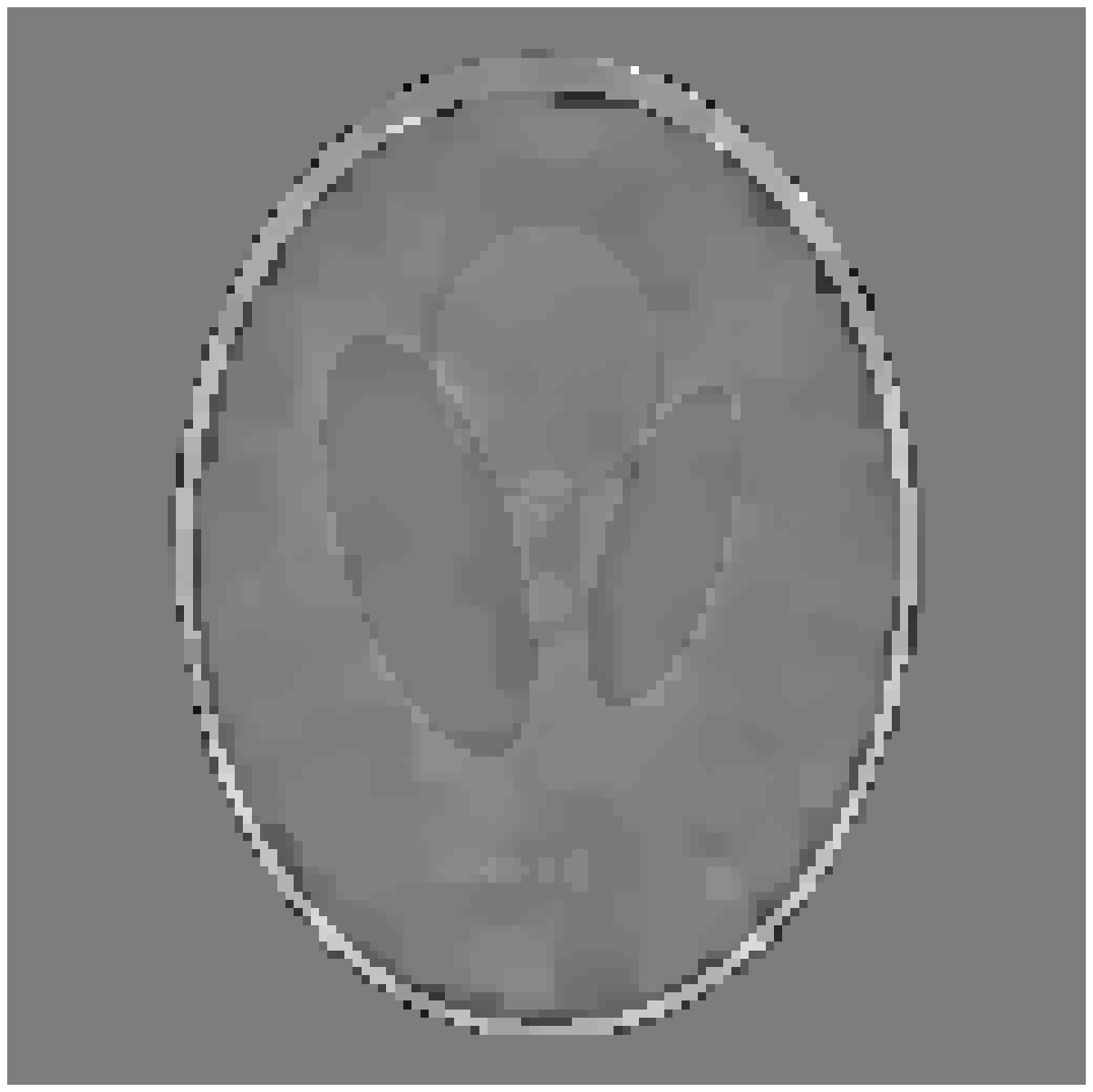} & \includegraphics[scale=0.2]{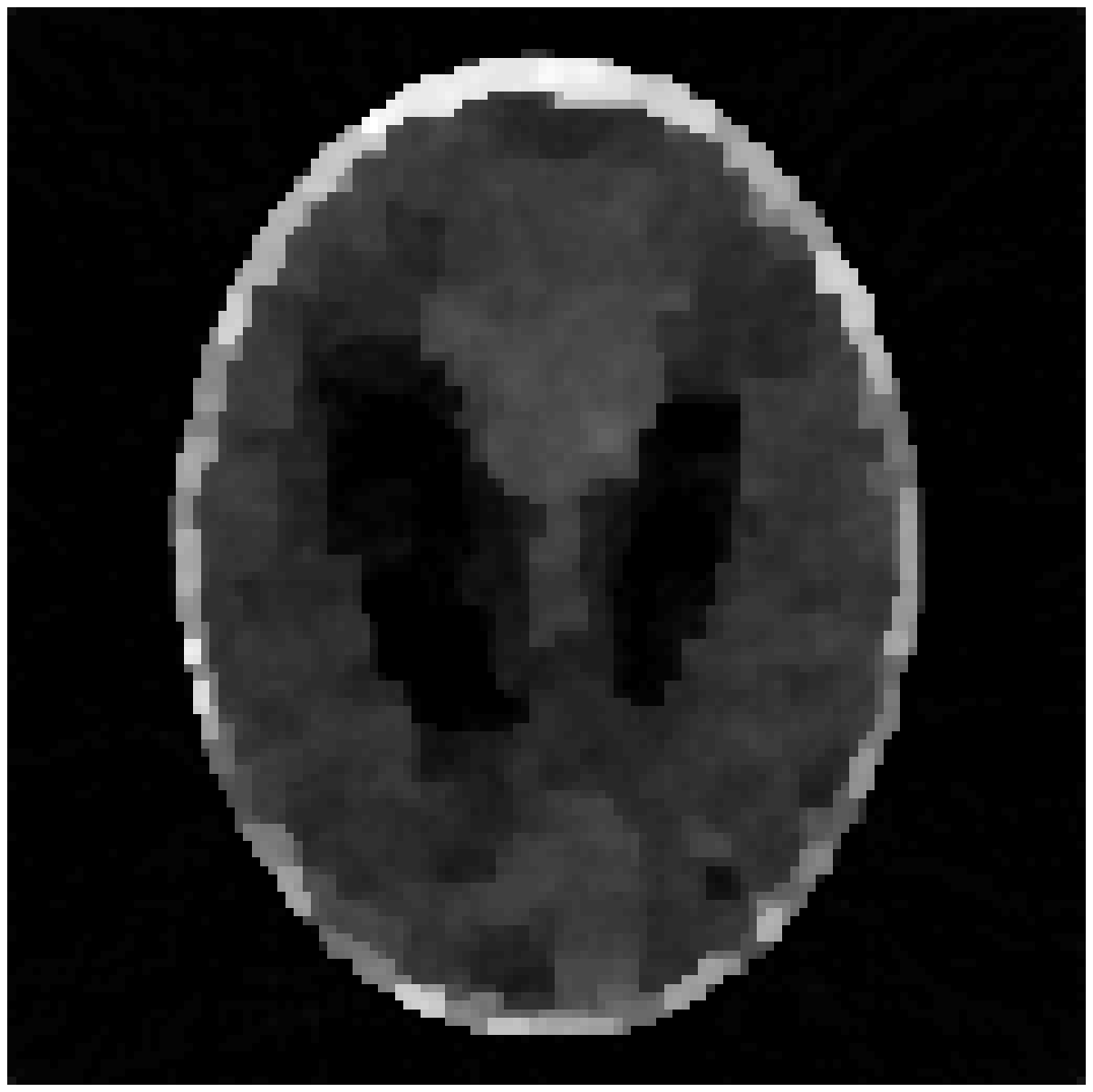} & \includegraphics[scale=0.2]{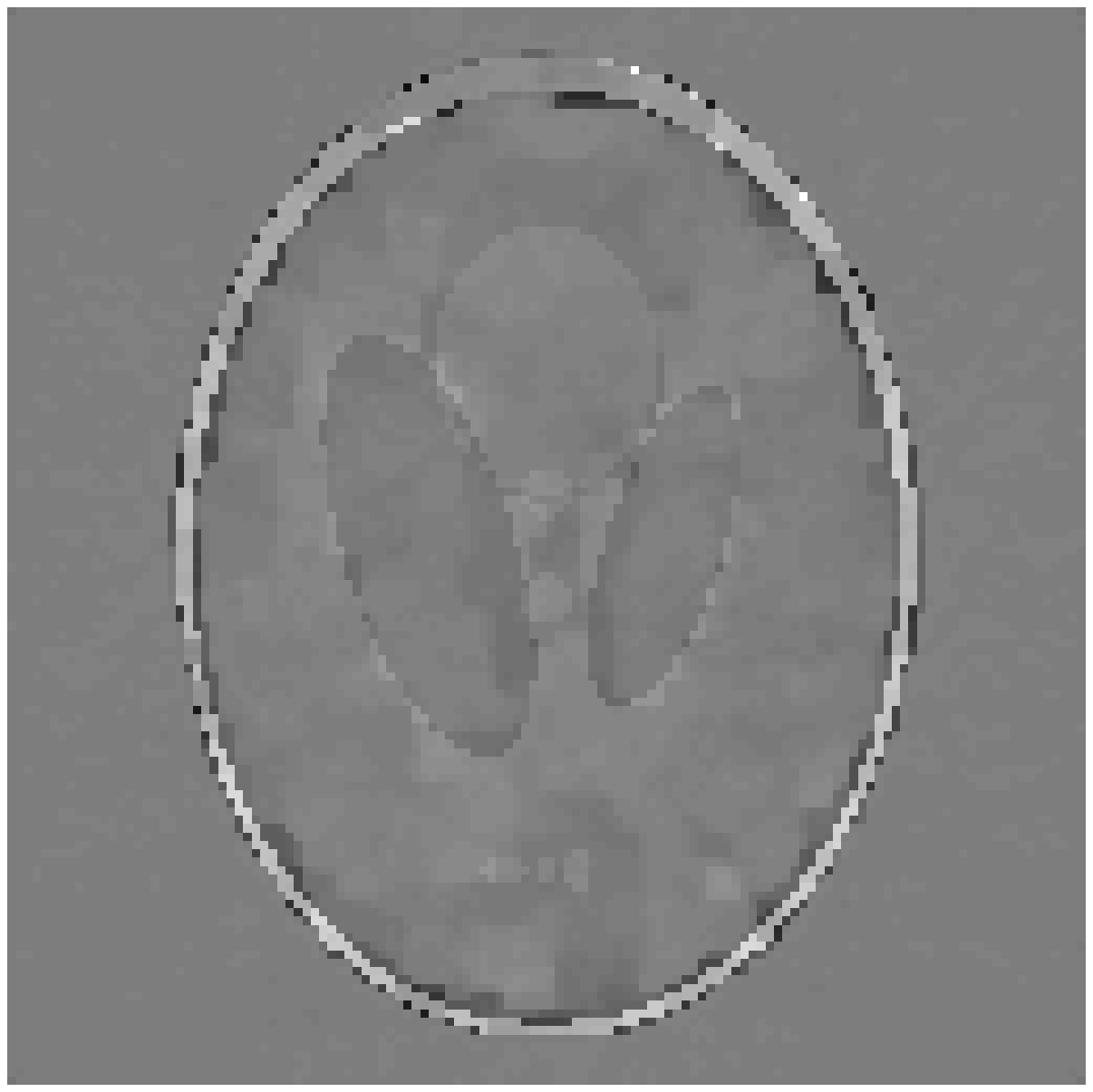} & \includegraphics[scale=0.2]{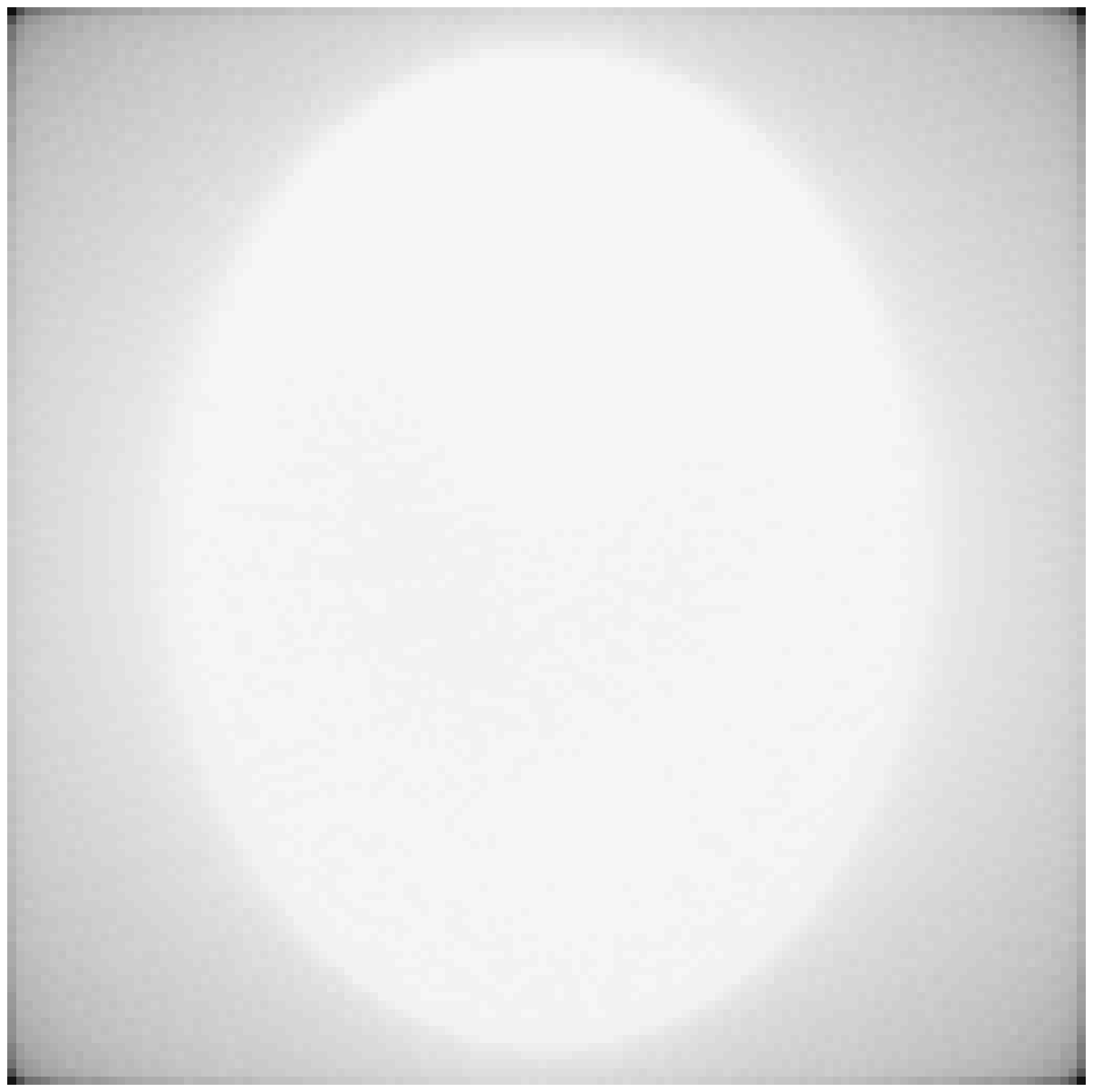}\\
\rotatebox{90}{\quad[0:4:179]}&\includegraphics[scale=0.2]{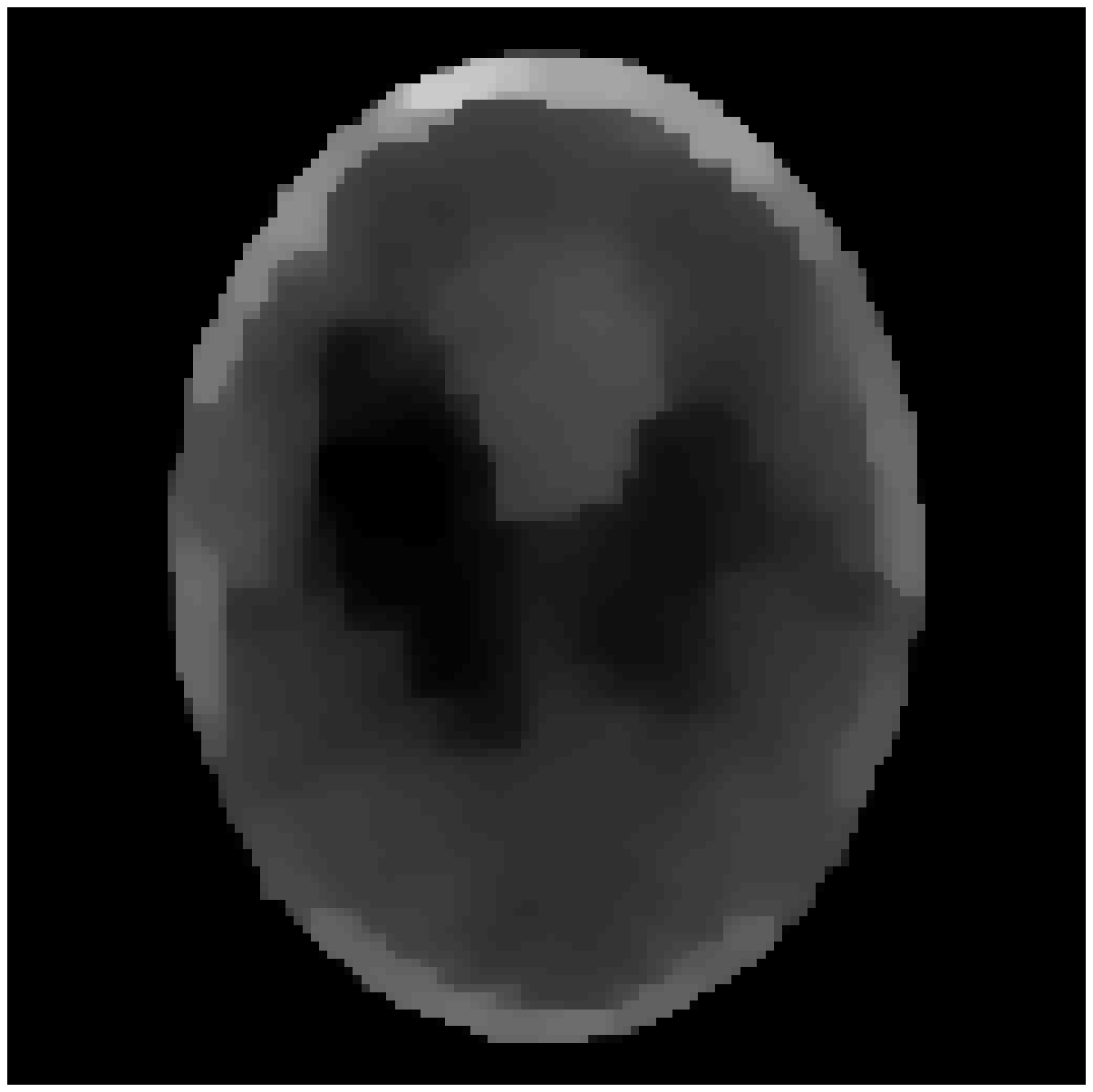} & \includegraphics[scale=0.2]{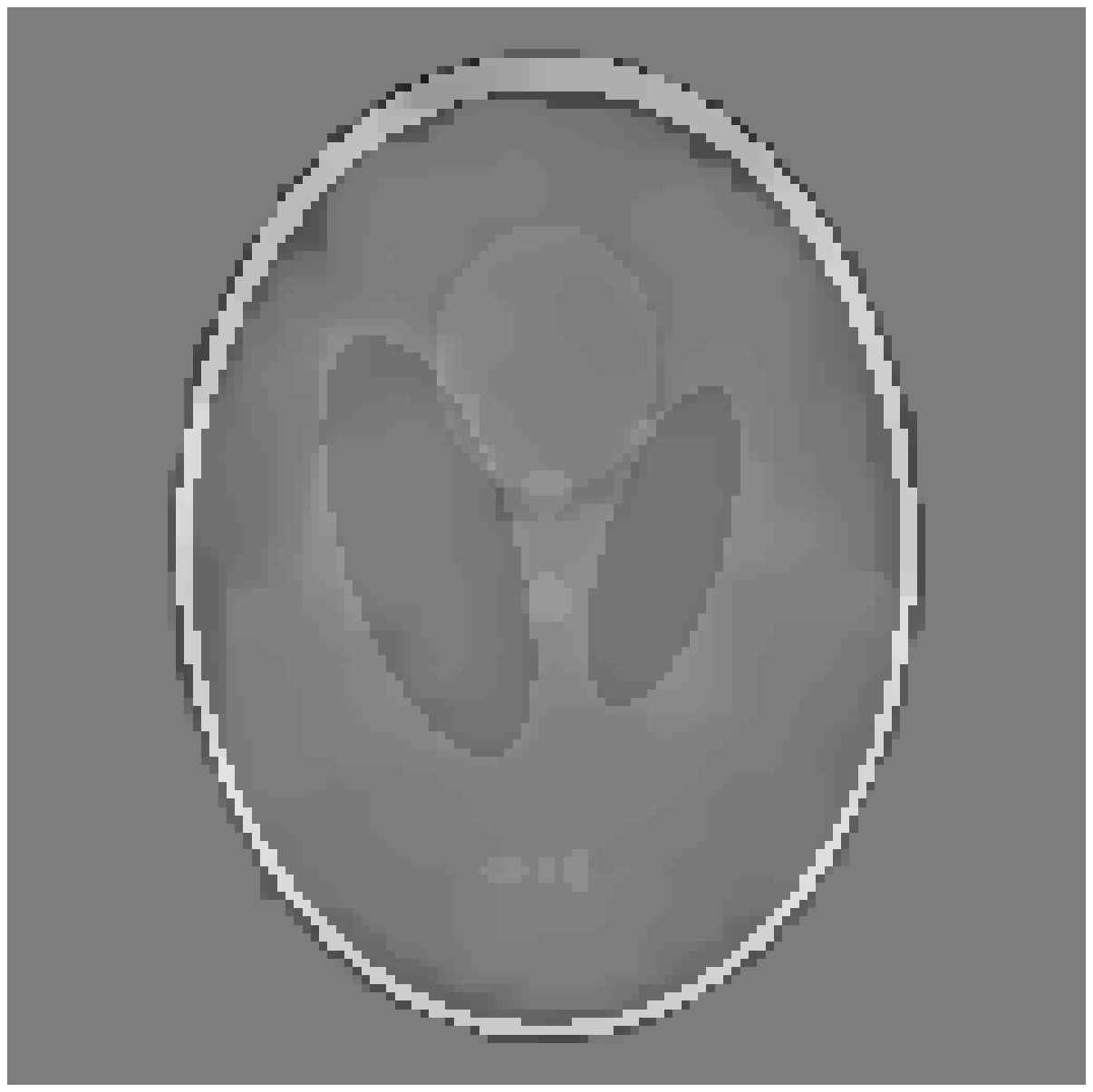} & \includegraphics[scale=0.2]{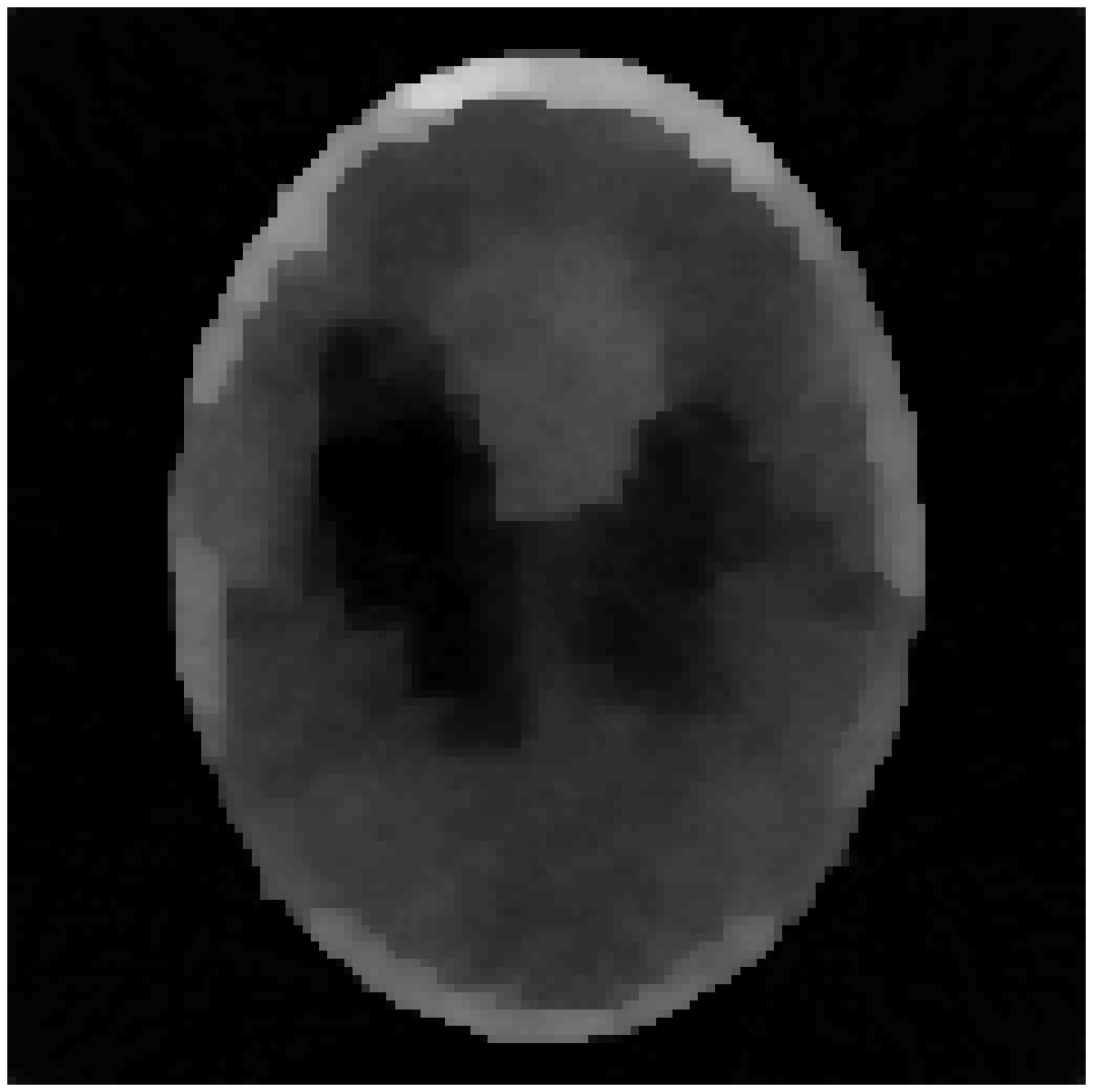} & \includegraphics[scale=0.2]{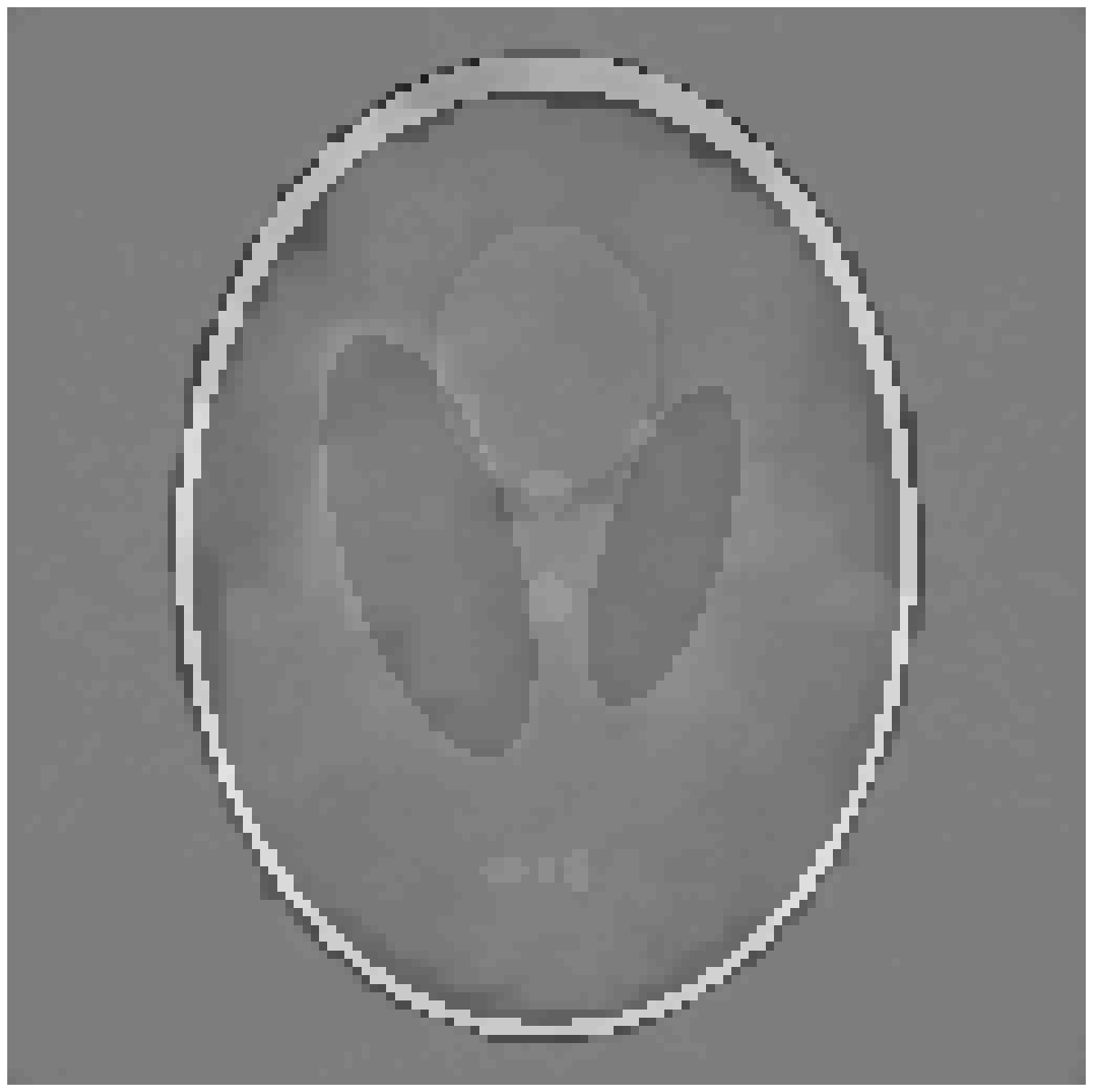} & \includegraphics[scale=0.2]{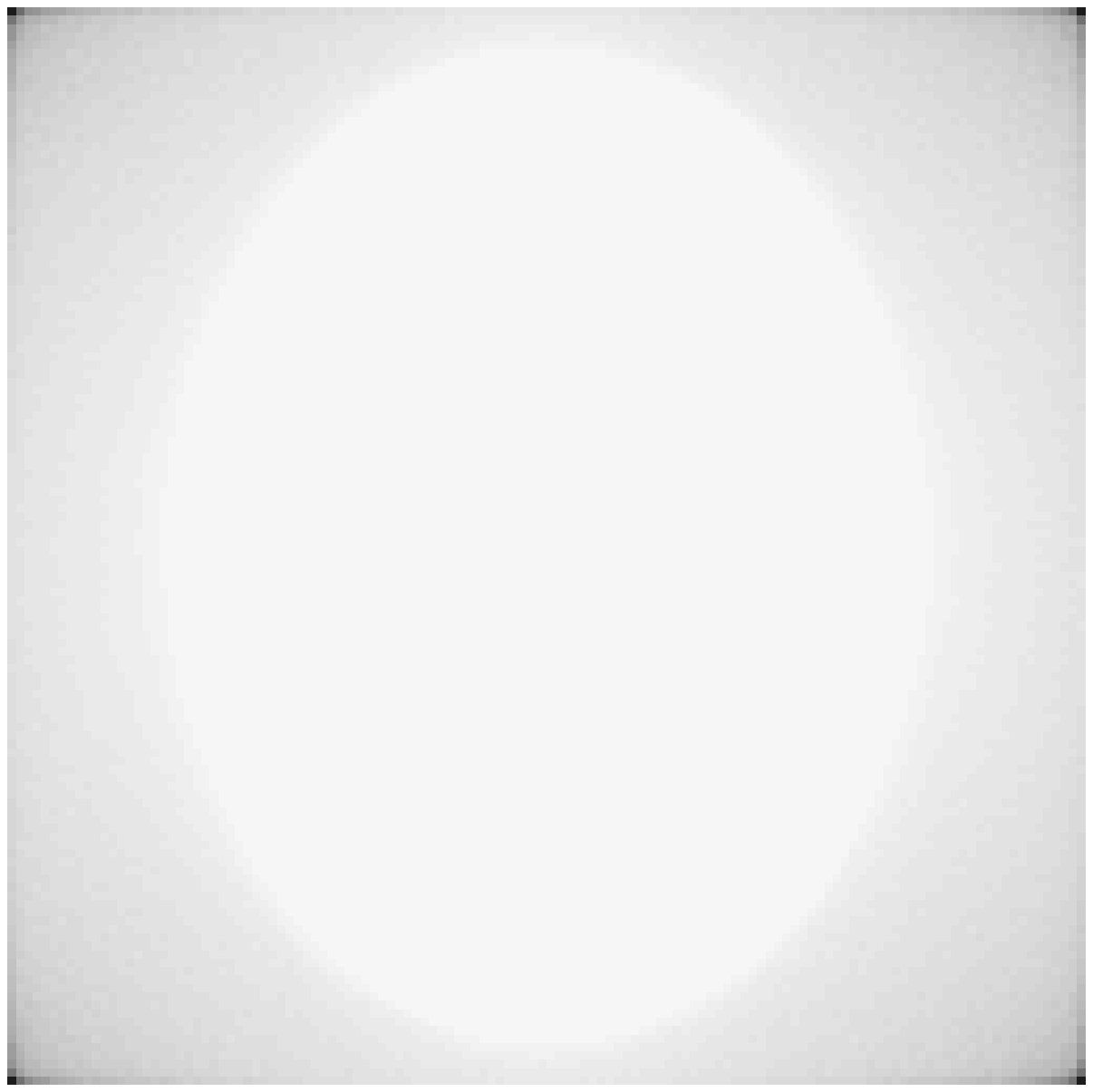}\\
\rotatebox{90}{\quad[0:8:179]}&\includegraphics[scale=0.2]{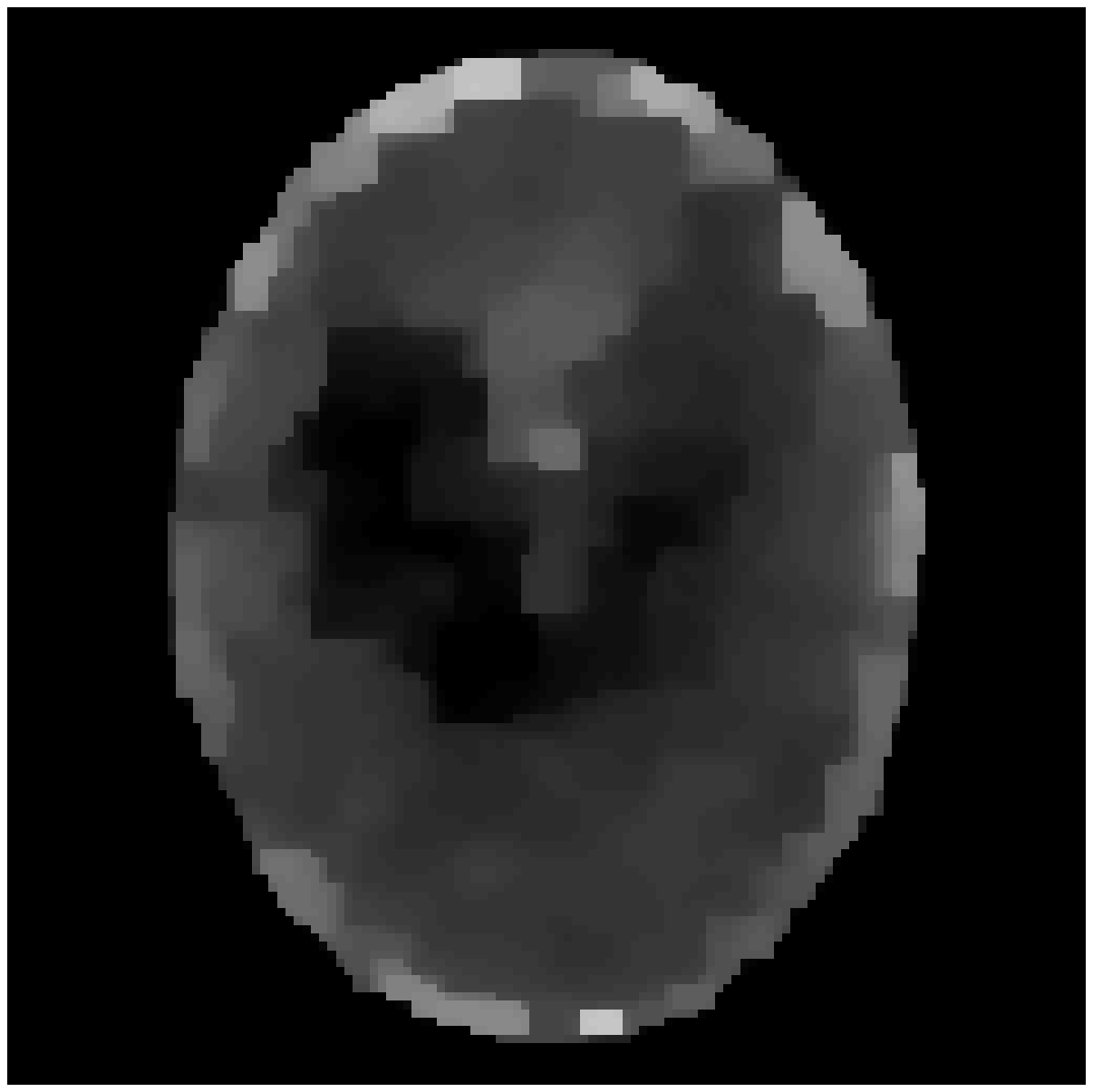} & \includegraphics[scale=0.2]{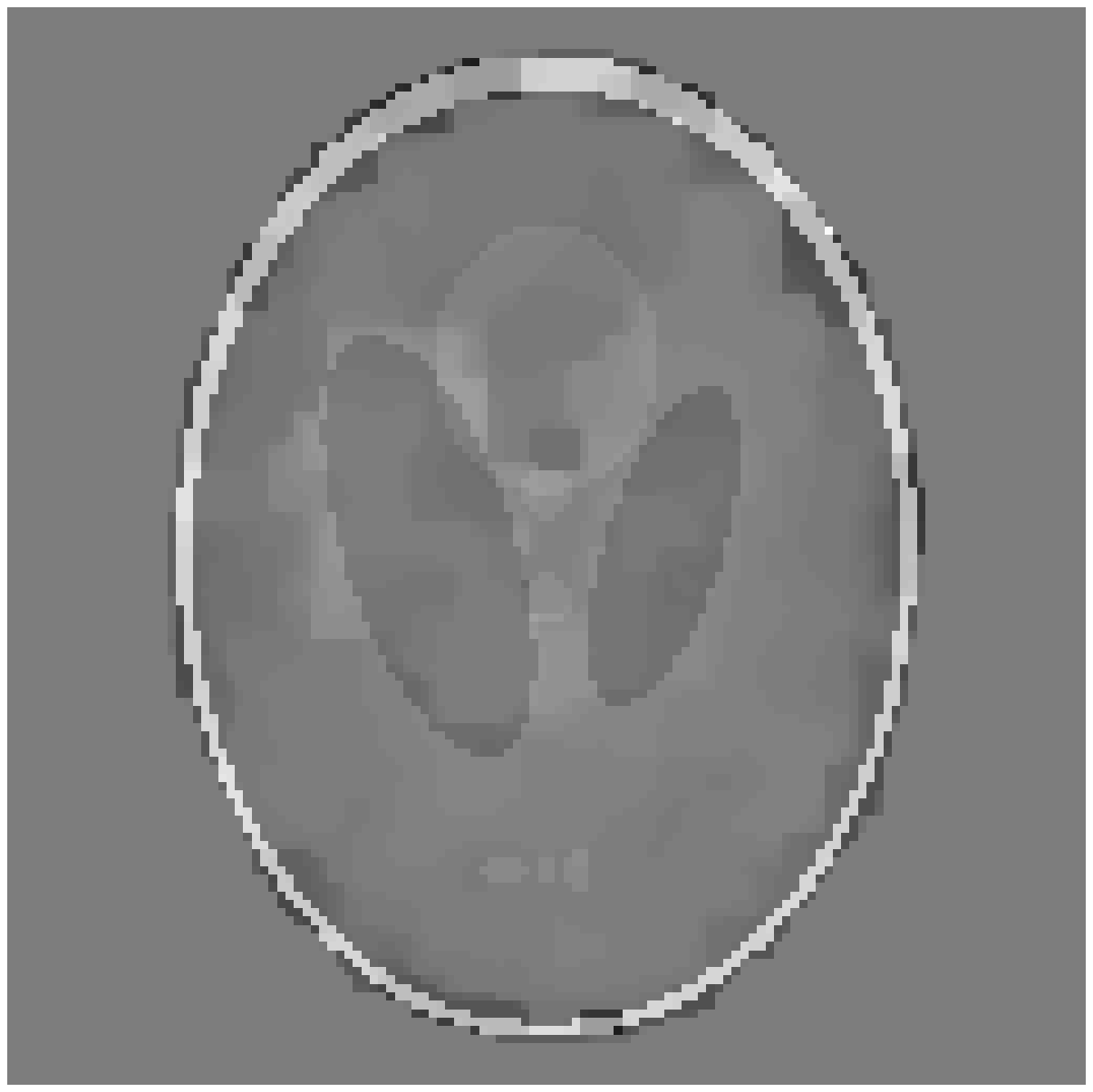} & \includegraphics[scale=0.2]{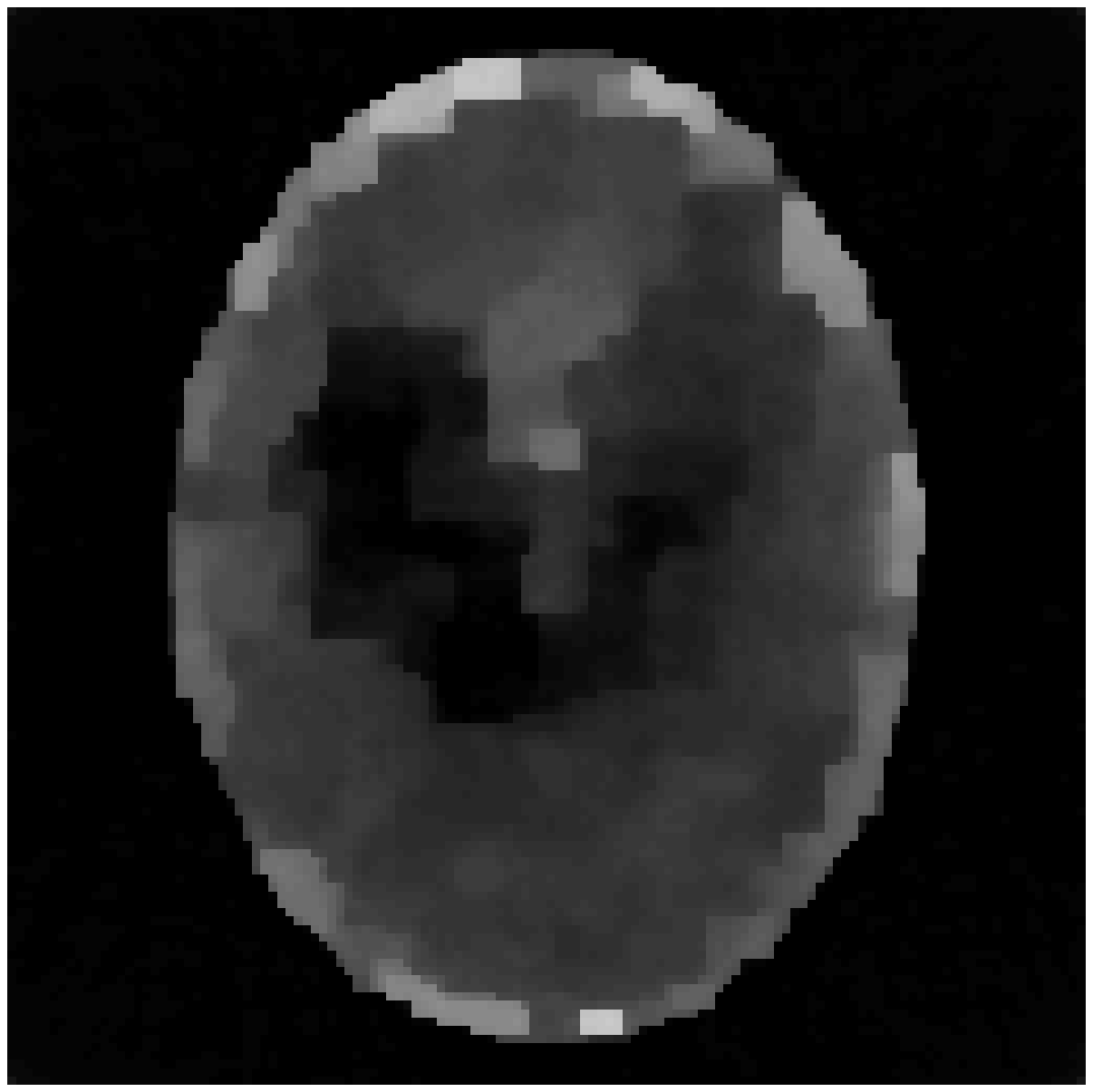} & \includegraphics[scale=0.2]{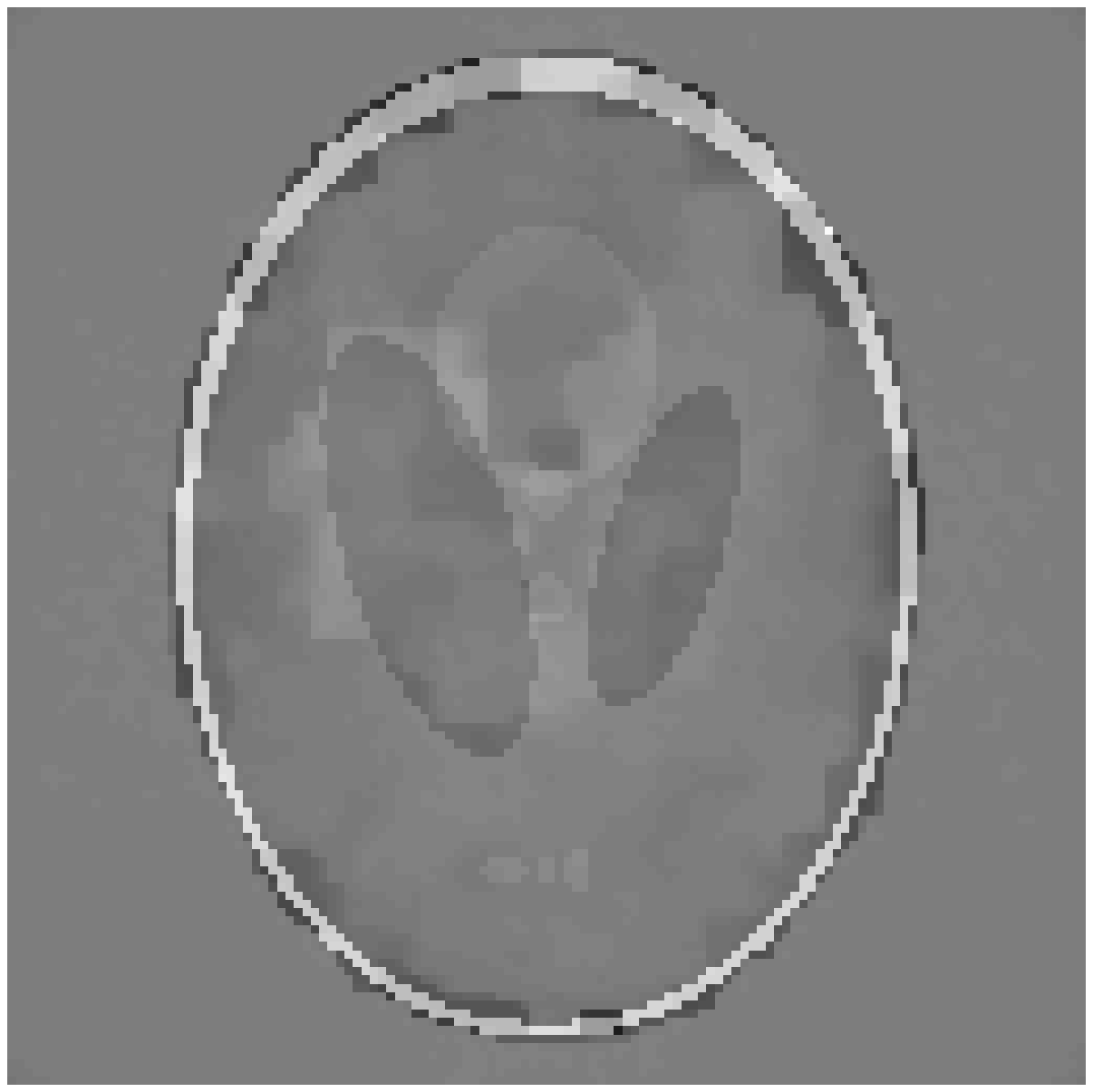} & \includegraphics[scale=0.2]{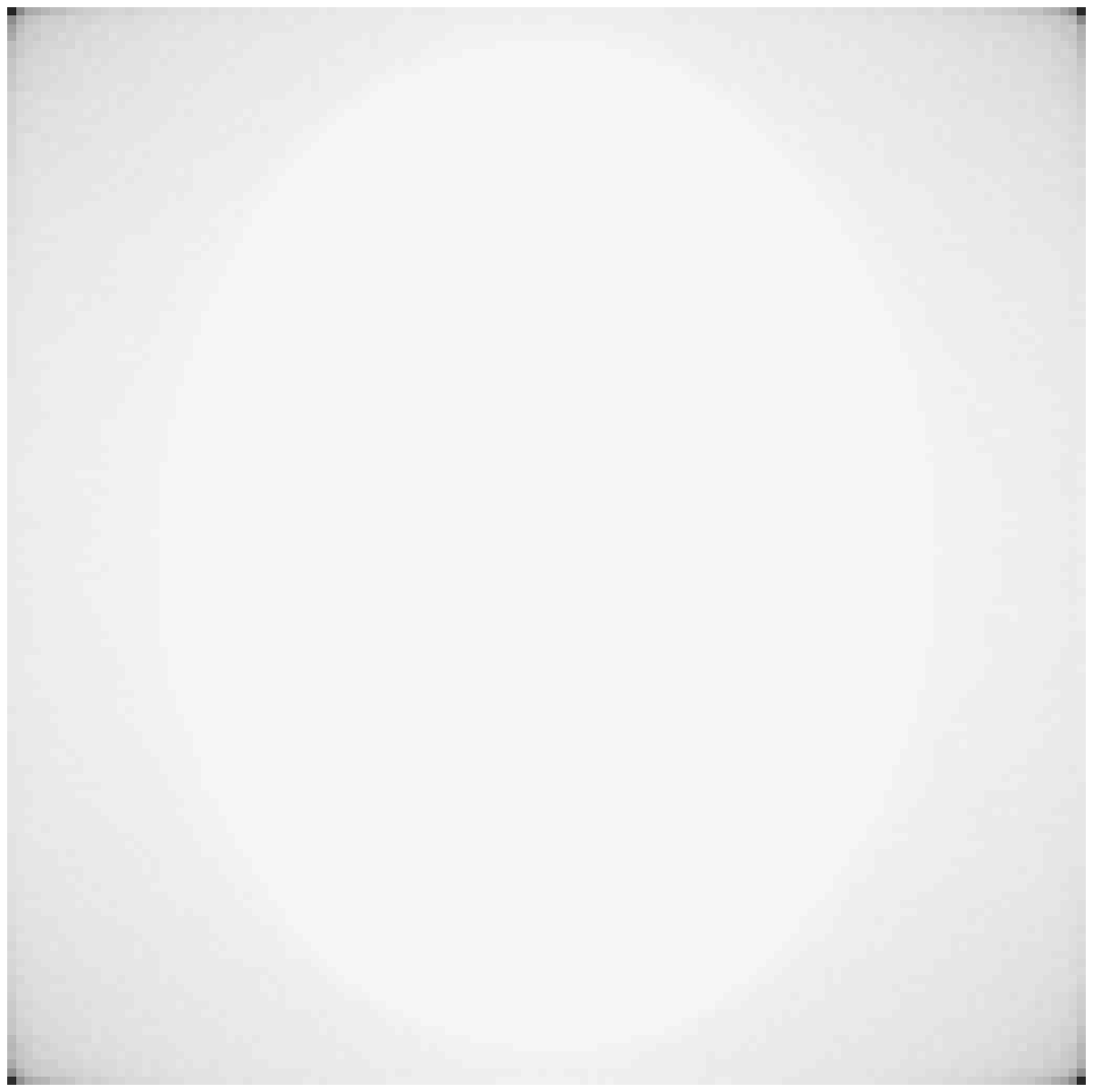}\\
&MAP & MAP error & EP mean & EP error & EP variance
\end{tabular}
\caption{MAP vs EP with anisotropic TV prior for the \texttt{Shepp-Logan} phantom, low count case.\label{fig:LC_SL_248}}	
\end{figure}

First, we take simulated data: the ground-truth images are \texttt{Shepp-Logan} and \texttt{PET}
\cite{ehrhardt2014joint} phantoms of size $128\times 128$; The map $A$ is a discrete Radon transform,
formed using \texttt{MATLAB} built-in function \texttt{radon} with $185$ projections per angle and
three different angle settings, i.e., $[0:2:179]$, $[0:4:179]$ and $[0:8:179]$, and accordingly, the
matrix $A$ is of size $A\in\mathbb{R}^{16650\times 16384}$, $A\in\mathbb{R}^{8325\times 16384}$ and
$A\in\mathbb{R}^{4255\times 16384}$. For each image, we consider two count levels:
the moderate count case is obtained from $A$, and the low count case from $A/3$ (so that the measured
counts are mostly below 10). The original image, sinogram and observed Poisson data are shown
in Figs. \ref{fig:SL_xby} and \ref{fig:ME_xby} for Shepp-Logan and PET, respectively. The numerical
results are summarized in Tables \ref{tab:SL_quant} and \ref{tab:ME_quant}, Figs.
\ref{fig:SL_248}--\ref{fig:LC_SL_248} and Figs. \ref{fig:ME_248}--\ref{fig:LC_ME_248}. The EP mean is mostly
comparable with MAP in all three metrics for both moderate count and low count cases, and the
reconstruction quality improves steadily as the number of projection angles increases. Interestingly,
the shape of the EP variance resembles closely the outer boundary of the phantom, whereas within the
boundary, there is little difference in the magnitudes. This might indicate that the
algorithm is rather certain in the cold regions where the error is close to zero and more uncertain
about the region where the error is potentially larger. It is observed that the computational complexity of
the EP grows with the amount of the data. This is attributed to the following fact: the number of sweeps is
fixed at four, and the complexity increases with the number of projection angles. Since the computing 
time is presented only for one reconstruction at each case, these numbers should be viewed
as a representative instead of an absolute measure for algorithmic performance. Roughly, EP is about two
orders of magnitude more expensive than the MAP approach (computed by limited memory BFGS \cite{LiuNocedal:1989}).

The Poisson model is especially useful for low count data, where a naive Gaussian approximation can fail
to give reasonable reconstructions. The EP results for the low-count case are shown in Figs.
\ref{fig:LC_SL_248} and \ref{fig:LC_ME_248}. Just as expected, the reconstruction accuracy deteriorates
as the count level decreases. Nonetheless, the EP means remain largely comparable with MAP results both
qualitatively and quantitatively. Note
that for the \texttt{PET} image, the reconstruction accuracy for both EP and MAP suffers significantly
in that the fine details such as vertical bars in the true image disappear, especially when the number of
projection angles is small. The computing times for the moderate count and low count cases are nearly the
same; see Tables \ref{tab:SL_quant} and \ref{tab:ME_quant}. Thus, EP is still feasible for the low-count case.

\begin{table}[hbt!]
\centering
\caption{Comparisons between EP mean and MAP for the \texttt{Shepp-Logan} phantom. The top and bottom
blocks refer to the moderate count and  low count cases, respectively.\label{tab:SL_quant}}
\begin{tabular}{|l|c|c|c|c|c|c|}
 \hline
 angle       & \multicolumn{2}{|c|}{[0:2:179]} & \multicolumn{2}{|c|}{[0:4:179]} & \multicolumn{2}{|c|}{[0:8:179]}\\
 \hline
$\alpha$ & \multicolumn{2}{|c|}{6e0} & \multicolumn{2}{|c|}{4e0} & \multicolumn{2}{|c|}{3e0}\\
 \hline
Method   & EP &  MAP & EP & MAP & EP & MAP\\
 \hline
$L^2$ error & 5.32  & 5.36  & 5.64  & 5.67  & 6.09  & 6.11\\
 \hline
SSIM     & 0.74  & 0.78  & 0.70  & 0.75  & 0.67  & 0.72\\
 \hline
PSNR     & 18.58 & 18.53 & 17.97 & 17.93 & 17.29 & 17.27\\
 \hline
CPU time (s) & 80187.88 & 124.44 & 46031.95 & 55.55 & 29274.16 & 27.23\\
 \hline
 \hline
 \hline
$\alpha$ & \multicolumn{2}{|c|}{1.3e0} & \multicolumn{2}{|c|}{2e0} & \multicolumn{2}{|c|}{1e0}\\
 \hline
Method   & EP &  MAP & EP & MAP & EP & MAP\\
 \hline
$L^2$ error & 4.07  & 4.09  & 6.15  & 6.24  & 6.14  & 6.19\\
 \hline
SSIM     & 0.57  & 0.79  & 0.51  & 0.72  & 0.48  & 0.70\\
 \hline
PSNR     & 19.50 & 19.47 & 17.53 & 17.42 & 17.18 & 17.15\\
 \hline
CPU time (s) & 82125.92 & 42.25 & 47110.50 & 29.69 & 29756.10 & 15.20\\
 \hline
\end{tabular}
\end{table}

\begin{figure}[htb!]
\centering
\begin{tabular}{ccccccccc}
\includegraphics[scale=0.2]{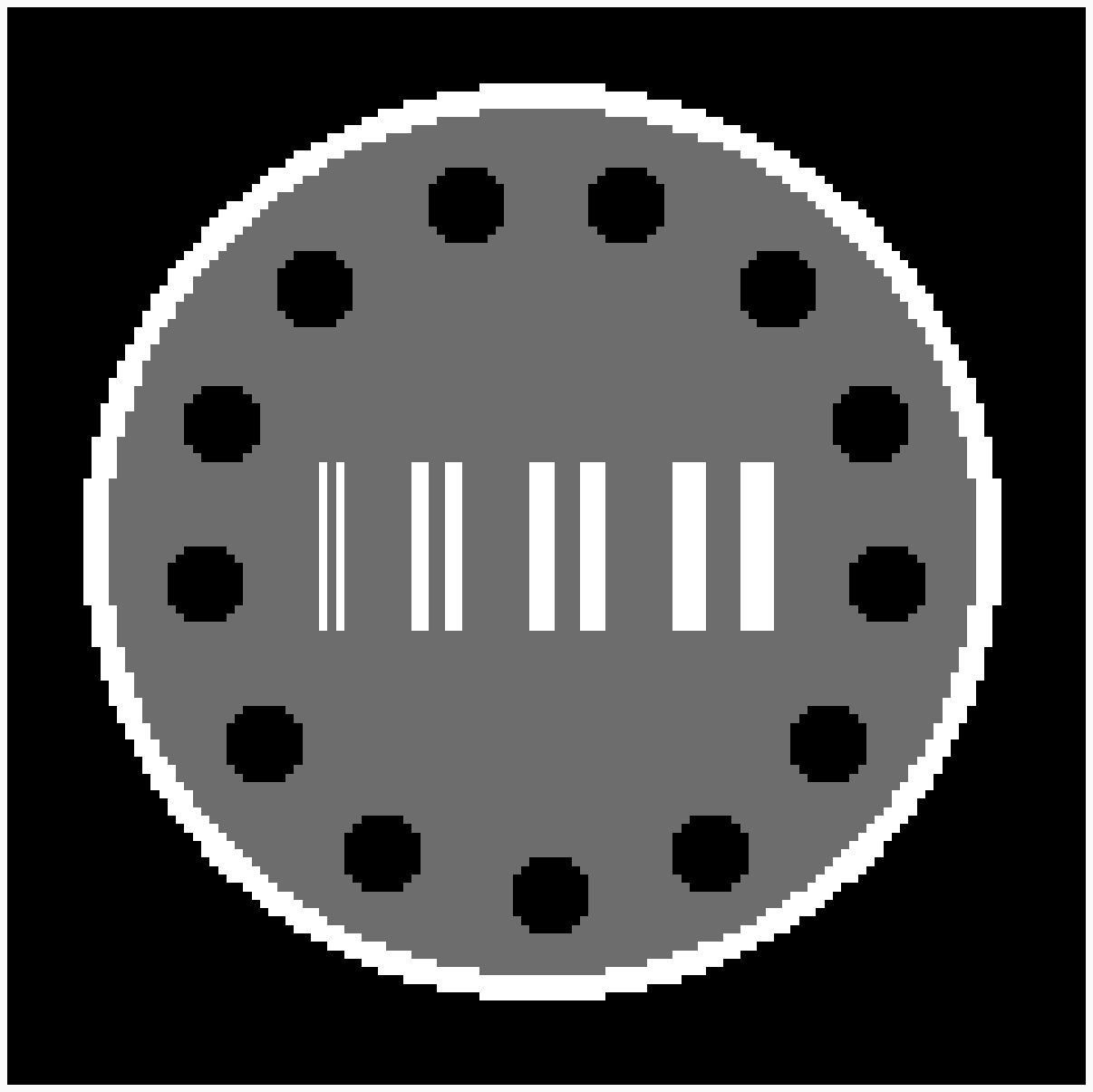}  \includegraphics[scale=0.2]{bar_1} & \includegraphics[scale=0.2]{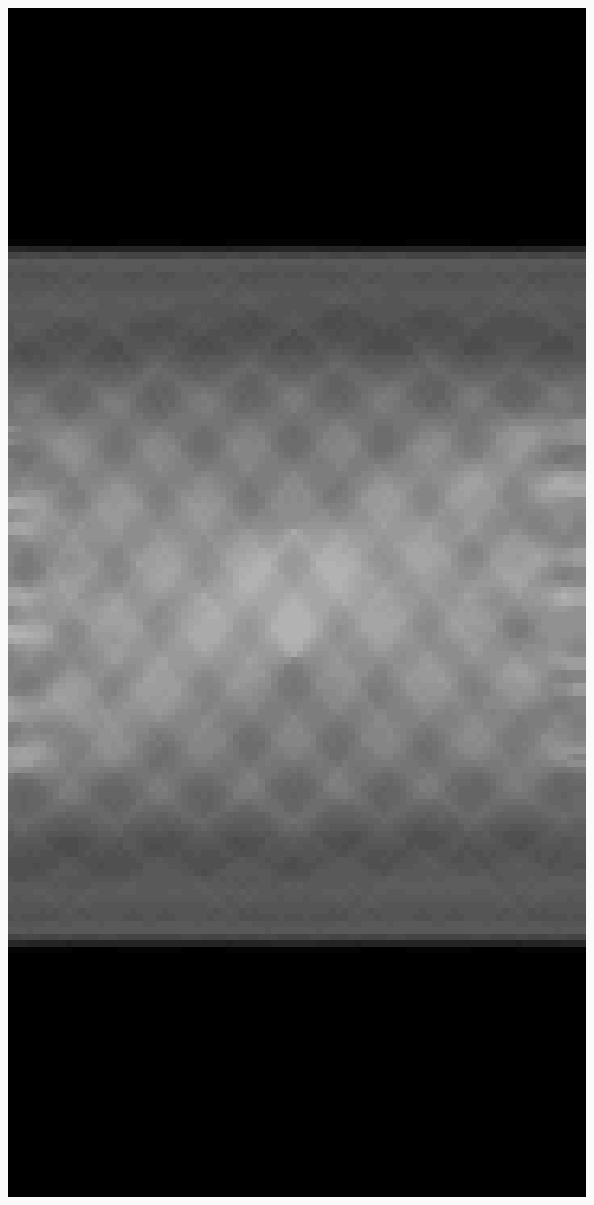}  \includegraphics[scale=0.2]{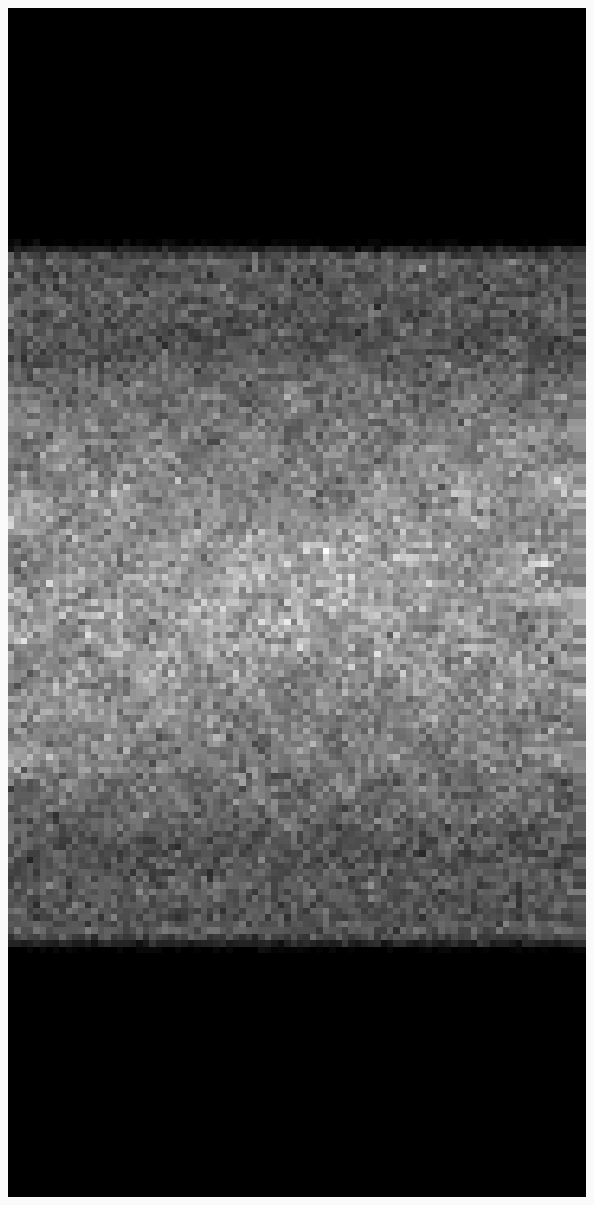} & \includegraphics[scale=0.2]{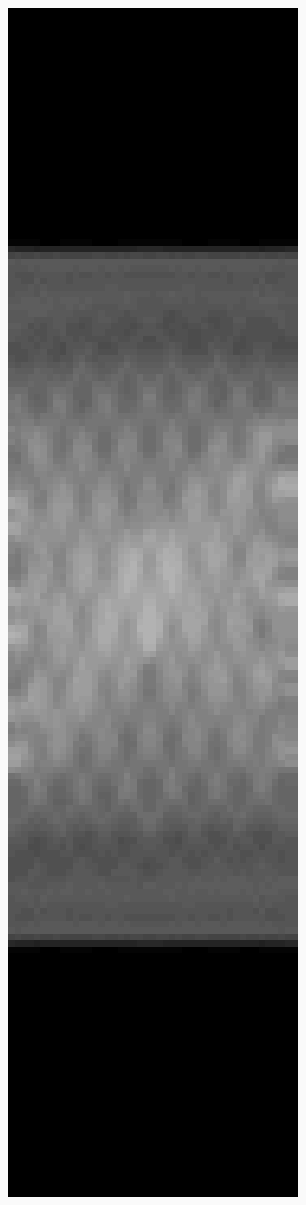}  \includegraphics[scale=0.2]{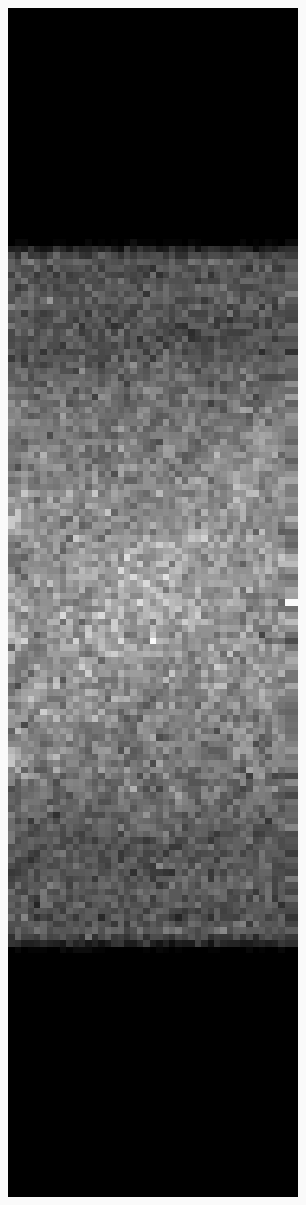} & \includegraphics[scale=0.2]{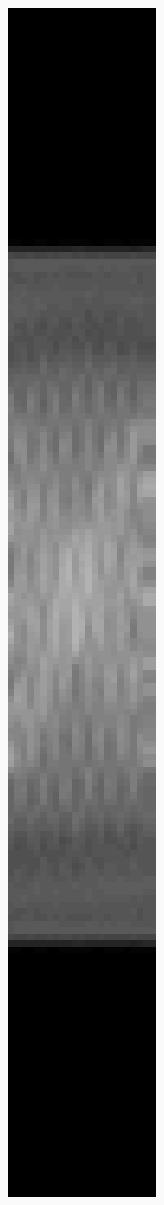}  \includegraphics[scale=0.2]{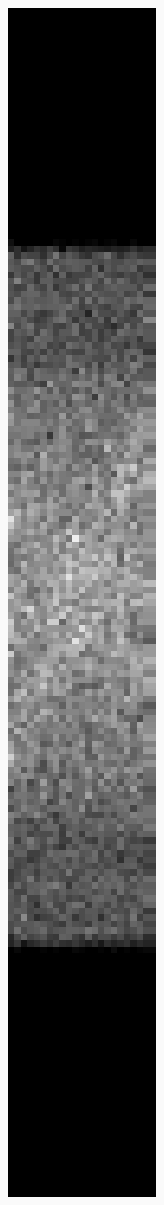} & \includegraphics[scale=0.2]{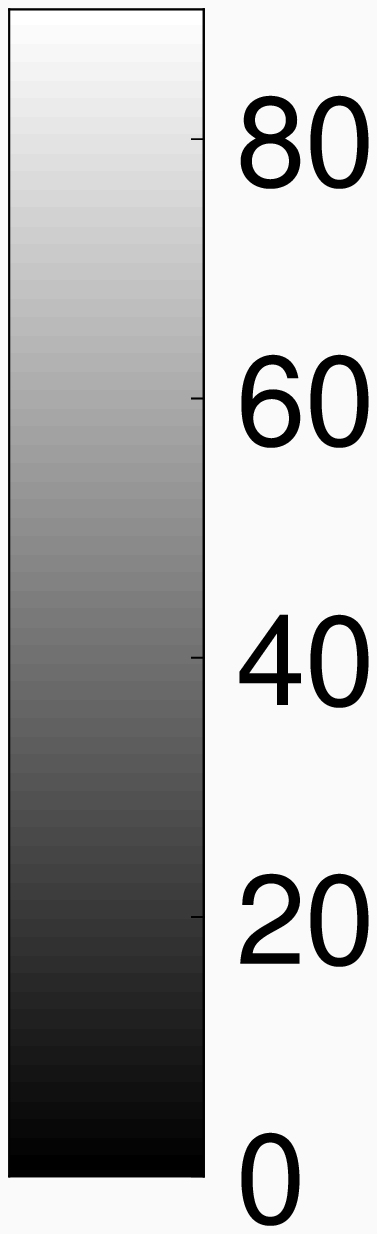}\\
\includegraphics[scale=0.2]{ME_true}  \includegraphics[scale=0.2]{bar_1} & \includegraphics[scale=0.2]{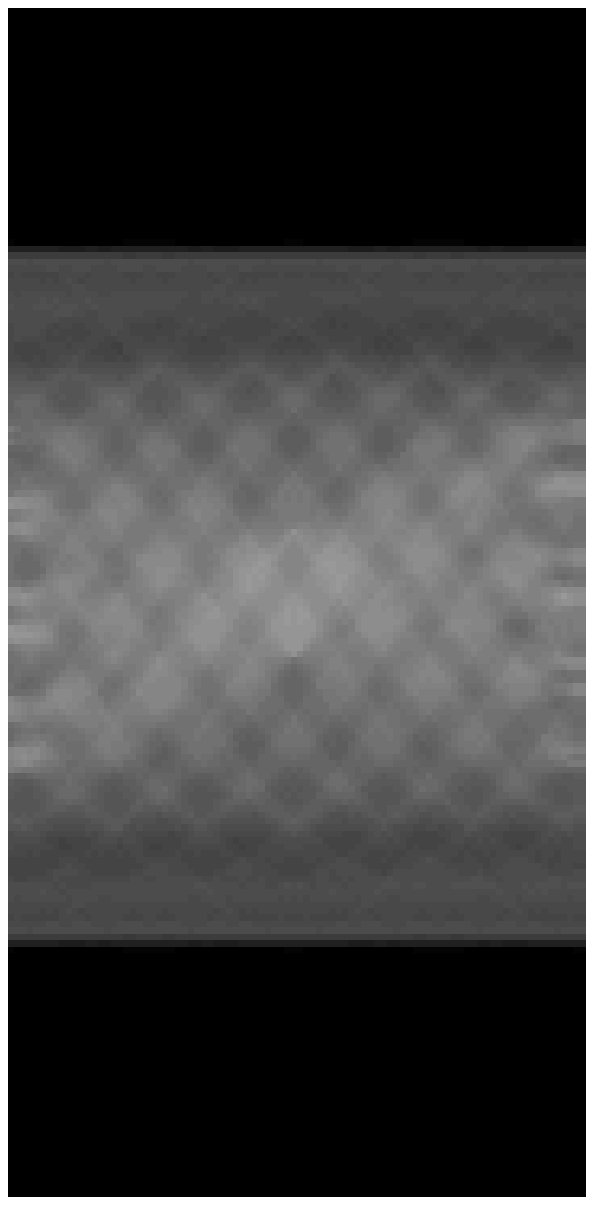}  \includegraphics[scale=0.2]{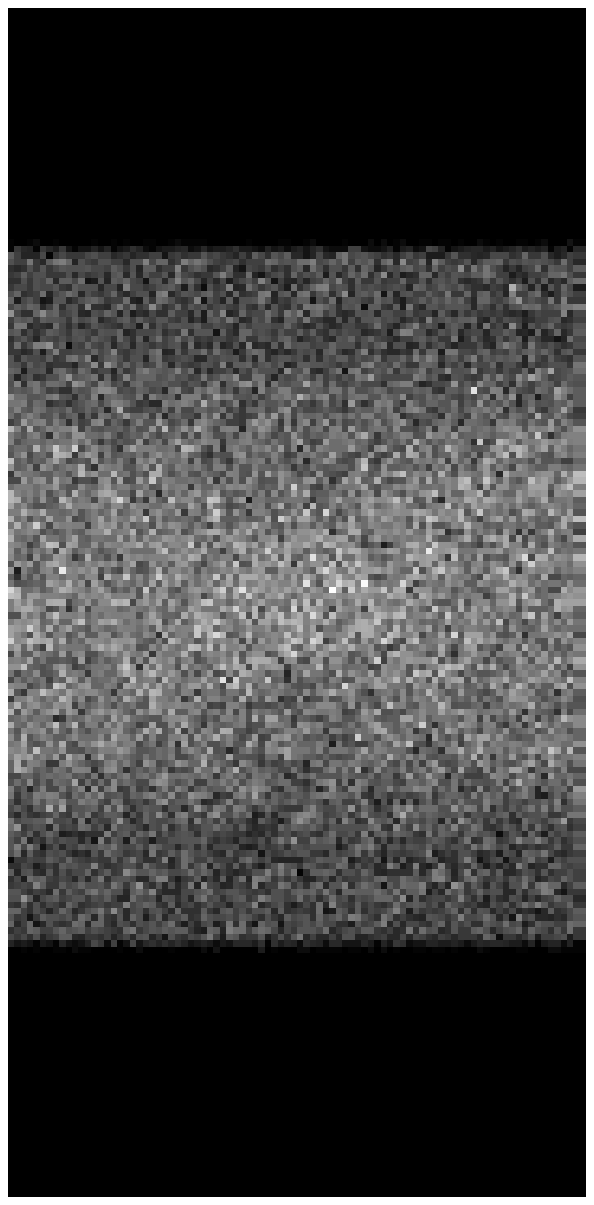} & \includegraphics[scale=0.2]{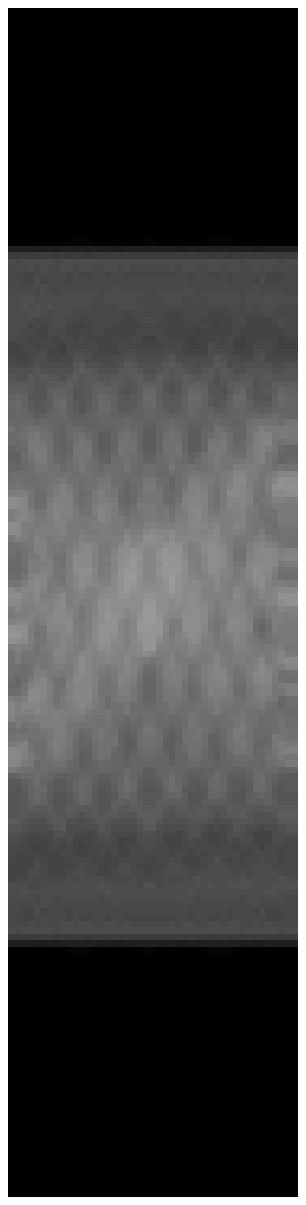}  \includegraphics[scale=0.2]{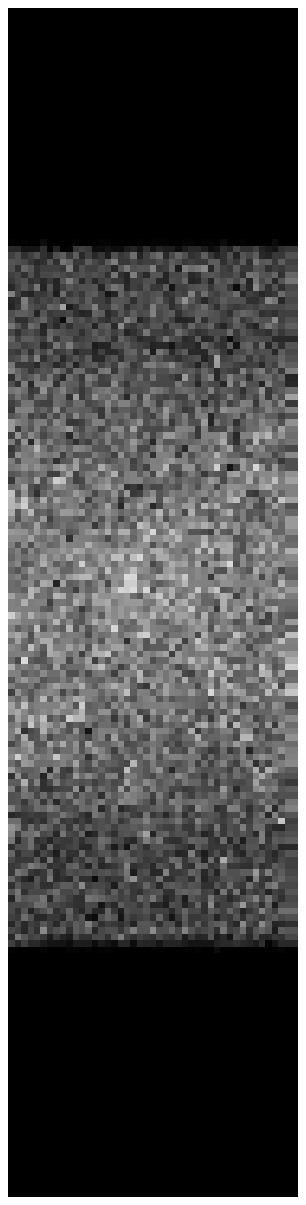} & \includegraphics[scale=0.2]{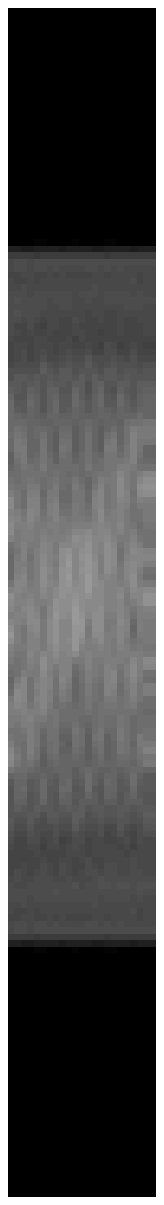}  \includegraphics[scale=0.2]{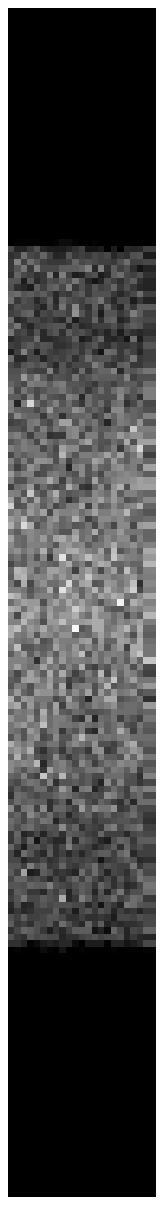} & \includegraphics[scale=0.2]{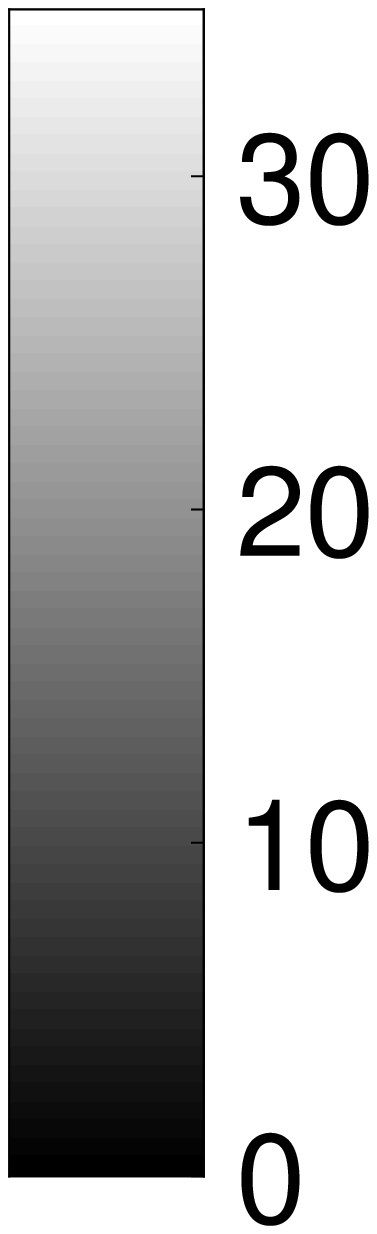}
\end{tabular}
\caption{The exact image, sinograms and observed data with three different $A$'s for the \texttt{PET} phantom. The top and bottom rows
 refer to the moderate count and low count cases, respectively. \label{fig:ME_xby}}	
\end{figure}

\begin{figure}[htb!]
\centering
\begin{tabular}{rcccccc}
 & \includegraphics[scale=0.2]{bar_1_h} & \includegraphics[scale=0.2]{bar_1_1_h} & \includegraphics[scale=0.2]{bar_1_h} & \includegraphics[scale=0.2]{bar_1_1_h} & \includegraphics[scale=0.2]{bar_001_h}\\
\rotatebox{90}{\quad[0:2:179]}&\includegraphics[scale=0.2]{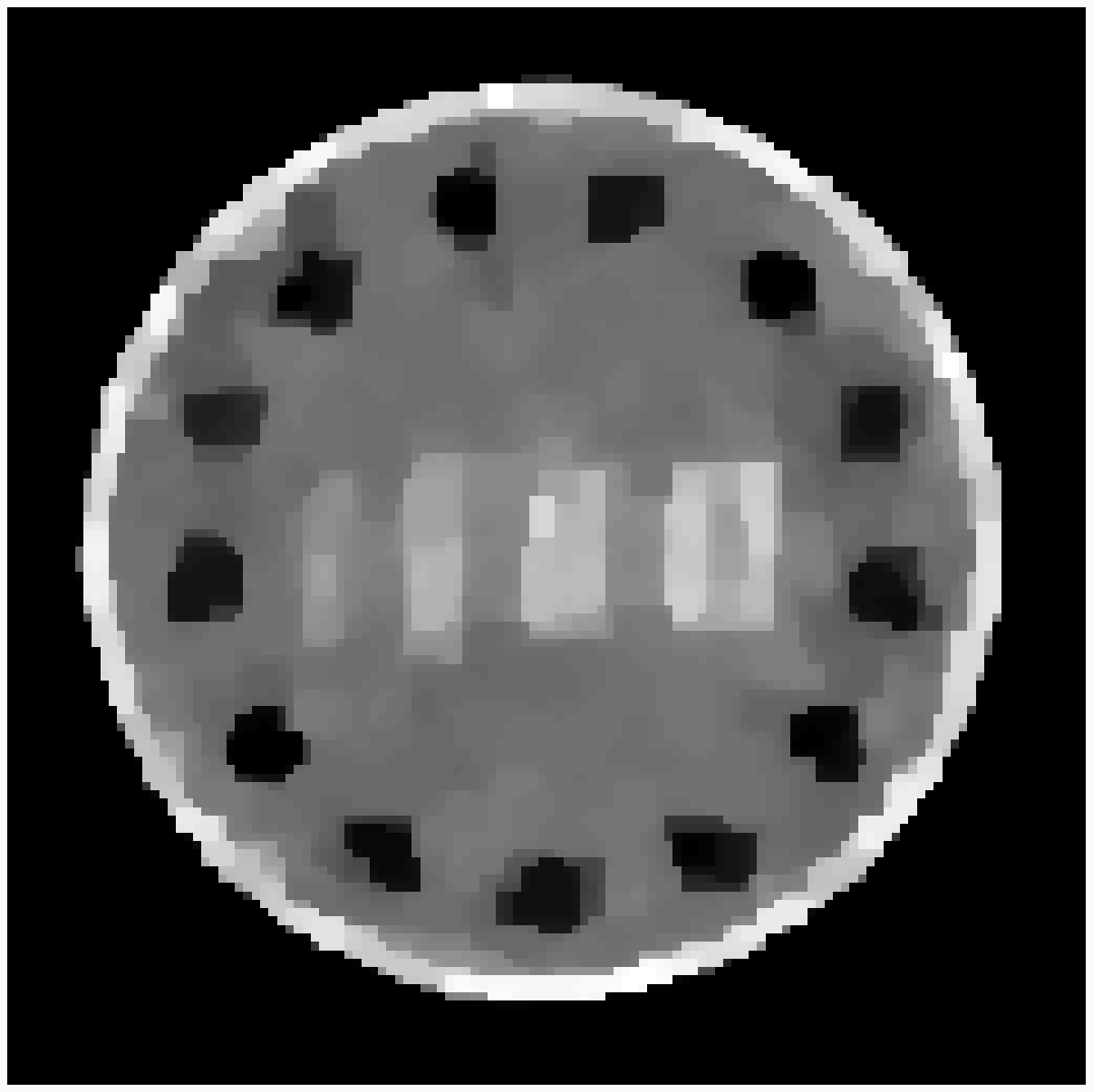} & \includegraphics[scale=0.2]{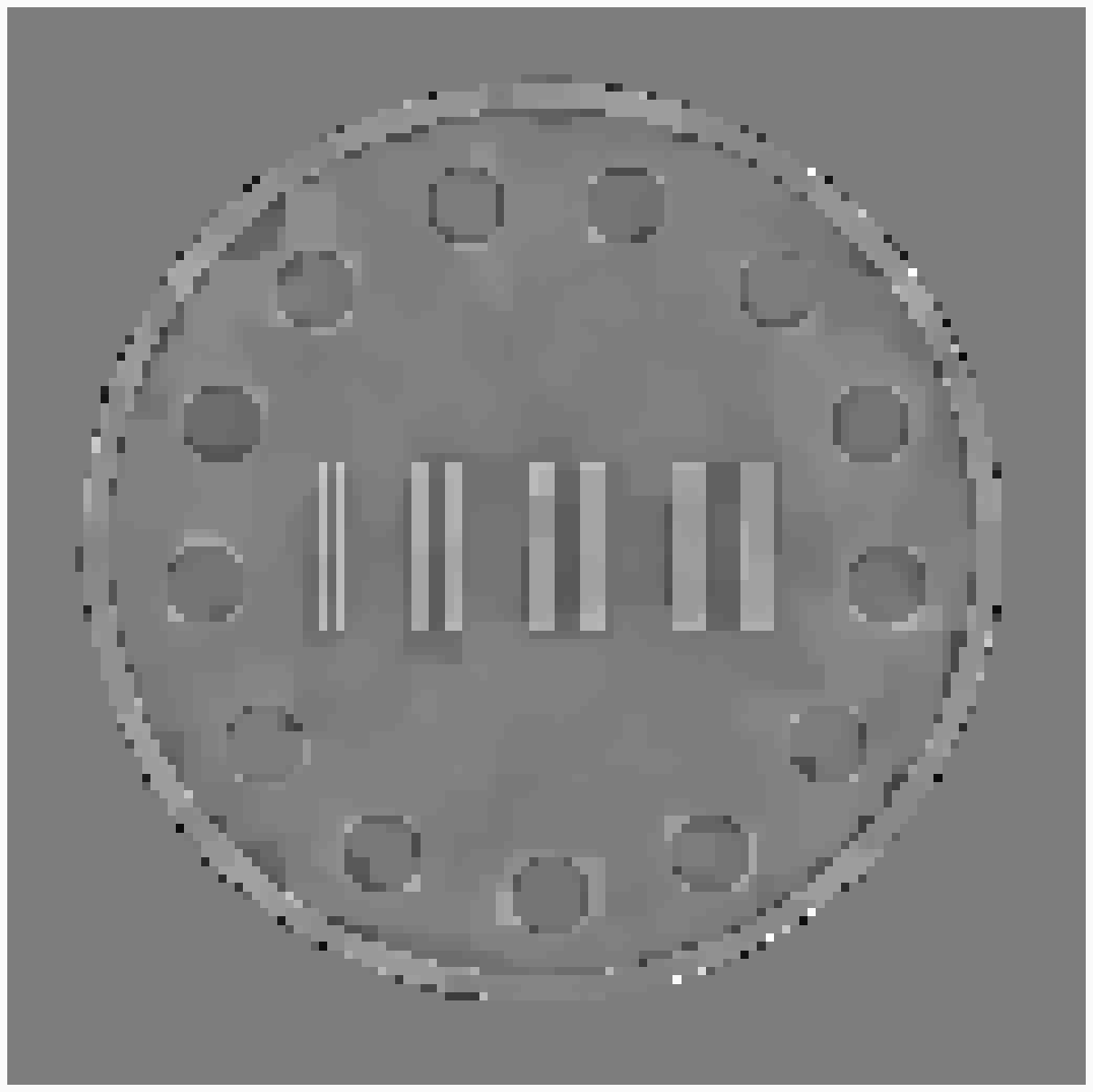} & \includegraphics[scale=0.2]{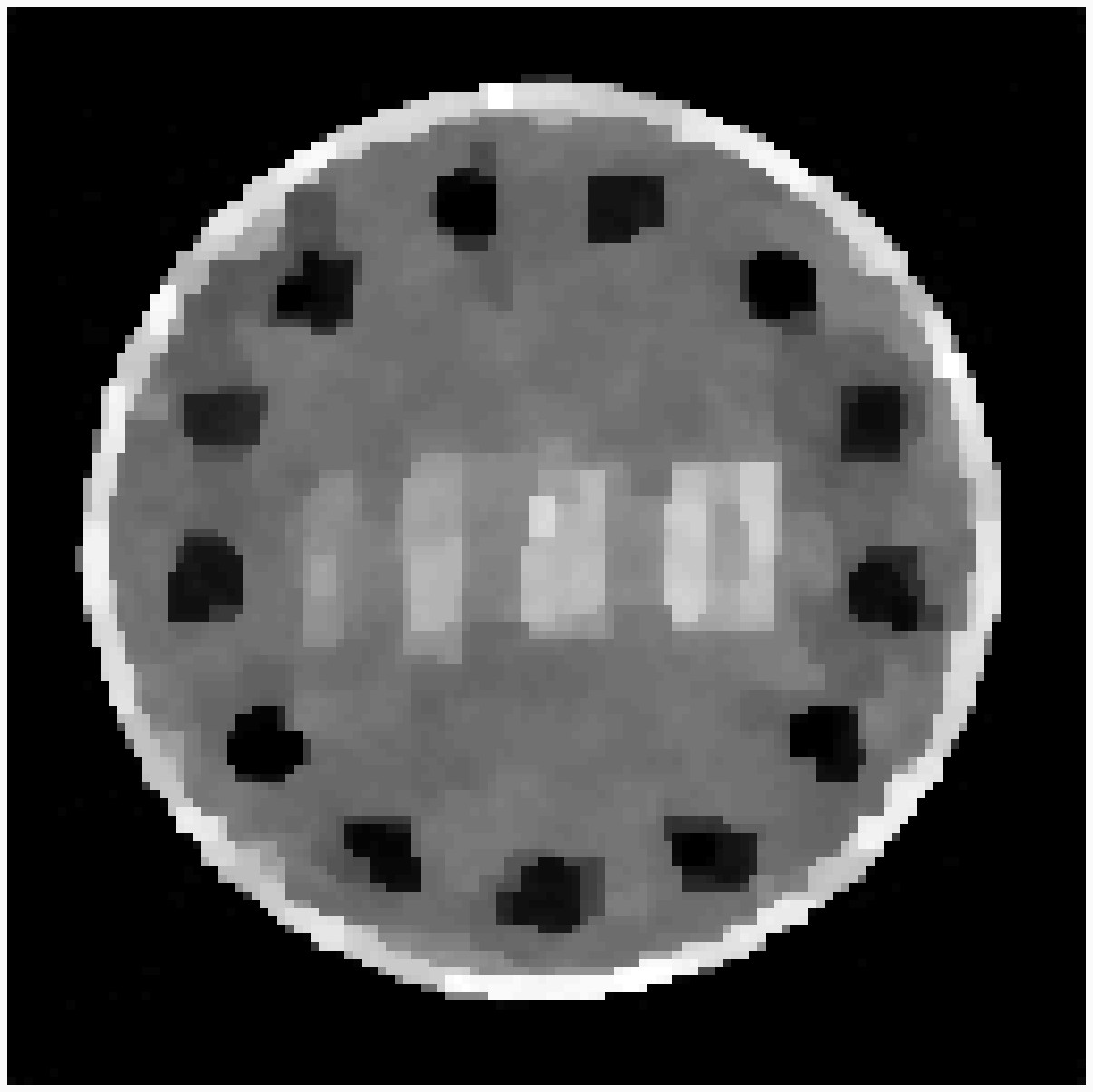} & \includegraphics[scale=0.2]{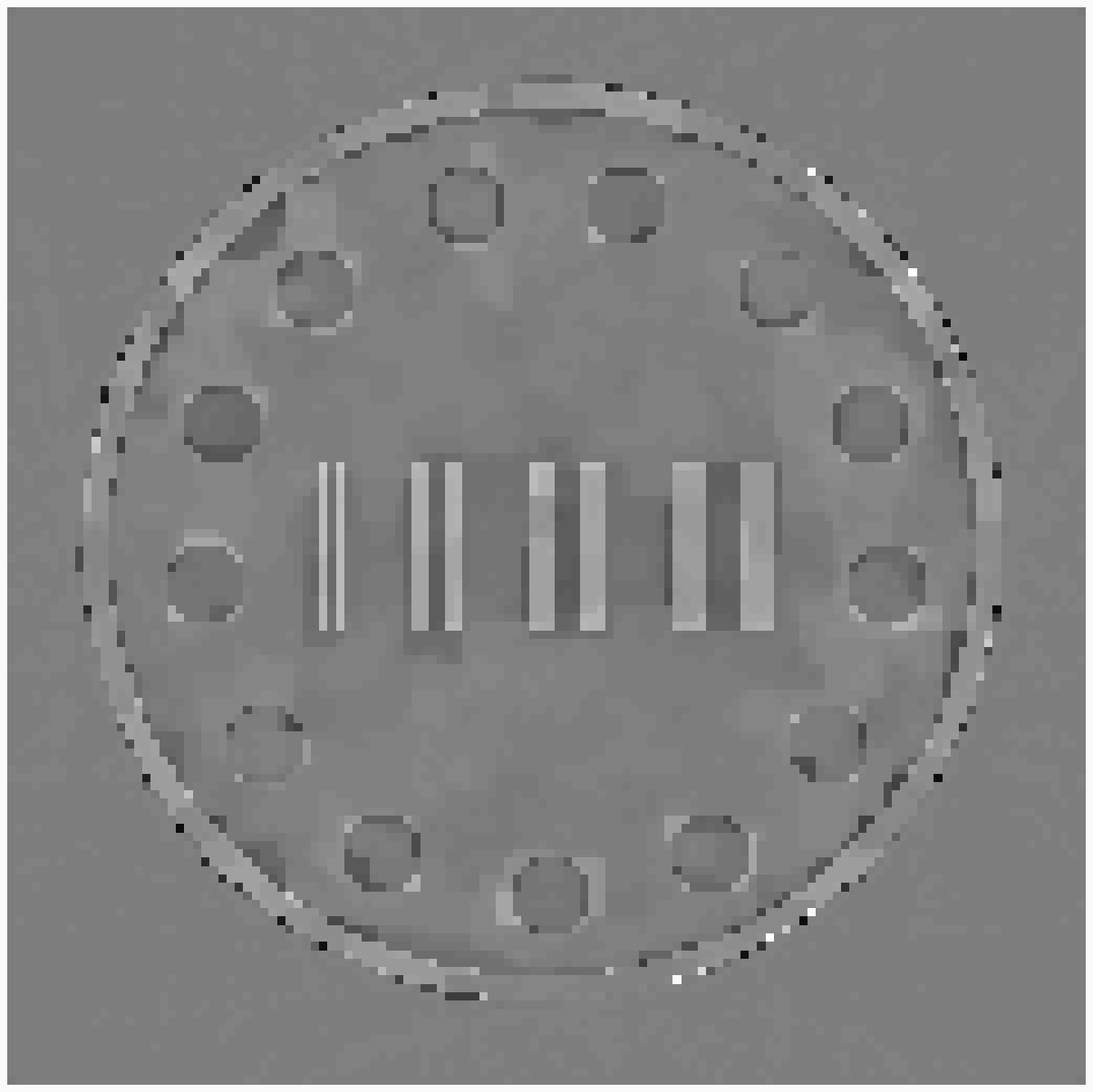} & \includegraphics[scale=0.2]{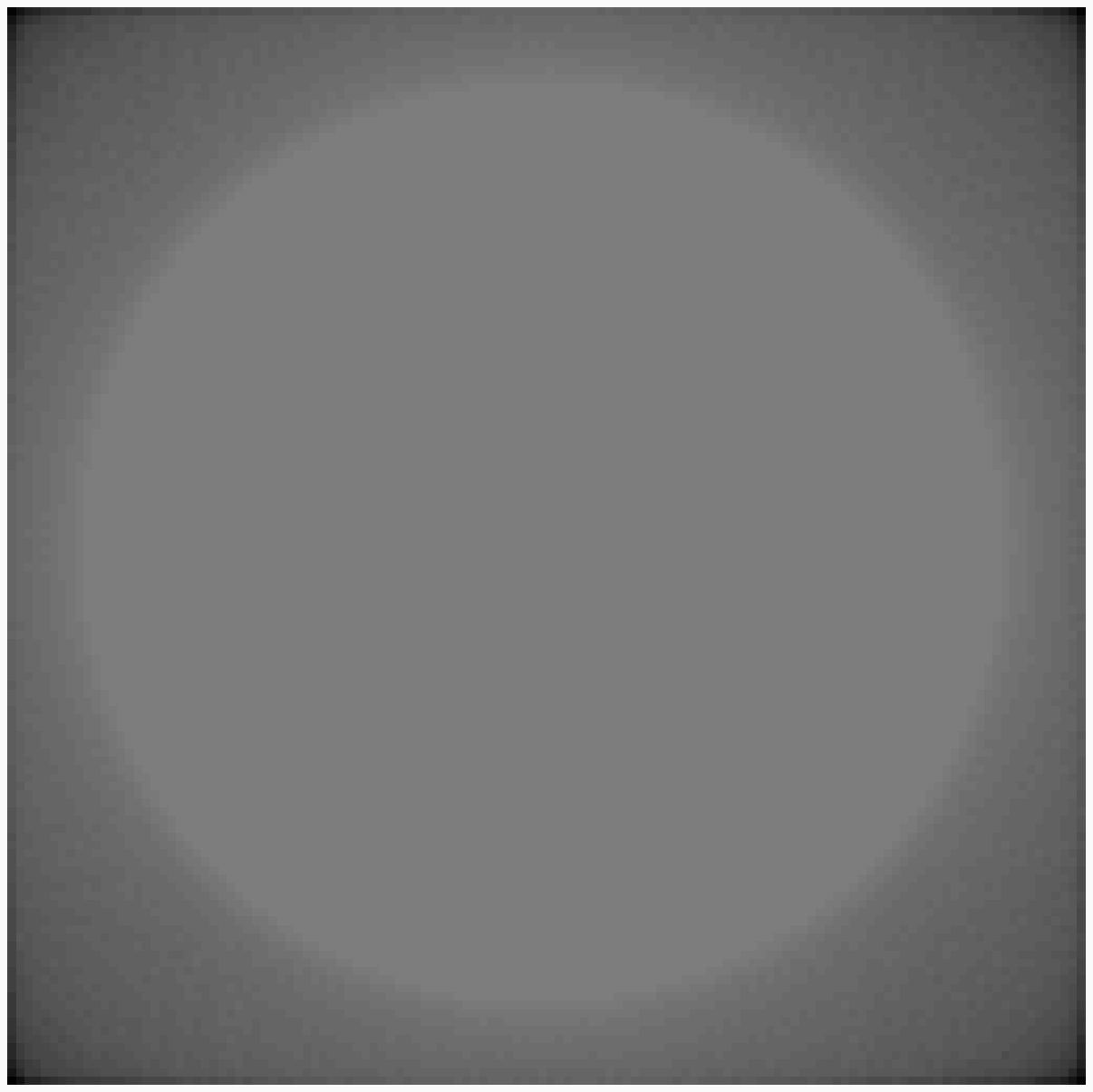}\\
\rotatebox{90}{\quad[0:4:179]}&\includegraphics[scale=0.2]{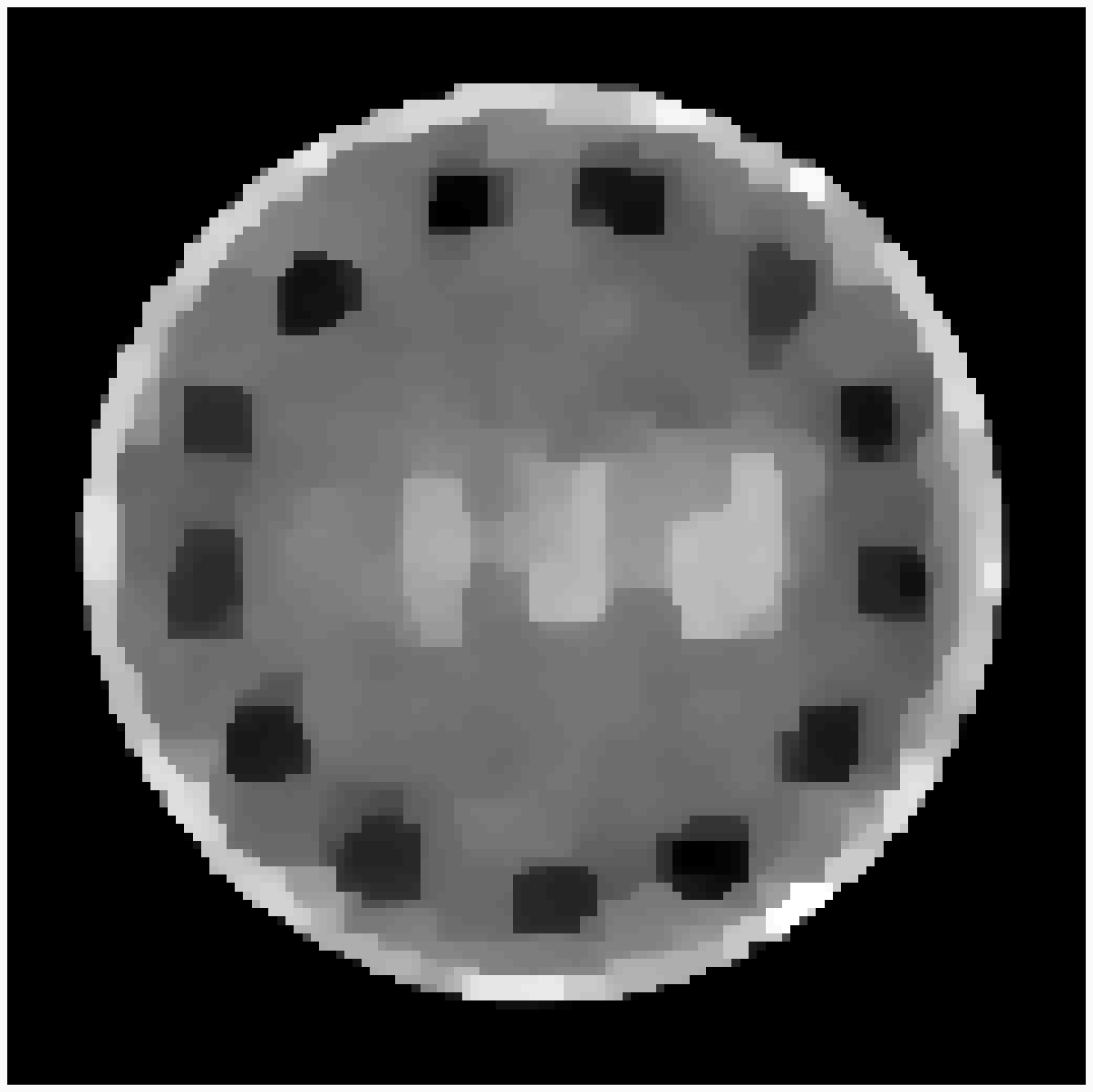} & \includegraphics[scale=0.2]{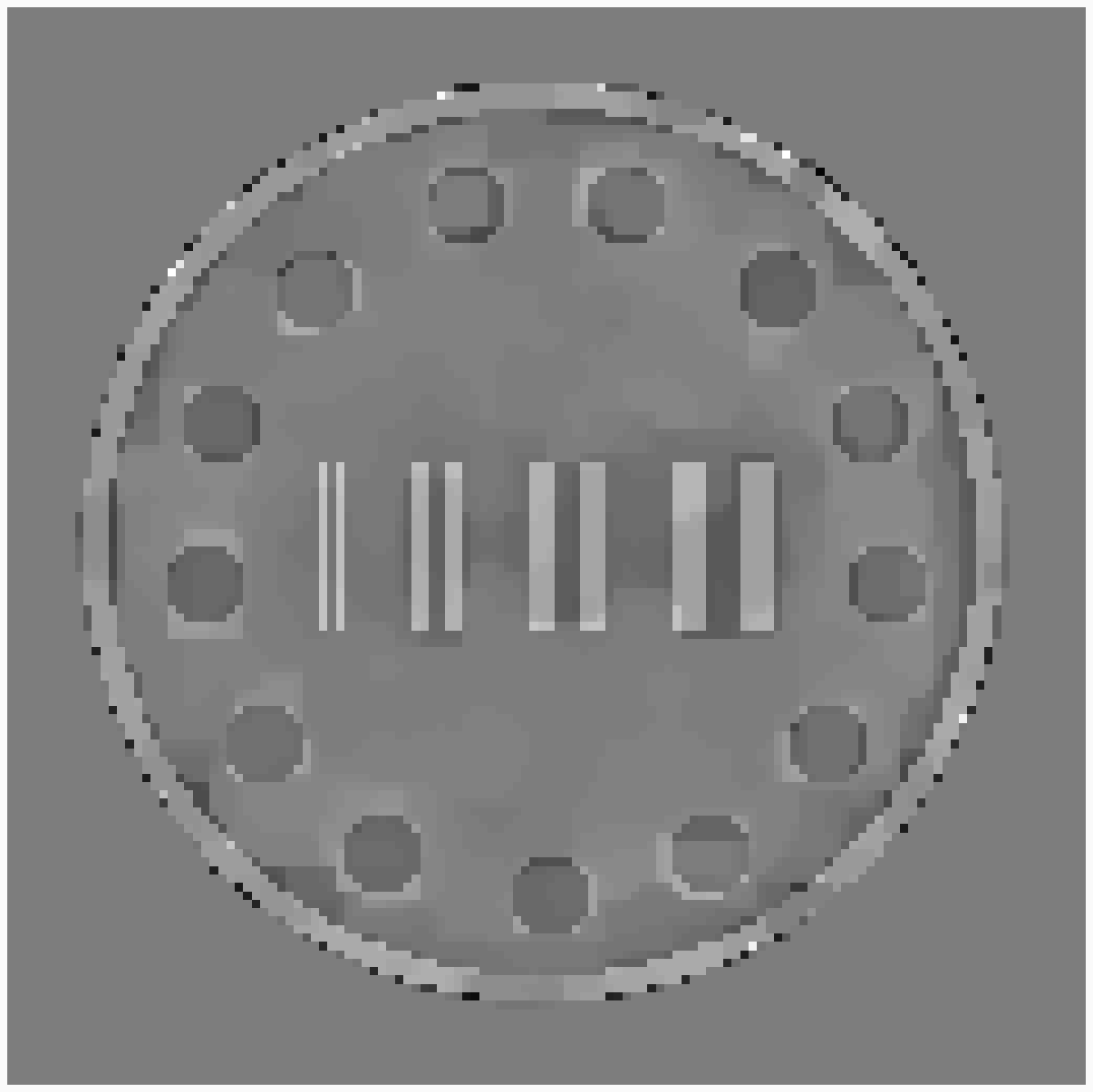} & \includegraphics[scale=0.2]{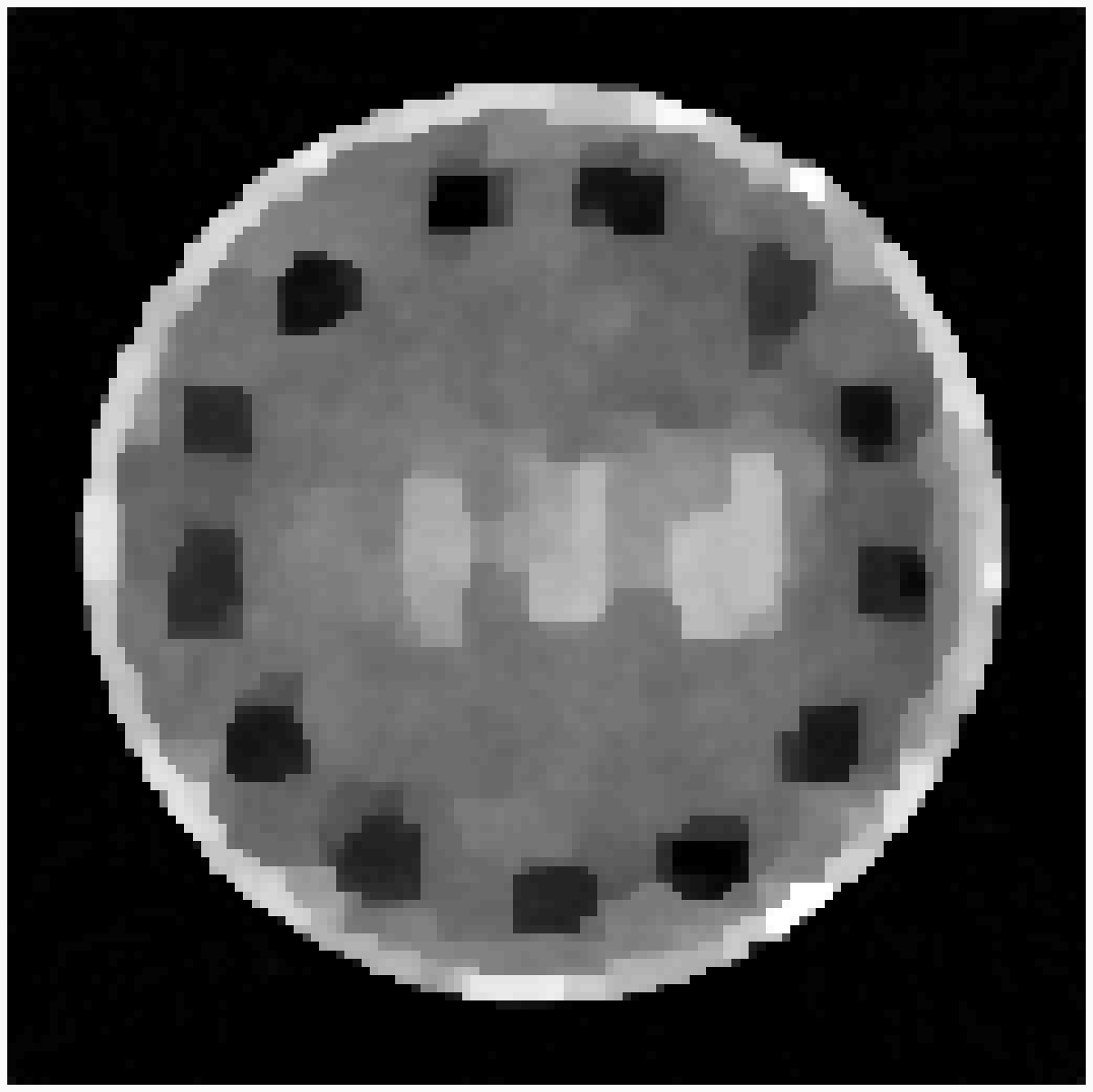} & \includegraphics[scale=0.2]{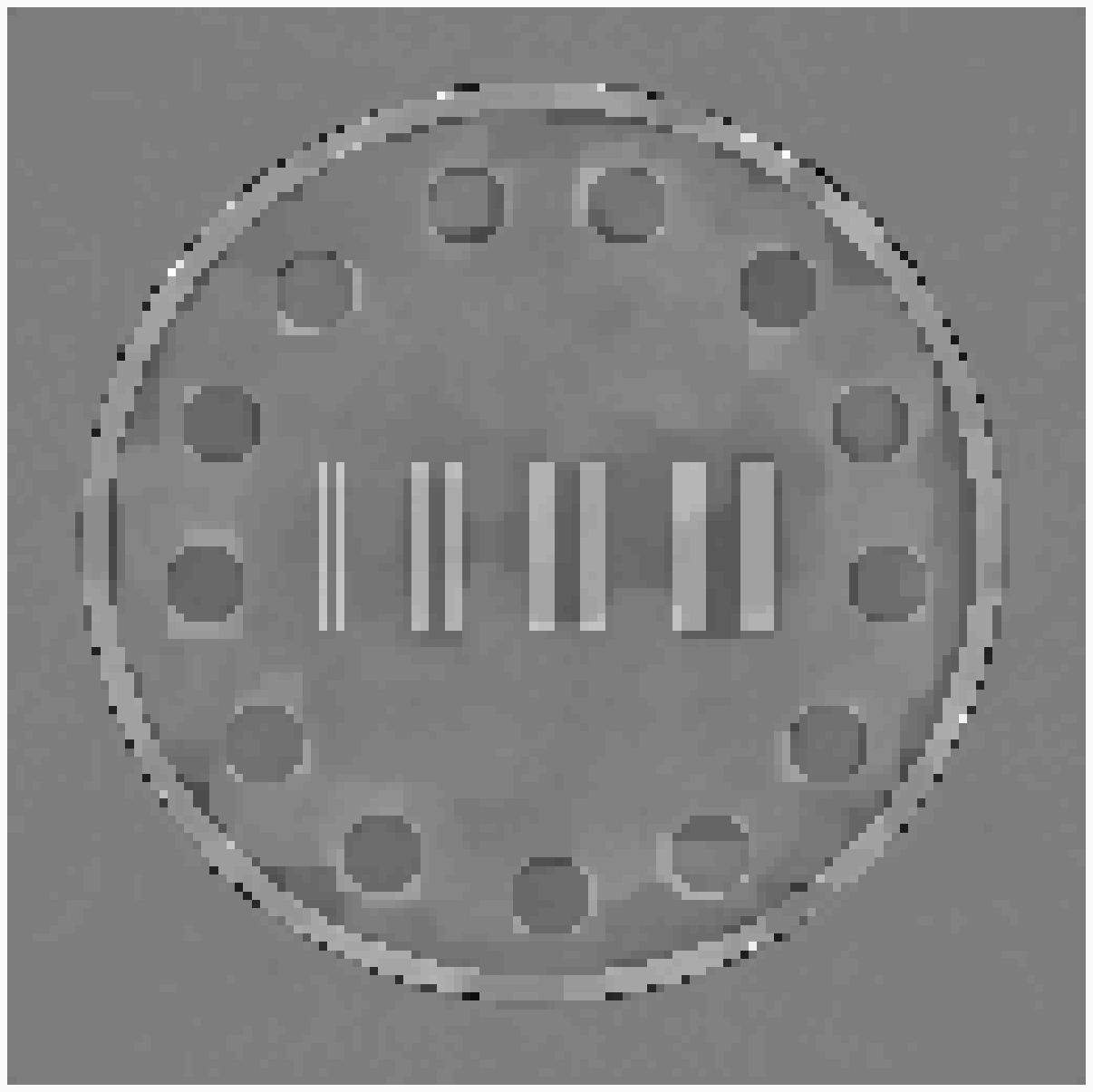} & \includegraphics[scale=0.2]{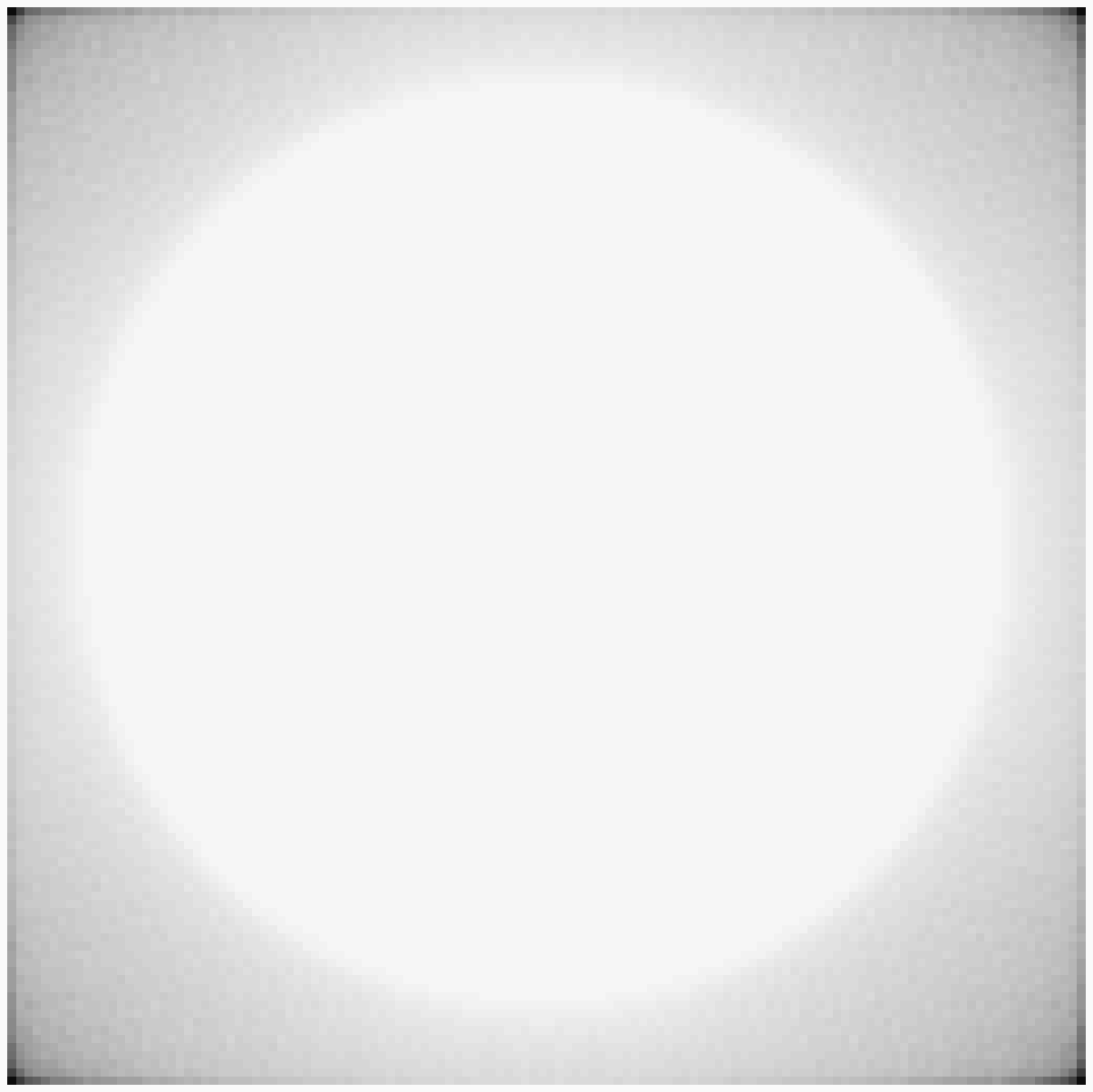}\\
\rotatebox{90}{\quad[0:8:179]}&\includegraphics[scale=0.2]{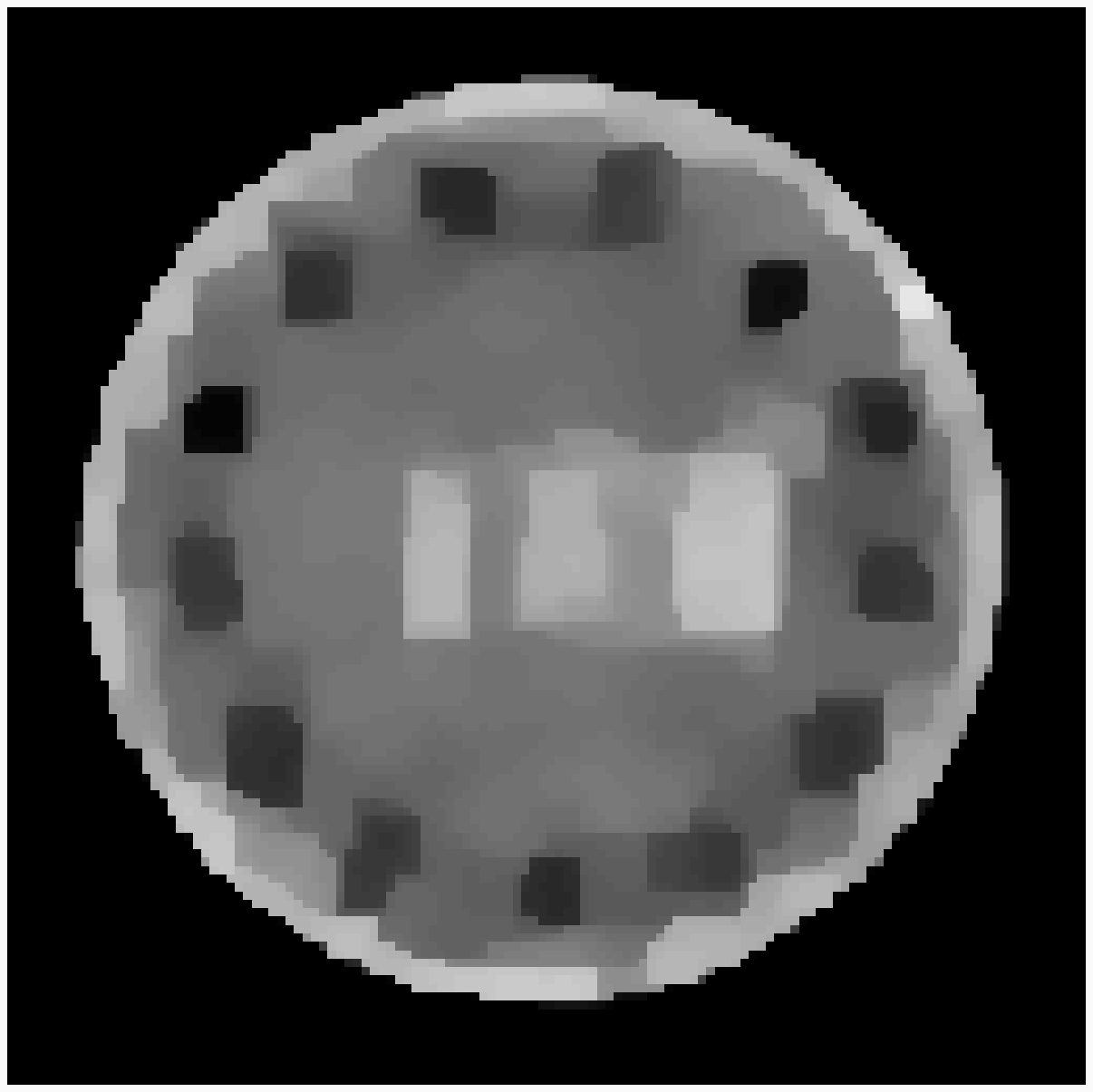} & \includegraphics[scale=0.2]{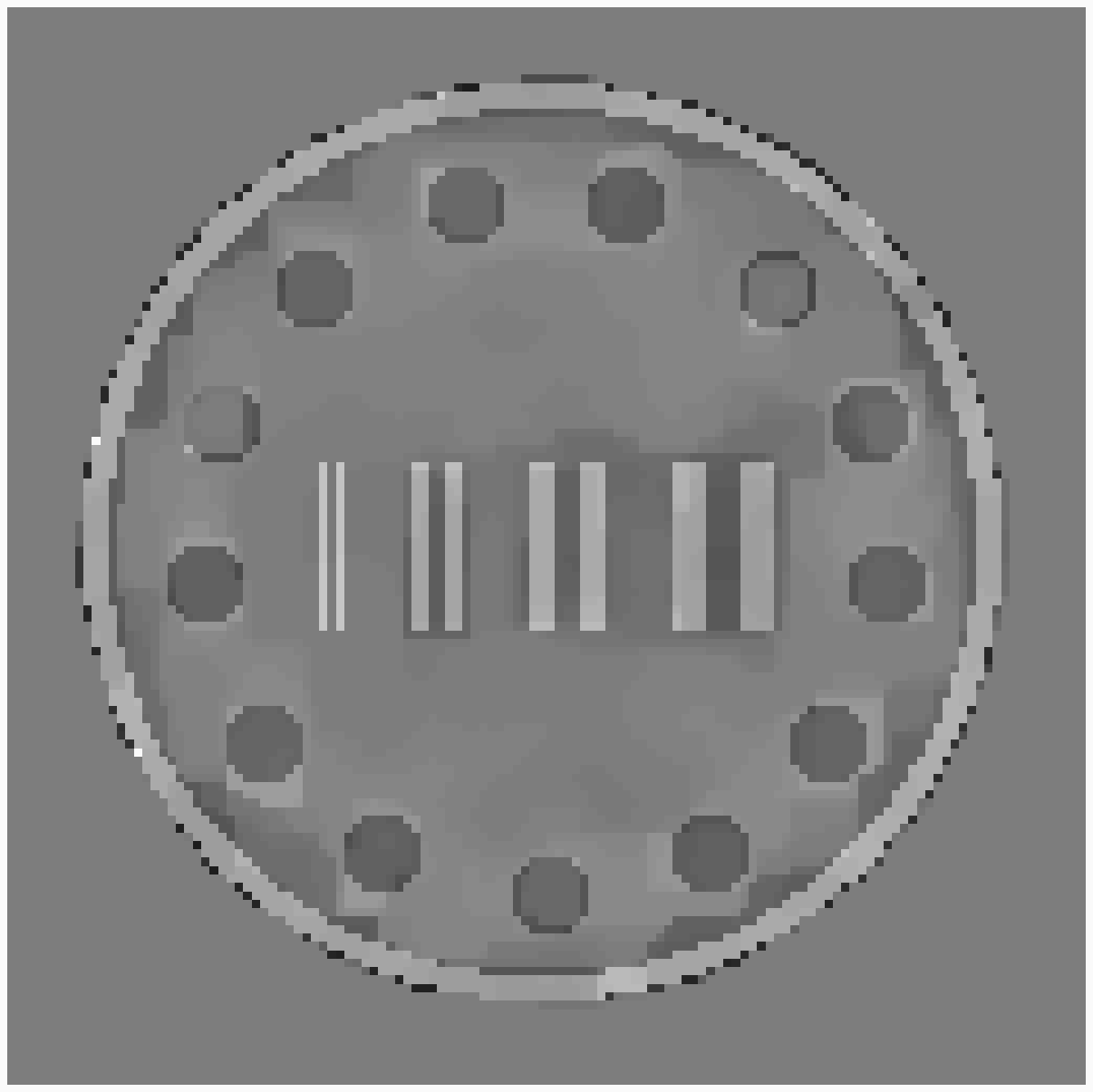} & \includegraphics[scale=0.2]{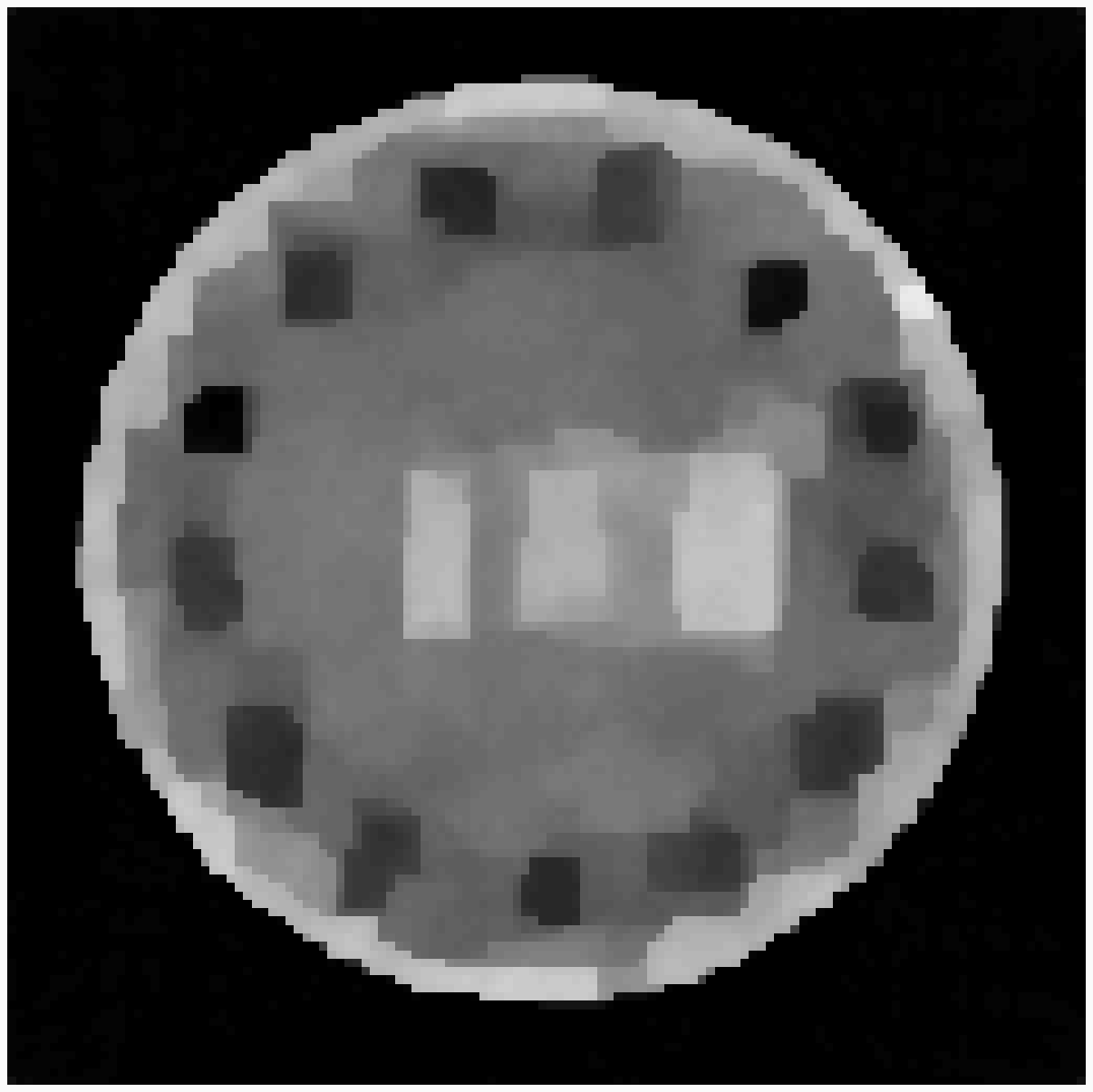} & \includegraphics[scale=0.2]{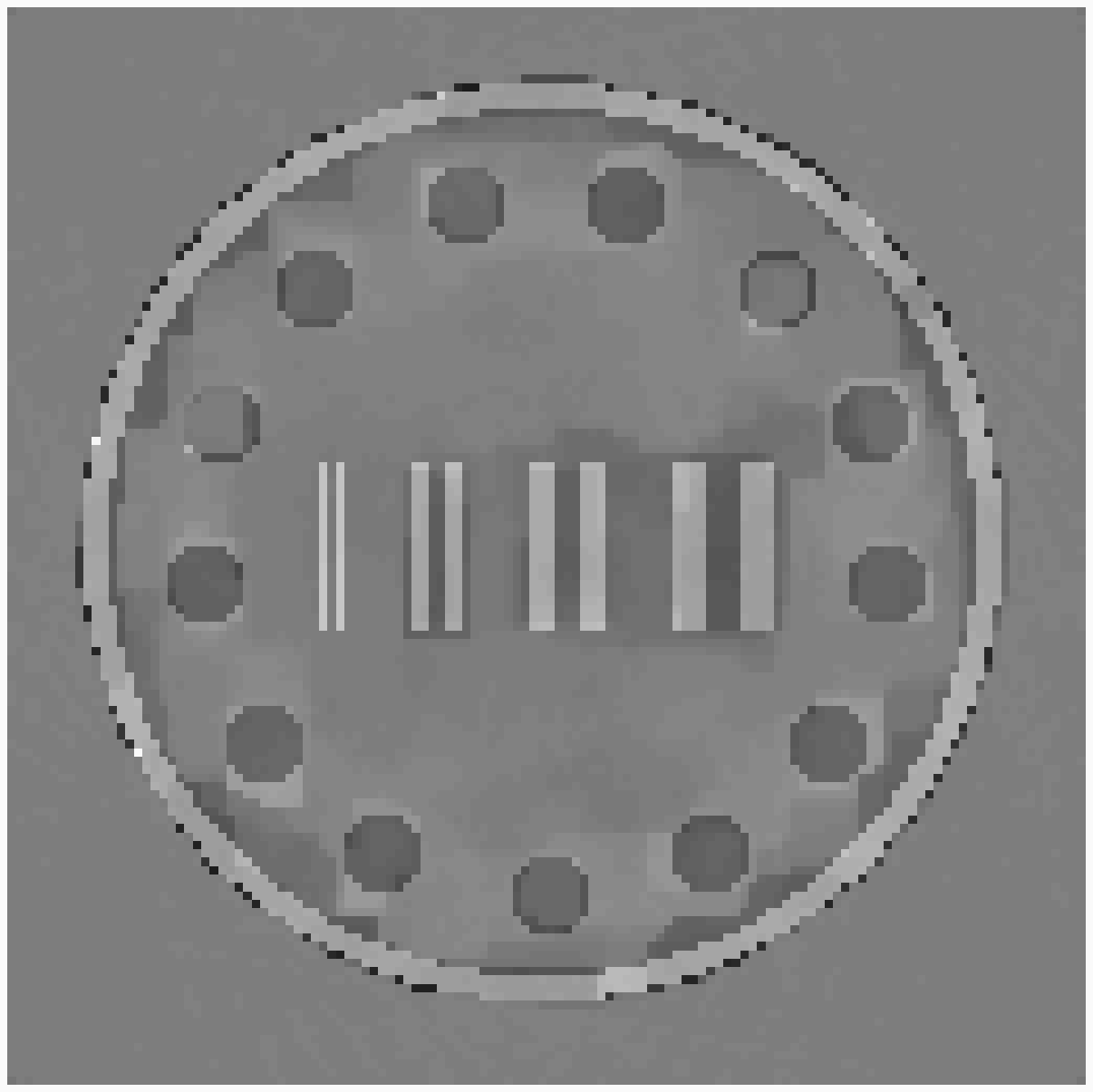} & \includegraphics[scale=0.2]{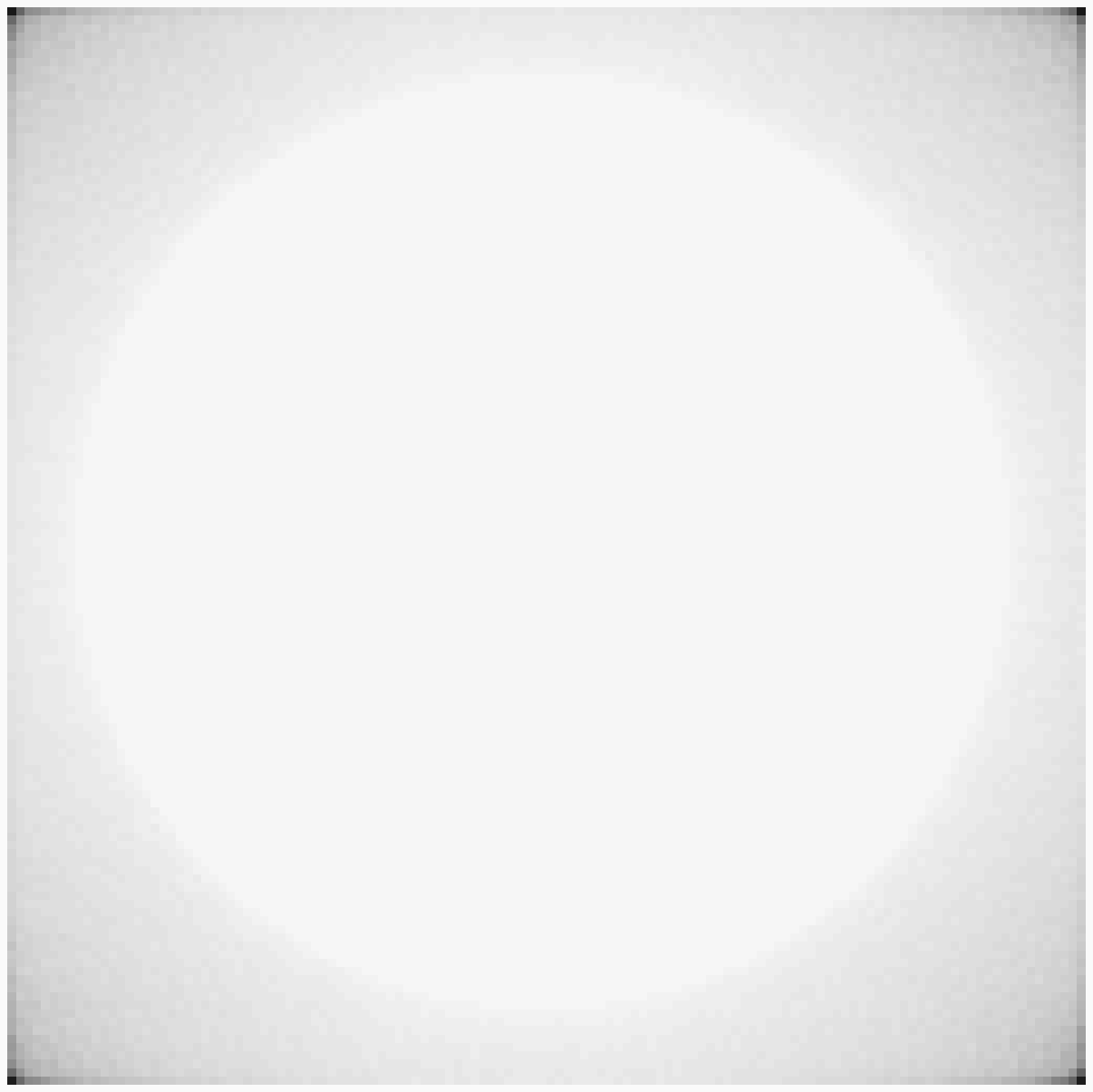}\\
&MAP & MAP error & EP mean & EP error & EP variance
\end{tabular}
\caption{MAP vs EP with anisotropic TV prior for the \texttt{PET} phantom, moderate count case.\label{fig:ME_248}}	
\end{figure}

\begin{figure}[htb!]
\centering
\begin{tabular}{rcccccc}
 & \includegraphics[scale=0.2]{bar_1_h} & \includegraphics[scale=0.2]{bar_1_1_h} & \includegraphics[scale=0.2]{bar_1_h} & \includegraphics[scale=0.2]{bar_1_1_h} & \includegraphics[scale=0.2]{bar_001_h}\\
\rotatebox{90}{\quad[0:2:179]}&\includegraphics[scale=0.2]{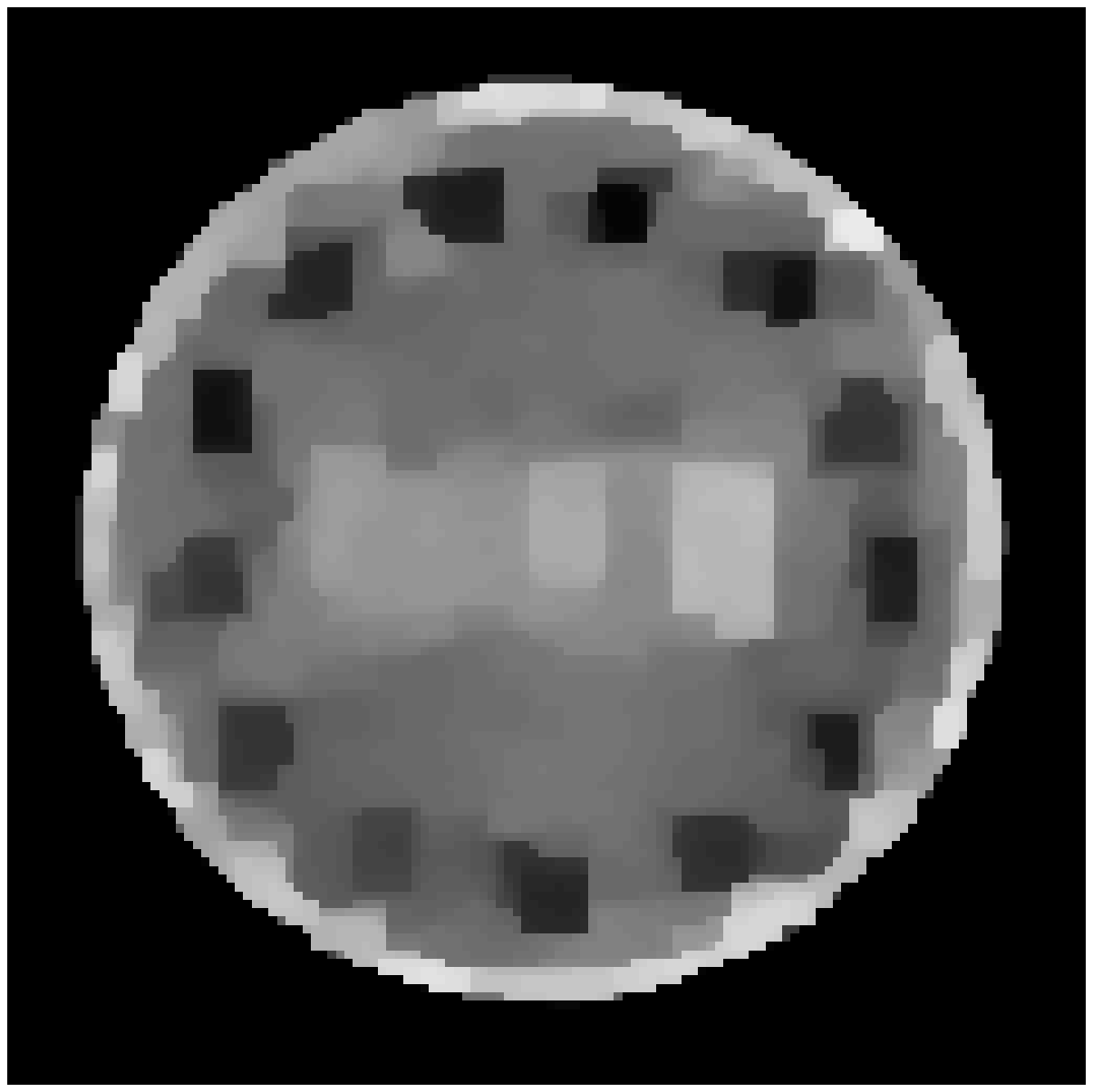} & \includegraphics[scale=0.2]{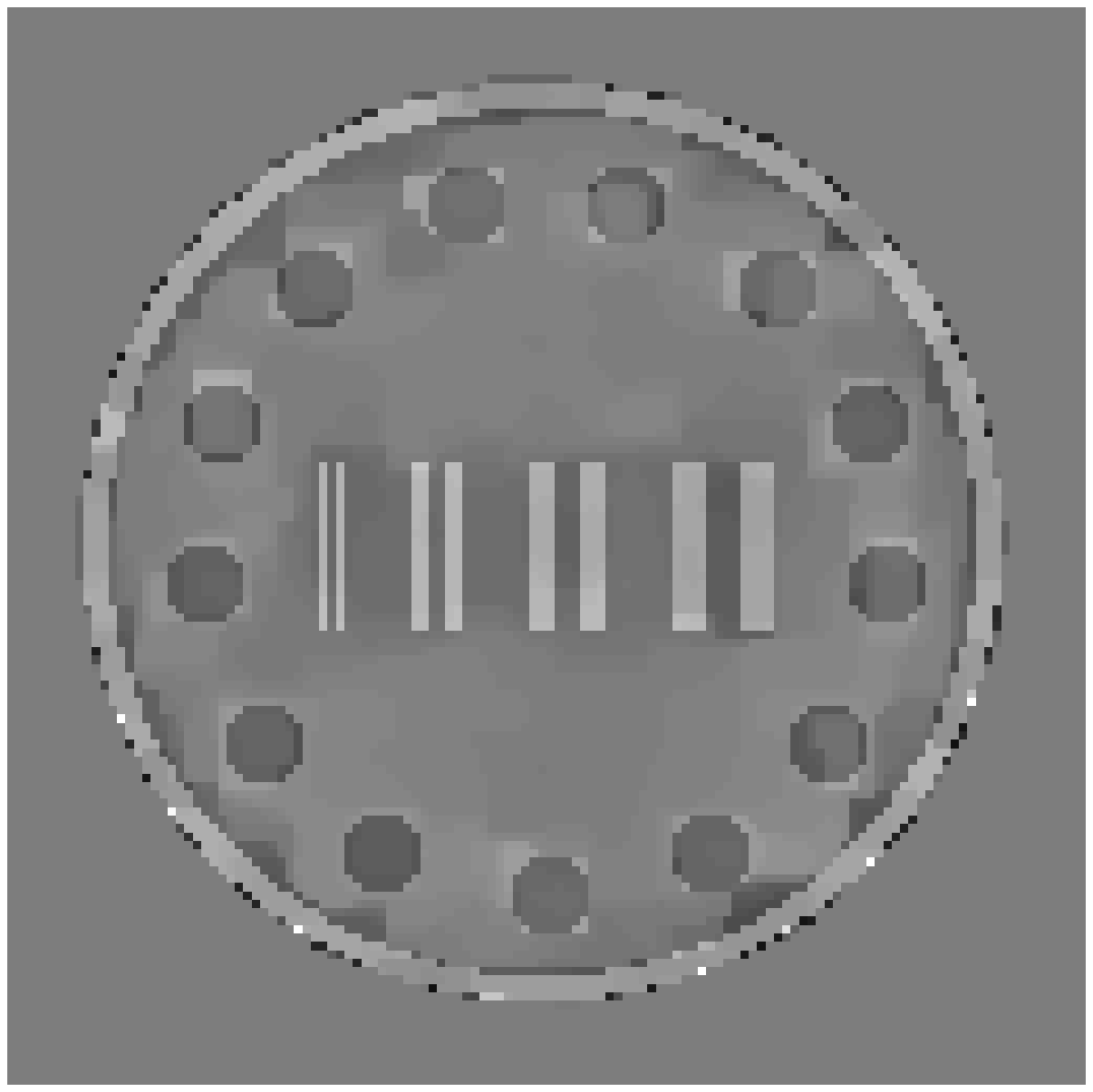} & \includegraphics[scale=0.2]{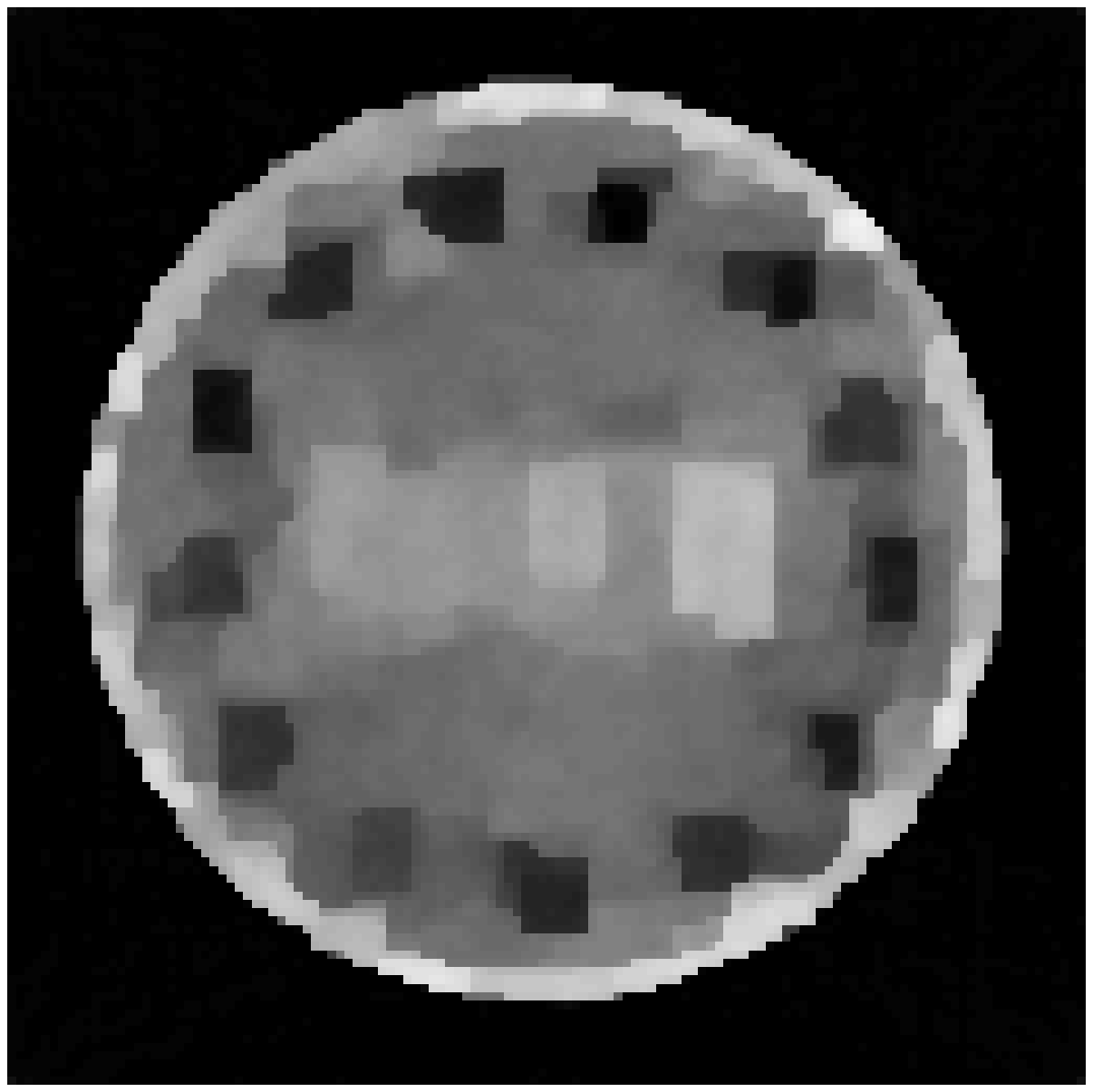} & \includegraphics[scale=0.2]{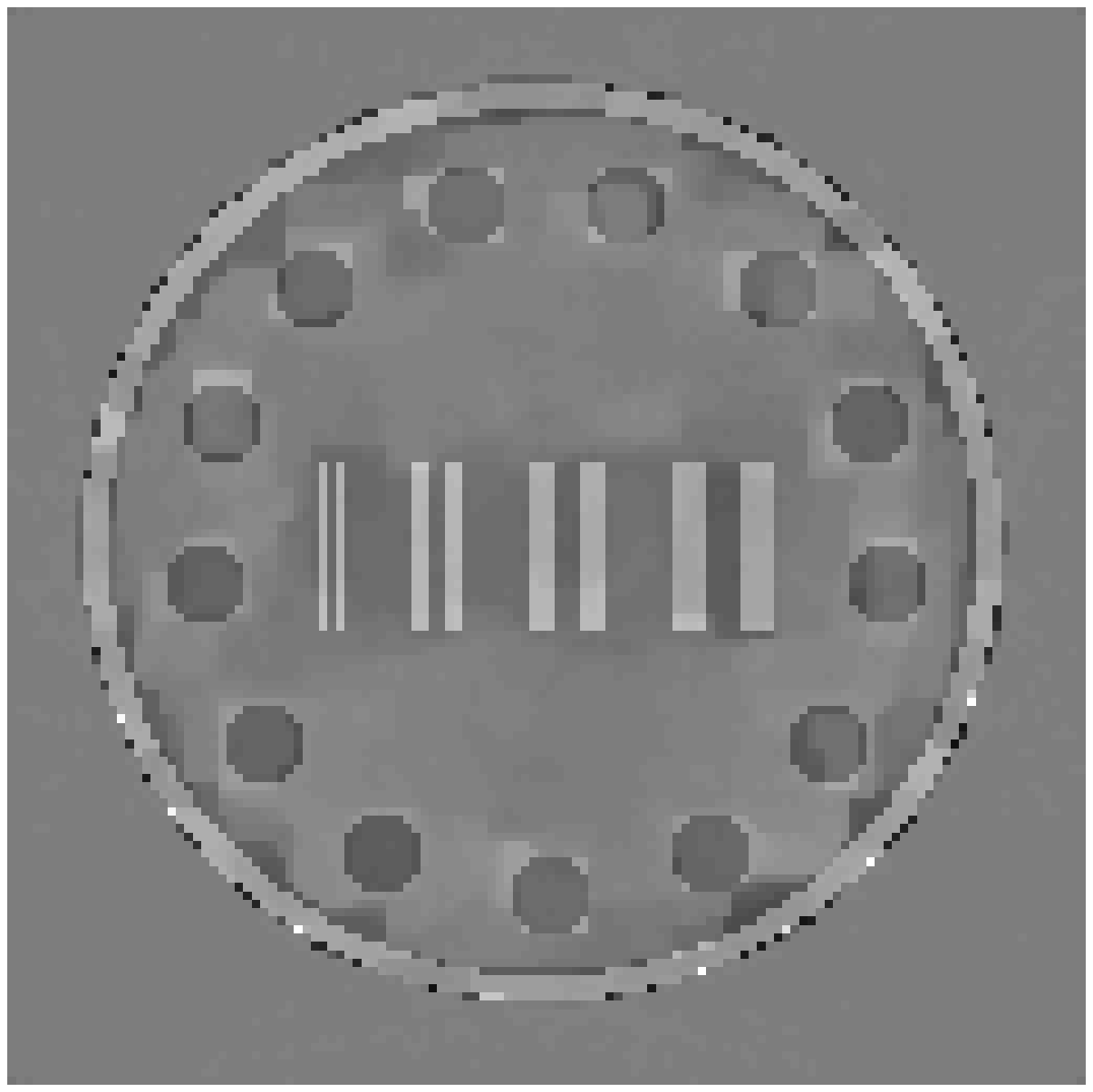} & \includegraphics[scale=0.2]{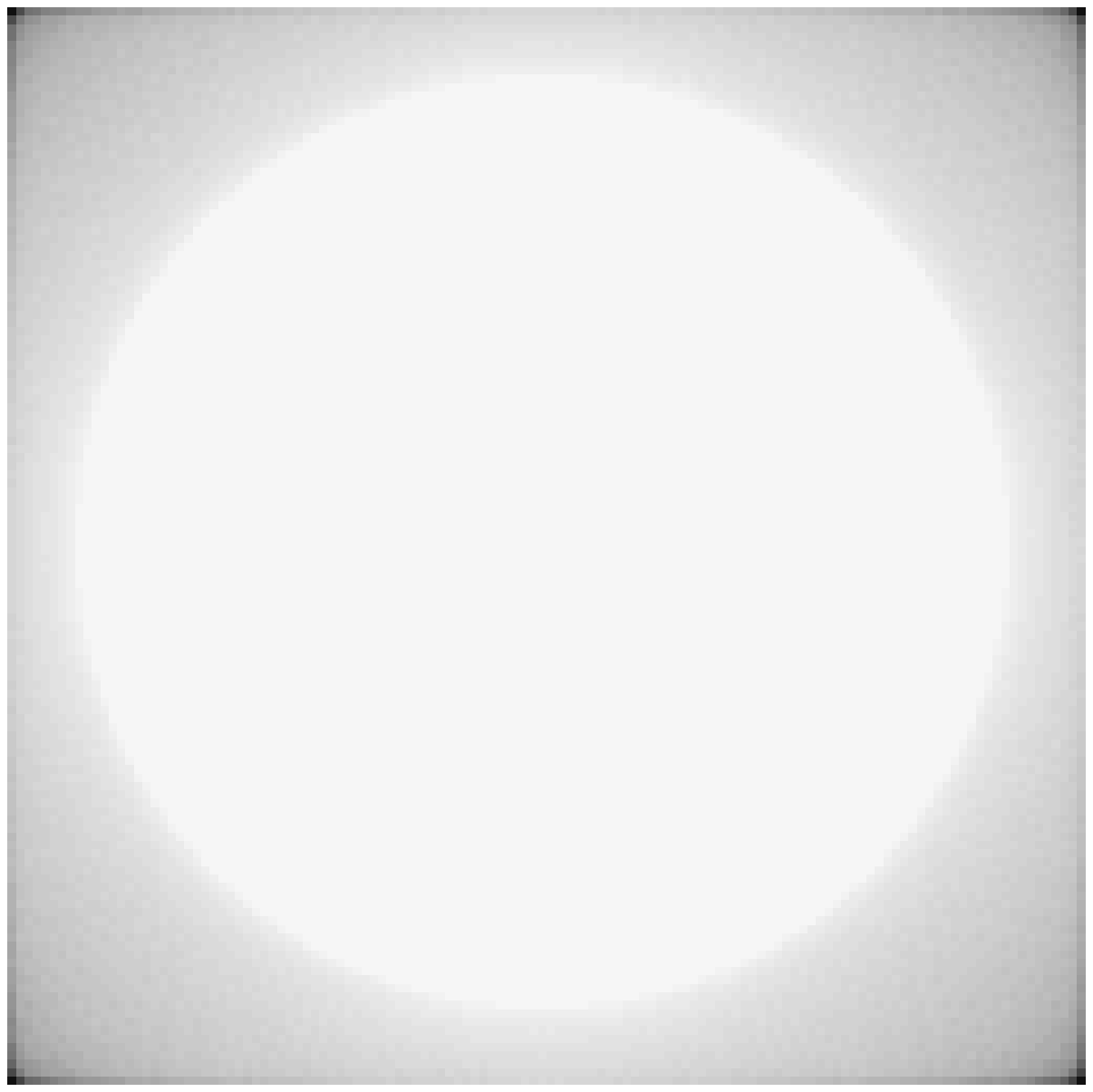}\\
\rotatebox{90}{\quad[0:4:179]}&\includegraphics[scale=0.2]{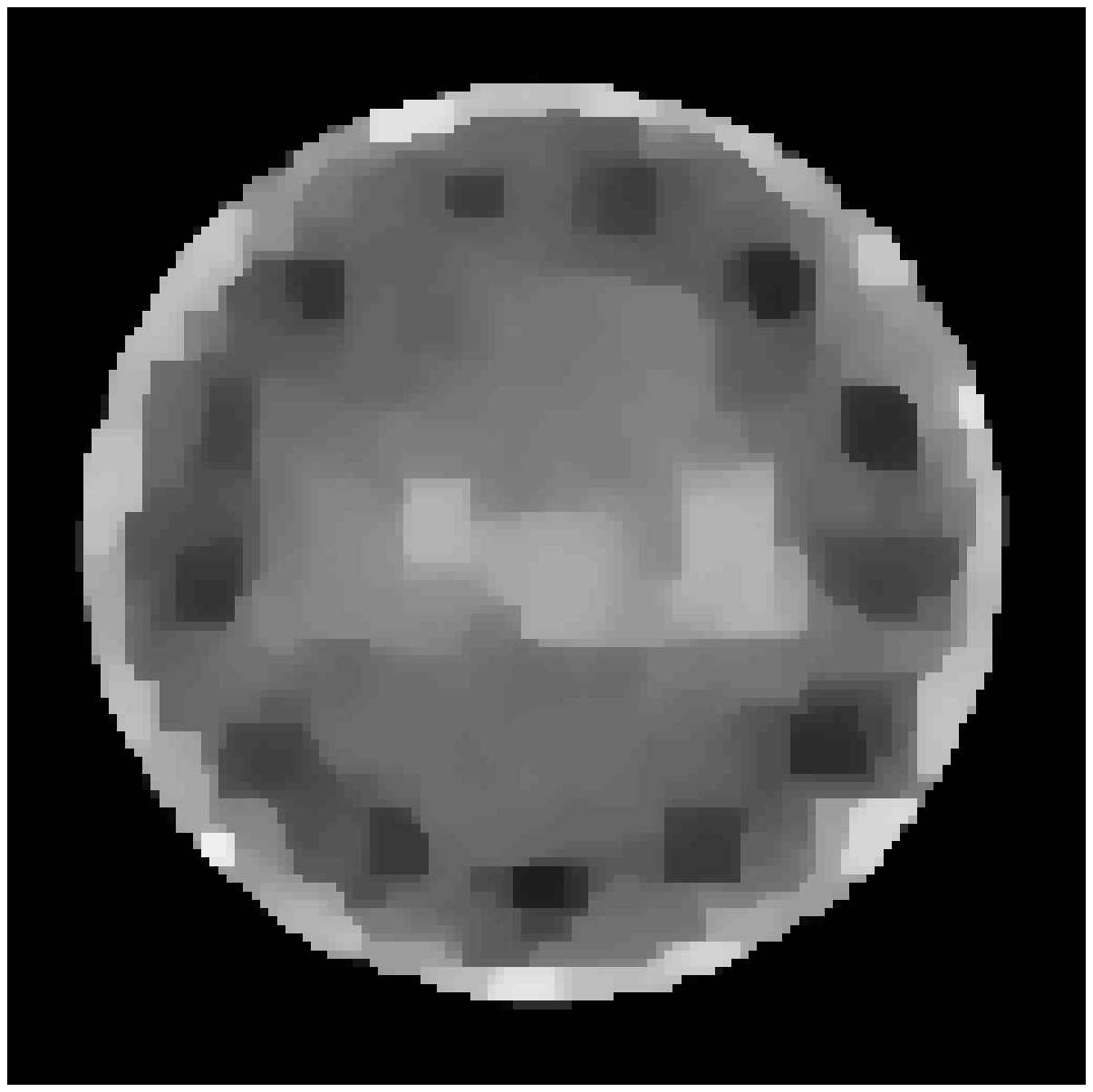} & \includegraphics[scale=0.2]{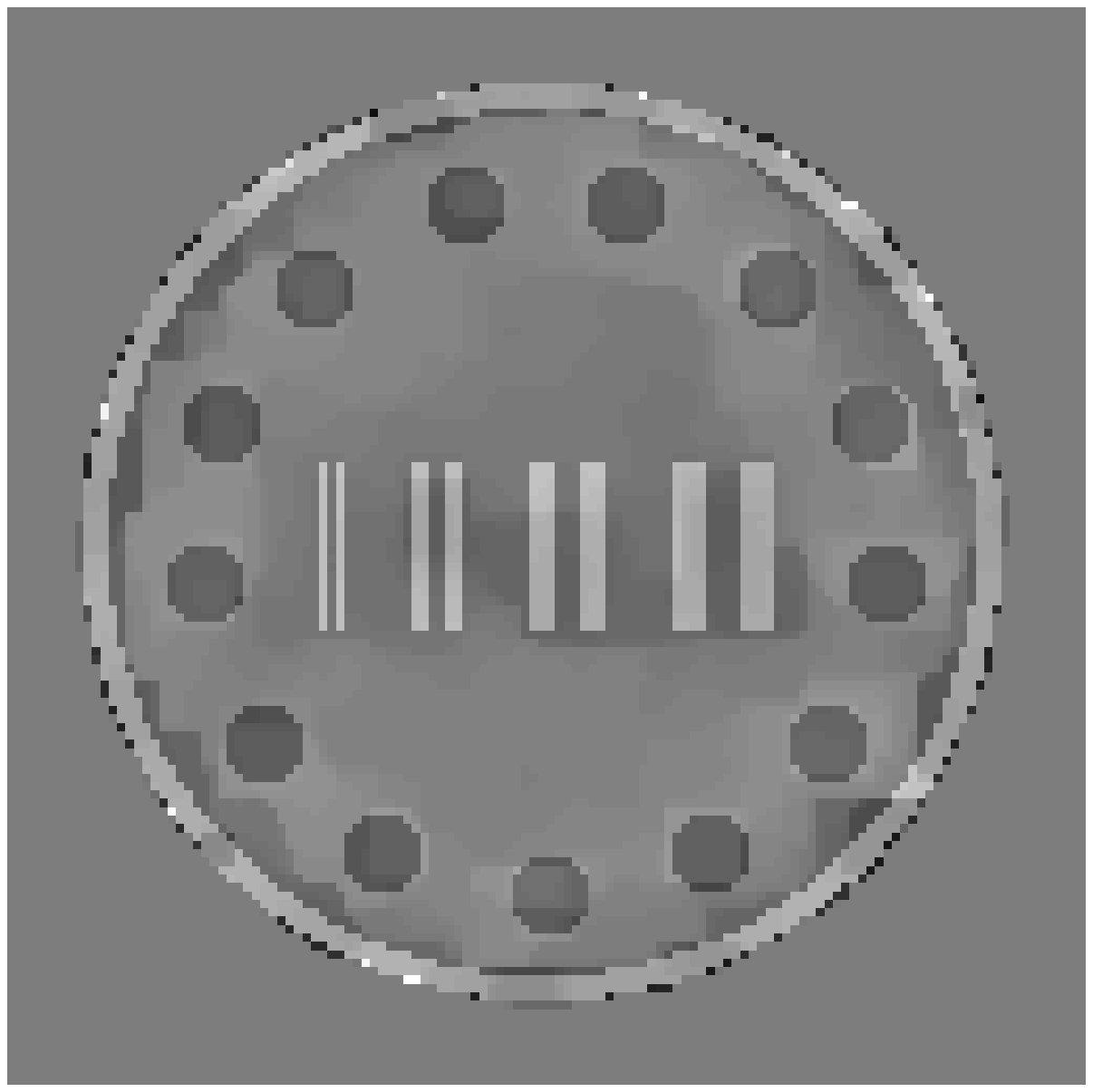} & \includegraphics[scale=0.2]{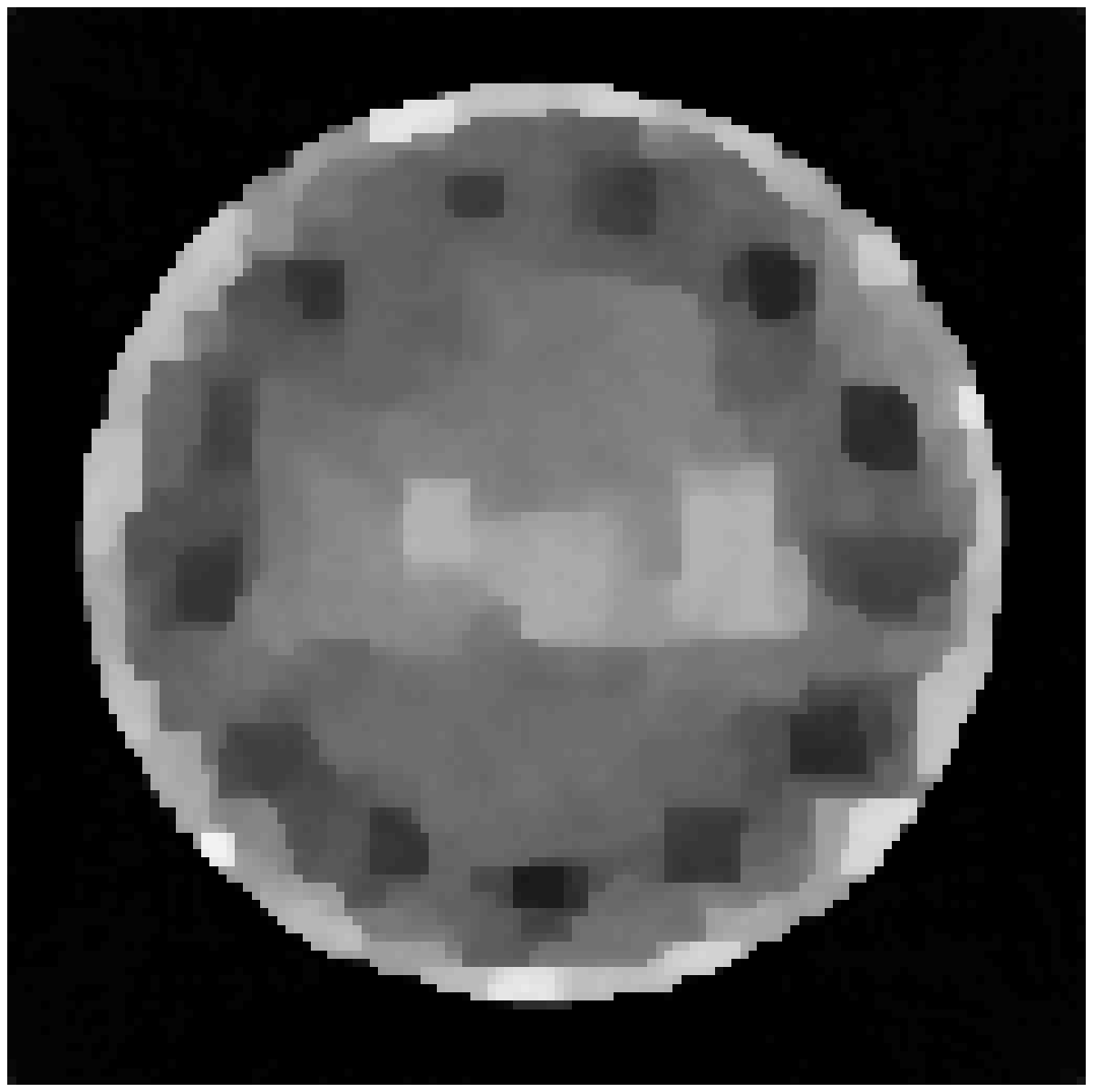} & \includegraphics[scale=0.2]{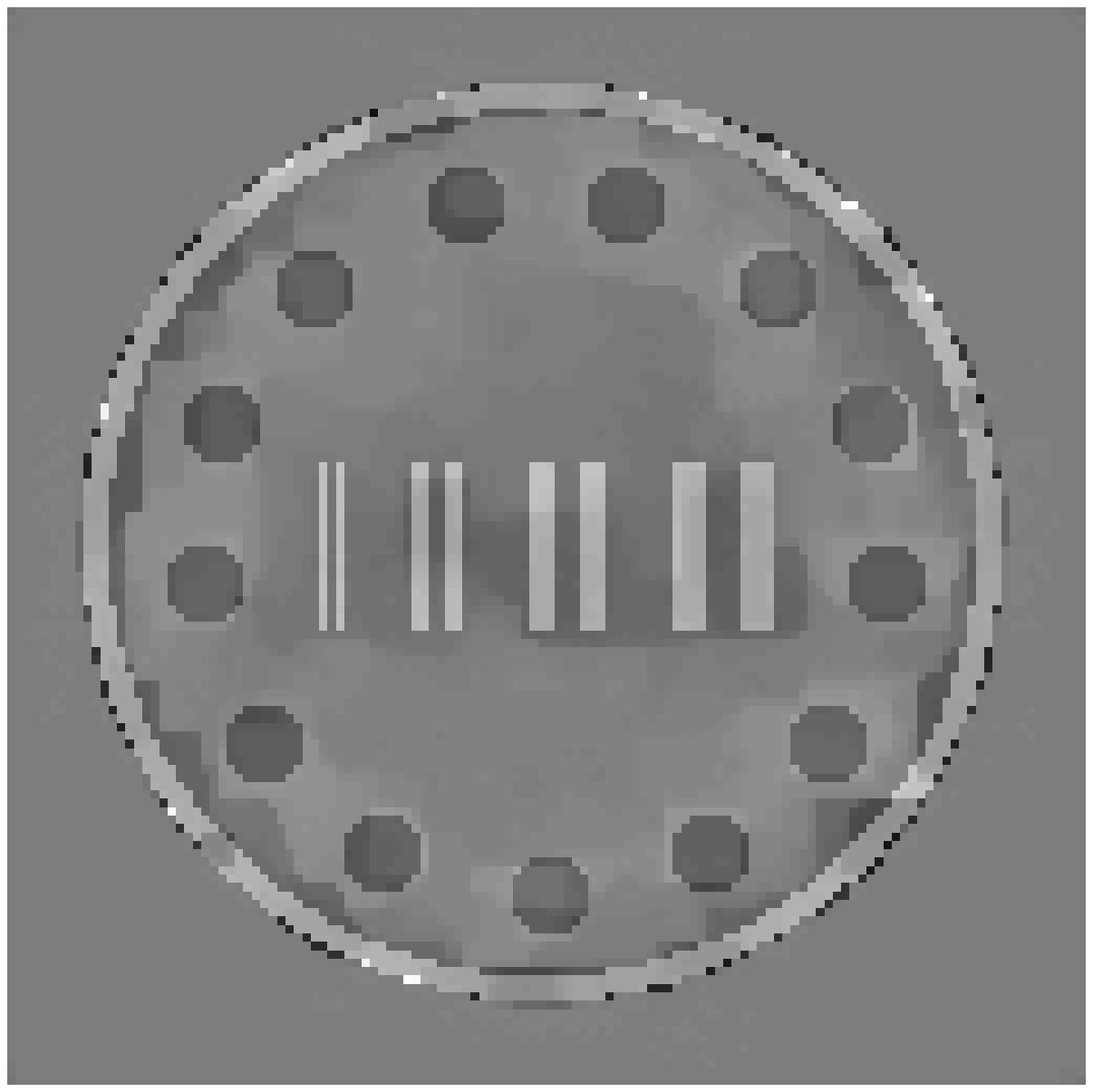} & \includegraphics[scale=0.2]{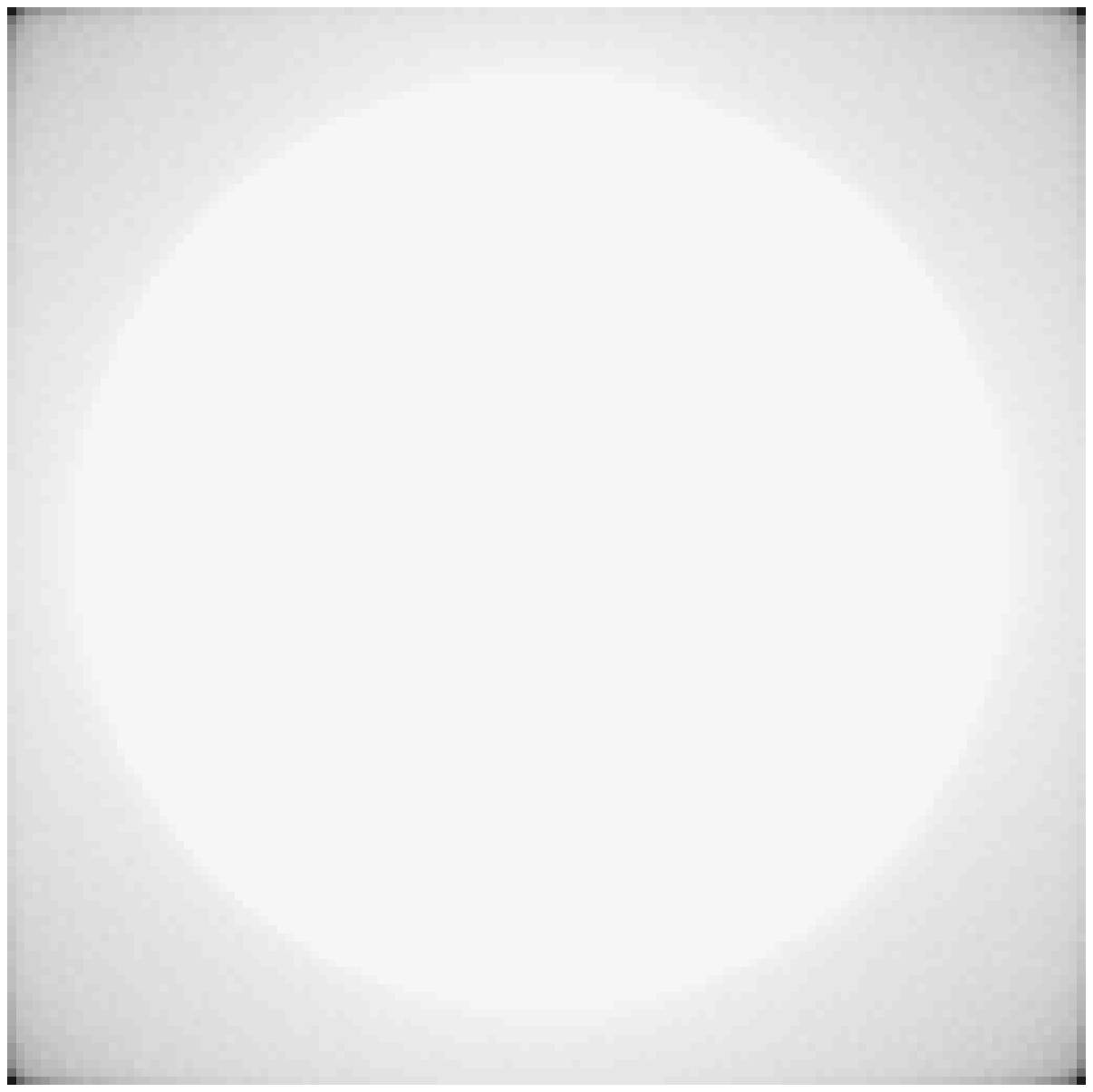}\\
\rotatebox{90}{\quad[0:8:179]}&\includegraphics[scale=0.2]{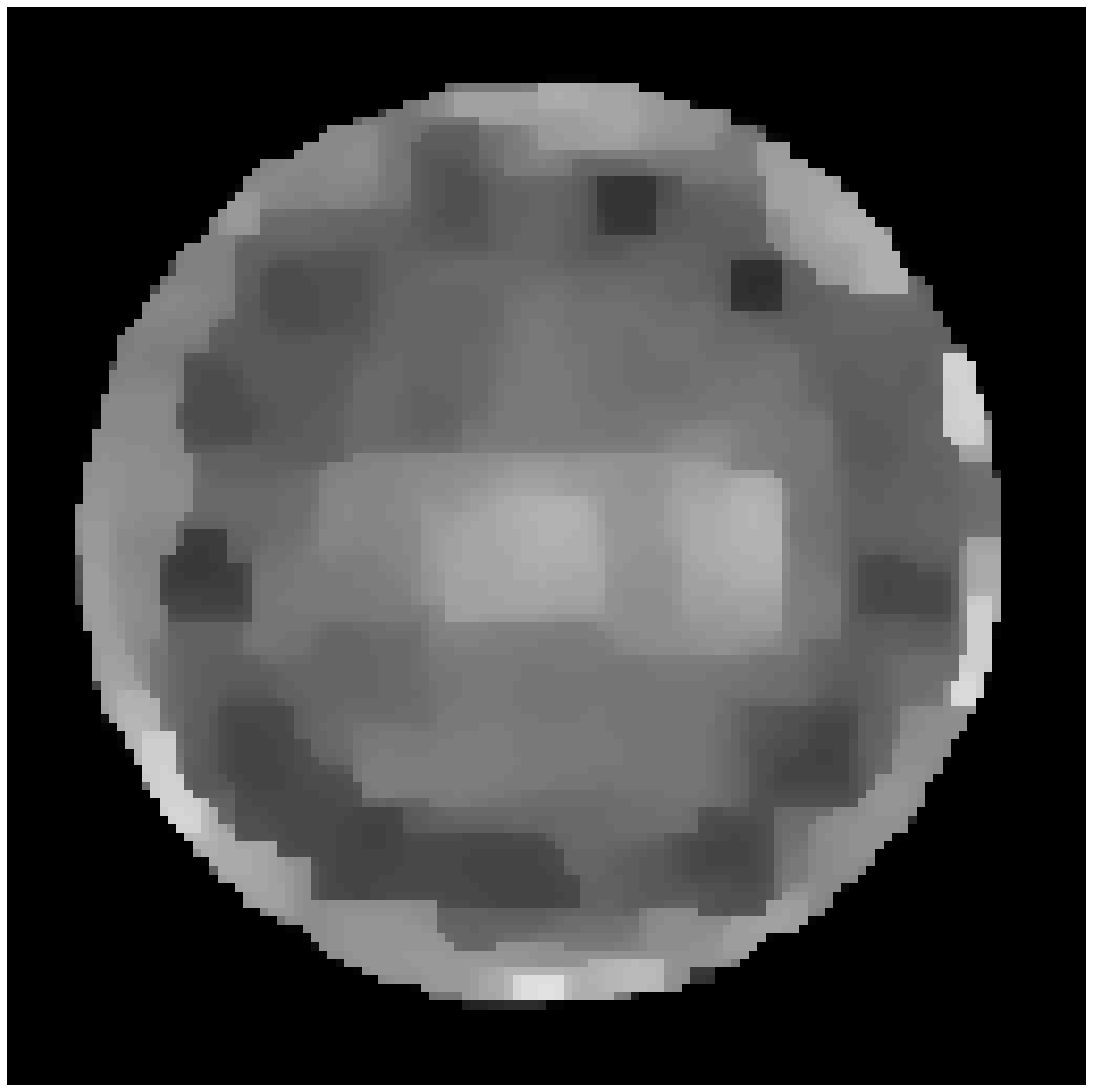} & \includegraphics[scale=0.2]{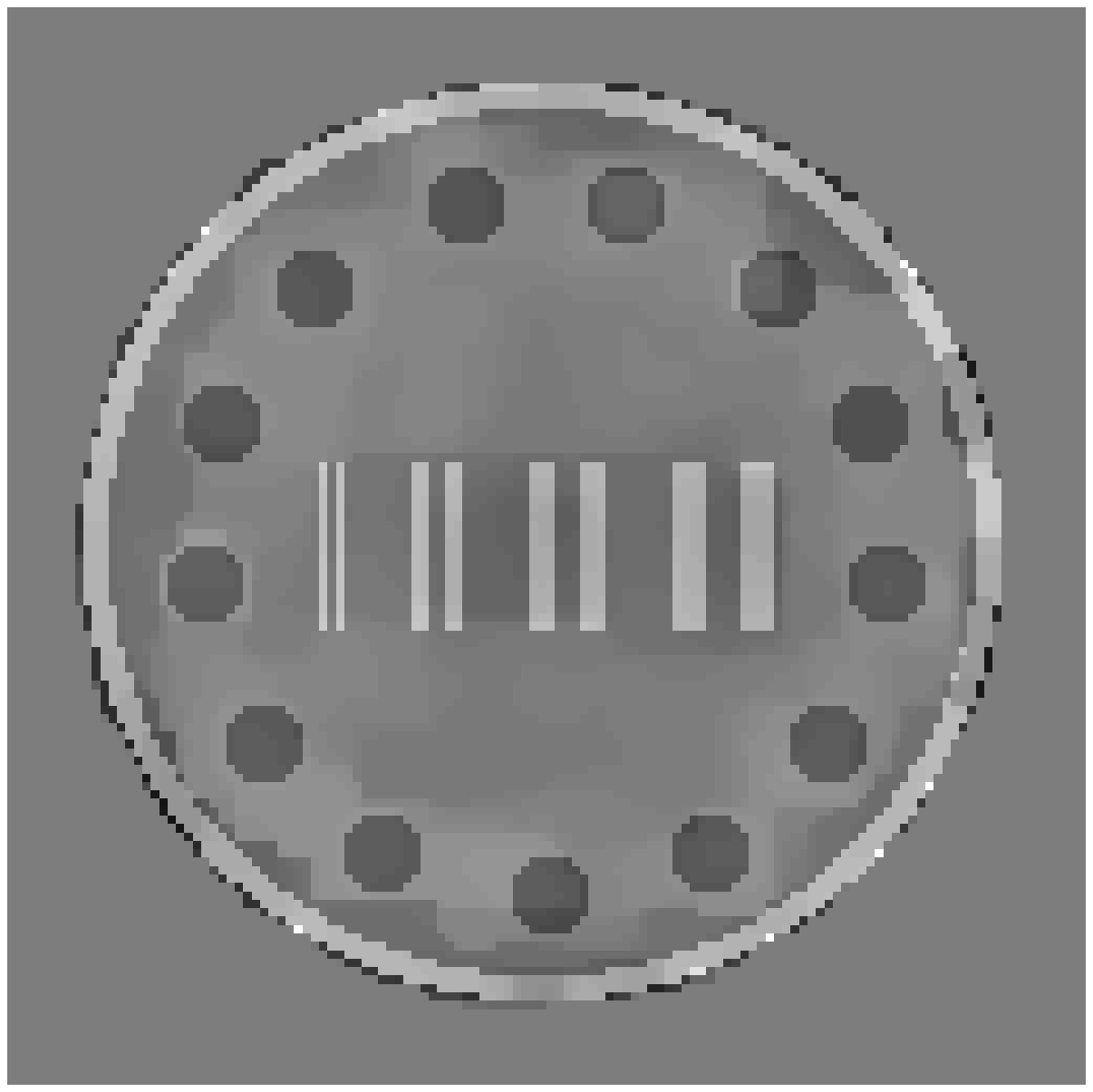} & \includegraphics[scale=0.2]{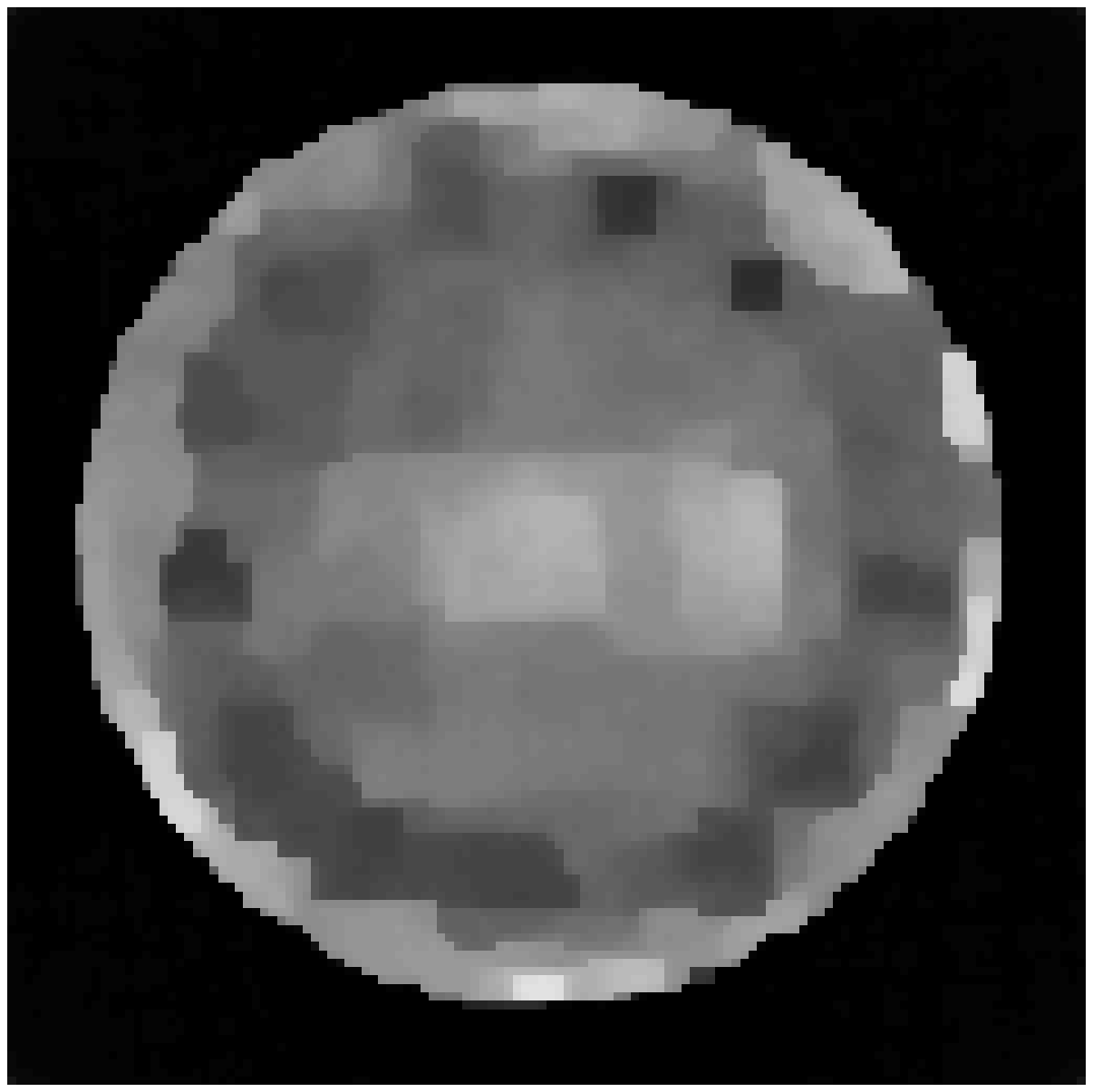} & \includegraphics[scale=0.2]{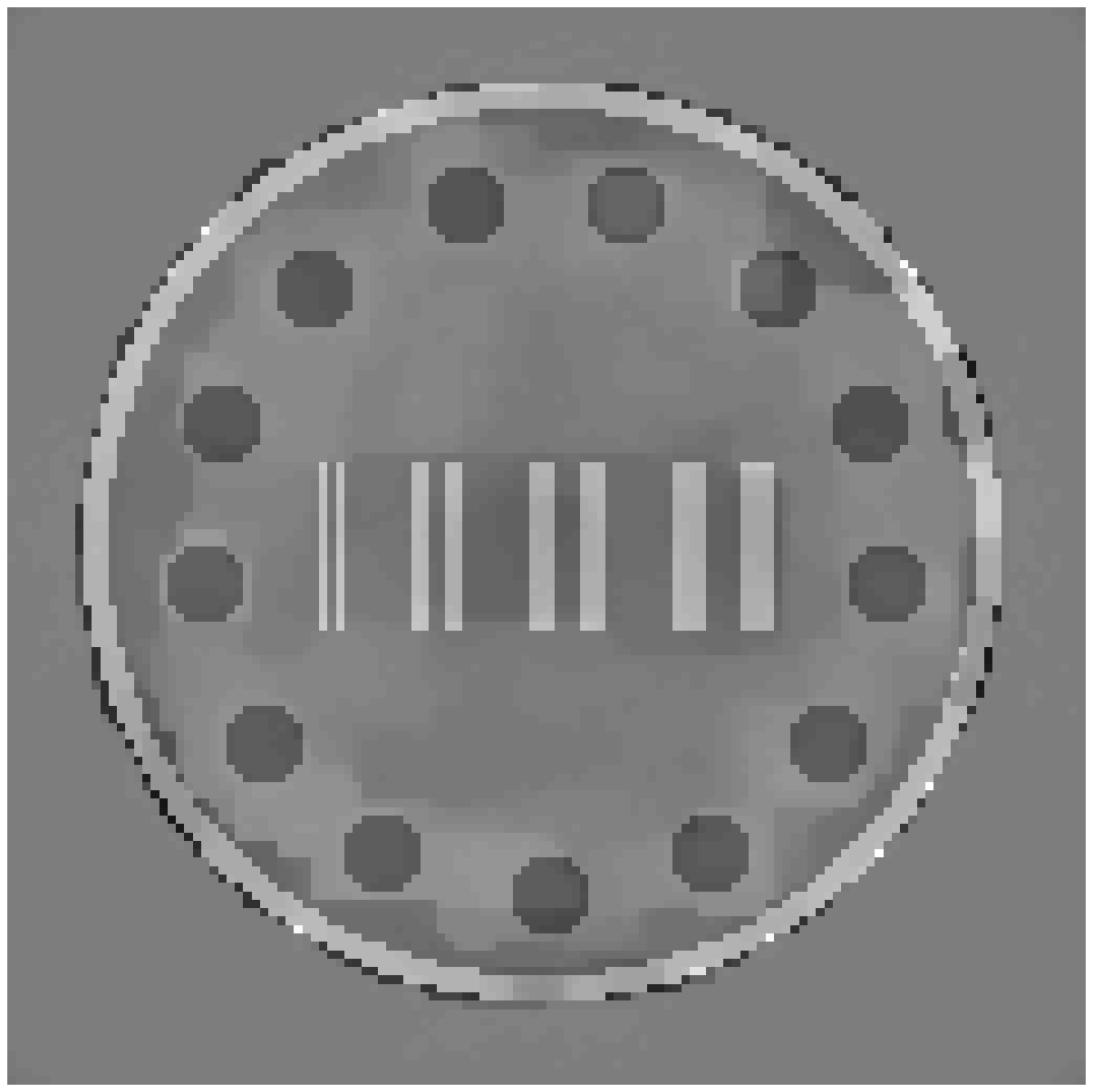} & \includegraphics[scale=0.2]{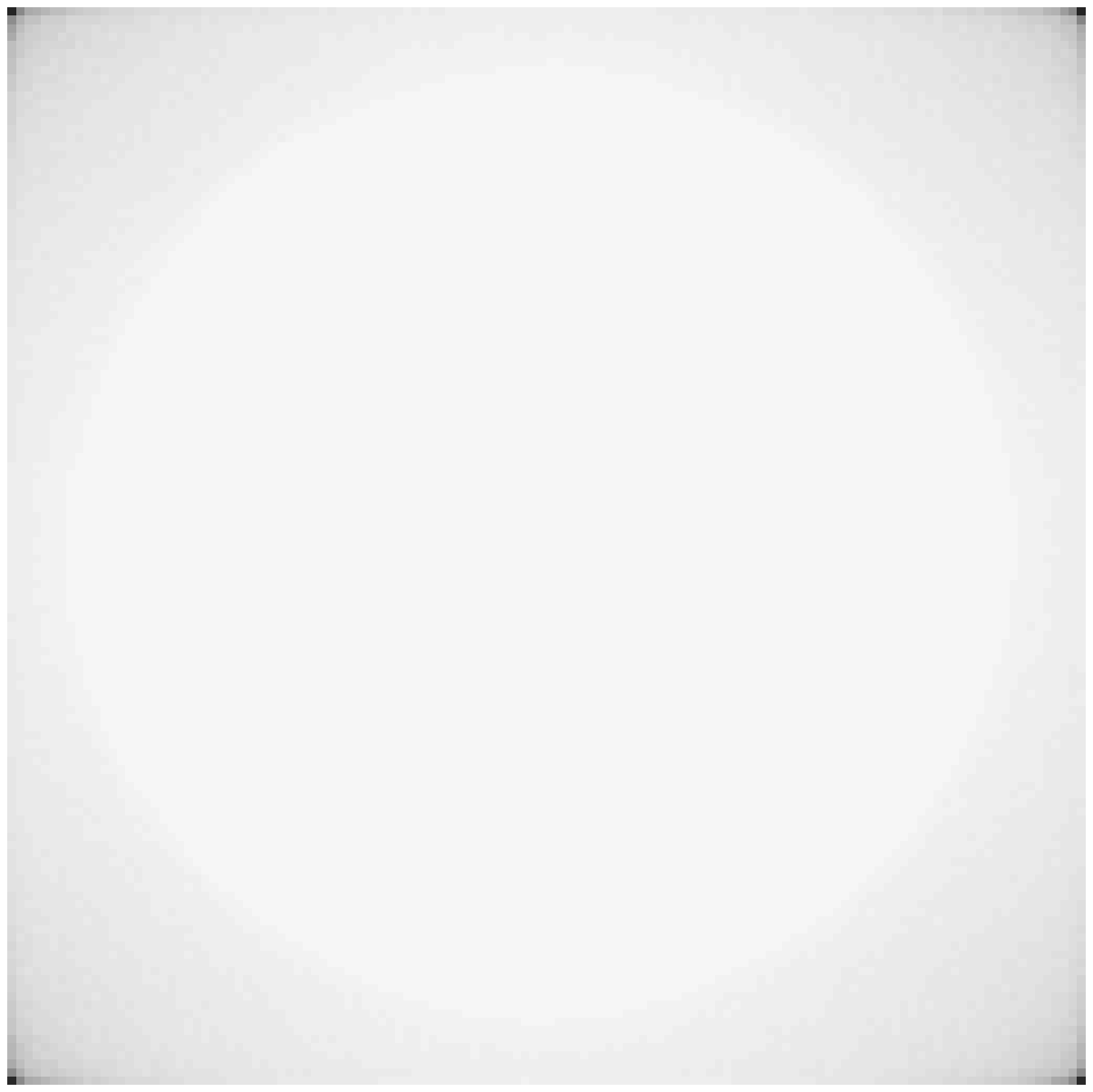}\\
&MAP & MAP error & EP mean & EP error & EP variance
\end{tabular}
\caption{MAP vs EP with anisotropic TV prior for the \texttt{PET} phantom, low count case.\label{fig:LC_ME_248}}	
\end{figure}

\begin{table}[hbt!]
\centering
\caption{The comparisons between EP mean and MAP for the \texttt{PET} phantom. The top
and bottom blocks refer to the moderate and low count cases, respectively. \label{tab:ME_quant}}
\begin{tabular}{|l|c|c|c|c|c|c|}
 \hline
 angle  & \multicolumn{2}{|c|}{[0:2:179]} & \multicolumn{2}{|c|}{[0:4:179]} & \multicolumn{2}{|c|}{[0:8:179]}\\
 \hline
$\alpha$ & \multicolumn{2}{|c|}{1.6e0} & \multicolumn{2}{|c|}{1.4e0} & \multicolumn{2}{|c|}{1.2e0}\\
 \hline
Method   & EP    & MAP   & EP    & MAP   & EP    & MAP\\
 \hline
$L^2$ error & 7.37  & 7.45  & 8.55  & 8.64  & 8.81  & 8.87\\
 \hline
SSIM     & 0.72  & 0.81  & 0.61  & 0.75  & 0.57  & 0.70\\
 \hline
PSNR     & 19.82 & 19.79 & 18.42 & 18.35 & 17.35 & 17.28\\
 \hline
CPU time (s) & 91263.00 & 110.05 & 53863.77 & 78.69 & 31537.05 & 28.20\\
 \hline
 \hline
 \hline
$\alpha$ & \multicolumn{2}{|c|}{1.2e0} & \multicolumn{2}{|c|}{9e-1} & \multicolumn{2}{|c|}{7.5e-1}\\
 \hline
Method   & EP    & MAP   & EP    & MAP   & EP    & MAP\\
 \hline
$L^2$ error & 8.96  & 9.04  & 9.30  & 9.35  & 10.13  & 10.17\\
 \hline
SSIM     & 0.55  & 0.72  & 0.49  & 0.67  & 0.43  & 0.62\\
 \hline
PSNR     & 17.66 & 17.61 & 16.93 & 16.89 & 15.84 & 15.81\\
 \hline
CPU time (s) & 82542.76 & 52.97 & 47263.64 & 32.43 & 29737.91 & 18.01\\
 \hline
\end{tabular}
\end{table}

\begin{figure}[htb!]
\centering
\begin{tabular}{cc}
\includegraphics[width=0.3\textwidth]{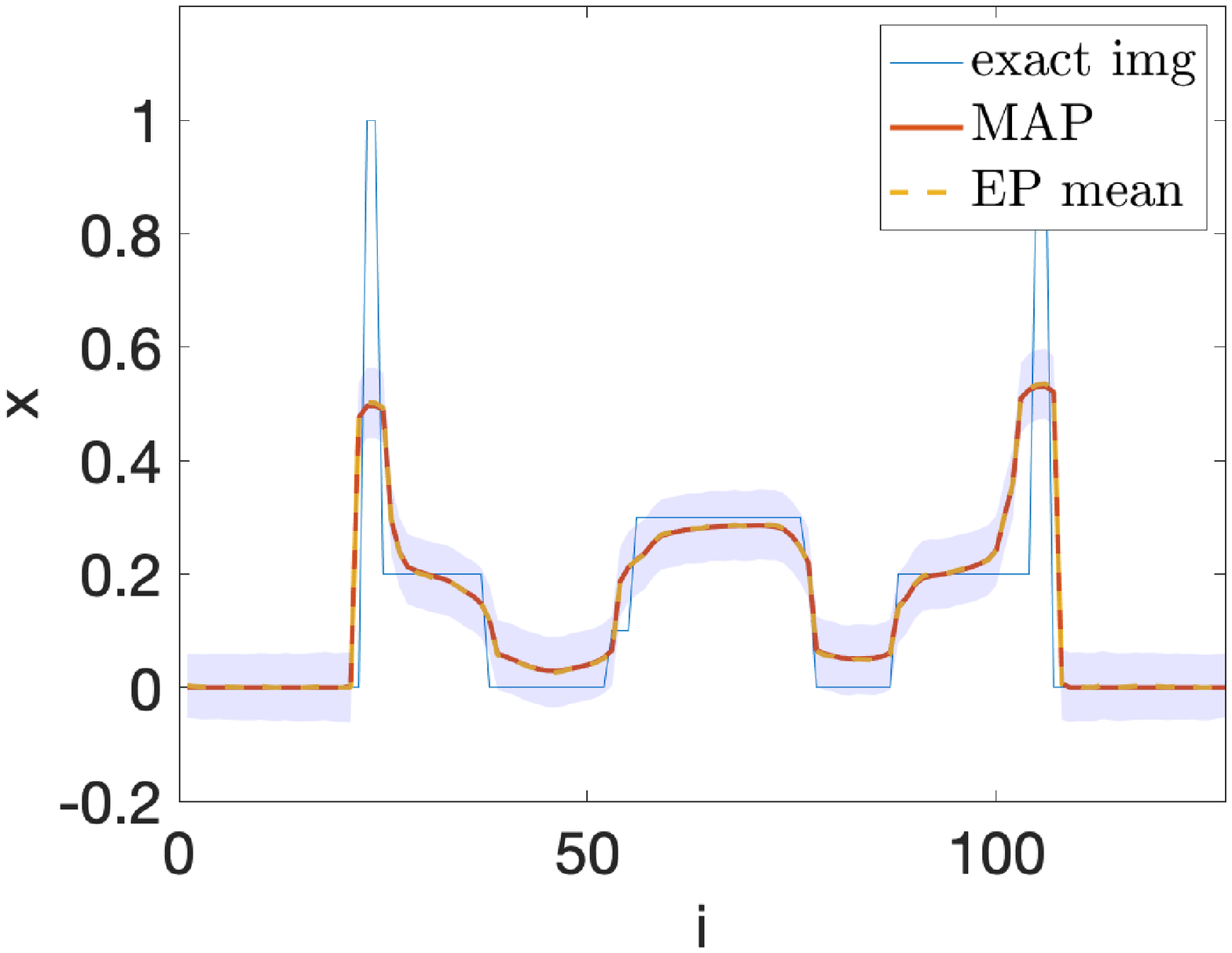} & \includegraphics[width=0.3\textwidth]{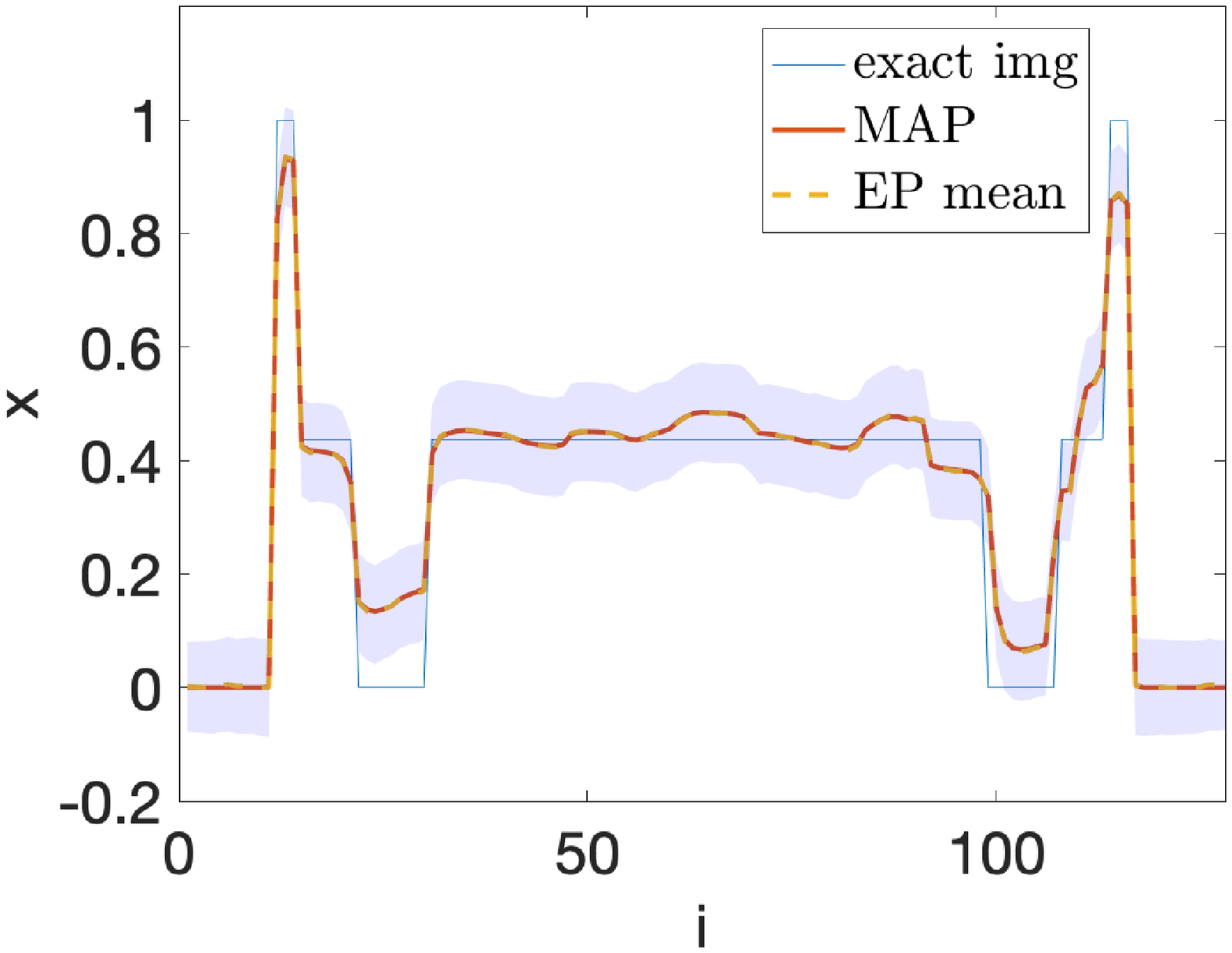}\\
(a) Shepp-Logan & (b) PET\\
\end{tabular}
\caption{The $50$-th cross-sections of the two phantoms and $0.95$-HPD regions, moderate count case.\label{fig:rev_128_slice}}	
\end{figure}

To further illustrate the approximation, we plot in Fig. \ref{fig:rev_128_slice} the cross-sections and 95\%
highest posterior density (HDP) region, which is estimated from the EP covariance. The EP mean is close to
MAP, and thus also suffers slightly from a reduced magnitude, as is typical of the total variation penalty in
variational regularization \cite{ChanShen:2005}. This also concurs with the error metrics in Tables \ref{tab:SL_quant} and
\ref{tab:ME_quant}. The thrust of EP is that it can also provide uncertainty estimates via covariance, which is
unavailable from MAP. In sharp contrast, the popular Laplace approximation (see Appendix \ref{app:Laplace})
can fail to yield a reasonable approximation for nonsmooth priors such as anisotropic total variation,
whereas MCMC tends to be prohibitively expensive for large images, though being asympotically exact; see
Appendix \ref{sec:1d} for further numerical results. So overall, EP represents a computationally feasible
approach to deliver uncertainty estimates for these benchmark images with Poisson data.

\subsection{Convergence of the EP algorithm}

Next, we present an experimental evaluation of the convergence of the EP algorithm, which is a long outstanding
theoretical issue, on the following experimental setup: \texttt{Shepp-Logan} phantom and Radon matrix $A\in \mathbb{R}^{4255\times
16384}$ (i.e., $185$ projections per angle and $[0:8:179]$, moderate count case). We denote the mean and covariance
after $k$ outer iterations (i.e., sweeps through all the sites) by $\mu^k$ and $C^k$, respectively, and the converged
iterate tuple by $(\mu^*,C^*)$. The EP mean $\mu^k$ converges rapidly, and visually it reaches convergence after
five iterations since thereafter the cross-sections graphically overlap with each other; see Fig.
\ref{fig:EP_conv_x_img}. Thus, in the numerical experiments, we have fixed the number of outer iterations to four, and
the complexity of the reconstruction algorithm is of order $O(mn^2)$. Fig. \ref{fig:EP_conv} shows the errors
of the iterate tuple $(\mu^k,C^k)$ with respect to $(\mu^*,C^*)$, where the errors
\begin{equation*}
\delta \mu=\mu^k-\mu^* \quad\mbox{ and }\quad \delta C=C^k-C^*
\end{equation*}
are measured by the $L^2$-norm and spectral norm, respectively. This phenomenon is also observed for all other
experiments, although not presented. Hence, both mean and covariance converge rapidly, showing the steady and fast
convergence of EP.

\begin{figure}[htb!]
\centering
\begin{tabular}{cccccc}
\includegraphics[scale=0.2]{bar_1_h} &  &  &  & \\
\includegraphics[scale=0.2]{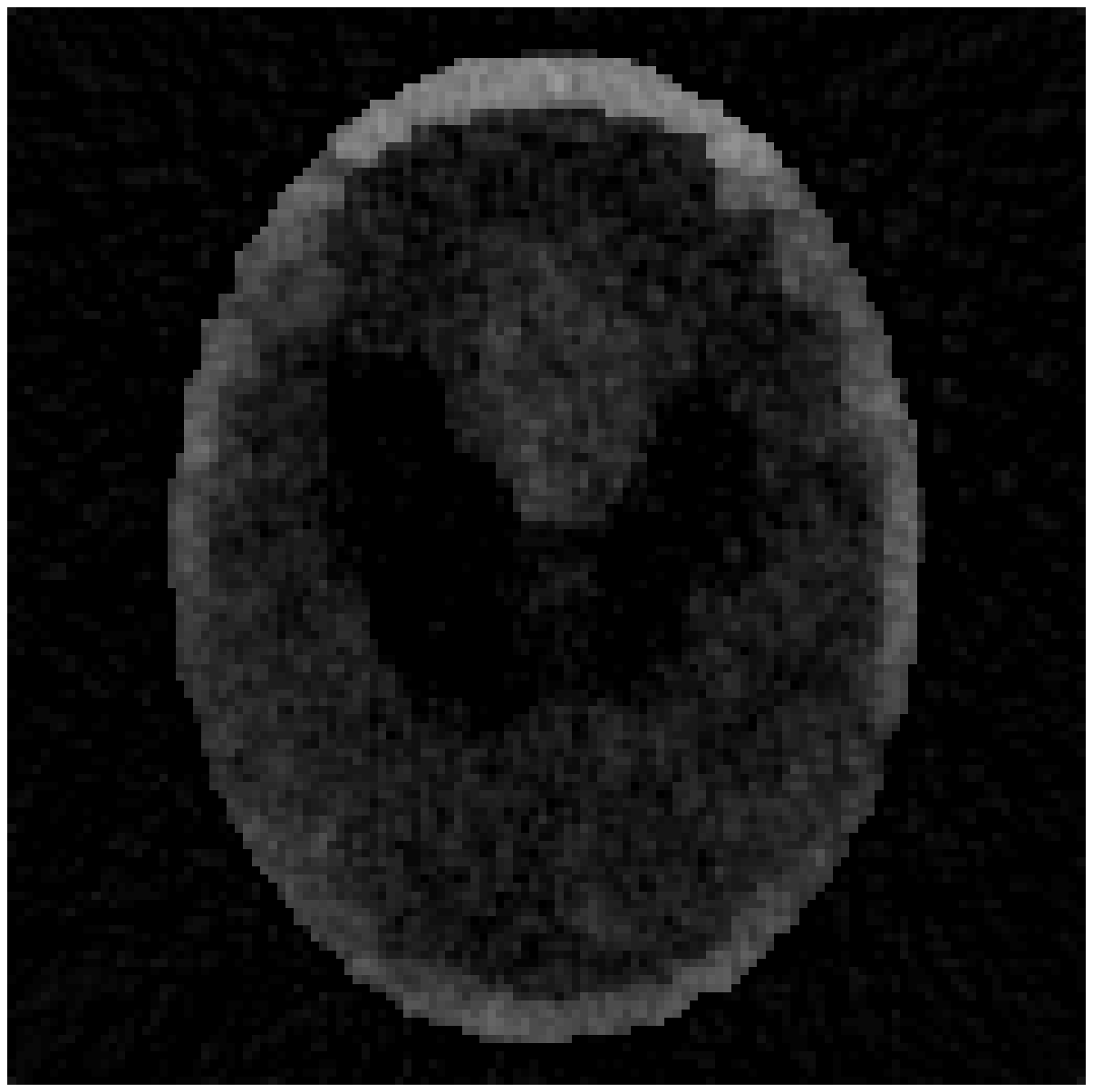} & \includegraphics[scale=0.2]{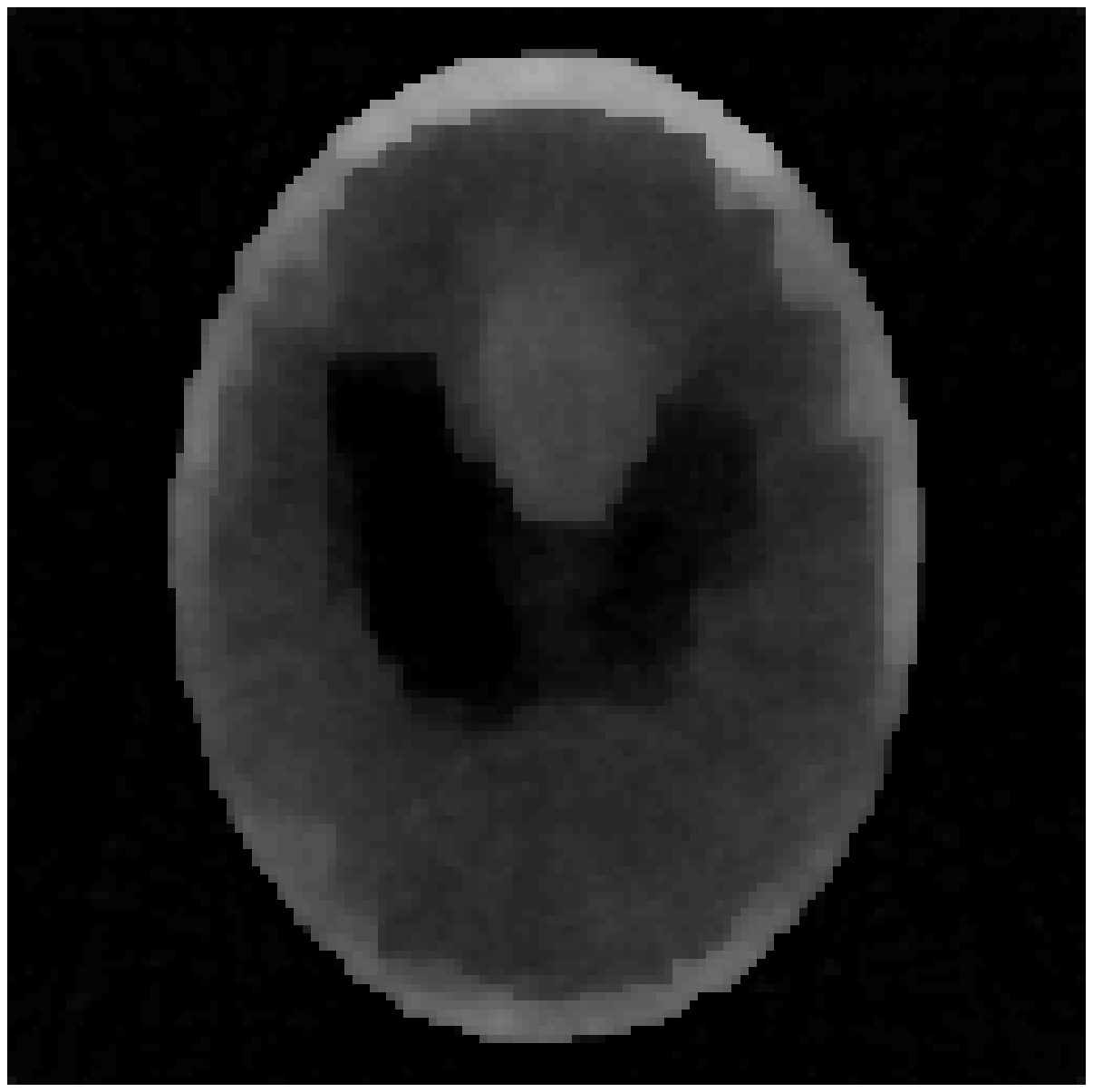} & \includegraphics[scale=0.2]{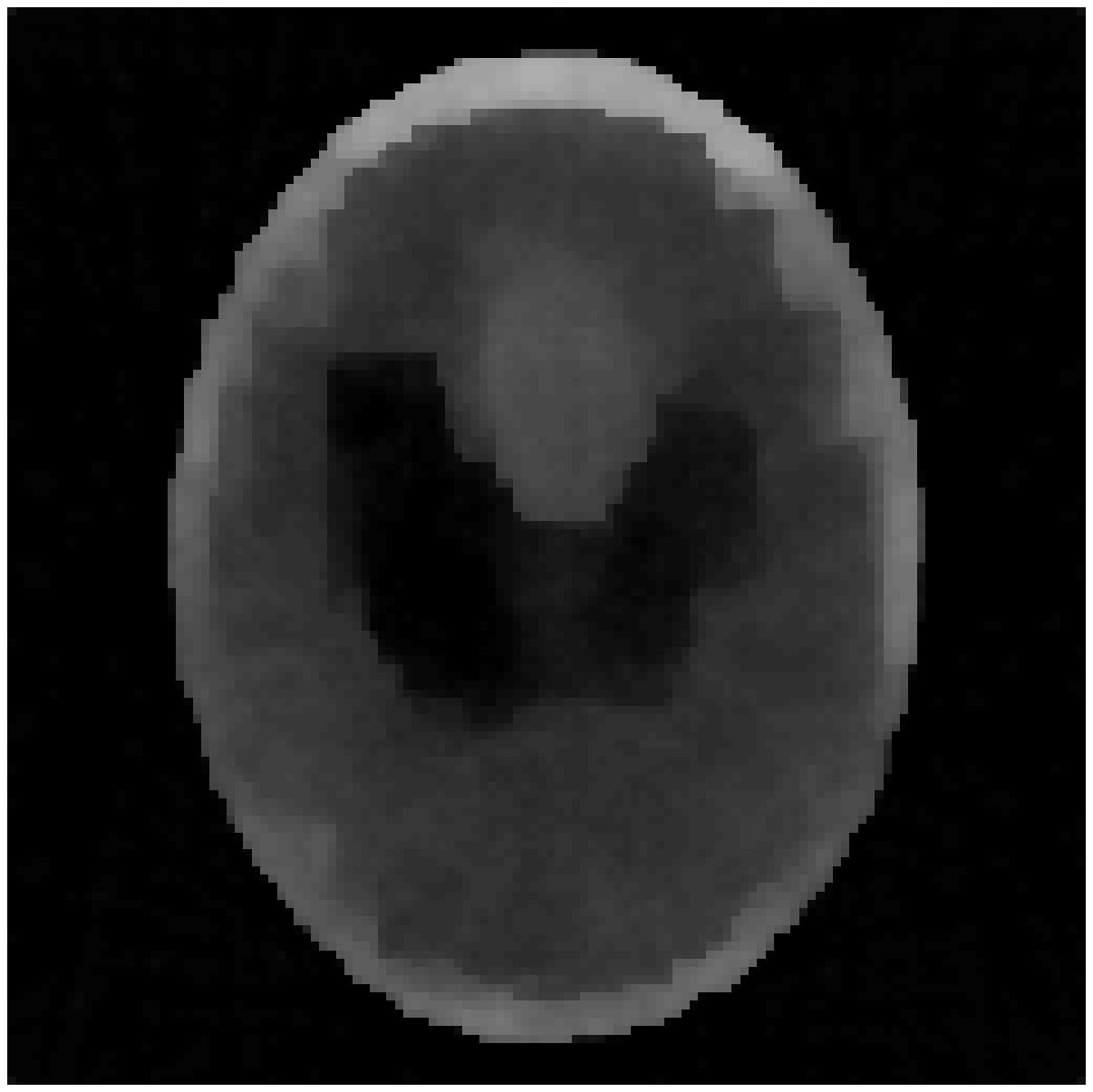} & \includegraphics[scale=0.2]{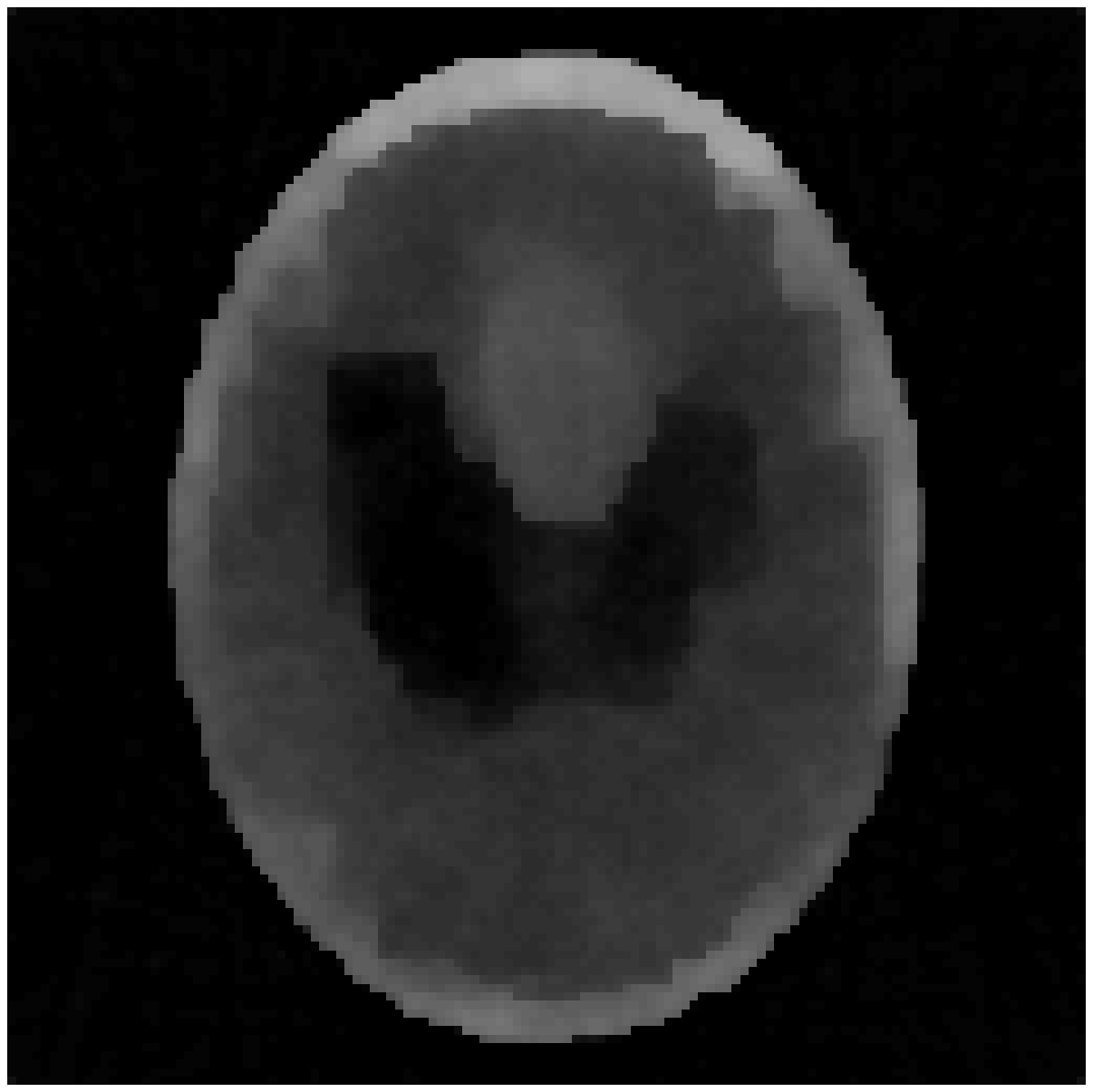} & \includegraphics[scale=0.2]{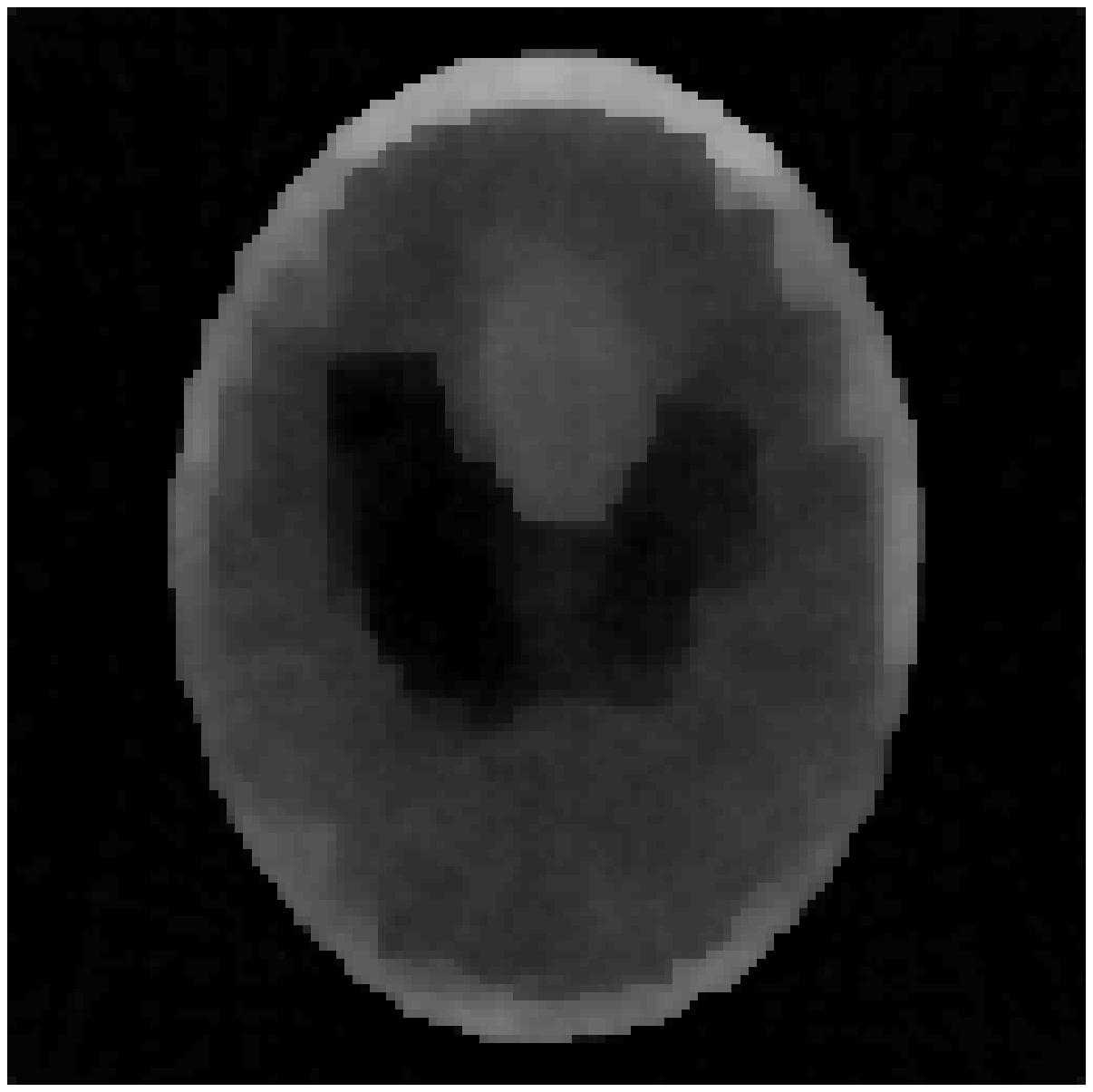}\\
$\mu^1$ & $\mu^2$ & $\mu^3$ & $\mu^4$ & $\mu^5$\\
\includegraphics[scale=0.2]{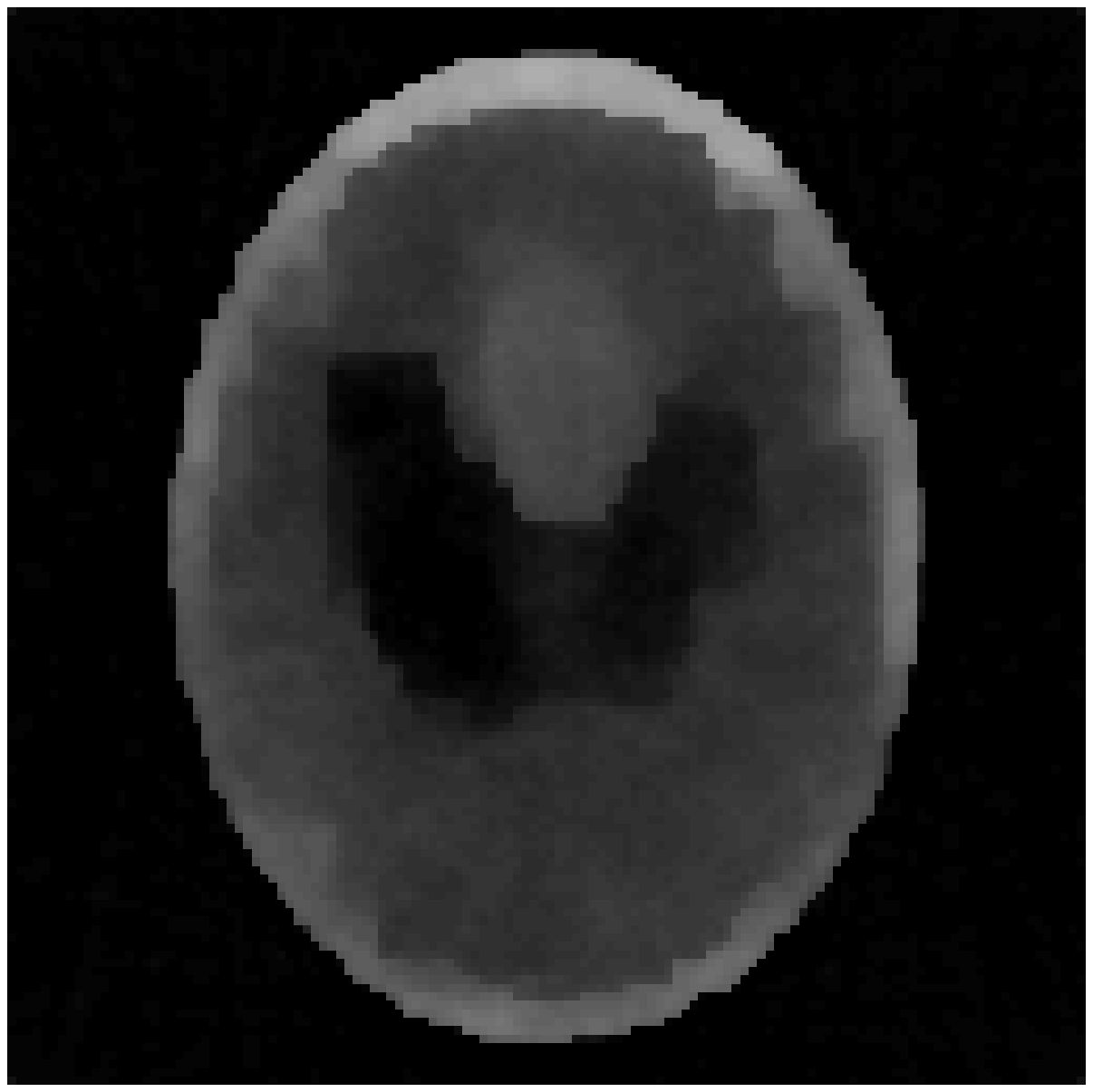} & \includegraphics[scale=0.2]{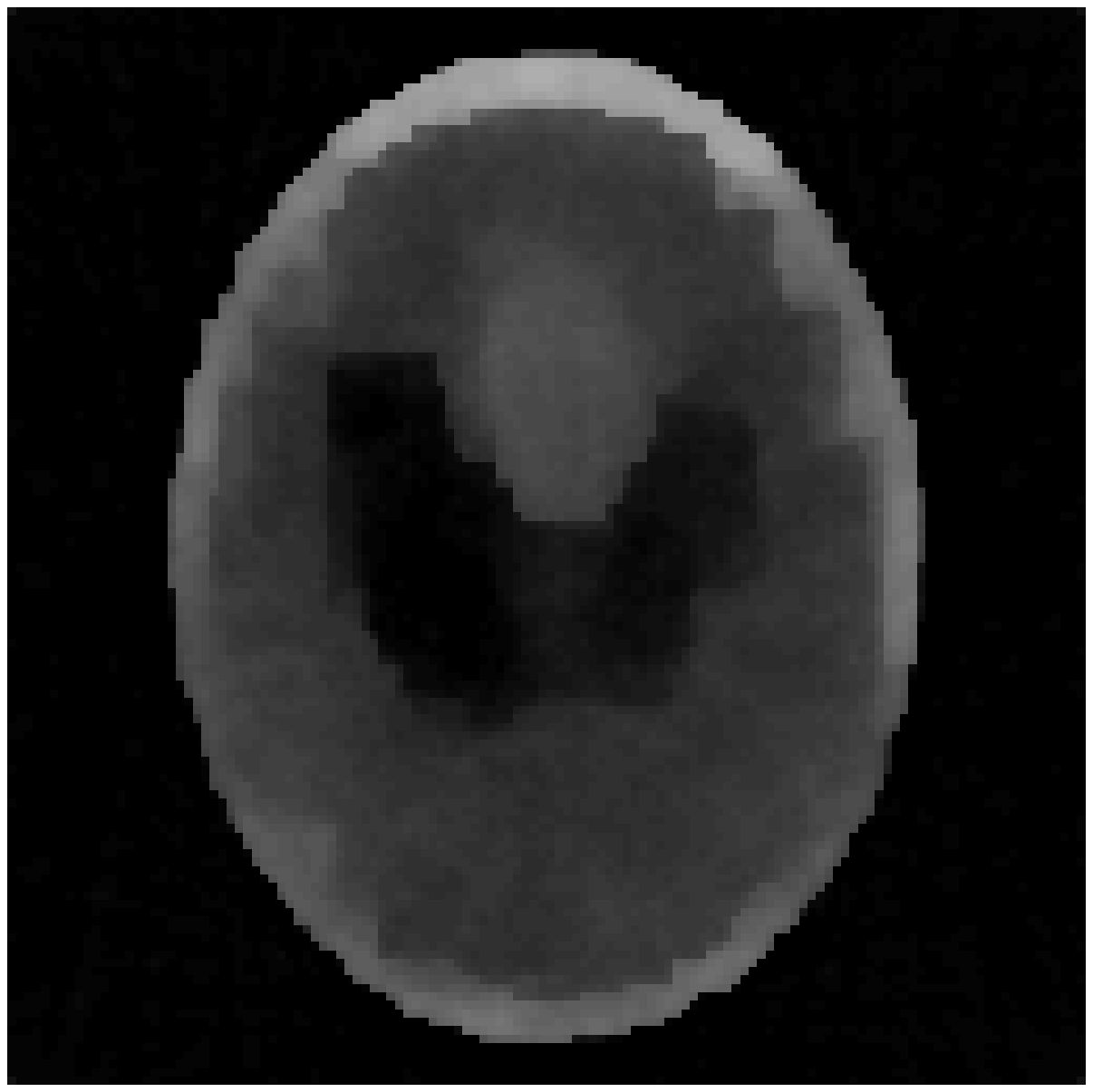} & \includegraphics[scale=0.2]{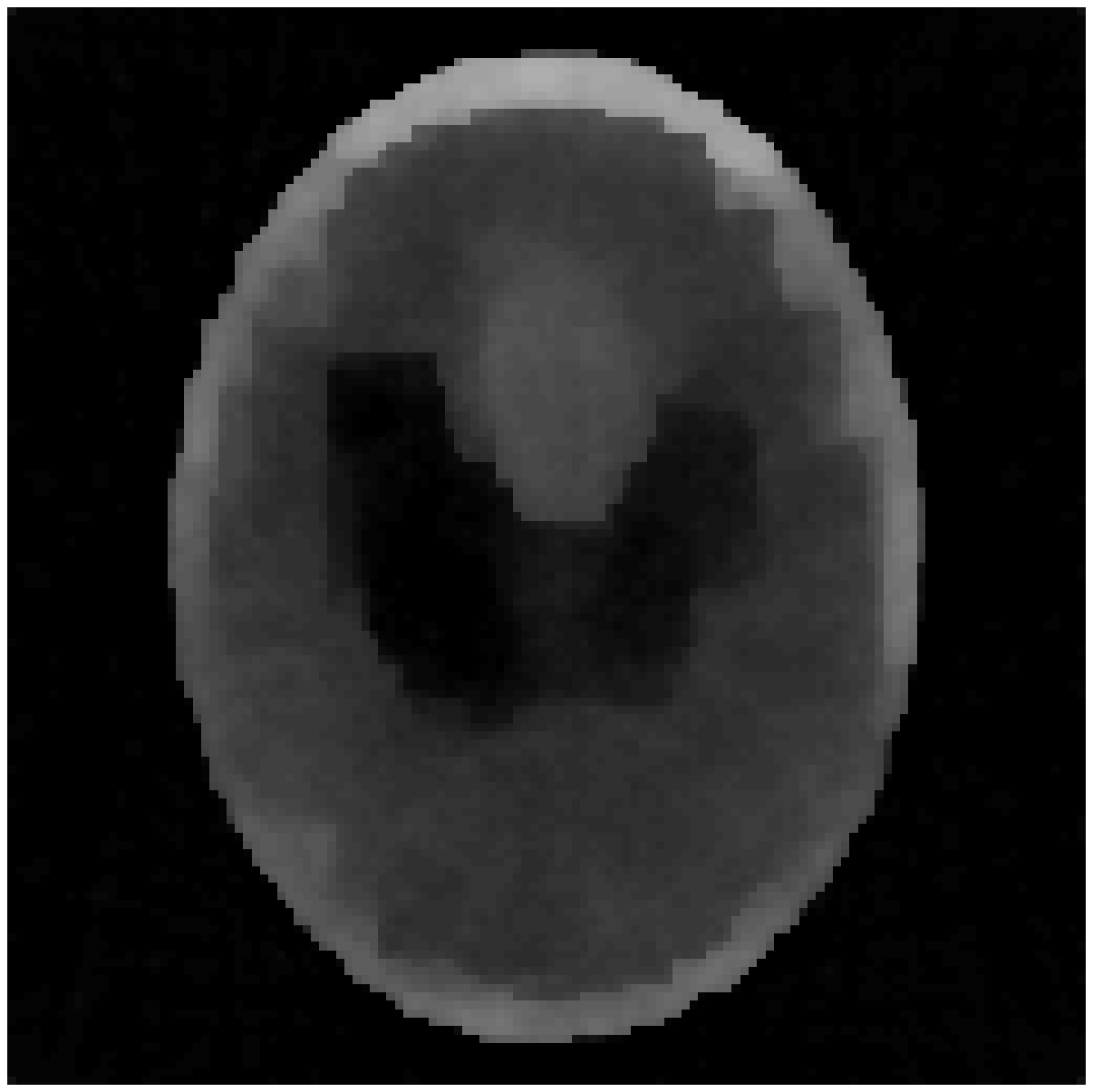} & \includegraphics[scale=0.2]{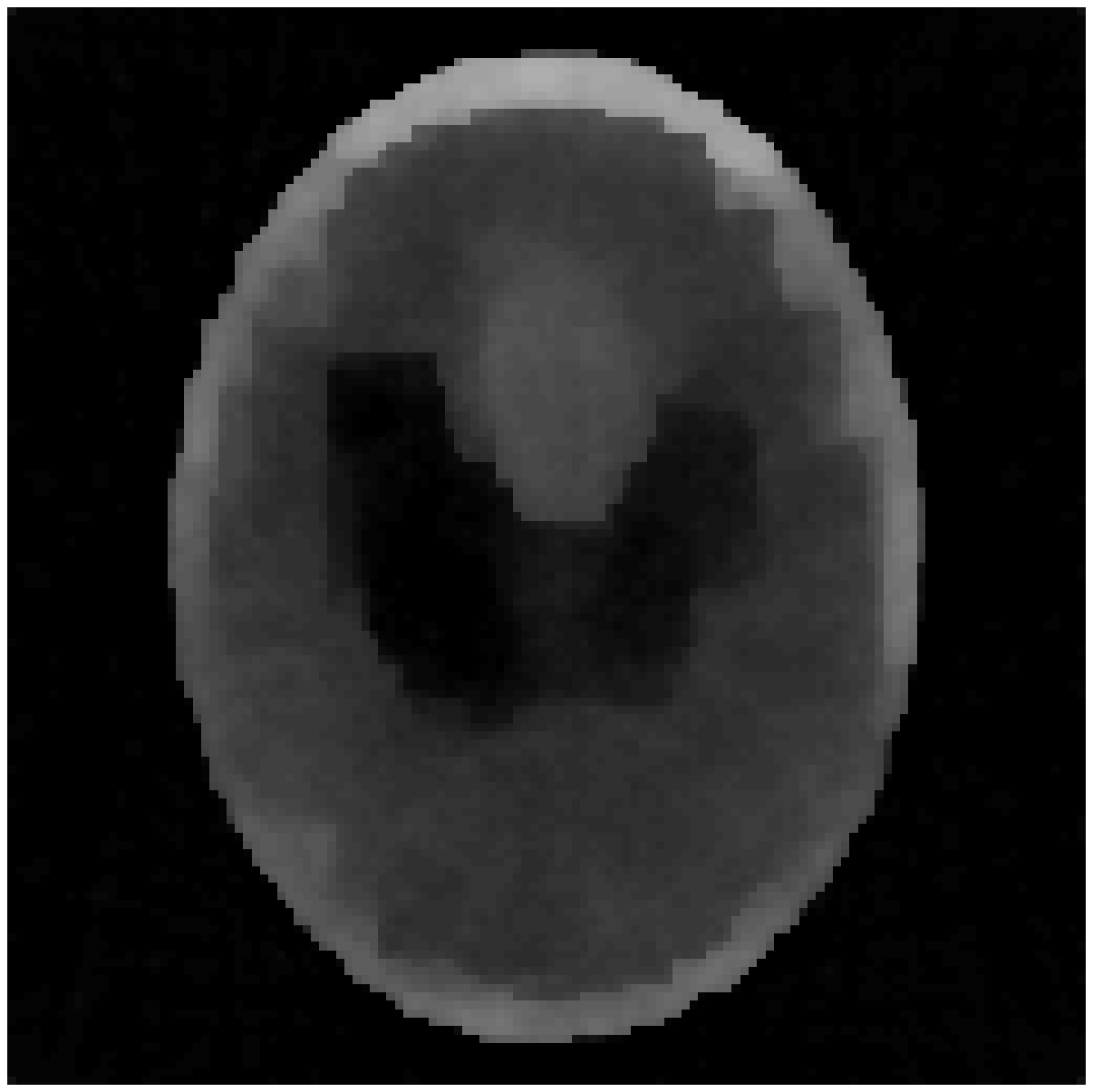} & \includegraphics[scale=0.2]{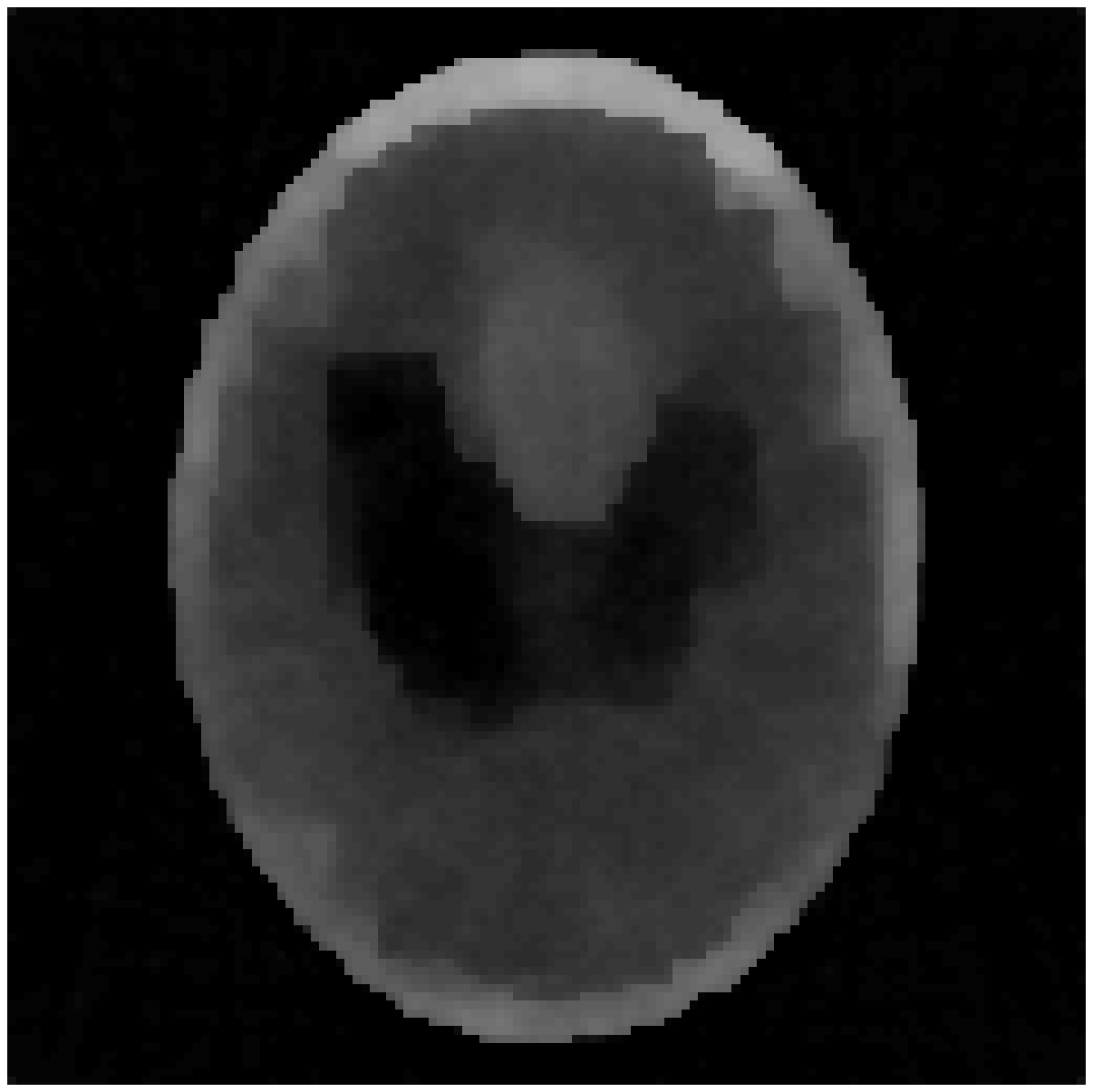}\\
$\mu^6$ & $\mu^7$ & $\mu^8$ & $\mu^9$ & $\mu^{10}$
\end{tabular}
\caption{The convergence of the mean $\mu^k$ by EP after $k$ outer iterations for the \texttt{Shepp-Logan} phantom, moderate count case.\label{fig:EP_conv_x_img}}	
\end{figure}

\begin{figure}[htb!]
\centering
\begin{tabular}{cc}
\includegraphics[width=0.3\textwidth]{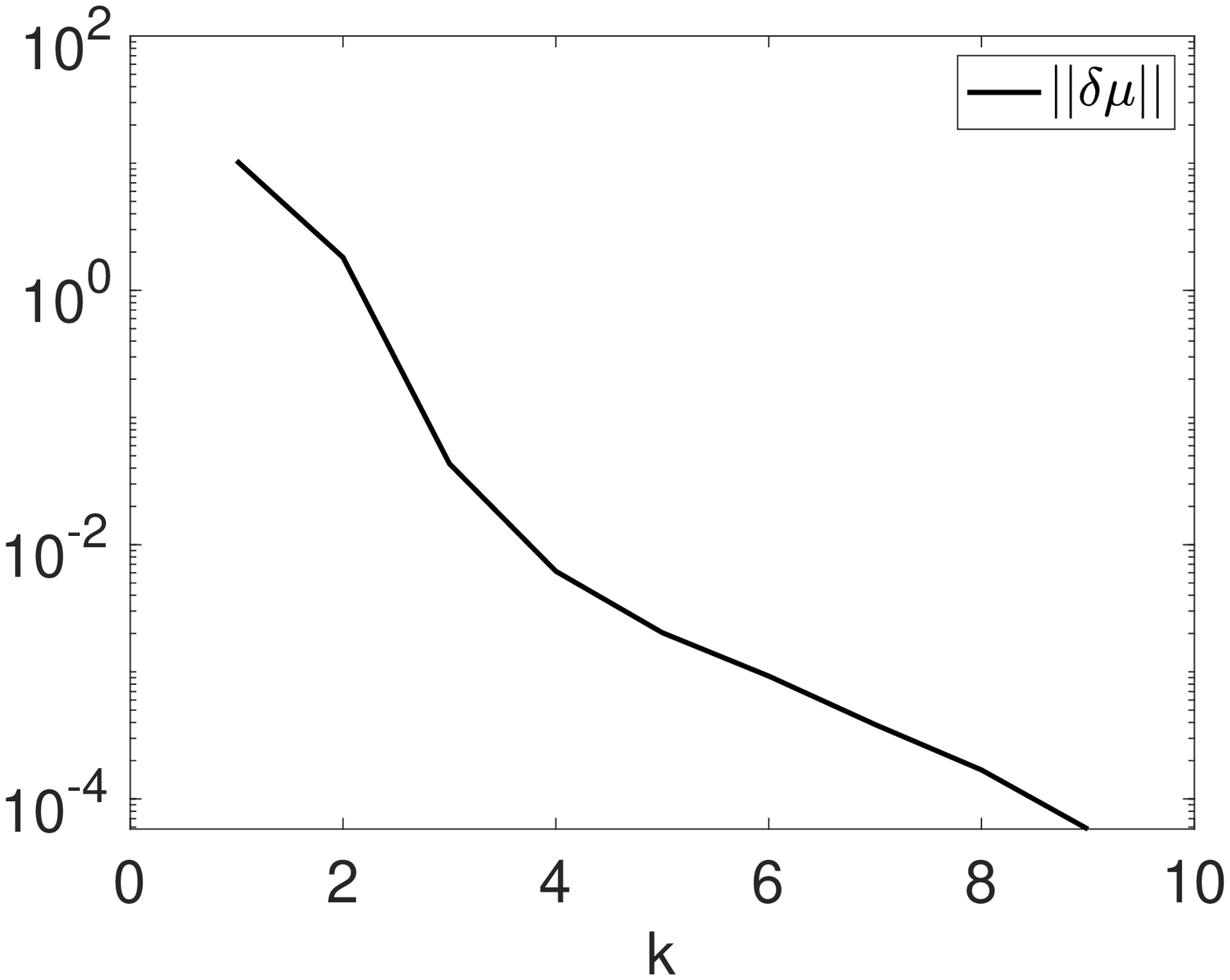} & \includegraphics[width=0.3\textwidth]{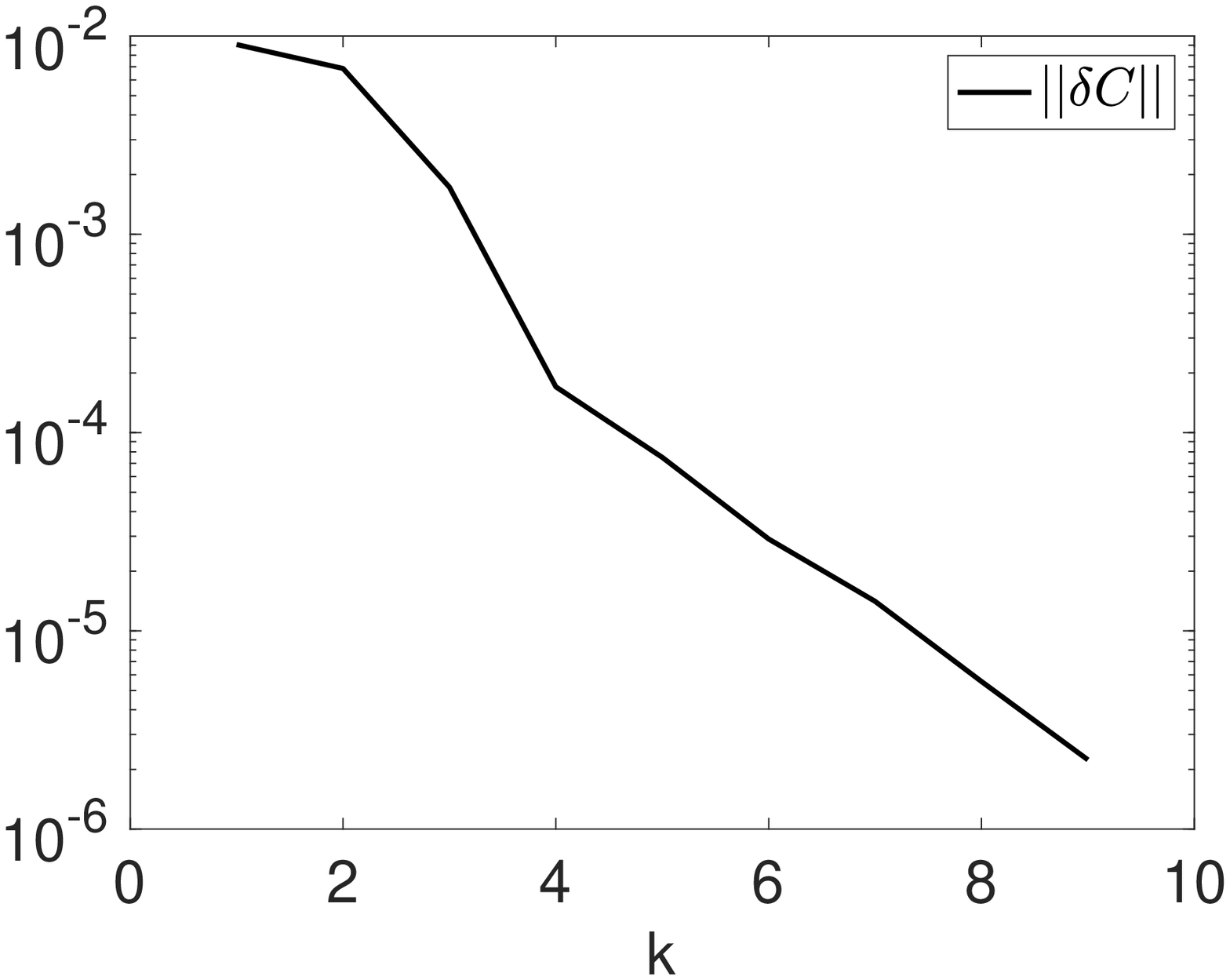}\\
(a) mean $\mu$ & (b) covariance $C$
\end{tabular}
\caption{The convergence of the mean $\mu$ and covariance $C$ after each outer iteration, moderate count case.\label{fig:EP_conv}}	
\end{figure}

\subsection{Real data}
Last, we illustrate the inference procedure with a dataset taken from \texttt{Michigan Image Reconstruction
Toolbox}\footnote{\url{https://web.eecs.umich.edu/~fessler/code/}, last accessed on July 30, 2018.}. The ground truth
image is denoted by \texttt{IRT}. The map  $A\in\mathbb{R}^{24960\times 16384}$ is assembled by $A=\text{diag}
(c_i)G$, where $G$ is the system matrix and $c_i$ is an attenuation vector, by setting the mask to all values
being unity and other parameters to default.
The exact image and data are shown in Fig. \ref{fig:irt_xby}; see Fig. \ref{fig:irt_248} for reconstructions,
obtained with a regularization parameter $\alpha=0.4$. The $L^2$ error, SSIM and PSNR for
EP and MAP are, respectively, $13.46$ and $13.48$, $0.62$ and $0.83$, and $25.66$ and $25.64$. Thus, the
EP results and MAP are comparable, and the preceding observations remain valid.

\begin{figure}[htb!]
\centering
\begin{tabular}{ccccc}
\includegraphics[scale=0.17]{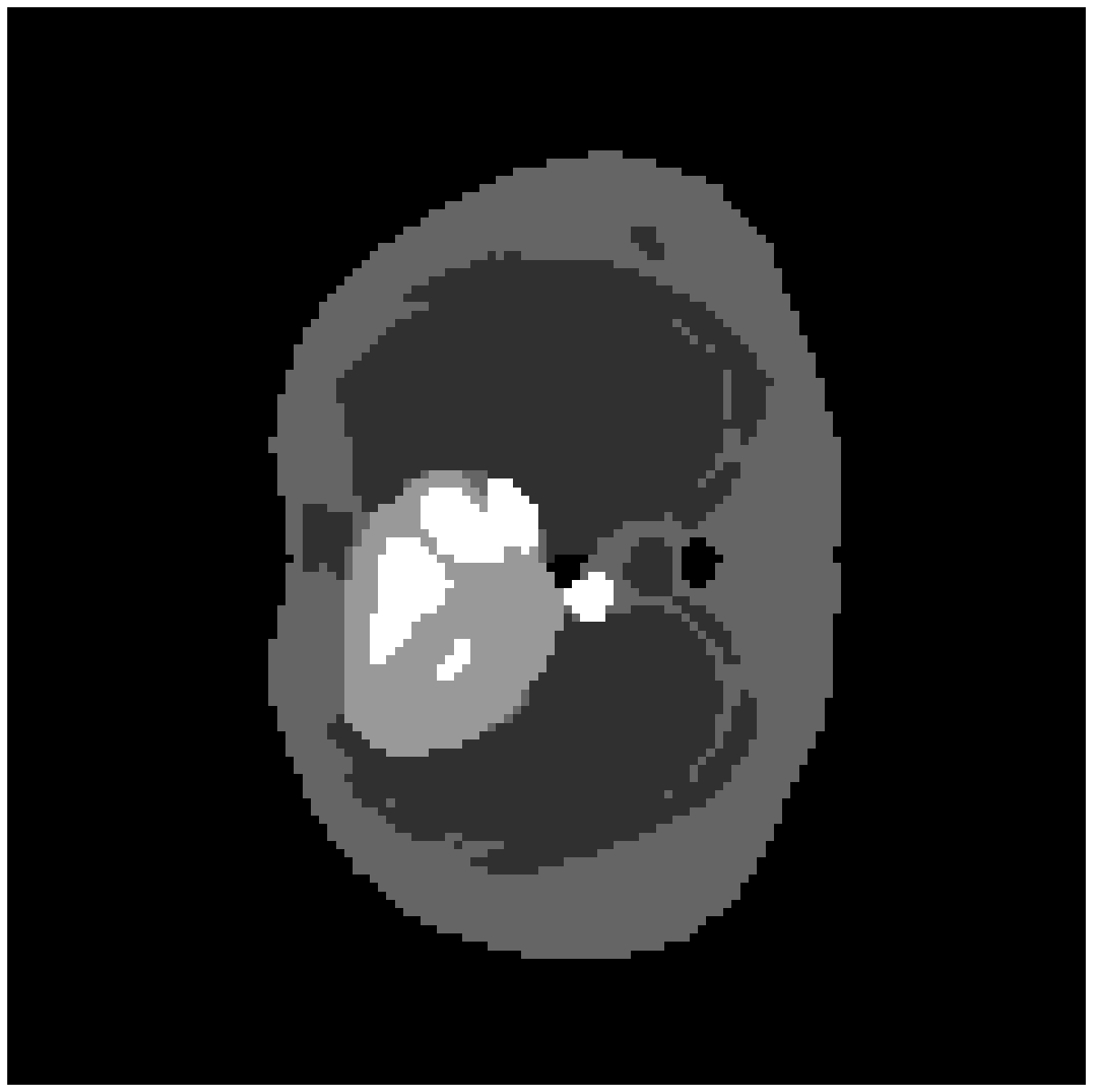}  \includegraphics[scale=0.17]{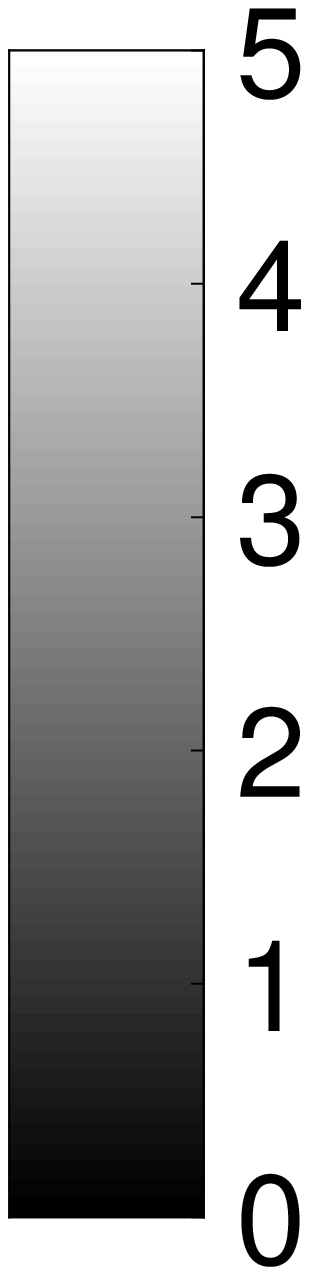} & \includegraphics[scale=0.2]{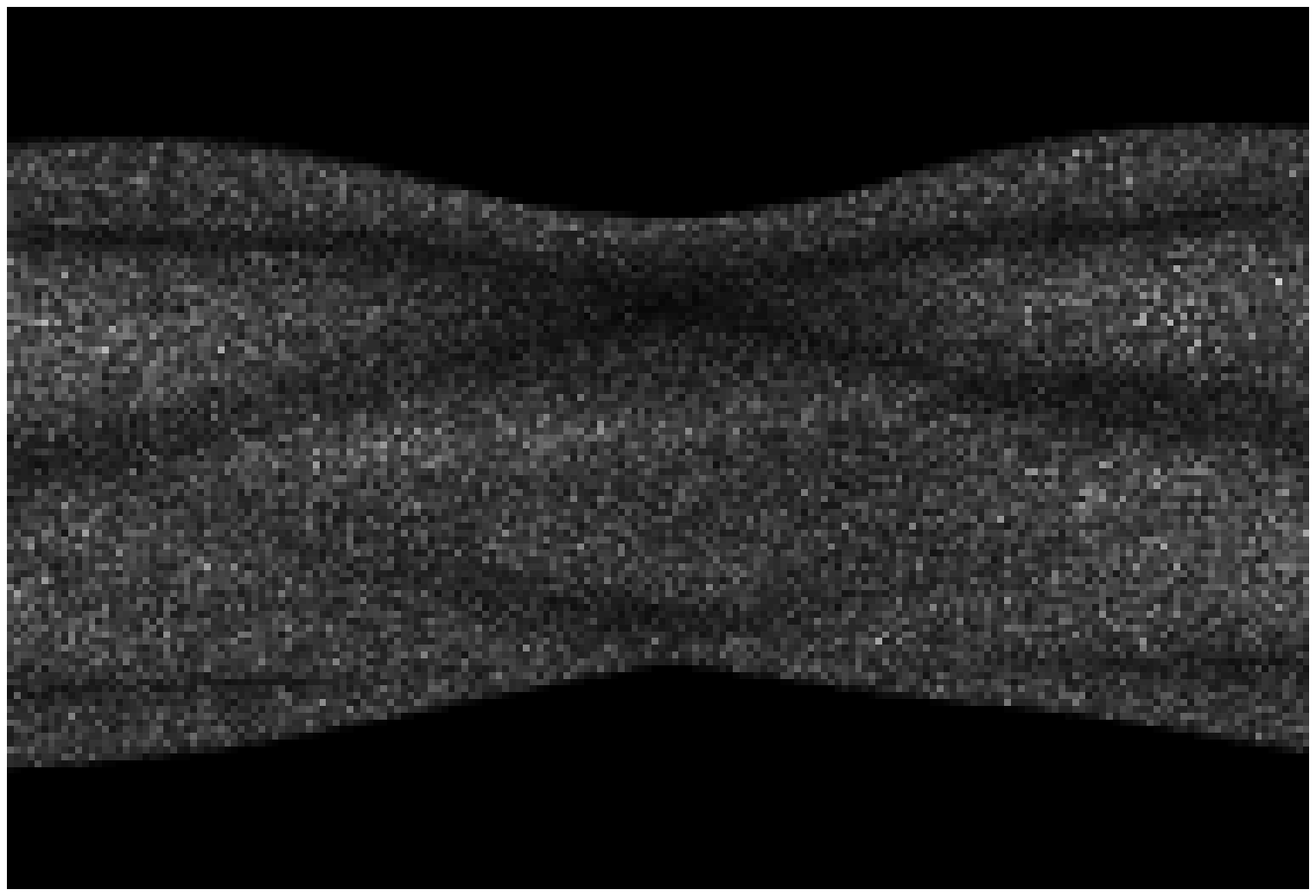}  \includegraphics[scale=0.2]{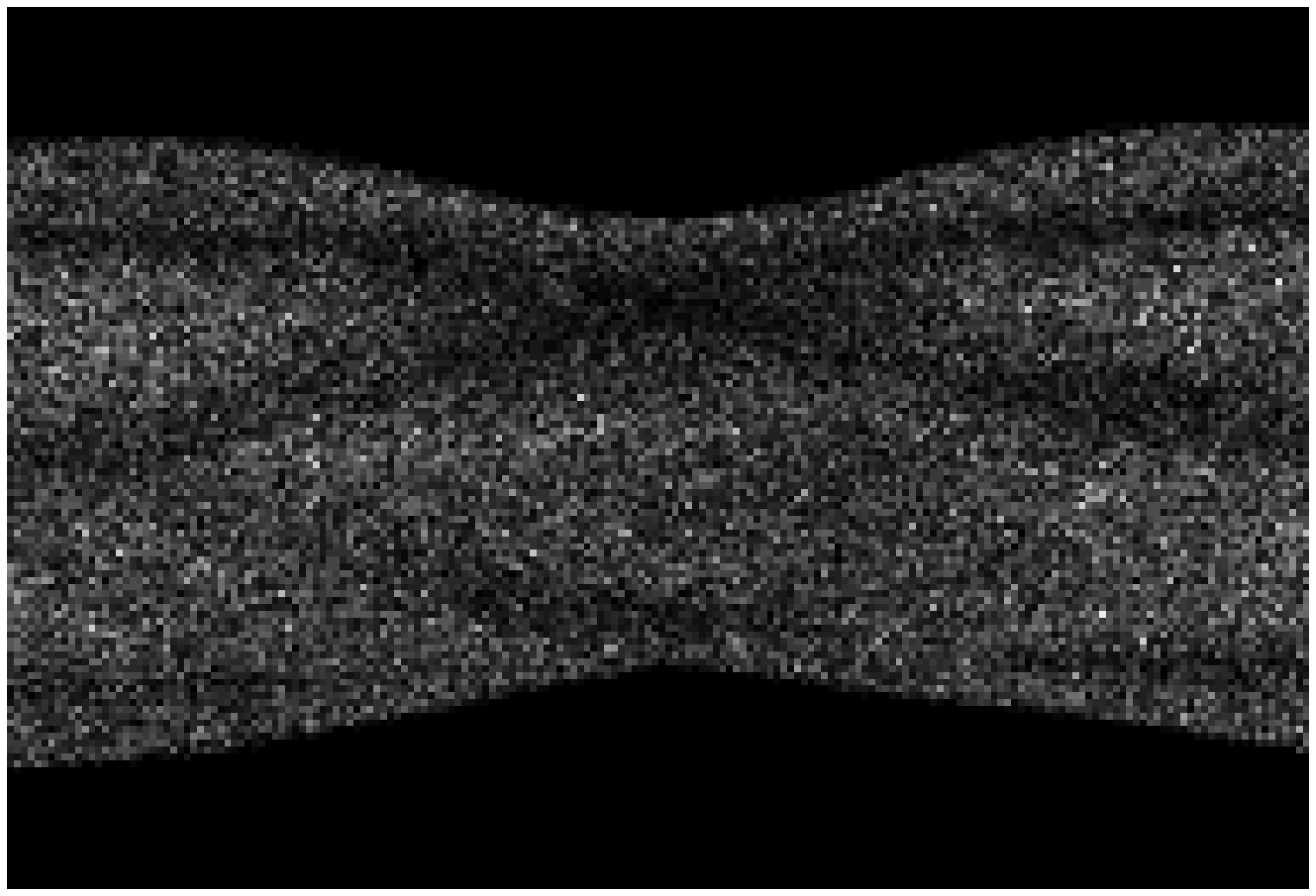} & \includegraphics[scale=0.17]{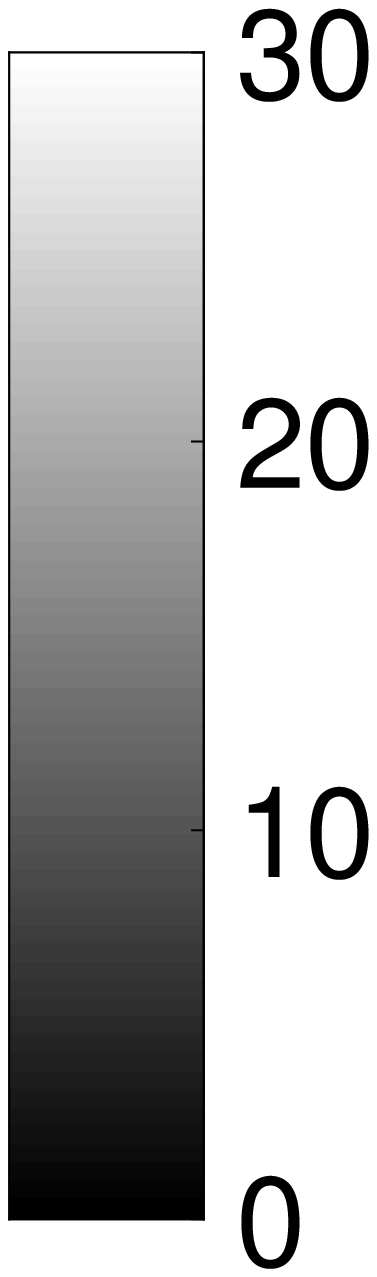}\\
\end{tabular}
\caption{The exact image, sinograms and observed data for \texttt{IRT} phantom. \label{fig:irt_xby}}	
\end{figure}

\begin{figure}[htb!]
\centering
\begin{tabular}{cccccc}
\includegraphics[scale=0.2]{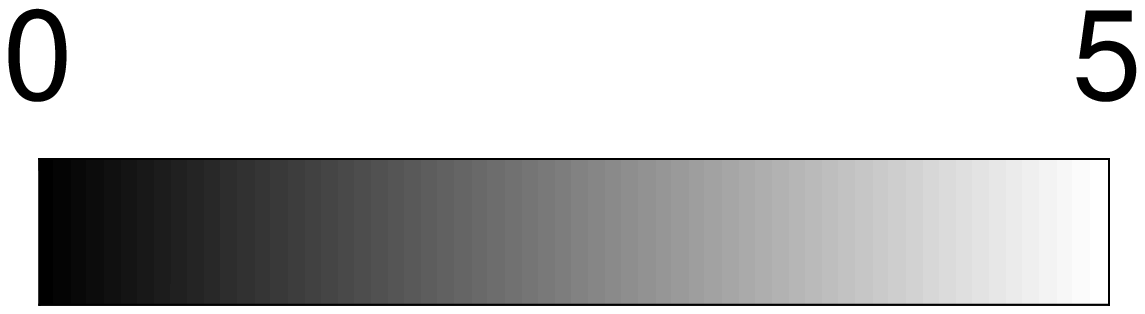} & \includegraphics[scale=0.2]{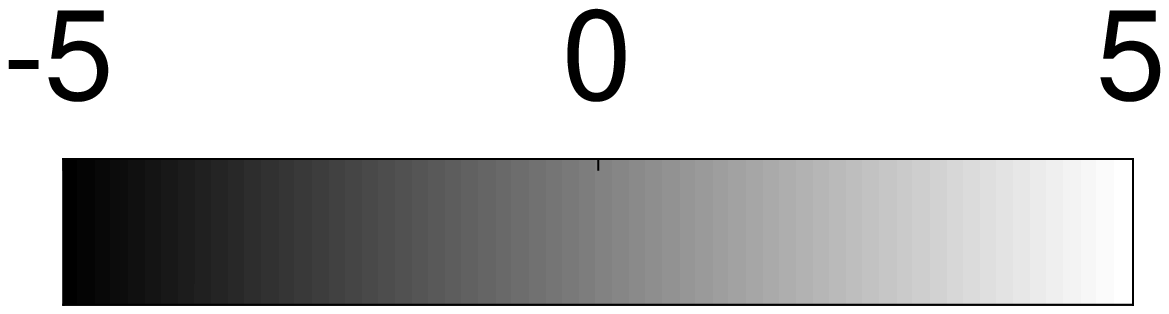} & \includegraphics[scale=0.2]{bar_5_h} & \includegraphics[scale=0.2]{bar_5_5_h} & \includegraphics[scale=0.2]{bar_001_h}\\
\includegraphics[scale=0.2]{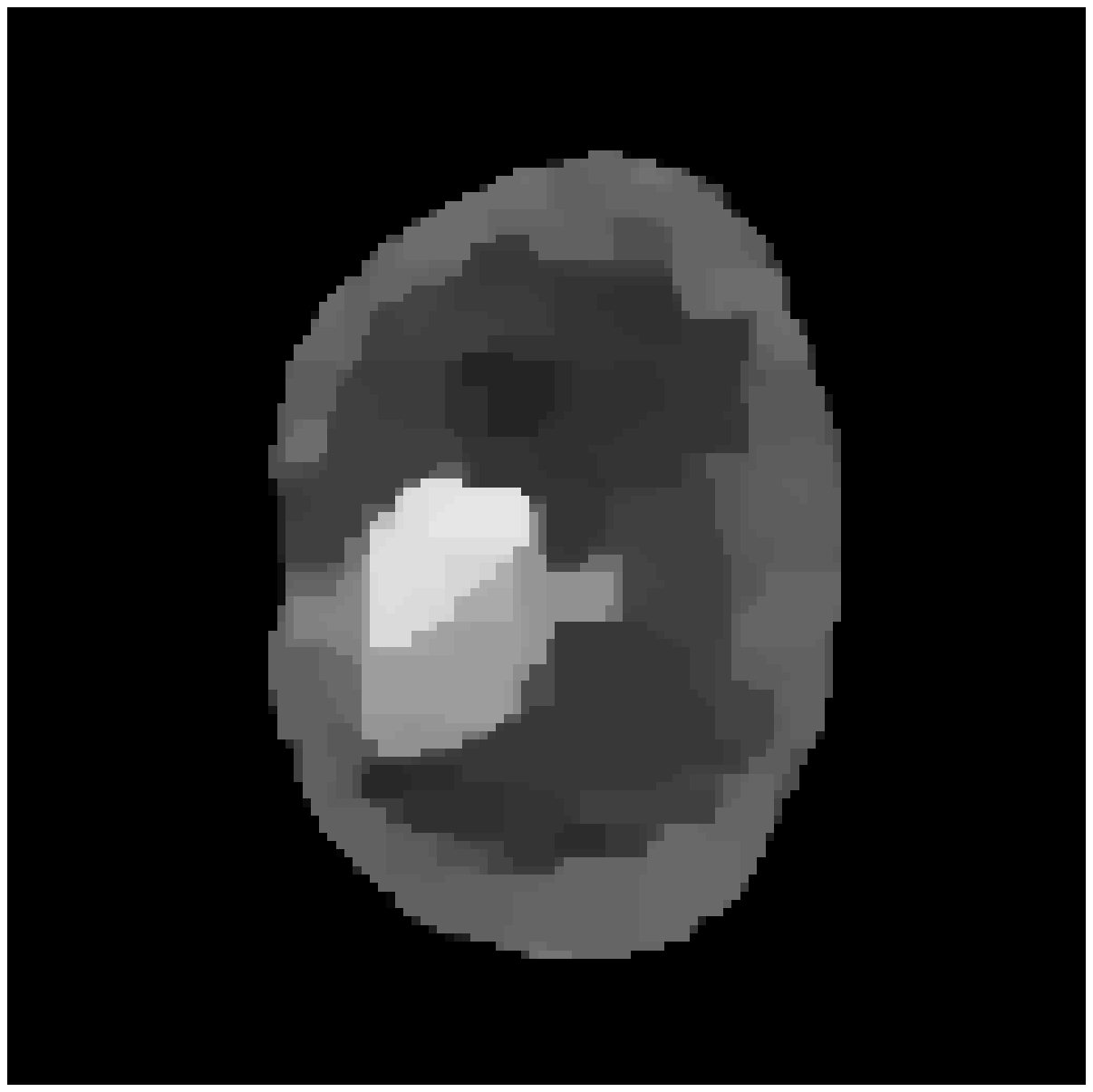} & \includegraphics[scale=0.2]{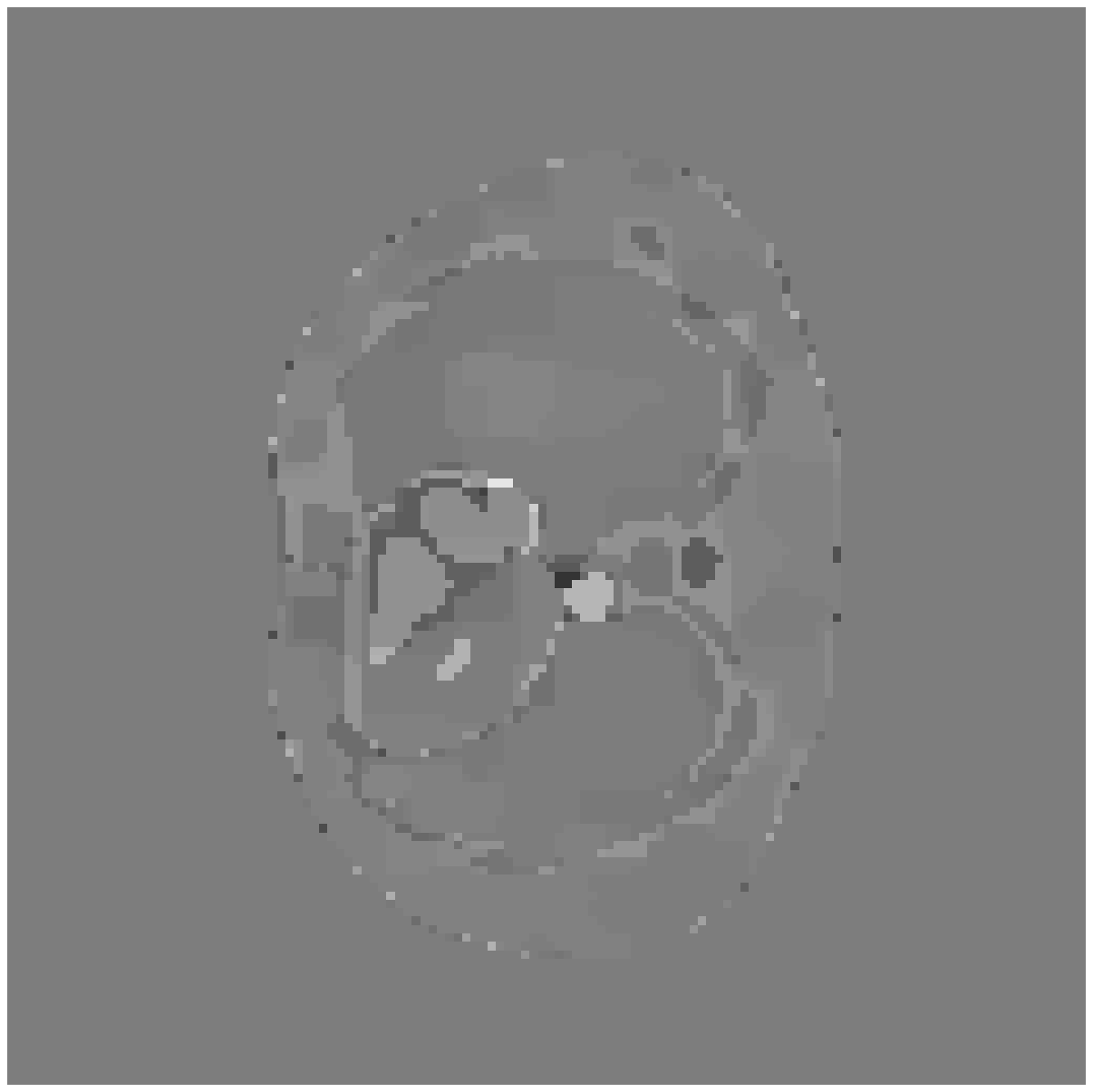} & \includegraphics[scale=0.2]{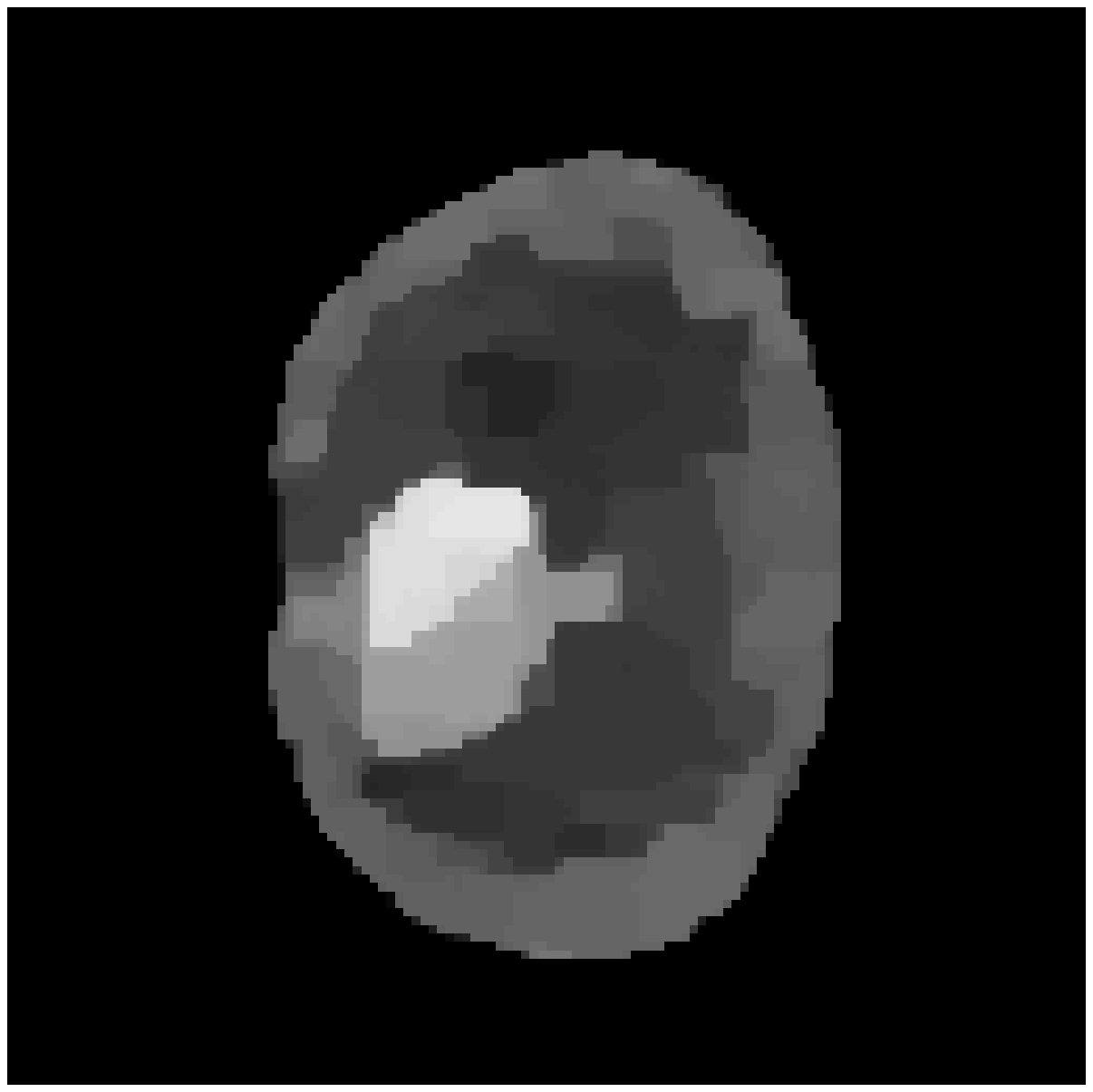} & \includegraphics[scale=0.2]{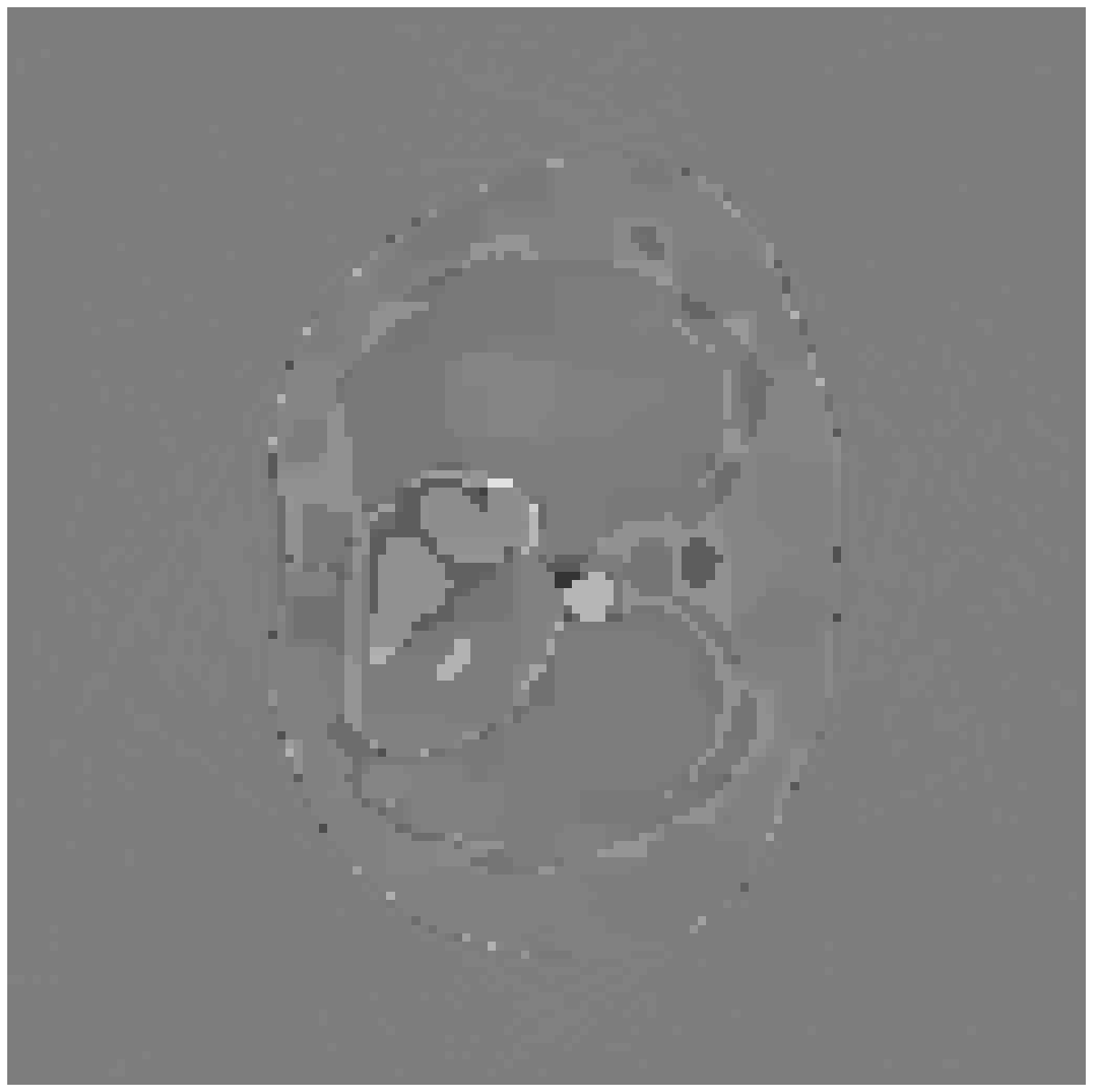} & \includegraphics[scale=0.2]{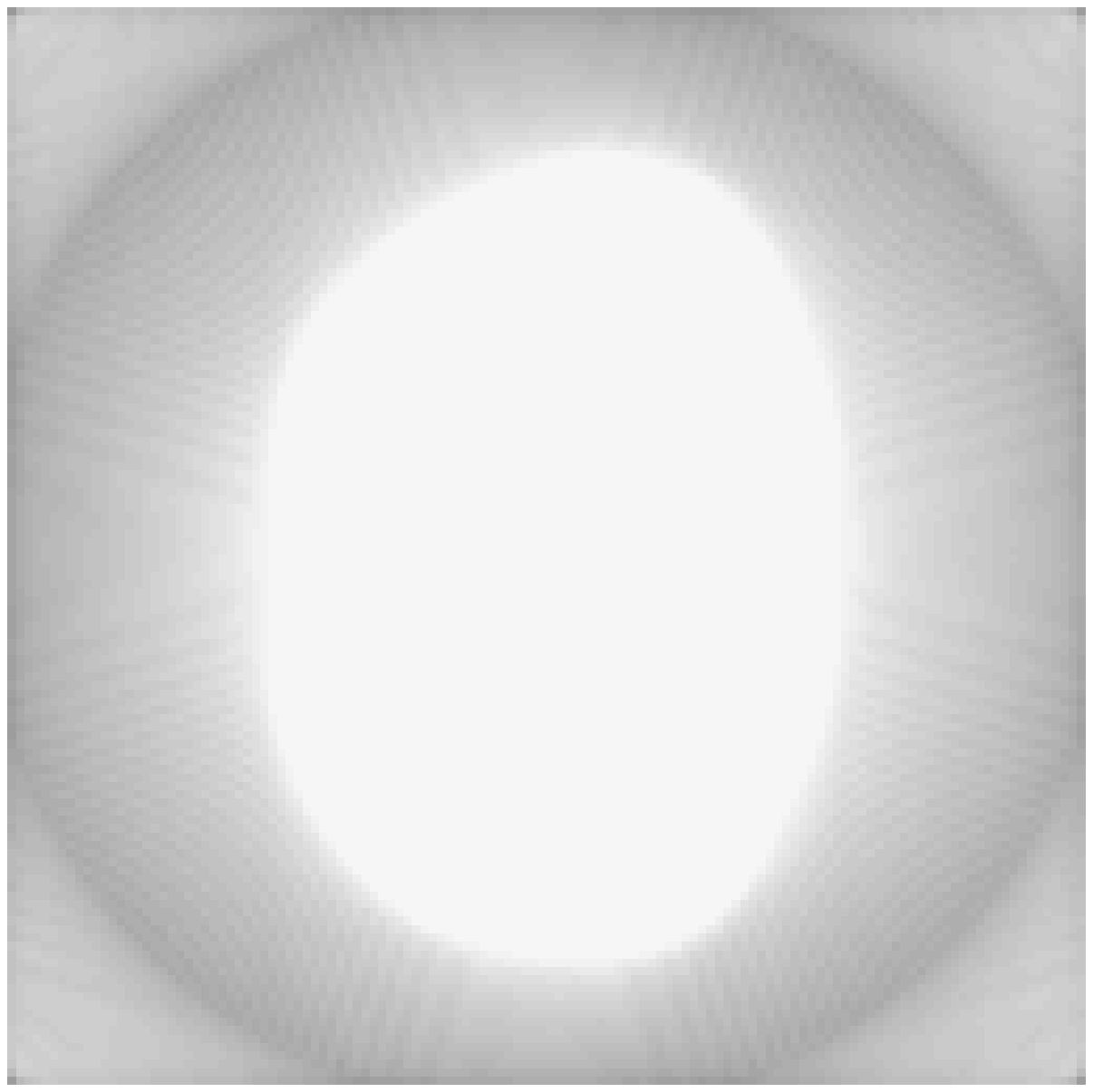}\\
MAP & MAP error & EP mean & EP error & EP variance
\end{tabular}
\caption{MAP vs EP with anisotropic TV prior for the \texttt{IRT} phantom.\label{fig:irt_248}}	
\end{figure}

These numerical results with different experimental settings show clearly that EP can provide comparable
point estimates with MAP as well as uncertainty information by means of the variance estimate.

\section{Conclusion}
In this work, we have developed inference procedures for the constrained Poisson likelihood
arising in emission tomography. They are based on expectation propagation developed in the
machine learning community. The detailed derivation of the algorithms, complexity and their
stable implementation are given for a Laplace type prior. Extensive numerical experiments
show that the EP algorithm (with natural parameters) converges rapidly and can deliver an
approximate posterior distribution with the approximate mean comparable with MAP, together
with uncertainty estimate, and can handle real images of medium size. Thus, the approach can be
viewed as a feasible fast alternative to the general-purposed but expensive MCMC for
rapid uncertainty quantification with Poisson data.

There are several avenues for future works. First, it is of enormous interest to analyze the convergence
rate and accuracy of EP, and more general approximate inference techniques, e.g., variational Bayes, which
have all achieved great practical successes but largely defied theoretical analysis. Second, it is important
to further extend the flexibility of EP algorithms to more complex posterior distributions, e.g., lack of
projection form. One notable example is isotropic total variation prior that appears frequently in practical
imaging algorithms. This may require introducing an additional layer of approximation, e.g.,
in the spirit of iteratively reweighed least-squares or (quasi-)Monte Carlo computation of
low-dimensional integrals. Third, many experimental studies show that EP converges very fast, with convergence
reached within five sweeps for the Poisson model under considerations. However, the overall $O(mn^2)$
computational complexity per sweep of all current implementations \cite{gelman2014expectation} is still
very high, and not scalable well to large images that are required in many real world applications.
Hence, there remains great demand to further accelerate the algorithms, e.g., via low-rank structure of the map
$A$ and diagonal dominance of the posterior covariance. Fourth and last, it is also important to derive
rigorous error estimates for the quadrature rules developed in Section \ref{sec:int}.

\appendix
\section{Parameterizing Gaussian distributions}\label{app:Gaussian}
For a Gaussian $\mathcal{N}(x|\mu,C)$ with mean $\mu\in \mathbb{R}^n$ and covariance $C\in \mathcal{S}^n_+$,
the density $\pi(x|\mu,C)$ is given by
\begin{equation*}
	\pi(x|\mu,C) = (2\pi)^{-\frac{n}{2}}|C|^{-\frac12}e^{-\frac{1}{2}(x-\mu)^tC^{-1}(x-\mu)}
						 = e^{\zeta+h^t x-\frac{1}{2}x^t\Lambda x},
\end{equation*}
where the parameters $\Lambda\in\mathcal{S}_+^n$, $h\in\mathbb{R}^n$ and $\zeta\in\mathbb{R}$ are respectively given by
\begin{equation*}
	\Lambda = C^{-1},\quad
	h=\Lambda\mu,\quad
\mbox{and}
\quad	\zeta=-\tfrac{1}{2}(n\log2\pi+\log|\Lambda|+\mu^t\Lambda\mu).
\end{equation*}
Thus, the density $\pi(x|\mu,C)$ is also uniquely defined by $\Lambda$ and $h$.
In the literature, $\Lambda$ is often referred to as the precision matrix and $h$ as the precision mean.
and the pair $(h,\Lambda)$ is called the natural parameter of a Gaussian
distribution.

It is easy to check that the product of $k$ Gaussians $\{\mathcal{N}(x|
\mu_k,C_k)\}_{k=1}^m$ is also a Gaussian $\mathcal{N}(x|\mu,C)$ after normalization,
and the mean $\mu$ and covariance $C$ of the product are given by
\begin{equation}
   \mu = C\sum_{k=1}^mC^{-1}_k\mu_k  \quad\mbox{and}\quad	C= \Big(\sum_{k=1}^mC^{-1}_k\Big)^{-1},
\end{equation}
or equivalently
\begin{equation}
   h = \sum_{k=1}^m h_k	\quad \mbox{and}\quad \Lambda = \sum_{k=1}^m \Lambda_k.
\end{equation}

\section{Laplace approximation}\label{app:Laplace}
In the engineering community, one popular approach to approximate the posterior distribution $p(x|y)$ is Laplace approximation
\cite{TierneyKadane:1986,Bishop:2006}. It constructs a Gaussian approximation by the second-order Taylor expansion of
the negative log-posterior $-\log p(x|y)$ around MAP $\hat x$. Upon ignoring the unimportant
constant and smoothing the Laplace potential, the negative log-posterior $J(x)$ is given by
\begin{equation}
	J(x) = \sum_{i=1}^{m_1} (-y_i\log(a^t_ix+r_i)+a^t_ix+r_i)+\alpha\sum_{i=1}^{m_2}((L^t_ix)^2+\epsilon^2)^{1/2},
\end{equation}
where $\epsilon>0$ is a small smoothing parameter to restore the differentiability.
The gradient $\nabla J(x)$ and Hessian $\nabla^2J(x)$ are given respectively by
\begin{align*}
	\nabla J(x) &= \sum_{i=1}^{m_1}(-\frac{y_i}{a^t_ix+r_i}+1)a_i + \alpha\sum_{i=1}^{m_2}((L^t_ix)^2+\epsilon^2)^{-1/2}(L_i^tx)L_i,\\
	\nabla^2J(x)&= \sum_{i=1}^{m_1} \frac{y_i}{(a^t_ix+r_i)^2}a_ia^t_i + \alpha{\epsilon^2}\sum_{i=1}^{m_2}{((L^t_ix)^2+\epsilon^2)^{-3/2}}L_iL^t_i.
\end{align*}
Since $\nabla J(\hat x)=0$, the Taylor expansion reads
\begin{equation}\label{eqn:Lap}
  J(x) \approx J(\hat x) + \tfrac{1}{2}(x-\hat x)^t\nabla^2J(\hat x)(x-\hat x),
\end{equation}
and $\nabla^2J(\hat x)$ approximates the precision matrix. When $\epsilon \ll |L_i^t\hat x|$, the second
term in $\nabla^2 J(x)$ can be negligible and thus the Hessian of the negative log-likelihood is dominating;
whereas for $\epsilon\gg |L_i^t\hat x|$, the second term is dominating. In either case, the approximation
is problematic. In practice, it is also popular to combine smoothing with an iterative weighted approximation
(e.g., lagged diffusivity approximation \cite{VogelOman:1996}) by fixing $((L_i^tx)^2+\epsilon^2)^{1/2}$ in
$\nabla J(x)$ at $((L_i^t\hat x)^2+\epsilon^2)^{1/2}$, which leads to a modified Hessian:
\begin{equation*}
  \widetilde \nabla^2J(x)= \sum_{i=1}^{m_1} \frac{y_i}{(a^t_ix+r_i)^2}a_ia^t_i + \alpha\sum_{i=1}^{m_2}{((L^t_i\hat x)^2+\epsilon^2)^{-1/2}}L_iL^t_i.
\end{equation*}
The Hessians $\nabla^2J(\hat x)$ and $\widetilde \nabla^2 J(\hat x)$ will be close to each other,
if $|L_i^t\hat x|$ are all small, which is expected to hold for truly sparse signals, i.e., $L_i^tx\approx0$
for $i=1,\ldots,m_2$. One undesirable feature of Laplace approximation is that the
precision approximation depends crucially on the smoothing parameter $\epsilon$.

\section{Comparison with MCMC and Laplace approximation}\label{sec:1d}
Numerically, the accuracy of EP has found to be excellent in several studies \cite{rasmussen2004gaussian,gehre2014expectation},
although there is still no rigorous justification. We provide an experimental evaluation of its accuracy with Markov chain
Monte Carlo (MCMC) and Laplace approximation. The true posterior distribution $p(x|y)$ can be explored by MCMC \cite{Liu:2001,
RobertCasella:2004}. However, usually a large number of samples are required to obtain reliable statistics. Thus, to obtain
further insights, we consider a one-dimensional problem, i.e., a Fredholm integral equation of the first kind
\cite{Phillips:1962} over the interval $[-6,6]$ with the kernel $K(s,t)=\phi(s-t)$ and exact solution $x(t)=\phi(t)$,
where $\phi(s) = 10+10\cos\frac{\pi}{3}s\chi_{[-3,3]}$. It is discretized by a standard piecewise constant Galerkin method,
and the resulting problem is of size $100$, i.e., $x\in\mathbb{R}^{100}$ and $A\in\mathbb{R}^{100\times 100}$. We implement a random
walk Metropolis-Hastings sampler with Gaussian proposals, and optimize the step size so that the acceptance ratio is close to
$0.23$ in order to ensure good convergence \cite{BrooksGelman:1998}. The hyperparameter $\alpha$ in the prior distribution is
set to $1$. The chain is run for a length of $2\times 10^7$, and the last $10^7$ samples are used for computing the mean and covariance.

To compare the Gaussian approximation by EP and MCMC results, we present the mean, MAP, covariance and $95\%$ HPD region. Both
approximations concentrate in the same region, and the shape and magnitude of $95\%$ HPD / covariance are mostly comparable;
see Figs. \ref{fig:1d_compare_HPD} and \ref{fig:1d_compare_mean_cov}, showing the validity of EP. However, there are noticeable
differences in the recovered mean: the EP mean is nearly piecewise constant, which differs from that by MCMC. So EP gives an
intermediate approximation between the MAP and posterior mean. In comparison with MAP, EP provides not only a point estimate,
but also the associated uncertainty, i.e., covariance. Interestingly, the covariance is clearly diagonal dominant, which suggests
the use of a banded covariance or its Cholesky factor for potentially speeding up the algorithm.

\begin{figure}[htb!]
\centering
\begin{tabular}{cc}
\includegraphics[width=0.35\textwidth]{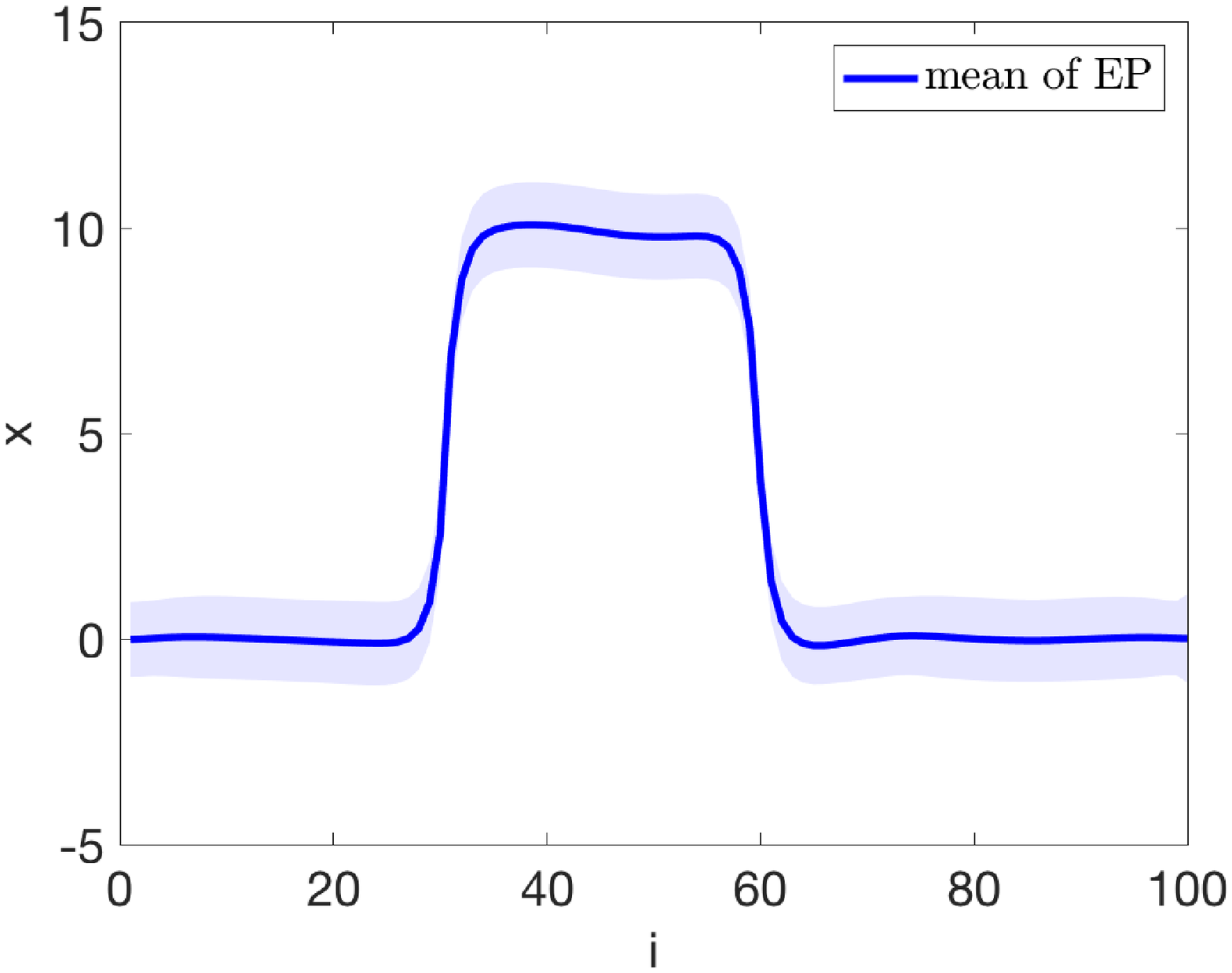} & \includegraphics[width=0.35\textwidth]{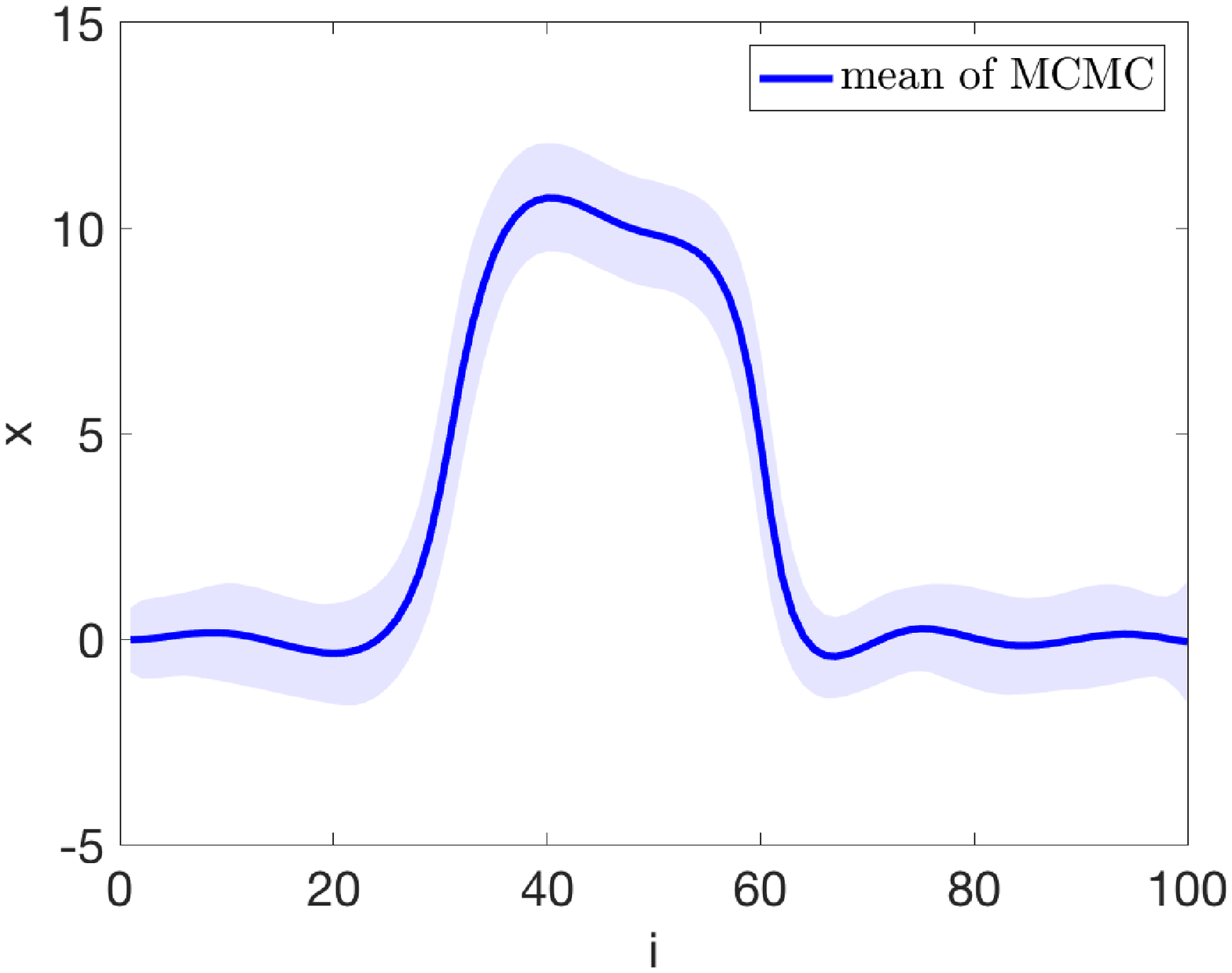} \\
(a) EP mean and $0.95$-HPD & (b) MCMC mean and $0.95$-HPD\\
\end{tabular}
\caption{Comparisons of mean and $0.95$ HPD between EP and MCMC for \texttt{Phillips} test\label{fig:1d_compare_HPD}}	
\end{figure}

\begin{figure}[htb!]
\centering
\begin{tabular}{ccc}
\includegraphics[scale=0.26]{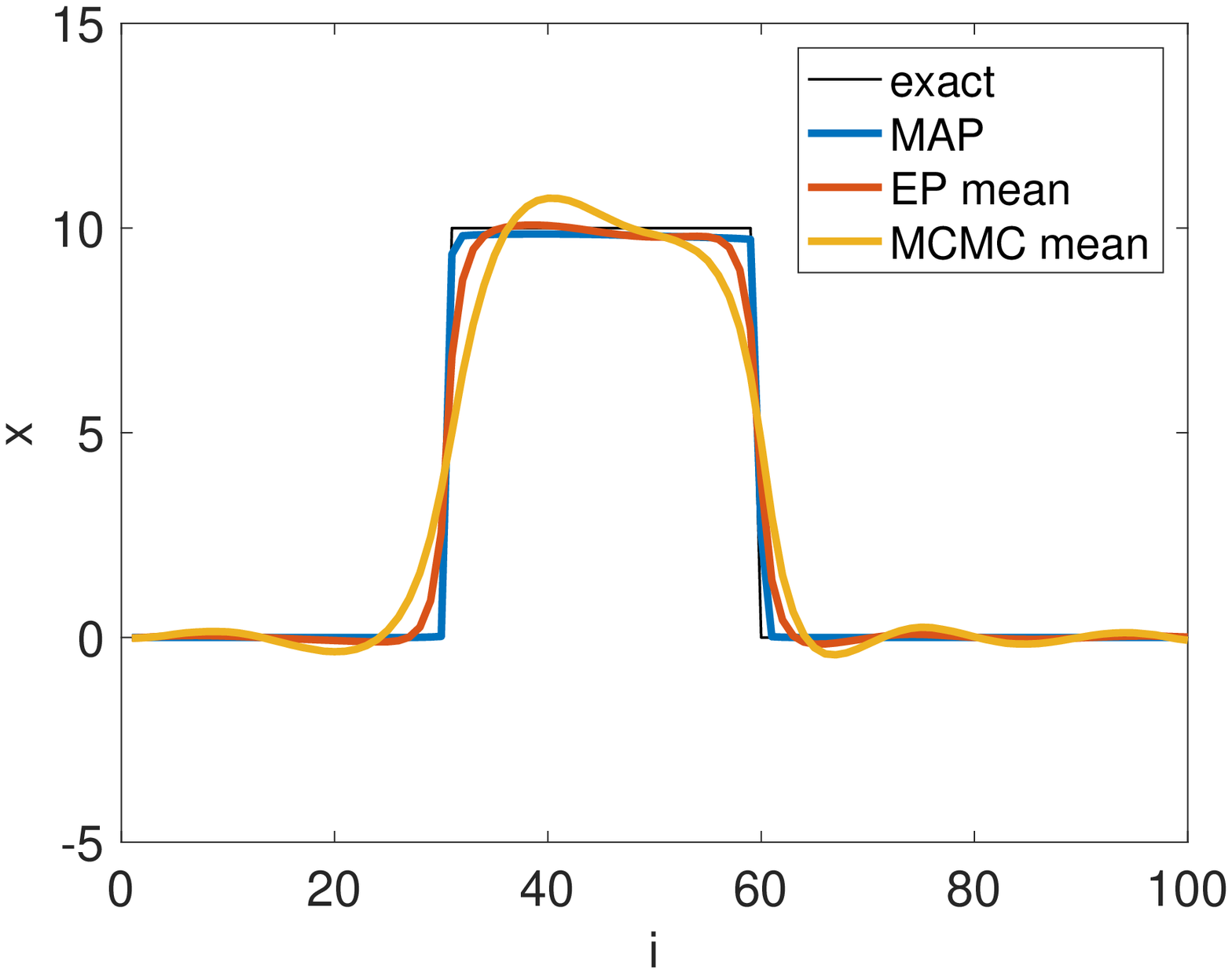} & \includegraphics[scale=0.3]{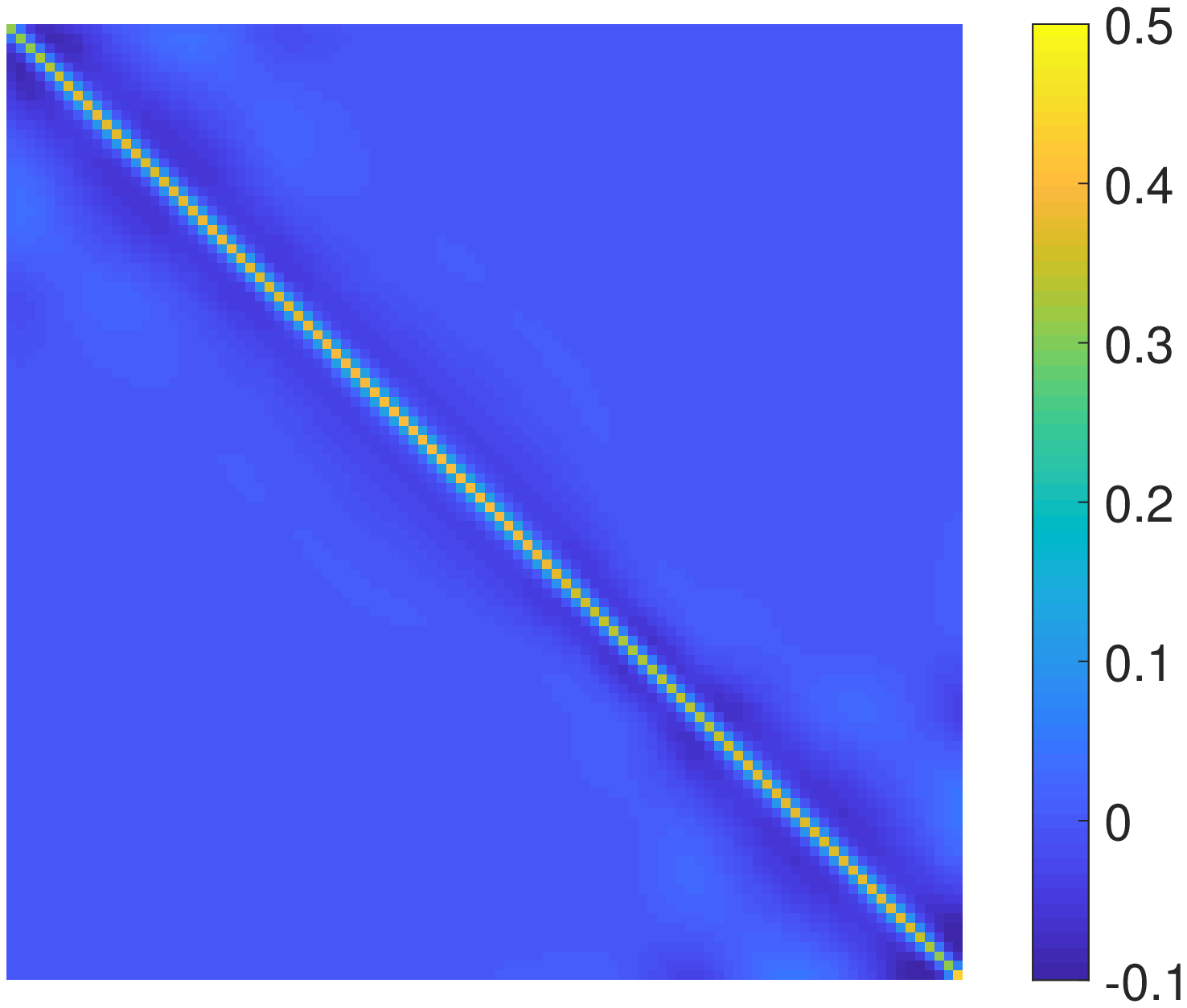} & \includegraphics[scale=0.3]{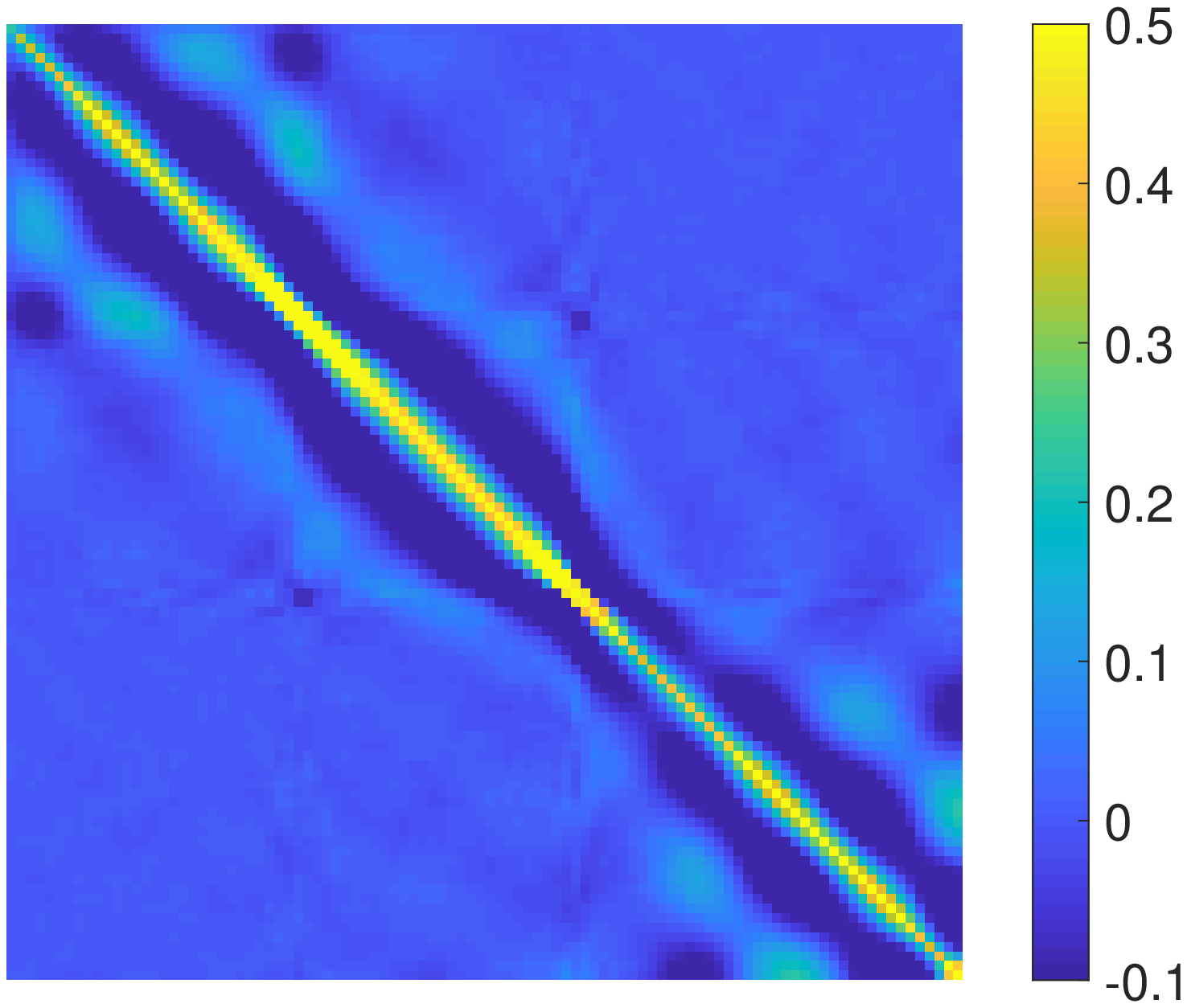}\\
(a) EP and MCMC means & (b) EP covariance & (c) MCMC covariance\\
\end{tabular}
\caption{Comparisons of mean and covariance of EP and MCMC for \texttt{Phillips} test\label{fig:1d_compare_mean_cov}}	
\end{figure}

The Laplace approximation described in Appendix \ref{app:Laplace} depends heavily on the smoothing parameter $\epsilon$,
and clearly there is a tradeoff between accuracy of MAP  and the variance approximation; see Fig. \ref{fig:map_smooth}
for the numerical results corresponding to four different smooth parameters $\epsilon$, based on the approximation \eqref{eqn:Lap}.
This tradeoff is largely attributed to the nonsmooth Laplace type prior, which pose significant challenges for
constructing the approximation. Thus, it is tricky to derive a reasonable approximation to the target posterior distribution.
In contrast, the EP algorithm only involves integrals, which are more amenable to non-differentiability, and thus can
handle nonsmooth priors naturally.

\begin{figure}[htb!]
\centering\setlength{\tabcolsep}{0pt}
\begin{tabular}{cccc}
\includegraphics[width=0.25\textwidth]{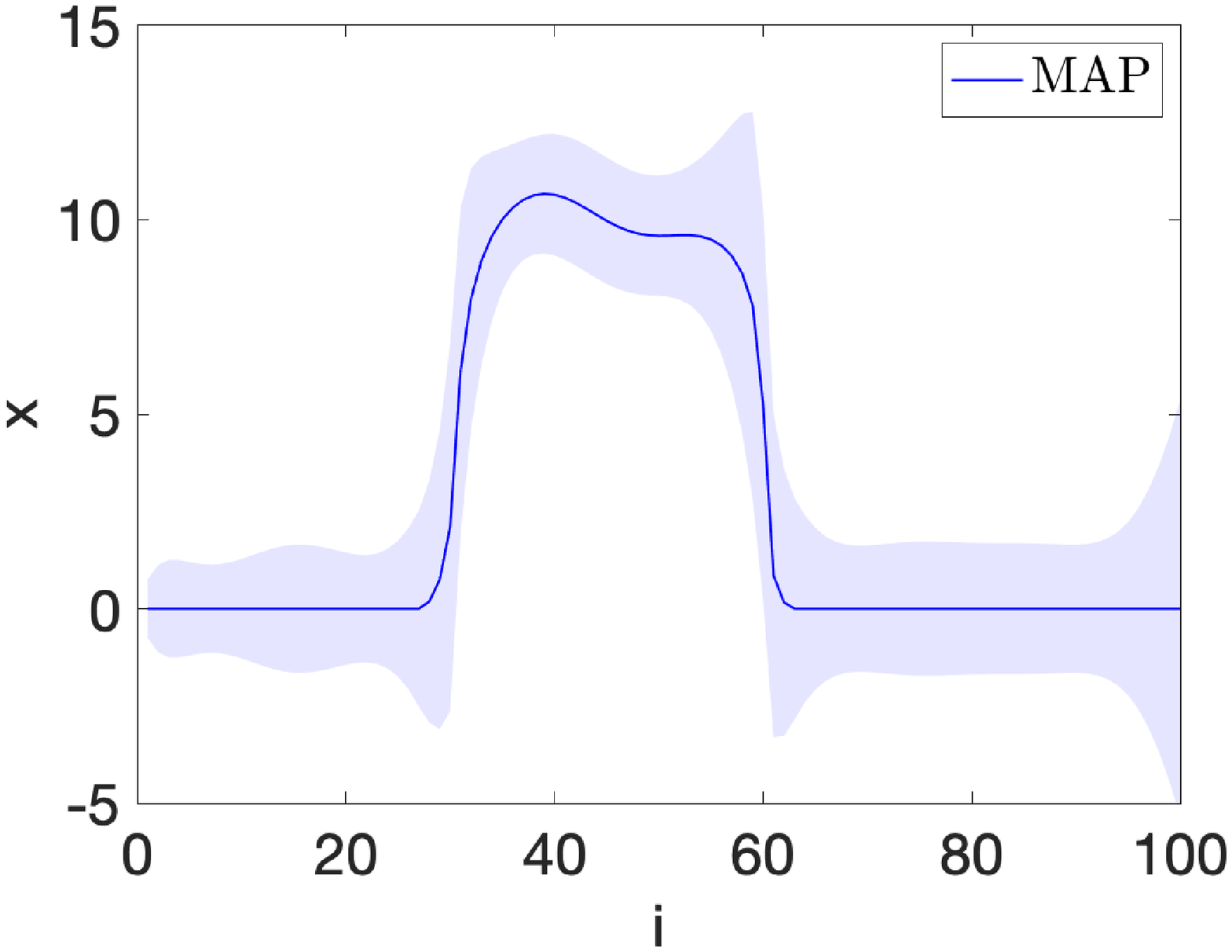} & \includegraphics[width=0.25\textwidth]{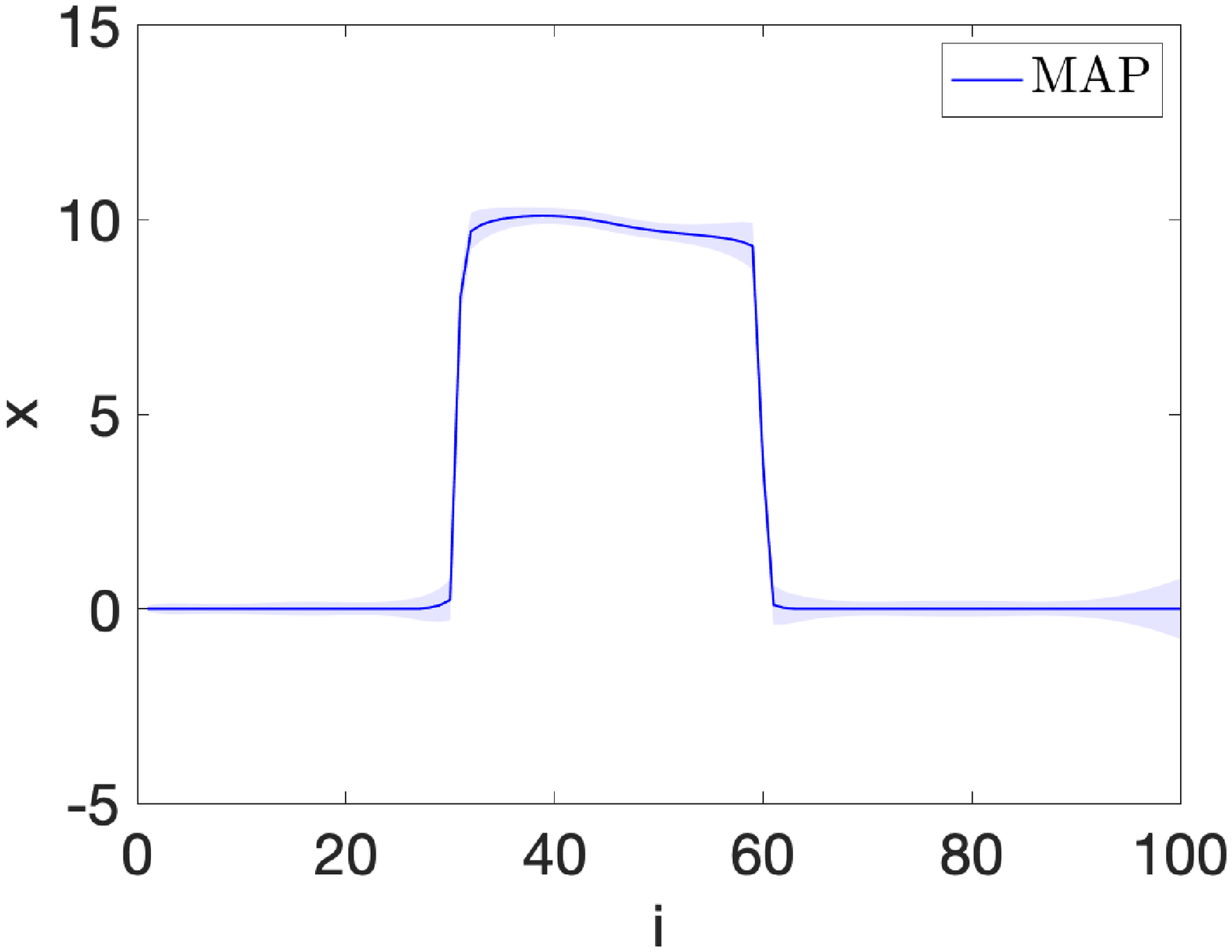}& \includegraphics[width=0.25\textwidth]{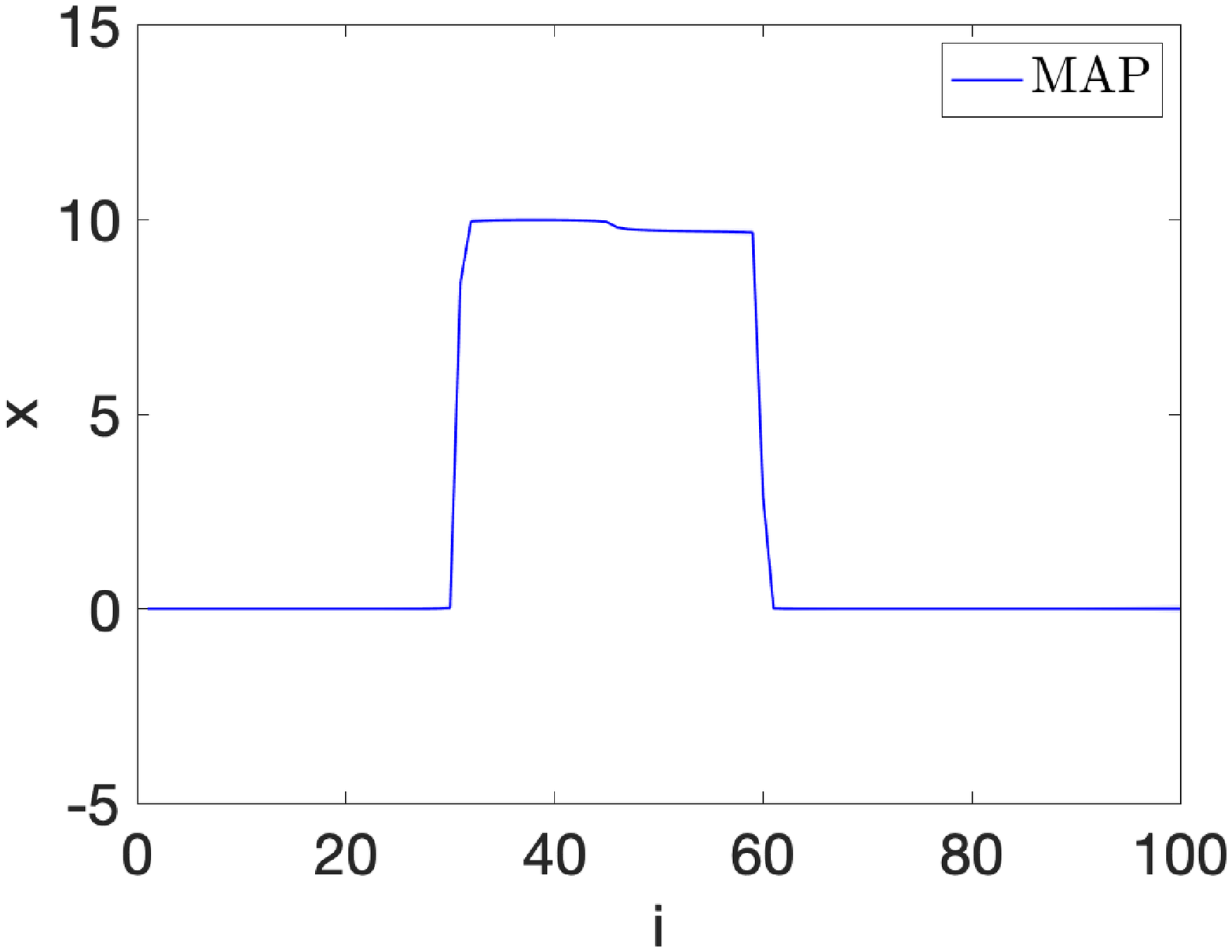} & \includegraphics[width=0.25\textwidth]{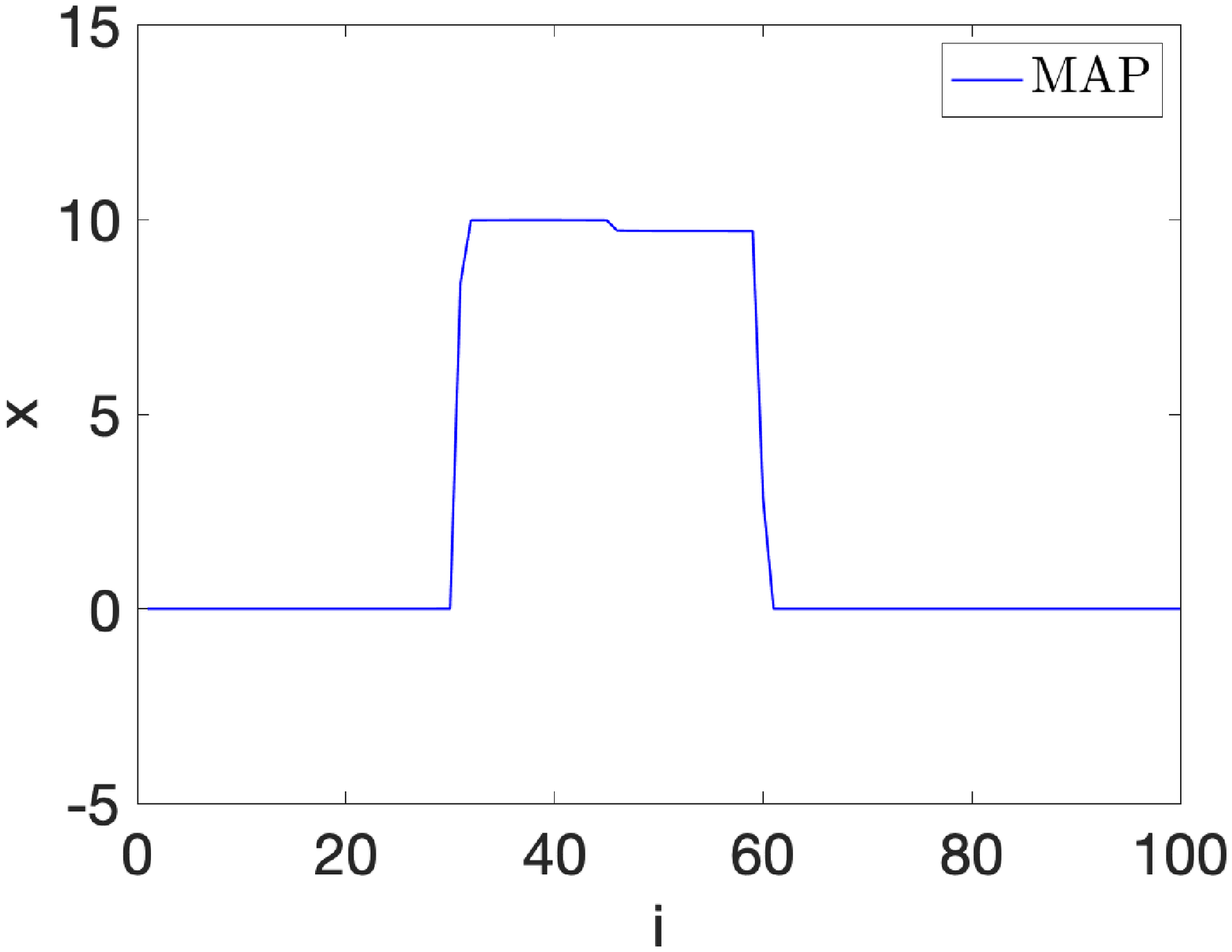}\\
(a) $\epsilon=5.18\text{e-}1$ & (b) $\epsilon=5.18\text{e-}2$& (c) $\epsilon=5.18\text{e-}3$ & (d) $\epsilon=5.18\text{e-}4$\\
\end{tabular}
\caption{Laplace approximation with different smoothing $\epsilon$.\label{fig:map_smooth}}	
\end{figure}

In passing, we note that the uncertainty estimate from the posterior probability distribution differs greatly from the concept of noise
variance \cite{Fessler:1996}, which is mainly concerned with the \textit{sensitivity} of the reconstruction with respect to
the noise in the input data $y$. It is derived using chain rule and implicit function theorem, under the assumptions of good
smoothness and local strong convexity of the associated functional \cite{Fessler:1996}. In contrast, the uncertainty in
the Bayesian framework as in this work originates from imprecise knowledge of the inverse solution encoded in the prior and the statistics of
the data. Thus, the results of these two approaches are not directly comparable.

\bibliographystyle{abbrv}
\bibliography{ep}

\end{document}